\newcommand{\Xa}{X\raisebox{-0.5ex}{$\scriptstyle\tau_a^-$}}
\newcommand{\Xb}{X\raisebox{-0.5ex}{$\scriptstyle\tau_b^+$}}
\newcommand{\ind}{1\hspace{-.27em}\mbox{\rm l}}
\newcommand{\lqn}[1]{\noalign{\noindent $\displaystyle{#1}$}}
\begin{document}

\section{Introduction}

Throughout the paper, we denote by $\mathbb{Z}$ the set of integers, by $\mathbb{N}$ that of
non-negative integers and by $\mathbb{N}^*$ that of positive integers:
$\mathbb{Z}=\{\dots,-1,0,1,\dots\}$, $\mathbb{N}=\{0,1,2,\dots\}$, $\mathbb{N}^*=\{1,2,\dots\}$.
More generally, for any set of numbers $E$, we set $E^*=E\backslash\{0\}$.

Let $N$ be a positive integer, $c$ be a positive constant and set $\kappa_{_{\!N}}=(-1)^{N-1}$.
Let $(U_n)_{n\in\mathbb{N}^*}$ be a sequence of independent identically distributed
pseudo-random variables taking their values in the set of integers
$\{-N,-N+1,\dots,-1,0,1,\dots,N-1,N\}$. By \textit{pseudo-random variable},
we mean a measurable function defined on a space endowed with a signed measure
with a total mass equaling the unity.
We assume that the common pseudo-distribution of the $U_n$'s is characterized by the
\textit{(positive or negative) real} pseudo-probabilities $p_k=\mbox{$\mathbb{P}\{U_n=k\}$}$ for
any $k\in\{-N,-N+1,\dots,N-1,N\}$. The parameters $p_k$ sum to the unity: $\sum_{k=-N}^N p_k=1$.

Now, let us introduce $(S_n)_{n\in\mathbb{N}}$ the associated pseudo-random walk defined
on $\mathbb{Z}$ by $S_0=0$ and $S_n=\sum_{j=1}^n U_j$ for $n\in\mathbb{N}^*$. 
The infinitesimal generator associated with $(S_n)_{n\in\mathbb{N}}$ is defined,
for any function $f$ defined on $\mathbb{Z}$, as
\[
\mathcal{G}_{_S} f(j)=\mathbb{E}[f(U_1+j)]-f(j)=\sum_{k=-N}^N p_k f(j+k)-f(j),\quad j\in\mathbb{Z}.
\]
Here we consider the pseudo-random walk which admits the discrete $N$-iterated
Laplacian as a generator infinitesimal. More precisely, by introducing the
so-called discrete Laplacian $\mathbf{\Delta}$ defined, for any function $f$ defined
on $\mathbb{ Z}$, by
\[
\mathbf{\Delta} f(j)=f(j+1)-2f(j)+f(j-1),\quad j\in\mathbb{Z},
\]
the discrete $N$-iterated Laplacian is the operator
$\mathbf{\Delta}^{\!N}=\underbrace{\mathbf{\Delta}\circ\dots\circ\mathbf{\Delta}}_{N\text{ times}}$
given by
\[
\mathbf{\Delta}^{\!N}\!f(j)=\sum_{k=-N}^{N}(-1)^{k+N} \binom{2N}{k+N} f(j+k),\quad j\in\mathbb{Z}.
\]
We then choose the $p_k$'s such that $\mathcal{G}_{_S}=\kappa_{_{\!N}} c\,\mathbf{\Delta}^{\!N}$
which yields, by identification, for any $k\in\{-N,-N+1,\dots,-1,1,\dots,N-1,N\}$,
\begin{equation}\label{proba-pk}
p_k=p_{-k}=(-1)^{k-1} c\binom{2N}{k+N}\quad\text{ and }\quad p_0=1-c\binom{2N}{N}.
\end{equation}

When $N=1$, $(S_n)_{n\in\mathbb{N}}$ is the pseudo-random walk to the closest neighbors
with a possible stay at its current location; it is characterized by the numbers
$p_1=p_{-1}=c$ and $p_0=1-2c$. Moreover, if $0<c<1/2$, then $p_0>0$; in this
case, we are dealing with an ordinary random walk (with positive probabilities).

Actually, with the additional assumption that $p_{-k}=p_k$ for any $k\in\{-N,-N+1,\dots,$
$N-1,N\}$ (i.e., the $U_n$'s are symmetric, or the pseudo-random walk has no drift),
the $p_k$'s are the unique numbers such that
\begin{equation}\label{infinitesimal-generator1}
\mathcal{G}_{_S} f(j)=\kappa_{_{\!N}} c\,\tilde{f}^{(2N)}(j)+\text{\it terms with higher
order derivatives}
\end{equation}
where $\tilde{f}$ is an analytical extension of $f$ and $\tilde{f}^{(2N)}$
stands for the $(2N)^{\mathrm{th}}$ derivative of $\tilde{f}$:
$\tilde{f}^{(2N)}(x)=\frac{\mathrm{d}^{2N}\tilde{f}}{\mathrm{d} x^{2N}}(x)$.

Our motivation for studying the pseudo-random walk associated with
the parameters defined by~(\ref{proba-pk})
is that it is the discrete counterpart of the pseudo-Brownian
motion as the classical random walk is for Brownian motion.
Let us recall that pseudo-Brownian motion is the pseudo-Markov process $(X_t)_{t\ge 0}$
with independent and stationary increments, associated with the \textit{signed}
heat-type kernel $p(t;x)$ which is the elementary solution of the high-order
heat-type equation $\partial/\partial t=\kappa_{_{\!N}} c\,\partial^{2N}\!/\partial x^{2N}\!.$
The kernel $p(t;x)$ is characterized by its Fourier transform:
\[
\mathbb{E}\!\left(\mathrm{e}^{\mathrm{i} uX_t}\right)=\int_{-\infty}^{+\infty}
\mathrm{e}^{\mathrm{i} ux}\,p(t;x)\,\mathrm{d} x=\mathrm{e}^{-ctu^{2N}}\!.
\]
The corresponding infinitesimal generator is given, for any $C^{2N}$-function
$f$, by
\begin{equation}\label{infinitesimal-generator2}
\mathcal{G}_{_X} f(x)=\lim_{h\to 0^+} \frac{1}{h}\left[\mathbb{E}[f(X_h+x)]-f(x)\right]
=\kappa_{_{\!N}} c\,f^{(2N)}(x).
\end{equation}
The reader can find an extensive literature on pseudo-Brownian motion.
For instance, let us quote the works of Beghin, Cammarota, Hochberg, Krylov,
Lachal, Nakajima, Nikitin, Nishioka, Orsingher, Ragozina, Sato (\cite{bho} to \cite{ors})
and the references therein.

We observe that (\ref{infinitesimal-generator1}) and~(\ref{infinitesimal-generator2})
are closely related to the continuous $N$-iterated Laplacian
$\mathrm{d}^{2N}\!/\mathrm{d} x^{2N}$.
For $N=2$, the operator $\mathbf{\Delta}^{\!2}$ is the two-Laplacian related to the
famous biharmonic functions: in the discrete case,
\[
\mathbf{\Delta}^{\!2} f(j)=f(j+2)-4f(j+1)+6f(j)-4f(j-1)+f(j-2),\quad j\in\mathbb{Z},
\]
and in the continuous case,
\[
\mathbf{\Delta}^{\!2} f(x)=\frac{\mathrm{d}^4f}{\mathrm{d} x^4}(x),\quad x\in\mathbb{R}.
\]
In the discrete case, it has been considered by Sato~\cite{sato} and Vanderbei~\cite{vanderbei}.

The link between the pseudo-random walk and pseudo-Brownian motion
is the following one: when normalizing the pseudo-random walk $(S_n)_{n\in\mathbb{N}}$
on a grid with small spatial step $\varepsilon^{2N}$ and temporal step $\varepsilon$ (i.e., we construct
the pseudo-process $\big(\varepsilon S_{\lfloor t/\varepsilon^{2N}\rfloor}\big)_{t\ge 0}$ where
$\lfloor\,.\,\rfloor$ denotes the usual floor function),
the limiting pseudo-process as $\varepsilon\to 0^+$ is exactly the pseudo-Brownian motion.

Now, we consider the first overshooting time of a fixed single threshold $a<0$ or $b>0$
($a,b$ being integers) for $(S_n)_{n\in\mathbb{N}}$:
\[
\sigma_a^-=\min\{n\in\mathbb{N}^*: S_n\le a\},\quad \sigma_b^+=\min\{n\in\mathbb{N}^*: S_n\ge b\}
\]
as well as the first exit time from a bounded interval $(a,b)$:
\[
\sigma_{ab}=\min\{n\in \mathbb{N}^*: S_n\le a \text{ or } S_n\ge b\}
=\min\{n\in \mathbb{N}^*: S_n\notin(a,b)\}
\]
with the usual convention that $\min\emptyset=+\infty$.
Since, when $\sigma_b^+<+\infty$, $S_{\sigma_b^+-1}\le b-1$ and $S_{\sigma_b^+}\ge b$,
the overshoot at time
$\sigma_b^+$ which is $S_{\sigma_b^+}-b$ can take the values
$0,1,2,\dots,N-1$, that is $S_{\sigma_b^+}\in\{b,b+1,b+2,\dots,b+N-1\}$.
Similarly, when $\sigma_a^-<+\infty$, $S_{\sigma_a^-}\in\{a-N+1,a-N+2,\dots,a\}$, and when
$\sigma_{ab}<+\infty$, $S_{\sigma_{ab}}\in\{a,a-1,\dots,a-N+1\}\cup\{b,b+1,\dots,b+N-1\}$.
We put $S_b^+=S_{\sigma_b^+}$, $S_a^-=S_{\sigma_a^-}$ and $S_{ab}=S_{\sigma_{ab}}$.

In the same way, we introduce the first overshooting times of the
thresholds $a<0$ and $b>0$ ($a,b$ being now real numbers) for $(X_t)_{t\ge 0}$:
\[
\tau_a^-=\inf\{t\ge 0: X_t\le a\},\quad \tau_b^+=\inf\{t\ge 0: X_t\ge b\}
\]
as well as the first exit time from a bounded interval $(a,b)$:
\[
\tau_{ab}=\inf\{t\ge 0: X_t \le a \text{ or } X_t\ge b\}=\inf\{t\ge 0: X_t\notin(a,b)\}
\]
with the similar convention that $\inf\emptyset=+\infty$, and we set, when
the corresponding time is finite,
\[
X_a^-=\Xa,\quad X_b^+=\Xb,\quad X_{ab}=X_{\tau_{ab}}.
\]

In this paper we provide a representation for the generating function of the joint
distributions of the couples $(\sigma_a^-,S_a^-)$, $(\sigma_b^+,S_b^+)$ and $(\sigma_{ab},S_{ab})$.
In particular, we derive simple expressions for the marginal distributions of
$S_a^-$, $S_b^+$ and $S_{ab}$. We also obtain explicit expressions for the
famous ``ruin pseudo-probabilities'' $\mathbb{P}\{\sigma_a^-<\sigma_b^+\}$ and
$\mathbb{P}\{\sigma_b^+<\sigma_a^-\}$.
The main tool employed in this paper is the use of generating functions.

Taking the limit as $\varepsilon$ goes to zero, we retrieve the joint distributions of
the couples $(\tau_a^-,X_a^-)$ and  $(\tau_b^+,X_b^+)$ obtained in~\cite{la2,la3}.
Therein, we used Spitzer's identity for deriving these distributions.
Moreover, we obtain the joint distribution of the couple $(\tau_{ab},X_{ab})$
which is a new and an important result for the study of pseudo-Brownian motion.
In particular, we deduce the ``ruin pseudo-probabilities'' $\mathbb{P}\{\tau_a^-<\tau_b^+\}$
and $\mathbb{P}\{\tau_b^+<\tau_a^-\}$; the results have been announced without
any proof in a survey on pseudo-Brownian motion, \cite{la5}, after a conference
held in Madrid (IWAP 2010).

In~\cite{la2}, \cite{nish1} and~\cite{nish2}, the authors observed a curious fact concerning
the pseudo-distributions of $X_a^-$ and $X_b^+$: they are linear combinations
of the Dirac distribution and its successive derivatives (in the sense of
Schwarz distributions). For instance,
\begin{equation}\label{dirac}
\mathbb{P}\{X_b^+\in \mathrm{d} z\}/\mathrm{d} z=\sum_{j=0}^{N-1}
\frac{b^{\,j}}{j!}\,\delta_b^{(j)}(z).
\end{equation}
The quantity $\delta_b^{(j)}$ is to be understood as the functional acting
on test functions $\phi$ according as $\langle \delta_b^{(j)},\phi \rangle
=(-1)^j\phi^{(j)}(b)$.
The appearance of the $\delta_b^{(j)}$'s in~(\ref{dirac}), which is quite
surprising for probabilists, can be better understood thanks
to the discrete approach. Indeed, the $\delta_b^{(j)}$'s
come from the location at the overshooting time of $b$ for the normalized
pseudo-random walk: the location takes place in the ``cluster'' of points
$b,b+\varepsilon,b+2\varepsilon,\dots,b+(N-1)\varepsilon$.

In order to facilitate the reading of the paper, we have divided it
into three parts:
\begin{itemize}
\item[]
Part I --- Some properties of the pseudo-random walk
\item[]
Part II --- First overshooting time of a single threshold
\item[]
Part III --- First exit time from a bounded interval
\end{itemize}

\newpage
\begin{center}
\textbf{\Large Part I --- Some properties of the pseudo-random walk}
\end{center}
\vspace{\baselineskip}


\section{Pseudo-distribution of $U_1$ and $S_n$}

We consider the pseudo-random walk $(S_n)_{n\in\mathbb{N}}$ related to a family of real
parameters $\{p_k,\,k\in\{-N,\dots,N\}\}$ satisfying $p_k=p_{-k}$ for any
$k\in\{1,\dots,N\}$ and $\sum_{k=-N}^N p_k=1$.
Let us recall that the infinitesimal generator associated with
$(S_n)_{k\ge 0}$ is defined by
\[
\mathcal{G}_{_S} f(j)=\sum_{k=-N}^N p_k f(j+k)-f(j)=(p_0-1) f(j)+\sum_{k=1}^N p_k [f(j+k)+f(j-k)].
\]
In this section, we look for the values of $p_k,\,k\in\{-N,\dots,N\}$, for which
the infinitesimal generator  $\mathcal{G}$ is of the form~(\ref{infinitesimal-generator1}).
Next, we provide several properties for the corresponding pseudo-random walk.

Suppose that $f$ can be extended into an analytical function $\tilde{f}$.
In this case, we can expand
\[
f(j+k)+f(j-k)=2\sum_{\ell=0}^{\infty} \frac{k^{2\ell}}{(2\ell)!}\, \tilde{f}^{(2\ell)}(j).
\]
Therefore,
\begin{align*}
\mathcal{G} f(j)
&=(p_0-1) f(j)+2\sum_{k=1}^N p_k\sum_{\ell=0}^{\infty} \frac{k^{2\ell}}{(2\ell)!}\,
\tilde{f}^{(2\ell)}(j)
\\
&=\!\left(p_0+2\sum_{k=1}^N p_k-1\right) \!f(j)+
2\sum_{\ell=1}^{\infty} \left(\sum_{k=1}^N k^{2\ell}p_k \right)
\frac{\tilde{f}^{(2\ell)}(j)}{(2\ell)!}.
\end{align*}
Since $p_0+2\sum_{k=1}^N p_k=1$, we see that the
expression~(\ref{infinitesimal-generator1}) of $\mathcal{G}$ holds if and only
if the $p_k$'s satisfy the equations
\begin{equation}\label{equation-proba}
\sum_{k=1}^N k^{2\ell}p_k=0 \text{ for $1\le \ell\le N-1$}\quad\text{and}
\quad \sum_{k=1}^N k^{2N}p_k =\frac12 \kappa_{_{\!N}} c(2N)!.
\end{equation}
%
\begin{proposition}\label{proposition-pk}
The numbers $p_k,\,k\in\{1,\dots,N\}$, satisfying~(\ref{equation-proba}) are given by
\[
p_k=(-1)^{k-1}c\binom{2N}{k+N}.
\]
In particular, $p_{_N}=\kappa_{_{\!N}} c$.
\end{proposition}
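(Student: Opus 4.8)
The plan is to treat (\ref{equation-proba}) as what it literally is: a linear system of $N$ equations in the $N$ unknowns $p_1,\dots,p_N$, and to argue first that it has a unique solution, then that the announced values solve it. For uniqueness, observe that the coefficient matrix of (\ref{equation-proba}) is $\big(k^{2\ell}\big)_{1\le\ell,k\le N}$; factoring $k^2$ out of the $k$-th column leaves a genuine Vandermonde matrix in the pairwise distinct nodes $1^2,2^2,\dots,N^2$, so the matrix is nonsingular. Hence it suffices to check that $p_k=(-1)^{k-1}c\binom{2N}{k+N}$, $k\in\{1,\dots,N\}$, satisfies the $N$ identities in (\ref{equation-proba}).

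To verify them I would route everything through the discrete $N$-iterated Laplacian. For $k\in\{1,\dots,N\}$, using $(-1)^{k-1}=(-1)^{k+1}$, $\kappa_{_{\!N}}=(-1)^{N+1}$ and $\kappa_{_{\!N}}^2=1$, one has $(-1)^{k+N}\binom{2N}{k+N}=\kappa_{_{\!N}}(-1)^{k-1}\binom{2N}{k+N}=\frac{\kappa_{_{\!N}}}{c}\,p_k$. Therefore, for every $\ell\ge 1$,
\[
\sum_{k=1}^N k^{2\ell}p_k=\frac{c}{\kappa_{_{\!N}}}\sum_{k=1}^N(-1)^{k+N}\binom{2N}{k+N}k^{2\ell}
=\frac{c}{2\kappa_{_{\!N}}}\sum_{k=-N}^{N}(-1)^{k+N}\binom{2N}{k+N}k^{2\ell},
\]
where the last equality holds because the summand is even in $k$ and vanishes at $k=0$. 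The right-hand sum is exactly $\mathbf{\Delta}^{\!N}\!f(0)$ with $f$ the monomial $f(x)=x^{2\ell}$.

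So the proof reduces to the elementary fact that the discrete second difference lowers the degree of a polynomial by exactly two: if $\deg f=m$ then $\mathbf{\Delta} f$ has degree $m-2$ with leading coefficient $m(m-1)$ times that of $f$. Iterating, $\mathbf{\Delta}^{\!N}\!f\equiv 0$ whenever $\deg f\le 2N-1$, while $\mathbf{\Delta}^{\!N}(x^{2N})=(2N)!$ (a constant), the leading coefficient having accumulated the product $2N(2N-1)\cdots 1$. Plugging $f(x)=x^{2\ell}$: for $1\le\ell\le N-1$ this gives $\sum_{k=1}^N k^{2\ell}p_k=0$, and for $\ell=N$ it gives $\sum_{k=1}^N k^{2N}p_k=\frac{c}{2\kappa_{_{\!N}}}(2N)!=\frac12\kappa_{_{\!N}}c\,(2N)!$, which is precisely (\ref{equation-proba}). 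Finally $p_{_N}=(-1)^{N-1}c\binom{2N}{2N}=\kappa_{_{\!N}}c$.

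I do not expect a real obstacle here; the only mildly error-prone step is the sign bookkeeping relating $(-1)^{k-1}$, $\kappa_{_{\!N}}=(-1)^{N-1}$ and the binomial-expansion sign $(-1)^{k+N}$, after which one only invokes the standard degree-reduction property of $\mathbf{\Delta}$. If one prefers to avoid polynomials altogether, the same two identities follow from the generating function $\sum_{m=0}^{2N}(-1)^m\binom{2N}{m}z^m=(1-z)^{2N}$, i.e.\ $\sum_{k=-N}^N(-1)^{k+N}\binom{2N}{k+N}z^k=z^{-N}(1-z)^{2N}$: applying $(z\,\mathrm{d}/\mathrm{d} z)^{2\ell}$ and letting $z\to 1$, the factor $(1-z)^{2N}$ forces the value $0$ when $2\ell<2N$ and the value $(2N)!$ when $2\ell=2N$.
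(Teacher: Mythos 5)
Your proof is correct, but it takes a genuinely different route from the paper's. The paper treats (\ref{equation-proba}) as a Vandermonde system in the nodes $a_k=k^2$ and \emph{derives} the $p_k$'s constructively: it invokes the explicit Cramer formulas, observes that for a right-hand side of the form $(0,\dots,0,\alpha_N)$ the solution collapses to $x_k=\alpha_N/\big(a_k\prod_{j\neq k}(a_k-a_j)\big)$, and then evaluates the product $k^2\prod_{j\neq k}(k^2-j^2)=\frac12(-1)^{N-k}(N+k)!\,(N-k)!$ to land on the binomial coefficient. You instead use the Vandermonde structure only for uniqueness (nonsingularity after factoring $k^2$ out of each column) and then \emph{verify} the announced values, reducing the $N$ identities to the statement that $\mathbf{\Delta}^{\!N}$ annihilates $x^{2\ell}$ for $\ell<N$ and sends $x^{2N}$ to the constant $(2N)!$ --- equivalently, to the order-$2N$ zero of $(1-z)^{2N}$ at $z=1$. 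Your sign bookkeeping ($(-1)^{k+N}=\kappa_{_{\!N}}(-1)^{k-1}$, evenness of the summand in $k$, $1/\kappa_{_{\!N}}=\kappa_{_{\!N}}$) checks out. What your approach buys is the avoidance of all determinant and product evaluations, and it makes the result conceptually transparent: the $p_k$'s work precisely because they are, by design, the coefficients of $\kappa_{_{\!N}}c\,\mathbf{\Delta}^{\!N}$. What it costs is that you must already know the answer, and you quietly rely on the expansion $\mathbf{\Delta}^{\!N}\!f(j)=\sum_{k=-N}^N(-1)^{k+N}\binom{2N}{k+N}f(j+k)$, which the paper states in the introduction but only establishes in Part III; either cite the one-line operator identity $\mathbf{\Delta}^{\!N}=E^{-N}(E-1)^{2N}$ for the shift operator $E$, or lean on your self-contained generating-function variant, which sidesteps the issue entirely.
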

%
\begin{proof}
First, we recall that the solution of a Vandermonde system of the form
$\sum_{k=1}^N a_k^{\ell} x_k=\alpha_{\ell}$, $1\le \ell \le N$,
is given by
\[
x_k=\frac{\Delta_k\!\left(\!\!\!\begin{array}{c} a_1,\dots,a_N\\[-.2ex]
\displaystyle\alpha_1,\dots,\alpha_N\end{array}\!\!\!\right)\!}{\Delta(a_1,\dots,a_N)},
\quad 1\le k\le N
\]
with
\[
\Delta(a_1,\dots,a_N)=\begin{vmatrix}
a_1    & \dots & a_N    \\
a_1^2  & \dots & a_N^2  \\
\vdots &       & \vdots \\
a_1^N  & \dots & a_N^N
\end{vmatrix}
\]
and, for any $k\in\{1,\dots,N\}$,
\[
\Delta_k\!\left(\!\!\!\begin{array}{c} a_1,\dots,a_N\\[-.2ex]
\displaystyle\alpha_1,\dots,\alpha_N\end{array}\!\!\!\right)\!
=\begin{vmatrix}
a_1    & \dots & a_{k-1}   & \alpha_1   & a_{k+1}   & \dots & a_N    \\
a_1^2  & \dots & a_{k-1}^2 & \alpha_2   & a_{k+1}^2 & \dots & a_N^2  \\
\vdots &       & \vdots    & \vdots & \vdots    &       & \vdots \\
a_1^N  & \dots & a_{k-1}^N & \alpha_N   & a_{k+1}^N & \dots & a_N^N
\end{vmatrix}\!.
\]
In the notation of $\Delta_k$ and that of forthcoming determinants, we adopt
the convention that when the index of certain entries in the determinant
lies out of the range of $k$, the corresponding column is discarded.
That is, for $k=1$ and $k=N$, the respective determinants write
\[
\Delta_1\!\left(\!\!\!\begin{array}{c} a_1,\dots,a_N\\[-.2ex]
\displaystyle\alpha_1,\dots,\alpha_N\end{array}\!\!\!\right)\!
=\begin{vmatrix}
\alpha_1   & a_2    & \dots  & a_N    \\
\vdots & \vdots &        & \vdots \\
\alpha_N   & a_2^N  & \dots  & a_N^N
\end{vmatrix},\quad
\Delta_N\!\left(\!\!\!\begin{array}{c} a_1,\dots,a_N\\[-.2ex]
\displaystyle\alpha_1,\dots,\alpha_N\end{array}\!\!\!\right)\!
=\begin{vmatrix}
a_1    & \dots & a_{N-1}   & \alpha_1   \\
\vdots &       & \vdots    & \vdots \\
a_1^N  & \dots & a_{N-1}^N & \alpha_N
\end{vmatrix}\!.
\]
It is well-known that, for any $k\in\{1,\dots,N\}$,
\begin{align*}
\Delta(a_1,\dots,a_N)
&
=\prod_{1\le j\le N} a_j \prod_{1\le \ell< m\le N} (a_m-a_{\ell})
\\
&
=(-1)^{k+N} \prod_{1\le j\le N} a_j \prod_{1\le j\le N\atop j\neq k} (a_k-a_j)
\prod_{1\le \ell< m\le N\atop \ell,m\neq k} (a_m-a_{\ell}).
\end{align*}
In the particular case where $\alpha_{\ell}=0$ for $1\le \ell\le N-1$,
we have, for any $k\in\{1,\dots,N\}$, that
\begin{align*}
\Delta_k\!\left(\!\!\!\begin{array}{c} a_1,\dots,a_N\\[-.2ex]
\displaystyle\alpha_1,\dots,\alpha_N\end{array}\!\!\!\right)\!
&=(-1)^{k+N}\alpha_N\begin{vmatrix}
a_1        & \dots & a_{k-1}       & a_{k+1}       & \dots  & a_N    \\
a_1^2      & \dots & a_{k-1}^2     & a_{k+1}^2     & \dots  & a_N^2  \\
\vdots     &       & \vdots        & \vdots        &        & \vdots \\
a_1^{N-1}  & \dots & a_{k-1}^{N-1} & a_{k+1}^{N-1} & \dots  & a_N^{N-1}
\end{vmatrix}
\\
&=(-1)^{k+N}\alpha_N \prod_{1\le j\le N\atop j\neq k} a_j
\prod_{1\le \ell< m\le N\atop \ell,m\neq k} (a_m-a_{\ell}).
\end{align*}
Therefore, the solution simply writes
\[
x_k=\frac{\alpha_N}{a_k \prod_{1\le j\le N\atop j\neq k} (a_k-a_j)},
\quad 1\le k\le N.
\]

Now, we see that system~(\ref{equation-proba}) is a Vandermonde system
with the choices $a_k=k^2$, $x_k=p_k$ and $\alpha_{\ell}=0$ for $1\le \ell\le N-1$,
$\alpha_N=\kappa_{_{\!N}} c(2N)!/2$.
With these settings at hands, we explicitly have
\begin{align*}
a_k \prod_{1\le j\le N\atop j\neq k} (a_k-a_j)
&=k^2 \prod_{1\le j\le N\atop j\neq k} (k^2-j^2)
=k^2 \prod_{1\le j\le N\atop j\neq k} (k-j)
\prod_{1\le j\le N\atop j\neq k}(k+j)
\\
&=\frac12\,(-1)^{N-k} (N+k)!(N-k)!
\end{align*}
and the result of Proposition~\ref{proposition-pk} ensues.
\end{proof}

Finally, the value of $p_0$ is obtained as follows: by using the fact that
$\sum_{k=0}^{2N} (-1)^k\binom{2N}{k}=0$,
\begin{align*}
p_0
&
=1-\sum_{-N\le k\le N\atop k\neq 0} p_k
=1+c\sum_{-N\le k\le N\atop k\neq 0} (-1)^k\binom{2N}{k+N}
\\
&
=1-c\binom{2N}{N}+(-1)^N c\sum_{k=0}^{2N} (-1)^k\binom{2N}{k}
=1-c\binom{2N}{N}.
\end{align*}
We find it interesting to compute the cumulative sums of the $p_j$'s:
for $k\in\{-N,\dots,N\}$,
\[
\sum_{j=-N}^k p_j=\sum_{j=-N}^k \left[\delta_{j0}+(-1)^{j-1}c\binom{2N}{j+N}\right]
=\ind_{\{k\ge 0\}} +(-1)^{N-1} c\sum_{j=0}^{k+N} (-1)^j\binom{2N}{j}.
\]
The last displayed sum is classical and easy to compute by appealing to Pascal's
formula which leads to a telescopic sum:
\[
\sum_{j=0}^{k+N} (-1)^j\binom{2N}{j}=\sum_{j=0}^{k+N} \left[(-1)^j\binom{2N-1}{j-1}
-(-1)^{j+1}\binom{2N-1}{j}\right]=(-1)^{k+N}\binom{2N-1}{k+N}.
\]
Thus, for $k\in\{-N,\dots,N\}$,
\[
\sum_{j=-N}^k p_j=\ind_{\{k\ge 0\}}+(-1)^{k-1}c\binom{2N-1}{k+N}.
\]
Observe that this sum is nothing but $\mathbb{P}\{U_1\le k\}$.
Next, we compute the total sum of the $|p_j|$'s: by using the fact that
$\sum_{k=0}^{2N} \binom{2N}{k}=4^N$,
\begin{align*}
\sum_{k=-N}^N |p_k|
&=\left|1-c\binom{2N}{N}\right|+\sum_{-N\le k\le N\atop k\neq 0} c\binom{2N}{k+N}
\\
&
=\left|1-c\binom{2N}{N}\right|+c\left[4^N-\binom{2N}{N}\right]\!
=c\,4^N-1+ 2\left[1-c\binom{2N}{N}\right]^+\!.
\end{align*}
As previously, there is an interpretation to this sum: this is the
total variation of the pseudo-distribution of $U_1$.
We can also explicitly determine the generating function of $U_1$:
for any $\zeta\in\mathbb{C}^*$,
\begin{align*}
\mathbb{E}\!\left(\zeta^{U_1}\right)
&
=\sum_{k=-N}^{N} p_k\zeta^k
=1+c \left[\,\sum_{k=-N}^{N} (-1)^{k-1} \binom{2N}{k+N} \zeta^k\right]
\\
&
=1+(-1)^{N-1}\frac{c}{\zeta^N}\left[\,\sum_{k=0}^{2N} (-1)^k \binom{2N}{k} \zeta^k\right]\!
=1+\kappa_{_{\!N}} c\,\frac{(1-\zeta)^{2N}}{\zeta^N}.
\end{align*}
We sum up below the  results we have obtained concerning the pseudo-distribution of $U_1$.
%
\begin{proposition}
The pseudo-distribution of $U_1$ is determined, for $k\in\{-N,\dots,N\}$, by
\[
\mathbb{P}\{U_1=k\}=\delta_{k0}+(-1)^{k-1}c\binom{2N}{k+N},
\]
or, equivalently, by
\[
\mathbb{P}\{U_1\le k\}=\ind_{\{k\ge 0\}}+(-1)^{k-1}c\binom{2N-1}{k+N}.
\]
The total variation of the pseudo-distribution of $U_1$ is given by
\[
\|\mathbb{P}_{U_1}\|=
\begin{cases}
1+ c\left[4^N-2\binom{2N}{N}\right] & \text{if $0<c\le 1/\binom{2N}{N}$,}
\\
c\,4^N-1 & \text{if $c\ge 1/\binom{2N}{N}$.}
\end{cases}
\]
The generating function of $U_1$ is given, for any $\zeta\in\mathbb{C}^*$, by
\begin{equation}\label{gene-function-U1}
\mathbb{E}\!\left(\zeta^{U_1}\right)\!=1+\kappa_{_{\!N}} c\,\frac{(1-\zeta)^{2N}}{\zeta^N}.
\end{equation}
In particular, the Fourier transform of $U_1$ admits the following expression:
for any $\theta\in[0,2\pi]$, by
\begin{equation*}
\mathbb{E}\big(\mathrm{e}^{\mathrm{i} \theta U_1}\big)=1-c\,4^N \sin^{2N}(\theta/2).
\end{equation*}
\end{proposition}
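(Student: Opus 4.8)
The statement is a compilation of the identities derived in the discussion immediately preceding it, so the plan is simply to assemble them in order. First, the point pseudo-probabilities: Proposition~\ref{proposition-pk} supplies $p_k=(-1)^{k-1}c\binom{2N}{k+N}$ for $k\in\{1,\dots,N\}$, the standing symmetry hypothesis $p_k=p_{-k}$ extends this to $k\in\{-N,\dots,-1\}$, and the value $p_0=1-c\binom{2N}{N}$ comes from writing $p_0=1-\sum_{-N\le k\le N,\,k\ne 0}p_k=1+c\sum_{-N\le k\le N,\,k\ne 0}(-1)^k\binom{2N}{k+N}$ and recognizing that $\sum_{-N\le k\le N}(-1)^k\binom{2N}{k+N}=(-1)^N\sum_{j=0}^{2N}(-1)^j\binom{2N}{j}=0$, so that the zero-index term $\binom{2N}{N}$ has to be removed with a minus sign. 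Merging the three cases gives the uniform formula $\mathbb{P}\{U_1=k\}=\delta_{k0}+(-1)^{k-1}c\binom{2N}{k+N}$.

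For the cumulative form I would sum the point masses from $-N$ to $k$, peel off the $\delta_{j0}$ contribution as $\ind_{\{k\ge 0\}}$, and treat the remaining alternating binomial sum $\sum_{j=0}^{k+N}(-1)^j\binom{2N}{j}$ by Pascal's rule $\binom{2N}{j}=\binom{2N-1}{j-1}+\binom{2N-1}{j}$, which makes the sum telescope to $(-1)^{k+N}\binom{2N-1}{k+N}$; this yields $\mathbb{P}\{U_1\le k\}=\ind_{\{k\ge 0\}}+(-1)^{k-1}c\binom{2N-1}{k+N}$. For the total variation $\|\mathbb{P}_{U_1}\|=\sum_{k=-N}^N|p_k|$, the off-zero terms contribute $c\sum_{-N\le k\le N,\,k\ne 0}\binom{2N}{k+N}=c\big[4^N-\binom{2N}{N}\big]$ via $\sum_{j=0}^{2N}\binom{2N}{j}=4^N$; adding $|p_0|=\big|1-c\binom{2N}{N}\big|$ and distinguishing the cases $c\binom{2N}{N}\le 1$ and $c\binom{2N}{N}\ge 1$ gives the two stated expressions.

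For the generating function I would compute $\mathbb{E}(\zeta^{U_1})=\sum_{k=-N}^N p_k\zeta^k=1+c\sum_{k=-N}^N(-1)^{k-1}\binom{2N}{k+N}\zeta^k$, then substitute $j=k+N$ and factor out $\zeta^{-N}$: the sum becomes $(-1)^{N-1}\zeta^{-N}\sum_{j=0}^{2N}(-1)^j\binom{2N}{j}\zeta^j=\kappa_{_{\!N}}\,(1-\zeta)^{2N}/\zeta^N$ by the binomial theorem, giving~(\ref{gene-function-U1}). Finally, for the Fourier transform I would set $\zeta=\mathrm{e}^{\mathrm{i}\theta}$ in~(\ref{gene-function-U1}) and use $1-\mathrm{e}^{\mathrm{i}\theta}=-2\mathrm{i}\,\mathrm{e}^{\mathrm{i}\theta/2}\sin(\theta/2)$, so that $(1-\mathrm{e}^{\mathrm{i}\theta})^{2N}=(-2\mathrm{i})^{2N}\mathrm{e}^{\mathrm{i}N\theta}\sin^{2N}(\theta/2)=(-1)^N4^N\mathrm{e}^{\mathrm{i}N\theta}\sin^{2N}(\theta/2)$; dividing by $\zeta^N=\mathrm{e}^{\mathrm{i}N\theta}$ and using $\kappa_{_{\!N}}(-1)^N=(-1)^{2N-1}=-1$ produces $\mathbb{E}\big(\mathrm{e}^{\mathrm{i}\theta U_1}\big)=1-c\,4^N\sin^{2N}(\theta/2)$.

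All of this is elementary, and since the computations have essentially been carried out above there is no genuine obstacle; the only points requiring a little care are the sign bookkeeping $(-2\mathrm{i})^{2N}=(-1)^N4^N$ in the Fourier computation and keeping track of the case distinction in the total variation, which turns exactly on whether $c\binom{2N}{N}$ is at most or at least $1$.
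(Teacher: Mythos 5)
Your proposal is correct and follows essentially the same route as the paper: the proposition is a summary of the computations carried out in the text immediately before it (the value of $p_0$ via the vanishing alternating binomial sum, the telescoping Pascal argument for the cumulative form, the split on $|p_0|$ for the total variation, and the index shift $j=k+N$ for the generating function), and your sign bookkeeping $(-2\mathrm{i})^{2N}=(-1)^N4^N$ together with $\kappa_{_{\!N}}(-1)^N=-1$ for the Fourier transform is exactly the intended verification.
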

%
In the sequel, we shall use the total variation of $U_1$ as an upper bound
which we call $M_1$:
\begin{equation}\label{bound1}
M_1=\begin{cases}
1+ c\left[4^N-2\binom{2N}{N}\right] & \text{if $0<c\le 1/\binom{2N}{N}$,}
\\
c\,4^N-1 & \text{if $c\ge 1/\binom{2N}{N}$.}
\end{cases}
\end{equation}
Set $\mathfrak{f}(\theta)=\mathbb{E}\big(\mathrm{e}^{\mathrm{i} \theta U_1}\big)$
for any $\theta\in[0,2\pi]$.
We notice that $\mathfrak{f}(\theta)\in[1-c\,4^N,1]$ and, more precisely,
\[
\|\mathfrak{f}\|_{_\infty}=\sup_{\theta\in[0,2\pi]}|\mathfrak{f}(\theta)|=
\max\!\left(\left|1-c\,4^N\right|,1\right)=\begin{cases}
1 & \text{if $0<c\le 1/2^{2N-1}$,}
\\
c\,4^N-1 & \text{if $c\ge 1/2^{2N-1}$.}
\end{cases}
\]
Let us denote this bound by $M_{\infty}$:
\begin{equation}\label{bound}
M_{\infty}=\begin{cases}
1 & \text{if $0<c\le 1/2^{2N-1}$,}
\\
c\,4^N-1 & \text{if $c\ge 1/2^{2N-1}$.}
\end{cases}
\end{equation}
In view of~(\ref{bound1}) and~(\ref{bound}), since $\binom{2N}{N}\le 2^{2N-1}$,
we see that $M_1\ge M_{\infty}\ge 1$.
%
\begin{proposition}
The pseudo-distribution of $S_n$ is given, for any $k\in\{-Nn,\dots,Nn\}$, by
\begin{equation}\label{dist-Sn}
\mathbb{P}\{S_n=k\}=(-1)^{k} \sum_{\ell=0}^n (-c)^{\ell}\binom{n}{\ell}\!
\binom{2N\ell}{k+N\ell}.
\end{equation}
Actually, the foregoing sum is taken over the $\ell$ such that $\ell\ge |k|/N$.
We also have that
\begin{equation}\label{dist-Snbis}
\mathbb{P}\{S_n\le k\}
=\ind_{\{k\ge0\}}+(-1)^{k} \sum_{\ell=1}^n (-c)^{\ell}\binom{n}{\ell}\!
\binom{2N\ell-1}{k+N\ell}.
\end{equation}
\end{proposition}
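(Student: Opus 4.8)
The plan is to base the whole proof on the generating function of $S_n$. Since $S_n=U_1+\dots+U_n$ with the $U_j$'s independent and identically distributed, multiplicativity of generating functions gives $\mathbb{E}(\zeta^{S_n})=\mathbb{E}(\zeta^{U_1})^n$, so by~(\ref{gene-function-U1}),
\[
\mathbb{E}\!\left(\zeta^{S_n}\right)=\left(1+\kappa_{_{\!N}} c\,\frac{(1-\zeta)^{2N}}{\zeta^N}\right)^{\!n},\quad \zeta\in\mathbb{C}^*.
\]
Because $S_n$ takes its values in the finite set $\{-Nn,\dots,Nn\}$, the right-hand side is a Laurent polynomial in $\zeta$ whose coefficient of $\zeta^k$ equals $\mathbb{P}\{S_n=k\}$; it therefore suffices to expand it and read off that coefficient.

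For~(\ref{dist-Sn}) I would first apply the binomial theorem in the outer exponent, writing $\mathbb{E}(\zeta^{S_n})=\sum_{\ell=0}^n\binom{n}{\ell}(\kappa_{_{\!N}} c)^\ell(1-\zeta)^{2N\ell}\zeta^{-N\ell}$, and then expand each $(1-\zeta)^{2N\ell}$. In $(1-\zeta)^{2N\ell}\zeta^{-N\ell}=\sum_{m}(-1)^m\binom{2N\ell}{m}\zeta^{m-N\ell}$ the coefficient of $\zeta^k$ corresponds to $m=k+N\ell$ and equals $(-1)^{k+N\ell}\binom{2N\ell}{k+N\ell}$. Collecting terms, $\mathbb{P}\{S_n=k\}=\sum_{\ell=0}^n\binom{n}{\ell}(\kappa_{_{\!N}} c)^\ell(-1)^{k+N\ell}\binom{2N\ell}{k+N\ell}$. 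The only arithmetic point is the sign simplification $(\kappa_{_{\!N}})^\ell(-1)^{N\ell}=(-1)^{(N-1)\ell+N\ell}=(-1)^{(2N-1)\ell}=(-1)^\ell$, which turns $(\kappa_{_{\!N}} c)^\ell(-1)^{N\ell}$ into $(-c)^\ell$ and yields~(\ref{dist-Sn}); the remark that the sum runs over $\ell\ge|k|/N$ just reflects that $\binom{2N\ell}{k+N\ell}=0$ unless $-N\ell\le k\le N\ell$.

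For~(\ref{dist-Snbis}) I would compute $\mathbb{P}\{S_n\le k\}=\sum_{j=-Nn}^{k}\mathbb{P}\{S_n=j\}$ by substituting~(\ref{dist-Sn}), exchanging the two finite summations over $j$ and $\ell$, and setting $i=j+N\ell$; since $\ell\le n$, the inner index $i$ runs over $\{0,\dots,k+N\ell\}$, and the inner sum becomes $(-1)^{N\ell}\sum_{i=0}^{k+N\ell}(-1)^i\binom{2N\ell}{i}$. Here I would invoke exactly the Pascal-telescoping identity already used in the text for $U_1$, namely $\sum_{i=0}^{m}(-1)^i\binom{M}{i}=(-1)^m\binom{M-1}{m}$, with $M=2N\ell$ and $m=k+N\ell$; after one further sign cancellation the inner sum equals $(-1)^k\binom{2N\ell-1}{k+N\ell}$. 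This gives $\mathbb{P}\{S_n\le k\}=(-1)^k\sum_{\ell=0}^n(-c)^\ell\binom{n}{\ell}\binom{2N\ell-1}{k+N\ell}$, and isolating the $\ell=0$ term, which equals $(-1)^k\binom{-1}{k}=\ind_{\{k\ge0\}}$ with the usual conventions on $\binom{-1}{k}$, produces~(\ref{dist-Snbis}).

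The whole argument is routine; the only places requiring care are the bookkeeping of the powers of $-1$ (the identity $(\kappa_{_{\!N}})^\ell(-1)^{N\ell}=(-1)^\ell$) and the summation ranges, so that the telescoping identity applies with the stated upper limit $k+N\ell$ and the $\ell=0$ contribution is correctly identified as $\ind_{\{k\ge0\}}$. I do not anticipate any genuine obstacle.
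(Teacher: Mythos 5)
Your proof is correct and follows essentially the same route as the paper: both identify $\mathbb{P}\{S_n=k\}$ as the coefficient of $\zeta^k$ in $\mathbb{E}(\zeta^{U_1})^n$ and then obtain the cumulative sums by the Pascal telescoping identity; the paper merely performs the coefficient extraction as a Fourier integral at $\zeta=\mathrm{e}^{\mathrm{i}\theta}$ instead of your direct Laurent-polynomial expansion. The only stylistic difference is that you absorb the $\ell=0$ term via the convention $\binom{-1}{k}=(-1)^k$ for $k\ge0$, whereas the paper isolates that term at the outset — both are fine.
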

%
\begin{proof}
By the independence of the $U_j$'s which have the same pseudo-probability
distribution, we plainly have that
\[
\mathbb{E}\big(\mathrm{e}^{\mathrm{i} \theta S_n}\big)=\mathfrak{f}(\theta)^n
=\left[1-c\,4^{N} \sin^{2N}(\theta/2)\right]^n.
\]
Hence, by inverse Fourier transform, we extract that
\begin{align}
\mathbb{P}\{S_n=k\}
&
=\frac{1}{2\pi}\int_{0}^{2\pi} \mathfrak{f}(\theta)^n \mathrm{e}^{-\mathrm{i} k\theta}
\,\mathrm{d}\theta \label{PSn-inter0}\\
&
=\sum_{\ell=0}^n (-4^Nc)^{\ell} \binom{n}{\ell}
\frac{1}{2\pi}\int_{0}^{2\pi} \sin^{2N\ell}(\theta/2) \mathrm{e}^{-\mathrm{i} k\theta}
\,\mathrm{d}\theta.
\label{PSn-inter1}
\end{align}
By writing $\sin(\theta/2)=(\mathrm{e}^{\mathrm{i} \theta/2}
-\mathrm{e}^{-\mathrm{i}\theta/2})/(2\mathrm{i})$, we get for the
integral lying in~(\ref{PSn-inter1}) that
\begin{align}
\frac{1}{2\pi}\int_{0}^{2\pi} \sin^{2N\ell}(\theta/2) \mathrm{e}^{-\mathrm{i} k\theta}
\,\mathrm{d}\theta &
=\frac{1}{2\pi}\int_{0}^{2\pi} \left(\frac{\mathrm{e}^{\mathrm{i} \theta/2}-
\mathrm{e}^{-\mathrm{i} \theta/2}}{2\mathrm{i}}\right)^{\!2N\ell}
\mathrm{e}^{-\mathrm{i} k\theta}\,\mathrm{d}\theta \nonumber\\
&
=\frac{(-1)^{N\ell}}{(2\pi) 4^{N\ell}}\int_{0}^{2\pi} \sum_{m=0}^{2N\ell}
(-1)^m\binom{2N\ell}{m} \mathrm{e}^{\mathrm{i} (m-k-N\ell)\theta}\,
\mathrm{d}\theta \nonumber\\
&
=\frac{(-1)^{N\ell}}{4^{N\ell}}\sum_{m=0}^{2N\ell}(-1)^m\binom{2N\ell}{m}
\frac{1}{2\pi}\int_{0}^{2\pi}\mathrm{e}^{\mathrm{i} (m-k-N\ell)\theta}\,
\mathrm{d}\theta \nonumber\\
&
=\frac{(-1)^k}{4^{N\ell}}\binom{2N\ell}{k+N\ell}.
\label{PSn-inter2}
\end{align}
By plugging~(\ref{PSn-inter2}) into~(\ref{PSn-inter1}), we derive~(\ref{dist-Sn}).
Next, we write, for $k\in\{-Nn,\dots,Nn\}$, that
\begin{align}
\mathbb{P}\{S_n\le k\}
&
=\sum_{j=-Nn}^k \mathbb{P}\{S_n=j\}
=\sum_{\ell=0}^n (-c)^{\ell}\binom{n}{\ell}
\sum_{j=(-Nn)\vee(-N\ell)}^{k\wedge(N\ell)} (-1)^j\binom{2N\ell}{j+N\ell}
\nonumber\\
&
=\sum_{\ell=0}^n (-c)^{\ell}\binom{n}{\ell}
\sum_{j=-N\ell}^{k\wedge(N\ell)} (-1)^j\binom{2N\ell}{j+N\ell}.
\label{sum-Sn}
\end{align}
If $k<0$, then the term in sum~(\ref{sum-Sn}) corresponding to $\ell=0$
vanishes and
\[
\mathbb{P}\{S_n\le k\}=\sum_{\ell=1}^n (-c)^{\ell}\binom{n}{\ell}
\sum_{j=-N\ell}^{k} (-1)^j\binom{2N\ell}{j+N\ell}.
\]
The second sum in the foregoing equality is easy to compute:
\begin{align}
\sum_{j=-N\ell}^{k} (-1)^j\binom{2N\ell}{j+N\ell}
&
=\sum_{j=-N\ell}^k \left[(-1)^j\binom{2N\ell-1}{j+N\ell-1}
-(-1)^{j+1}\binom{2N\ell-1}{j+N\ell}\right]
\nonumber\\
&
=(-1)^k\binom{2N\ell-1}{k+N\ell}.
\label{sum-Snbis}
\end{align}
If $k\ge0$, then the term in sum~(\ref{sum-Sn}) corresponding to $\ell=0$
is $1$ and
\[
\mathbb{P}\{S_n\le k\}=1+\sum_{\ell=1}^n (-c)^{\ell}\binom{n}{\ell}
\sum_{j=-N\ell}^{k\wedge(N\ell)} (-1)^j\binom{2N\ell}{j+N\ell}.
\]
By using the convention that $\binom{\alpha}{\beta}=0$
if $\beta>\alpha$, we see that the second sum above also coincides
with (\ref{sum-Snbis}). Formula~(\ref{dist-Snbis}) ensues in both cases.
\end{proof}
%
\begin{proposition}
The upper bound below holds true: for any positive integer $n$ and any integer $k$,
\begin{equation}\label{bound-Sn-part}
|\mathbb{P}\{S_n=k\}|\le \sqrt{\mathbb{P}\{S_{2n}=0\}}\le M_{\infty}^n.
\end{equation}
Assume that $0<c\le 1/2^{2N-1}$. The asymptotics below holds true:
for any $\delta\in(0,1/(2N))$,
\begin{equation}\label{asymptotics-Sn}
\mathbb{P}\{S_n=0\} \underset{n\to +\infty}{=}\mathcal{O}\left(1/n^{\delta}\right).
\end{equation}
\end{proposition}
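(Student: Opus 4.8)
The plan is to work entirely from the Fourier-analytic representation established in the previous proposition, namely that $\mathbb{P}\{S_n=k\}=\frac{1}{2\pi}\int_0^{2\pi}\mathfrak{f}(\theta)^n\,\mathrm{e}^{-\mathrm{i} k\theta}\,\mathrm{d}\theta$ with $\mathfrak{f}(\theta)=1-c\,4^N\sin^{2N}(\theta/2)$, a real-valued function. First I would prove the pointwise bound~(\ref{bound-Sn-part}). The left inequality is the standard Cauchy--Schwarz / Parseval argument: writing the integral for $\mathbb{P}\{S_n=k\}$ and applying Cauchy--Schwarz in $L^2([0,2\pi],\mathrm{d}\theta/(2\pi))$ gives $|\mathbb{P}\{S_n=k\}|^2\le \frac{1}{2\pi}\int_0^{2\pi}|\mathfrak{f}(\theta)|^{2n}\,\mathrm{d}\theta$, and since $\mathfrak{f}$ is real this last quantity is exactly $\frac{1}{2\pi}\int_0^{2\pi}\mathfrak{f}(\theta)^{2n}\,\mathrm{d}\theta=\mathbb{P}\{S_{2n}=0\}$ by the same inversion formula at $k=0$ with $2n$ steps. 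The right inequality is then immediate from $\mathbb{P}\{S_{2n}=0\}\le \frac{1}{2\pi}\int_0^{2\pi}\|\mathfrak{f}\|_\infty^{2n}\,\mathrm{d}\theta=\|\mathfrak{f}\|_\infty^{2n}=M_\infty^{2n}$, whence $\sqrt{\mathbb{P}\{S_{2n}=0\}}\le M_\infty^n$.

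For the asymptotics~(\ref{asymptotics-Sn}) I assume $0<c\le 1/2^{2N-1}$, so that $M_\infty=1$ and $0\le \mathfrak{f}(\theta)\le 1$ on $[0,2\pi]$; moreover $\mathfrak{f}(\theta)=1$ only at $\theta=0$ (and $\theta=2\pi$), where $\mathfrak{f}$ has a zero of order exactly $2N$: near $\theta=0$, $\sin^{2N}(\theta/2)\sim (\theta/2)^{2N}$, so $1-\mathfrak{f}(\theta)\sim c\,(\theta/2)^{2N}\cdot 4^N = c\,\theta^{2N}$. The idea is a Laplace-type estimate: $\mathbb{P}\{S_n=0\}=\frac{1}{2\pi}\int_0^{2\pi}\mathfrak{f}(\theta)^n\,\mathrm{d}\theta=\frac{1}{2\pi}\int_0^{2\pi}\mathrm{e}^{n\log\mathfrak{f}(\theta)}\,\mathrm{d}\theta$, and the mass concentrates near $\theta=0,2\pi$. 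Split the integral into a neighbourhood $[0,\eta]\cup[2\pi-\eta,2\pi]$ of the maxima and its complement; on the complement $\mathfrak{f}$ is bounded away from $1$ by some $\rho<1$, contributing $O(\rho^n)$, which is negligible compared to any $n^{-\delta}$. On $[0,\eta]$ one uses $0\le\mathfrak{f}(\theta)\le \mathrm{e}^{-c'\theta^{2N}}$ for a suitable constant $c'>0$ (valid by the local behaviour above, after shrinking $\eta$), so $\int_0^\eta \mathfrak{f}(\theta)^n\,\mathrm{d}\theta\le \int_0^\infty \mathrm{e}^{-c' n\theta^{2N}}\,\mathrm{d}\theta = \Gamma(1+1/(2N))\,(c'n)^{-1/(2N)}$ by the substitution $u=c'n\theta^{2N}$. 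This gives $\mathbb{P}\{S_n=0\}=O(n^{-1/(2N)})$, which is stronger than the claimed $O(n^{-\delta})$ for any $\delta\in(0,1/(2N))$.

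The only mild subtlety — and the step I would be most careful with — is the uniform upper bound $\mathfrak{f}(\theta)\le \mathrm{e}^{-c'\theta^{2N}}$ on the full neighbourhood $[0,\eta]$ rather than merely asymptotically as $\theta\to0$; this requires choosing $\eta$ small enough that $\sin^{2N}(\theta/2)\ge (\theta/2)^{2N}/2$, say, on $[0,\eta]$ (clear since $\sin(\theta/2)/(\theta/2)\to1$), and then using $\log(1-x)\le -x$ to get $\log\mathfrak{f}(\theta)\le -c\,4^N\sin^{2N}(\theta/2)\le -\tfrac{c}{2}\,\theta^{2N}$, i.e. one may take $c'=c/2$; the region near $2\pi$ is handled identically by symmetry $\mathfrak{f}(2\pi-\theta)=\mathfrak{f}(\theta)$. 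Note the hypothesis $0<c\le 1/2^{2N-1}$ is exactly what guarantees $\mathfrak{f}\ge 0$, so that no cancellation or oscillation can occur in $\int\mathfrak{f}(\theta)^n\,\mathrm{d}\theta$ and the Laplace bound is legitimate; without it one would need to track the set where $\mathfrak{f}<0$ and $|\mathfrak{f}|$ could again approach $M_\infty>1$.
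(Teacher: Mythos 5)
Your proof of the bound $|\mathbb{P}\{S_n=k\}|\le\sqrt{\mathbb{P}\{S_{2n}=0\}}\le M_{\infty}^n$ is correct and is essentially the paper's argument: Cauchy--Schwarz applied to $\mathfrak{f}^n$ is exactly the chain $\|\mathfrak{f}^n\|_{_1}\le\|\mathfrak{f}^n\|_{_2}\le\|\mathfrak{f}\|_{_\infty}^n$ used there, together with Parseval for the identification $\|\mathfrak{f}^n\|_{_2}^2=\mathbb{P}\{S_{2n}=0\}$.

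For the asymptotics your Laplace-type estimate is, in principle, a sharper route than the paper's (which only bounds $|\mathbb{P}\{S_n=0\}|\le\varepsilon+|\mathfrak{f}(\varepsilon)|^n$ and takes $\varepsilon=n^{-\delta}$, yielding $O(n^{-\delta})$, whereas your computation would give the optimal $O(n^{-1/(2N)})$). But it rests on a false premise: the hypothesis $0<c\le 1/2^{2N-1}$ does \emph{not} guarantee $\mathfrak{f}\ge0$. Since $\min_{\theta}\mathfrak{f}(\theta)=\mathfrak{f}(\pi)=1-c\,4^N$, nonnegativity would require $c\le 1/4^N=1/2^{2N}$; the stated hypothesis only gives $\mathfrak{f}(\pi)\ge-1$, i.e. $|\mathfrak{f}|\le1$ (already for $N=1$ and $c\in(1/4,1/2]$ the function $\mathfrak{f}$ is negative near $\pi$). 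Two consequences. First, your estimate on the complement $[\eta,2\pi-\eta]$ must be phrased for $|\mathfrak{f}|$, not $\mathfrak{f}$; this is harmless when $c<1/2^{2N-1}$, since then $|\mathfrak{f}|<1$ on $(0,2\pi)$ and compactness yields $\sup_{[\eta,2\pi-\eta]}|\mathfrak{f}|=\rho<1$. Second --- and this is the genuine gap --- in the boundary case $c=1/2^{2N-1}$ one has $|\mathfrak{f}(\pi)|=1$, so the complement is \emph{not} bounded away from $1$ in modulus and the $O(\rho^n)$ bound fails; you must excise a neighbourhood of $\theta=\pi$ as well and estimate it separately (there $1-|\mathfrak{f}(\theta)|=2-2\sin^{2N}(\theta/2)$ is of order $(\theta-\pi)^2$, so that piece contributes $O(n^{-1/2})$). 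This is precisely the extra three-piece splitting the paper performs when $c=1/2^{2N-1}$. Your local estimate near $\theta=0$ is fine as written, since $\mathfrak{f}$ is indeed positive there for $\eta$ small.
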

%
\begin{proof}
Let us introduce the usual norms of any suitable function $\phi$:
\[
\|\phi\|_{_1}=\frac{1}{2\pi}\int_{0}^{2\pi} |\phi(\theta)| \,\mathrm{d}\theta,\quad
\|\phi\|_{_2}=\sqrt{\frac{1}{2\pi}\int_{0}^{2\pi} |\phi(\theta)|^2 \,\mathrm{d}\theta},\quad
\|\phi\|_{_{\infty}}=\sup_{\theta\in[0,2\pi]} |\phi(\theta)|
\]
and recall the elementary inequalities $\|\phi\|_{_1}\le \|\phi\|_{_2}
\le \|\phi\|_{_{\infty}}$.

It is clear from~(\ref{PSn-inter0}) that, for any integer $k$,
\begin{align*}
|\mathbb{P}\{S_n=k\}|
&
\le\frac{1}{2\pi}\int_{0}^{2\pi} |\mathfrak{f}(\theta)|^n \,\mathrm{d}\theta
=\|\mathfrak{f}^n\|_{_1}
\\
&
\le \|\mathfrak{f}^n\|_{_2}
=\sqrt{\frac{1}{2\pi}\int_{0}^{2\pi} \mathfrak{f}(\theta)^{2n} \,\mathrm{d}\theta}
=\sqrt{\mathbb{P}\{S_{2n}=0\}}
\\
&
\le \|\mathfrak{f}\|_{_\infty}^n=M_{\infty}^n.
\end{align*}
This proves (\ref{bound-Sn-part}). Next, by (\ref{PSn-inter0}),
since $\mathfrak{f}(2\pi-\theta)=\mathfrak{f}(\theta)$,
we have, for any $\varepsilon\in(0,\pi)$, that
\begin{align*}
\mathbb{P}\{S_n=0\}
&
=\frac{1}{2\pi}\int_{0}^{2\pi} \mathfrak{f}(\theta)^n \,\mathrm{d}\theta
=\frac{1}{\pi}\int_{0}^{\pi} \mathfrak{f}(\theta)^n \,\mathrm{d}\theta
\\
&
=\frac{1}{\pi}\left(\int_{0}^{\varepsilon} \mathfrak{f}(\theta)^n \,\mathrm{d}\theta
+\int_{\varepsilon}^{\pi} \mathfrak{f}(\theta)^n \,\mathrm{d}\theta\right)\!.
\end{align*}
The assumption $0<c<1/2^{2N-1}$ entails that $|\mathfrak{f}(\theta)|<1$
for any $\theta\in(0,\pi)$. We see that
$|\mathfrak{f}(\theta)|\le 1$ on $[0,\varepsilon]$, and
$|\mathfrak{f}(\theta)|\le|\mathfrak{f}(\varepsilon)|$
on $[\varepsilon,\pi]$ for any $\varepsilon\in (0,\pi)$.
Hence,
\[
|\mathbb{P}\{S_n=0\}|\le \varepsilon+|\mathfrak{f}(\varepsilon)|^n.
\]
Now, choose $\varepsilon=1/n^{\delta}$ for a positive $\delta$.
We have that
\[
\ln(|\mathfrak{f}(\varepsilon)|^n)=n\ln\!\left(1-c\,4^N\sin^{2N}
\!\Big(\frac{1}{2n^{\delta}}\Big)\right)
\sim -c\, n^{1-2N\delta}
\]
which clearly entails, for large enough $n$, that
$|\mathfrak{f}(\varepsilon)|^n\le \exp\left(-\frac{c}{2} \, n^{1-2N\delta}\right)$.
Thus, if $\delta\in(0,1/(2N))$, $|\mathfrak{f}(\varepsilon)|^n=o(\varepsilon)$
which proves (\ref{asymptotics-Sn}).

If $c=1/2^{2N-1}$, $\mathfrak{f}(\theta)=1-2 \sin^{2N}(\theta/2)$.
In this case, the same holds true upon splitting the integral
$\int_{0}^{\pi}$ into $\int_{0}^{\varepsilon}
+\int_{\varepsilon}^{\pi-\varepsilon}+\int_{\pi-\varepsilon}^{\pi}$.
\end{proof}
%
\begin{remark}
A better estimate for $|\mathbb{P}\{S_n=k\}|$ can be obtained in the same way:
\[
\forall k\in\mathbb{Z},\,|\mathbb{P}\{S_n=k\}| \le \left\{ \begin{array}{ll}
\mathbb{P}\{S_n=0\} & \mbox{ if $n$ is even,}
\\[1ex]
M_{\infty}\,\mathbb{P}\{S_{n-1}=0\} & \mbox{ if $n$ is odd.}
\end{array}\right.
\]
Nevertheless, we shall not use it.
We also have the following inequality for the total variation of $S_n$:
\[
\|\mathbb{P}_{S_n}\|=\sum_{k=-Nn}^{Nn} |\mathbb{P}\{S_n=k\}|
\le \|\mathbb{P}_{U_1}\|^n=M_1^n.
\]
\end{remark}
%
\begin{proposition}
For any bounded function $F$ defined on $\mathbb{Z}^n$,
\begin{equation}\label{bound-Fbis}
|\mathbb{E}[F(S_1,\dots,S_n)]| \le \|F\|_{_\infty} M_1^n.
\end{equation}
\end{proposition}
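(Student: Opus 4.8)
The plan is to reduce everything to the joint pseudo-distribution of the increments $(U_1,\dots,U_n)$, which by the assumed independence is the product pseudo-distribution $\mathbb{P}\{U_1=k_1,\dots,U_n=k_n\}=p_{k_1}\cdots p_{k_n}$ on the finite set $\{-N,\dots,N\}^n$. Since $(S_1,\dots,S_n)$ is the image of $(U_1,\dots,U_n)$ under the deterministic linear bijection $(k_1,\dots,k_n)\mapsto\big(k_1,k_1+k_2,\dots,k_1+\cdots+k_n\big)$, the expectation is the finite sum
\[
\mathbb{E}[F(S_1,\dots,S_n)]=\sum_{k_1=-N}^{N}\cdots\sum_{k_n=-N}^{N} p_{k_1}\cdots p_{k_n}\,F\!\left(k_1,\,k_1+k_2,\,\dots,\,\textstyle\sum_{j=1}^n k_j\right)\!.
\]
This rewriting is the only place where a little care is needed: the ``expectation'' here is integration against a signed product measure, so I would justify the product formula by a routine induction on $n$ from the definition of independence of pseudo-random variables, exactly as one does with ordinary random variables.

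Next I would take absolute values inside the (finite) sum and use $|F(\,\cdot\,)|\le\|F\|_{_\infty}$, which yields
\[
|\mathbb{E}[F(S_1,\dots,S_n)]|\le\|F\|_{_\infty}\sum_{k_1=-N}^{N}\cdots\sum_{k_n=-N}^{N}|p_{k_1}|\cdots|p_{k_n}|=\|F\|_{_\infty}\left(\sum_{k=-N}^{N}|p_k|\right)^{\!n}\!.
\]
Finally I would recognize the inner sum as the total variation $\|\mathbb{P}_{U_1}\|$ of the pseudo-distribution of $U_1$, which was computed above to equal $M_1$ (see~(\ref{bound1})), giving the claimed bound $\|F\|_{_\infty}M_1^n$.

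There is no genuine obstacle here; the estimate is sharp precisely because total variation is multiplicative under independence, which is exactly what the product structure of the sum reflects. The only points deserving a line of comment are the passage to the product pseudo-distribution and the observation that all sums are finite, so interchanging $|\,\cdot\,|$ with $\sum$ raises no integrability issue. In particular, applying the same argument with $F$ depending only on its last coordinate recovers the weaker bound $\|\mathbb{P}_{S_n}\|\le M_1^n$ stated in the preceding remark.
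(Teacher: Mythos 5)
Your proof is correct and follows essentially the same route as the paper: the paper writes the expectation as a sum over the values $(k_1,\dots,k_n)$ of $(S_1,\dots,S_n)$ with weights $p_{k_1}p_{k_2-k_1}\cdots p_{k_n-k_{n-1}}$, whereas you sum over the increments $(U_1,\dots,U_n)$, which is the same computation after the change of variables $(k_1,\dots,k_n)\mapsto(k_1,k_2-k_1,\dots,k_n-k_{n-1})$. In both cases one takes absolute values inside the finite sum, factorizes, and recognizes $\sum_{k}|p_k|=M_1$.
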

%
\begin{proof}
Recall that we set $p_k=\mathbb{P}\{U_1=k\}$ for any $k\in\{-N,\dots,N\}$.
We extend these settings by putting $p_k=0$ for
$k\in\mathbb{Z}\backslash\{-N,\dots,N\}$. We have that
\begin{align*}
|\mathbb{E}[F(S_1,\dots,S_n)]|
&
=\left|\sum_{(k_1,\dots,k_n)\in\mathbb{Z}^n} F(k_1,\dots,k_n)
\mathbb{P}\{S_1=k_1,\dots,S_n=k_n\}\right|
\\
&
\le \|F\|_{_\infty} \sum_{(k_1,\dots,k_n)\in\mathbb{Z}^n}
|p_{k_1}p_{k_2-k_1}\dots p_{k_n-k_{n-1}}|.
\end{align*}
The foregoing sum can be easily evaluated as follows:
\begin{align*}
\lqn{\sum_{(k_1,\dots,k_n)\in\mathbb{Z}^n}
|p_{k_1}p_{k_2-k_1}\cdots p_{k_n-k_{n-1}}|}
\\[-5ex]
&
=\sum_{k_1\in\mathbb{Z}} |p_{k_1}| \left(\sum_{k_2\in\mathbb{Z}} |p_{k_2-k_1}|
\left(\cdots \left(\sum_{k_n\in\mathbb{Z}} |p_{k_n-k_{n-1}}|\right)\cdots\right)\!\!\right)
\\
&
=\left(\sum_{k\in\mathbb{Z}} |p_{k}|\right)^{\!n}=M_1^n
\end{align*}
which proves~(\ref{bound-Fbis}).
\end{proof}
%

\section{Generating function of $S_n$}

Let us introduce the generating functions, defined for complex numbers $z,\zeta$, by
\begin{align*}
G_k(z)
&=\sum_{n\in\mathbb{N}} \mathbb{P}\{S_n=k\}z^n=\sum_{n\in\mathbb{N}:
\atop n\ge |k|/N} \mathbb{P}\{S_n=k\}z^n\text{ for $k\in\mathbb{Z}$},
\\
\mathbf{G}(\zeta,z)
&
=\sum_{k\in\mathbb{Z},n\in\mathbb{N}} \mathbb{P}\{S_n=k\}\zeta^kz^n
=\sum_{k\in\mathbb{Z},n\in\mathbb{N}: \atop |k|\le Nn} \mathbb{P}\{S_n=k\}\zeta^kz^n.
\end{align*}
We first study the problem of convergence of the foregoing series.
We start from
\[
\sum_{k\in\mathbb{Z},n\in\mathbb{N}} \left|\mathbb{P}\{S_n=k\}\zeta^kz^n\right|
\le\sum_{k\in\mathbb{Z},n\in\mathbb{N}:\atop |k|\le Nn} |\zeta|^k|M_{\infty}z|^n.
\]
If $\zeta\neq 0$ and $|\zeta|\neq 1$, then
\begin{align*}
\sum_{k\in\mathbb{Z},n\in\mathbb{N}:\atop |k|\le Nn} |\zeta|^k|M_{\infty}z|^n
&
= \sum_{n=0}^{\infty} \left(\frac{1-|\zeta|^{Nn+1}}{1-|\zeta|}
+\frac{1-1/|\zeta|^{Nn+1}}{1-1/|\zeta|}-1\right) |M_{\infty}z|^n
\\
&
=\frac{1}{1-|\zeta|} \left(\sum_{n=0}^{\infty} |M_{\infty}z|^n -|\zeta|
\sum_{n=0}^{\infty} \left|M_{\infty}z\zeta^N\right|^n\right)
\\
&
\hphantom{=\;}
+\frac{1}{1-1/|\zeta|}\left(\sum_{n=0}^{\infty} |M_{\infty}z|^n -\frac{1}{|\zeta|}
\sum_{n=0}^{\infty} \left|M_{\infty}z/\zeta^N\right|^n\right)
- \sum_{n=0}^{\infty} |M_{\infty}z|^n.
\end{align*}
If we choose $z,\zeta$ such that $|M_{\infty}z|<1$, $\left|M_{\infty}z\zeta^N\right|<1$
and $\left|M_{\infty}z/\zeta^N\right|<1$ (which is equivalent to
$|z|<\frac{1}{M_{\infty}} [\min(|\zeta|,1/|\zeta|)]^N$,
or $\sqrt[N\!\!]{|M_{\infty}z|}<|\zeta|<1/\!\sqrt[N\!\!]{|M_{\infty}z|}$), then
the double sum defining the function $\mathbf{G}(\zeta,z)$ is absolutely summable.
If $|\zeta|=1$, then
\[
\sum_{k\in\mathbb{Z},n\in\mathbb{N}:\atop |k|\le Nn} |\zeta|^k|M_{\infty}z|^n
=\sum_{n=0}^{\infty} (2Nn+1)|M_{\infty}z|^n.
\]
If we choose $z$ such that $|M_{\infty}z|<1$, then the same conclusion holds.

Now, we have that
\[
\mathbf{G}(\zeta,z)=\sum_{n\in\mathbb{N}}\left(\sum_{k\in\mathbb{Z}}
\mathbb{P}\{S_n=k\}\zeta^k\right)\!z^n
=\sum_{n\in\mathbb{N}} \mathbb{E}\!\left( \zeta^{S_n}\right)\!z^n
=\sum_{n\in\mathbb{N}} \left[z\mathbb{E}\!\left(\zeta^{U_1}\right)\right]^n
=\frac{1}{1-z\mathbb{E}(\zeta^{U_1})}
\]
and, thanks to~(\ref{gene-function-U1}), we can state the following result.
%
\begin{proposition}\label{double-gene}
The double generating function of the $\mathbb{P}\{S_n=k\}$,
$k\in\mathbb{Z},n\in\mathbb{N}$, is given,
for any complex numbers $z,\zeta$ such that $\sqrt[N\!\!]{|M_{\infty}z|}<|\zeta|<1/\!\sqrt[N\!\!]{|M_{\infty}z|}$, by
\begin{equation}\label{functionG}
\mathbf{G}(\zeta,z)=\frac{\zeta^N}{(1-z)\zeta^N-\kappa_{_{\!N}} cz(1-\zeta)^{2N}}.
\end{equation}
In particular, for any $\theta\in[0,2\pi]$ and $z\in\mathbb{C}$ such that $|z|<1/M_{\infty}$,
\begin{equation}\label{functionGbis}
\mathbf{G}(\mathrm{e}^{\mathrm{i} \theta},z)=\frac{1}{1-z+c\,4^Nz\sin^{2N}(\theta/2)}.
\end{equation}
\end{proposition}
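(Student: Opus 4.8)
The plan is to use the i.i.d.\ structure of the increments to collapse the double series into a geometric series, and then feed in the explicit generating function~(\ref{gene-function-U1}) of $U_1$. First I would fix $z,\zeta$ with $\sqrt[N\!\!]{|M_{\infty}z|}<|\zeta|<1/\!\sqrt[N\!\!]{|M_{\infty}z|}$ and invoke the convergence estimate carried out just before the statement: combining the crude bound $|\mathbb{P}\{S_n=k\}|\le M_{\infty}^n$ from~(\ref{bound-Sn-part}) with the geometric computation of $\sum_{|k|\le Nn}|\zeta|^k|M_{\infty}z|^n$, the double series $\sum_{k,n}\mathbb{P}\{S_n=k\}\zeta^kz^n$ is absolutely convergent on that annulus. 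This legitimizes reordering the terms by the index $n$, so that $\mathbf{G}(\zeta,z)=\sum_{n\in\mathbb{N}}\big(\sum_{k\in\mathbb{Z}}\mathbb{P}\{S_n=k\}\zeta^k\big)z^n=\sum_{n\in\mathbb{N}}\mathbb{E}(\zeta^{S_n})z^n$. By independence and the common pseudo-distribution of the $U_j$'s one has $\mathbb{E}(\zeta^{S_n})=[\mathbb{E}(\zeta^{U_1})]^n$, hence the series is geometric with ratio $z\,\mathbb{E}(\zeta^{U_1})$; its absolute convergence forces $|z\,\mathbb{E}(\zeta^{U_1})|<1$, so it sums to $1/(1-z\,\mathbb{E}(\zeta^{U_1}))$.

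Next I would substitute the closed form $\mathbb{E}(\zeta^{U_1})=1+\kappa_{_{\!N}} c\,(1-\zeta)^{2N}/\zeta^N$ provided by~(\ref{gene-function-U1}) and clear denominators: multiplying numerator and denominator by $\zeta^N$ turns $1/(1-z\,\mathbb{E}(\zeta^{U_1}))$ into $\zeta^N/\big((1-z)\zeta^N-\kappa_{_{\!N}} c z(1-\zeta)^{2N}\big)$, which is~(\ref{functionG}). For~(\ref{functionGbis}) I would specialize to $\zeta=\mathrm{e}^{\mathrm{i}\theta}$: the annulus condition then reduces to $|M_{\infty}z|<1$, i.e.\ $|z|<1/M_{\infty}$, and since $\mathbb{E}(\mathrm{e}^{\mathrm{i}\theta U_1})=\mathfrak{f}(\theta)=1-c\,4^N\sin^{2N}(\theta/2)$, the expression $1/(1-z\mathfrak{f}(\theta))$ rearranges at once into $1/\big(1-z+c\,4^Nz\sin^{2N}(\theta/2)\big)$.

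The only genuinely delicate point is the interchange of the two summations, i.e.\ checking that the region of $(\zeta,z)$ announced in the statement lies inside the region of absolute convergence established beforehand (and, for~(\ref{functionGbis}), that the circle $|\zeta|=1$ case of that analysis applies). Once this is settled, everything else is an algebraic simplification. I would also emphasize that the inequality $|z\,\mathbb{E}(\zeta^{U_1})|<1$, needed to sum the geometric series, is not assumed a priori but is a consequence of the absolute convergence of the double series, so no circular reasoning is involved.
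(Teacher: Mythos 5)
Your proposal is correct and follows essentially the same route as the paper: the paper likewise establishes absolute convergence of the double series on the stated annulus (treating $|\zeta|=1$ separately), reorders the sums to obtain the geometric series $\sum_n[z\,\mathbb{E}(\zeta^{U_1})]^nz^0$ summing to $1/(1-z\,\mathbb{E}(\zeta^{U_1}))$, and then substitutes~(\ref{gene-function-U1}). Nothing is missing.
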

%
On the other hand,
\[
\mathbf{G}(\zeta,z)=\sum_{k\in\mathbb{Z}}\left(\sum_{n\in\mathbb{N}}
\mathbb{P}\{S_n=k\}z^n\right)\zeta^k
=\sum_{k\in\mathbb{Z}} G_k(z)\zeta^k.
\]
By substituting $\zeta=\mathrm{e}^{\mathrm{i} \theta}$ in the foregoing equality, we get the
Fourier series of the function $\theta\mapsto \mathbf{G}(\mathrm{e}^{\mathrm{i} \theta},z)$:
\[
\mathbf{G}(\mathrm{e}^{\mathrm{i} \theta},z)=\sum_{k\in\mathbb{Z}}
G_k(z) \mathrm{e}^{\mathrm{i} k\theta}
\]
from which we extract the sequence of the coefficients $(G_k(z))_{k\in\mathbb{N}}$.
Indeed, since $\mathbb{P}\{S_n=-k\}=\mathbb{P}\{S_n=k\}$, we have that
$G_k(z)=G_{-k}(z)$ and
\begin{align*}
G_k(z)=\frac{1}{2\pi}\int_{0}^{2\pi} \mathbf{G}(\mathrm{e}^{\mathrm{i} \theta},z)
\mathrm{e}^{-\mathrm{i} k\theta}\,\mathrm{d}\theta
=\frac{1}{2\pi}\int_{0}^{2\pi} \mathbf{G}(\mathrm{e}^{\mathrm{i} \theta},z)
\mathrm{e}^{\mathrm{i} k\theta}\,\mathrm{d}\theta
=\frac{1}{2\pi \mathrm{i}}\int_{\mathcal{C}} \mathbf{G}(u,z)u^{k-1}\,\mathrm{d} u
\end{align*}
where $\mathcal{C}$ is the circle of radius 1 centered at the origin and unclockwise
orientated. Then, referring to~(\ref{functionG}), we obtain, for any $z\in\mathbb{C}$
satisfying $|z|<1/M_{\infty}$, that
\[
G_k(z)=\frac{1}{2\pi \mathrm{i}}\int_{\mathcal{C}} \frac{u^{k+N-1}}{P_z(u)}\,\mathrm{d} u
\]
where $P_z$ is the polynomial given by
\[
P_z(u)=(1-z)u^N-\kappa_{_{\!N}} cz(u-1)^{2N}.
\]
We are looking for the roots of $P_z$ which lie inside the circle $\mathcal{C}$.
For this, we introduce the $N^{\mathrm{th}}$ roots of $\kappa_{_{\!N}}$:
$\theta_j=-\,\mathrm{e}^{\mathrm{i} \frac{2j-1}{N}\pi}$ for $1\le j\le N$;
$\theta_j^N=\kappa_{_{\!N}}$.

From now on, in order to simplify the writing of the roots of $P_z$,
we make the assumption that $z$ is a real number lying in $(0,1/M_{\infty})$
(and then $z\in(0,1)$).
The roots of $P_z$ are those of the equations
$u^2-2[1+\theta_j w(z)]u+1$, $1\le j\le N$, where
\[
w(z)=\frac{\sqrt[N\!]{1-z} }{2\sqrt[N\!] {cz}}.
\]
They can be written as
\begin{align*}
u_j(z)
&
=1+\theta_j w(z)-\theta_j\sqrt{w(z)}\,[a_j(z)+\mathrm{i}\epsilon_j b_j(z)],\quad 1\le j\le N,
\\
v_j(z)
&
=1+\theta_j w(z)+\theta_j\sqrt{w(z)}\,[a_j(z)+\mathrm{i}\epsilon_j b_j(z)],\quad 1\le j\le N,
\end{align*}
with
\begin{align*}
\epsilon_j
&
\textstyle=\mathop{\mathrm{sgn}}\!\Big(\sin\!\big(\frac{2j-1}{N}\pi \big)\Big)
\;\text{(with the convention that $\mathop{\mathrm{sgn}}(0)=0$)},
\\
a_j(z)
&
\textstyle=\frac{1}{\sqrt2}\Big[\sqrt{w(z)^2-4\cos\!\big(\frac{2j-1}{N}\pi\big)
w(z)+4}+w(z)-2\cos\!\big(\frac{2j-1}{N}\pi\big)\!\Big]^{1/2},
\\
b_j(z)
&
\textstyle=\frac{1}{\sqrt2}\Big[\sqrt{w(z)^2-4\cos\!\big(\frac{2j-1}{N}\pi\big)
w(z)+4}-w(z)+2\cos\!\big(\frac{2j-1}{N}\pi\big)\!\Big]^{1/2}.
\end{align*}
We notice that $a_j(z)b_j(z)=\big|\sin\big(\frac{2j-1}{N}\pi \big)\big|$.
Because of the last coefficient $1$ in the polynomial $u^2-2[1+\theta_j w(z)]u+1$,
it is clear that the roots $u_j(z)$ and $v_j(z)$ are inverse: $v_j(z)=1/u_j(z)$.

Let us check that $|u_j(z)|<1<|v_j(z)|$ for any $j\in\{1,\dots,N\}$.
Straightforward computations yield that
\[
|u_j(z)|^2=A_j(z)-B_j(z)\quad\text{and}\quad |v_j(z)|^2=A_j(z)+B_j(z)
\]
where
\begin{align*}
A_j(z)
&
\textstyle=w(z)^2+w(z)\Big[\sqrt{w(z)^2-4\cos\!\big(\frac{2j-1}{N}\pi\big)
w(z)+4}-2\cos\!\big(\frac{2j-1}{N}\pi\big)\Big]+1,
\\
B_j(z)
&
\textstyle=2\sqrt{w(z)}\Big[\big(w(z)-\cos\!\big(\frac{2j-1}{N}\pi\big)\big)a_j(z)
+\big|\sin\!\big(\frac{2j-1}{N}\pi\big)\big|\, b_j(z)\Big].
\end{align*}
Since $v_j(z)=1/u_j(z)$, checking that $|u_j(z)|<1<|v_j(z)|$ is equivalent
to checking that $|u_j(z)|^2<|v_j(z)|^2$, that is $B_j(z)>0$.
If $\sin\!\big(\frac{2j-1}{N}\pi\big)\neq 0$, we have $a_j(z)b_j(z)\neq 0$,
$a_j(z)=\big|\sin\!\big(\frac{2j-1}{N}\pi\big)\big|/b_j(z)$ and then
\begin{align*}
B_j(z)
&
\textstyle =2a_j(z)\sqrt{w(z)}\Big[w(z)-\cos\!\big(\frac{2j-1}{N}\pi\big)
+b_j(z)^2\Big]
\\
&
\textstyle =a_j(z)\sqrt{w(z)}\Big[\sqrt{w(z)^2-4\cos\!\big(\frac{2j-1}{N}
\pi\big)w(z)+4}+w(z)\Big]>0.
\end{align*}
If $\sin\!\big(\frac{2j-1}{N}\pi\big)=0$ (which happens only when $N$ is odd
and $j=(N+1)/2$), $a_j(z)=\sqrt{w(z)+2}$, $b_j(z)=0$ and then
$B_j=2a_j(z)\sqrt{w(z)}(w(z)+1)>0$.

The above discussion ensures that the roots we are looking for
(i.e. those lying inside $\mathcal{C}$) are $u_j(z)$, $1\le j\le N$; we discard
the $v_j(z)$'s.

%
\begin{remark}
We notice that
\[
\lim_{z\to 1^-} a_j(z) =\textstyle\sqrt2 \sin\!\left(\!\!
\vphantom{\frac 2N}\right.\frac{2j-1}{2N}\pi \left.\!\!\vphantom{\frac 2N}\right),
\quad
\displaystyle \lim_{z\to 1^-} b_j(z) =\textstyle\epsilon_j\sqrt2 \cos\!\left(\!\!
\vphantom{\frac 2N}\right.\frac{2j-1}{2N}\pi \left.\!\!\vphantom{\frac 2N}\right)
\]
and then
\[
\lim_{z\to 1^-} \theta_j\,[a_j(z)+\mathrm{i}\epsilon_j b_j(z)]=\sqrt2 \,\varphi_j
\]
where we set $\varphi_j=-\mathrm{i}\,\mathrm{e}^{\mathrm{i} \frac{2j-1}{2N}\pi}$.
The $\varphi_j$, $1\le j\le N$
are the $(2N)^{\mathrm{th}}$ roots of $\kappa_{_{\!N}}$ with positive real part:
$\varphi_j^{2N}=\kappa_{_{\!N}}$ and $\Re(\varphi_j)>0$.
As a result, we derive the asymptotic, which will be used further,
\begin{equation}\label{asymptotic-u}
u_j(z)=1+\varepsilon_j(z)\quad\text{with}\quad
\varepsilon_j(z)\underset{z\to 1^-}{\sim} -\frac{\varphi_j}{\sqrt[2N\!] c}\sqrt[2N\!]{1-z}
=\mathcal{O}\!\left(\!\sqrt[2N\!]{1-z}\,\right)\!.
\end{equation}
\end{remark}
%
\begin{example}\label{example-roots}
For $N=2$, the roots explicitly write as
\begin{align*}
u_1(z)&=[1-b_1(z)w(z)]-\mathrm{i}\left[w(z)-a_1(z)\sqrt{w(z)}\,\right]\!,
\\
u_2(z)&=\overline{u_1(z)},\quad v_1(z)=1/u_1(z),\quad v_2(z)=\overline{v_1(z)},
\end{align*}
with
\[
w(z)=\frac{\sqrt{1-z}}{2\sqrt{cz}},\quad
a_1(z)=\frac{1}{\sqrt 2}\sqrt{\sqrt{w(z)^2+4}+w(z)},\quad
b_1(z)=\frac{1}{\sqrt 2}\sqrt{\sqrt{w(z)^2+4}-w(z)}.
\]
For $N=3$, the roots explicitly write as
\begin{align*}
u_1(z)&=\frac{1}{2}\left[2-w(z)-\left(a_1(z)-b_1(z)\sqrt3\,\right)\sqrt{w(z)}\,\right]
\\
&\hphantom{=\;}
-\frac{\mathrm{i}}{2}\left[w(z)\sqrt3-\left(a_1(z)\sqrt3+b_1(z)\right)\sqrt{w(z)}\,\right]\!,
\\
u_2(z)&=1-w(z)-a_2(z)\sqrt{w(z)} ,\quad u_3(z)=\overline{u_1(z)},
\\
v_1(z)&=1/u_1(z),\quad v_2(z)=1/u_2(z),\quad v_3(z)=\overline{v_1(z)},
\end{align*}
with
\begin{align*}
a_1(z)&=\frac{1}{\sqrt 2}\sqrt{\sqrt{w(z)^2-2w(z)+4}+w(z)-1},
\\
b_1(z)&=\frac{1}{\sqrt 2}\sqrt{\sqrt{w(z)^2-2w(z)+4}-w(z)+1},
\\
a_2(z)&=\sqrt{w(z)+2},\quad w(z)=\frac{\sqrt[3]{1-z}}{2\sqrt[3]{cz}}.
\end{align*}
\end{example}

Now, $G_k(z)$ can be evaluated by residues theorem. Suppose first that $k\ge 0$
(then $k+N-1\ge 0$) so that $0$ is not a pole in the integral defining $G_k(z)$:
\begin{align}\label{expression-Gk1}
G_k(z)
&
=\sum_{j=1}^N \mathrm{Res}\!\left(\frac{u^{k+N-1}}{P_z(u)},u_j(z)\right)
=\sum_{j=1}^N \frac{u_j(z)^{k+N-1}}{P'_z(u_j(z))}
\nonumber\\
&
=\frac{1}{N(1-z)} \sum_{j=1}^N \frac{1-u_j(z)}{1+u_j(z)}\,u_j(z)^{k}.
\end{align}
The foregoing representation of $G_k(z)$ is valid \textit{a priori} for any
$z\in(0,1/M_{\infty})$. Actually, in view of the expressions of $w(z)$
and $u_j(z)$, we can see that (\ref{expression-Gk1}) defines an analytical
function in the interval $(0,1)$.
Since $G_k(z)$ is a power series, by analytical continuation,
equality~(\ref{expression-Gk1}) holds true for any $z\in(0,1)$.
Moreover, by symmetry, we have that $G_k(z)=G_{-k}(z)$ for $k\le 0$.
We display this result in the theorem below.
%
\begin{theorem}\label{theo-gene-function}
For any $k\in\mathbb{Z}$, the generating function of the $\mathbb{P}\{S_n=k\}$,
$n\in\mathbb{N}$, is given, for any $z\in(0,1)$, by
\begin{equation}\label{expression-Gk}
G_k(z)=\frac{1}{N(1-z)} \sum_{j=1}^N \frac{1-u_j(z)}{1+u_j(z)}\,u_j(z)^{|k|}.
\end{equation}
\end{theorem}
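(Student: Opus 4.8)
The plan is to start from the contour-integral representation
\[
G_k(z)=\frac{1}{2\pi\mathrm{i}}\int_{\mathcal{C}} \frac{u^{k+N-1}}{P_z(u)}\,\mathrm{d} u,
\qquad P_z(u)=(1-z)u^N-\kappa_{_{\!N}} cz(u-1)^{2N},
\]
which is already established for $z\in(0,1/M_\infty)$ as the $k$-th Fourier coefficient of $\mathbf{G}(\mathrm{e}^{\mathrm{i}\theta},z)$. For $k\ge 0$ the integrand $u^{k+N-1}/P_z(u)$ is holomorphic at $u=0$, so the only singularities inside $\mathcal{C}$ are the roots of $P_z$ lying strictly inside the unit circle. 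By the earlier discussion these are exactly the $N$ simple roots $u_j(z)$, $1\le j\le N$ (the companion roots $v_j(z)=1/u_j(z)$ lie outside). Hence the residue theorem gives
\[
G_k(z)=\sum_{j=1}^N \frac{u_j(z)^{k+N-1}}{P_z'(u_j(z))}.
\]

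The next step is to simplify $P_z'(u_j(z))$. Differentiating, $P_z'(u)=N(1-z)u^{N-1}-2N\kappa_{_{\!N}} cz(u-1)^{2N-1}$. Using $P_z(u_j(z))=0$, i.e. $(1-z)u_j(z)^N=\kappa_{_{\!N}} cz(u_j(z)-1)^{2N}$, one can eliminate the $(u_j-1)^{2N-1}$ term: write $\kappa_{_{\!N}} cz(u_j-1)^{2N-1}=(1-z)u_j^N/(u_j-1)$, so that
\[
P_z'(u_j(z))=N(1-z)u_j(z)^{N-1}\left(1-\frac{2u_j(z)}{u_j(z)-1}\right)
=-N(1-z)u_j(z)^{N-1}\,\frac{u_j(z)+1}{u_j(z)-1}.
\]
Substituting this back yields
\[
G_k(z)=\sum_{j=1}^N \frac{u_j(z)^{k+N-1}}{-N(1-z)u_j(z)^{N-1}}\cdot\frac{u_j(z)-1}{u_j(z)+1}
=\frac{1}{N(1-z)}\sum_{j=1}^N \frac{1-u_j(z)}{1+u_j(z)}\,u_j(z)^{k},
\]
which is formula~(\ref{expression-Gk}) for $k\ge 0$.

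It remains to extend this from $z\in(0,1/M_\infty)$ to all $z\in(0,1)$ and to handle $k<0$. For the former, note that the right-hand side of~(\ref{expression-Gk1}) is, by the explicit formulas for $w(z)$, $a_j(z)$, $b_j(z)$ and $u_j(z)$, a well-defined real-analytic function of $z$ on the whole interval $(0,1)$ (the quantities under the square roots stay positive, and $1+u_j(z)$ stays away from $0$ since $|u_j(z)|<1$ there — this non-vanishing of the denominator is the one point that needs a line of justification, and it follows from $\Re(u_j)> -1$, or more simply from $u_j(z)\ne -1$ because $P_z(-1)=(1-z)(-1)^N-\kappa_{_{\!N}} cz\,2^{2N}\ne 0$ for $z\in(0,1)$). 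Since $G_k(z)=\sum_n \mathbb{P}\{S_n=k\}z^n$ is itself analytic on its disc of convergence, which contains $(0,1/M_\infty)$, the identity theorem for analytic functions forces the two expressions to agree throughout $(0,1)$. Finally, $\mathbb{P}\{S_n=-k\}=\mathbb{P}\{S_n=k\}$ (symmetry of the $p_k$'s) gives $G_{-k}(z)=G_k(z)$, so replacing $k$ by $|k|$ covers all $k\in\mathbb{Z}$. The main obstacle is really just the bookkeeping in the residue computation — correctly identifying which $N$ of the $2N$ roots lie inside $\mathcal{C}$ (already done in the excerpt) and carrying out the algebraic simplification of $P_z'(u_j(z))$ via the defining relation $P_z(u_j(z))=0$; everything else is routine.
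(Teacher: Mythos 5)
Your proof is correct and follows essentially the same route as the paper: the contour-integral representation of $G_k(z)$, the residue theorem applied at the $N$ roots $u_j(z)$ inside the unit circle, the simplification of $P_z'(u_j(z))$ via the relation $P_z(u_j(z))=0$, analytic continuation from $(0,1/M_\infty)$ to $(0,1)$, and the symmetry $G_{-k}=G_k$ for negative $k$. Your explicit verification that $P_z'(u_j(z))=-N(1-z)u_j(z)^{N-1}\frac{u_j(z)+1}{u_j(z)-1}$ and that $u_j(z)\neq -1$ (via $1-z+c\,4^N z>0$) merely fills in details the paper leaves implicit.
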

%
\begin{remark}
Another proof of Theorem~\ref{theo-gene-function} consists in expanding the
rational fraction $\zeta\mapsto \mathbf{G}(\zeta,z)$ into partial fractions.
We find it interesting to outline the main steps of this method.
We can write that
\[
\mathbf{G}(\zeta,z)=\sum_{j=1}^N \frac{A_j(z)}{\zeta-u_j(z)}
+\sum_{j=1}^N \frac{B_j(z)}{\zeta-v_j(z)}
\]
with
\[
A_j(z)=\frac{u_j(z)}{N(1-z)}\frac{1-u_j(z)}{1+u_j(z)},\quad
B_j(z)=\frac{v_j(z)}{N(1-z)}\frac{1-v_j(z)}{1+v_j(z)}
=-\frac{1/u_j(z)}{N(1-z)}\frac{1-u_j(z)}{1+u_j(z)}.
\]
We next expand the partial fractions $\frac{1}{\zeta-u_j(z)}$ and
$\frac{1}{\zeta-v_j(z)}$ into power series as follows. We have
checked that $|u_j(z)|<1<|v_j(z)|$ for any $j\in\{1,\dots,N\}$.
Now, if $|u_j(z)|<|\zeta|<|v_j(z)|$ for any $j\in\{1,\dots,N\}$,
\[
\frac{1}{\zeta-u_j(z)}=\sum_{k=0}^{\infty}\frac{u_j(z)^k}{\zeta^{k+1}}
=\sum_{k=-\infty}^{-1}\frac{\zeta^k}{u_j(z)^{k+1}},
\quad\frac{1}{\zeta-v_j(z)}=-\sum_{k=0}^{\infty}\frac{\zeta^k}{v_j(z)^{k+1}}
\]
from which~(\ref{expression-Gk}) can be easily extracted.
\end{remark}
%

\section{Limiting pseudo-process}\label{section-limit}

In this section, by pseudo-process it is meant a continuous-time process
driven by a signed measure. Actually, this object is not properly defined
on all continuous times but only on dyadic times $k/2^j$, $j,k\in\mathbb{N}$.
A proper definition consists in seeing it
as the limit of a step process associated with the observations of the
pseudo-process on the dyadic times. We refer the reader to~\cite{la3}
and~\cite{nish2} for precise details which are cumbersome to reproduce here.

Below, we give an ad hoc definition for the convergence of a family of
pseudo-processes $((X_t^{\varepsilon})_{t\ge 0})_{\varepsilon>0}$ towards a
pseudo-process $(X_t)_{t\ge 0}$.
%
\begin{definition}\label{def1}
Let $((X_t^{\varepsilon})_{t\ge 0})_{\varepsilon>0}$ be a family of pseudo-processes and
$(X_t)_{t\ge 0}$ be a pseudo-process. We say that
\[
(X_t^{\varepsilon})_{t\ge 0}\underset{\varepsilon\to 0^+}{\longrightarrow} (X_t)_{t\ge 0}
\]
if and only if
\[
\forall n\in\mathbb{N}^*, \, \forall t_1,\dots,t_n\ge 0, \,
\forall \mu_1,\dots,\mu_n\in\mathbb{R},
\quad\mathbb{E}\!\left(\mathrm{e}^{\mathrm{i} \sum_{k=1}^n\mu_k
X_{t_k}^{\varepsilon}}\right)
\underset{\varepsilon\to 0^+}{\longrightarrow}\mathbb{E}\!\left(\mathrm{e}^{\mathrm{i}
\sum_{k=1}^n\mu_k X_{t_k}}\right)\!.
\]
\end{definition}
%
This is the weak convergence of the finite-dimensional projections of the family
of pseudo-processes.

In this part, we choose for the family $((X_t^{\varepsilon})_{t\ge 0})_{\varepsilon>0}$
the continuous-time pseudo-processes defined, for any $\varepsilon>0$, by
\[
X_t^{\varepsilon}=\varepsilon S_{\lfloor t/\varepsilon^{2N}\rfloor},\quad t\ge 0,
\]
where $\lfloor\,.\,\rfloor$ stands for the usual floor function.
The quantity $X_t^{\varepsilon}$ takes its values on the discrete set $\varepsilon\mathbb{Z}$.
Roughly speaking, we normalize the pseudo-random walk on the time$\times$space grid
$\varepsilon^{2N}\mathbb{N}\times \varepsilon\mathbb{Z}$. Let $(X_t)_{t\ge 0}$
be the pseudo-Brownian motion.
It is characterized by the following property:
for any $n\in\mathbb{N}^*$, any $t_1,\dots,t_n\ge0$ such that
$t_1<\dots<t_n$ and any $\mu_1,\dots,\mu_n\in\mathbb{R}$,
\begin{equation}\label{FT-BM}
\mathbb{E}\!\left(\mathrm{e}^{\mathrm{i} \sum_{k=1}^n\mu_k X_{t_k}}\right)\!
=\mathrm{e}^{-c \sum_{k=1}^n(\mu_1+\dots+\mu_k)^{2N}(t_k-t_{k-1})}.
\end{equation}
We refer to~\cite{la3} and~\cite{nish2} for a proper definition of pseudo-Brownian
motion, and to references therein for interesting properties of this pseudo-process.
%
\begin{theorem}\label{theo-limit}
Suppose that $c\le 1/2^{2N-1}$. The following convergence holds:
\[
(X_t^{\varepsilon})_{t\ge 0}\underset{\varepsilon\to 0^+}{\longrightarrow} (X_t)_{t\ge 0}.
\]
\end{theorem}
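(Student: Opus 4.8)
The plan is to verify Definition~\ref{def1} directly: show that for every $n\in\mathbb{N}^*$, every $t_1<\dots<t_n$ and every $\mu_1,\dots,\mu_n\in\mathbb{R}$, the characteristic function $\mathbb{E}\big(\mathrm{e}^{\mathrm{i}\sum_{k=1}^n\mu_k X^{\varepsilon}_{t_k}}\big)$ converges as $\varepsilon\to0^+$ to the right-hand side of~(\ref{FT-BM}). First I would reduce the multidimensional statement to increments: writing $X^{\varepsilon}_{t_k}=\varepsilon S_{\lfloor t_k/\varepsilon^{2N}\rfloor}$ and using the fact that $(S_n)_{n\in\mathbb{N}}$ has independent stationary increments (each increment being a sum of i.i.d.\ copies of $U_1$), the joint characteristic function factorizes as a product over $k$ of terms of the form $\mathfrak{f}\big((\mu_k+\dots+\mu_n)\varepsilon\big)^{\,m_k^{\varepsilon}}$ where $m_k^{\varepsilon}=\lfloor t_k/\varepsilon^{2N}\rfloor-\lfloor t_{k-1}/\varepsilon^{2N}\rfloor$ (with $t_0=0$), after an Abel summation reorganizing $\sum_k\mu_k S_{n_k}$ into $\sum_k(\mu_k+\dots+\mu_n)(S_{n_k}-S_{n_{k-1}})$. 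The boundedness of all these partial products is guaranteed by the assumption $c\le 1/2^{2N-1}$, which via~(\ref{bound}) gives $M_{\infty}=1$, hence $|\mathfrak{f}(\theta)|\le 1$ and every factor has modulus $\le 1$; this is exactly where the hypothesis is used and why the proof works without it being an \emph{a priori} estimate blow-up.

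The core analytic step is the one-factor limit. Using the explicit Fourier transform $\mathfrak{f}(\theta)=1-c\,4^N\sin^{2N}(\theta/2)$, I would compute, for a fixed real $\nu$ (playing the role of $\mu_k+\dots+\mu_n$) and $m^{\varepsilon}=\lfloor s/\varepsilon^{2N}\rfloor$,
\[
\mathfrak{f}(\nu\varepsilon)^{m^{\varepsilon}}=\Big(1-c\,4^N\sin^{2N}(\nu\varepsilon/2)\Big)^{\lfloor s/\varepsilon^{2N}\rfloor}.
\]
Since $\sin(\nu\varepsilon/2)=\nu\varepsilon/2+\mathcal{O}(\varepsilon^3)$, we have $4^N\sin^{2N}(\nu\varepsilon/2)=\nu^{2N}\varepsilon^{2N}+\mathcal{O}(\varepsilon^{2N+2})$, so $\ln\mathfrak{f}(\nu\varepsilon)=-c\,\nu^{2N}\varepsilon^{2N}+\mathcal{O}(\varepsilon^{2N+2})$, and multiplying by $\lfloor s/\varepsilon^{2N}\rfloor=s/\varepsilon^{2N}+\mathcal{O}(1)$ yields $m^{\varepsilon}\ln\mathfrak{f}(\nu\varepsilon)\to -c\,\nu^{2N}s$. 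Exponentiating, $\mathfrak{f}(\nu\varepsilon)^{m^{\varepsilon}}\to\mathrm{e}^{-c\nu^{2N}s}$. Applying this with $\nu=\mu_k+\dots+\mu_n$ and $s=t_k-t_{k-1}$ (so $m^{\varepsilon}=m_k^{\varepsilon}$, using $\lfloor t_k/\varepsilon^{2N}\rfloor-\lfloor t_{k-1}/\varepsilon^{2N}\rfloor=(t_k-t_{k-1})/\varepsilon^{2N}+\mathcal{O}(1)$), and taking the product over $k=1,\dots,n$, gives exactly $\mathrm{e}^{-c\sum_{k=1}^n(\mu_k+\dots+\mu_n)^{2N}(t_k-t_{k-1})}$, which matches~(\ref{FT-BM}) after reindexing the $\mu$'s (the labelling of thresholds in~(\ref{FT-BM}) uses $\mu_1+\dots+\mu_k$ with increasing times, equivalent to our $\mu_k+\dots+\mu_n$ after the Abel rearrangement — one should be careful to state the reindexing cleanly).

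The main obstacle, such as it is, is bookkeeping rather than depth: making the Abel/summation-by-parts rearrangement precise so that the product structure is manifest, and handling the floor functions uniformly (one must check that replacing $\lfloor t_k/\varepsilon^{2N}\rfloor$ by $t_k/\varepsilon^{2N}$ costs only a bounded number of extra factors, each of modulus $\le 1$ in the logarithm limit, hence contributes $o(1)$). One subtlety worth flagging: when some $\nu=\mu_k+\dots+\mu_n$ vanishes the corresponding factor is identically $1$ and the limit $\mathrm{e}^0=1$ is trivially correct, so no special case is needed. A second minor point is that the convergence in Definition~\ref{def1} is only asserted for finite-dimensional projections at arbitrary times $t_1,\dots,t_n\ge0$ not necessarily ordered; but one may reduce to the ordered case by relabelling, and at coincident times $t_i=t_j$ the corresponding increment is empty ($m^{\varepsilon}=0$), again giving a trivial factor $1$. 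Thus the proof is essentially the single-factor asymptotic $\mathfrak{f}(\nu\varepsilon)^{\lfloor s/\varepsilon^{2N}\rfloor}\to\mathrm{e}^{-c\nu^{2N}s}$, combined with independence of increments and the uniform modulus bound supplied by $c\le 1/2^{2N-1}$.
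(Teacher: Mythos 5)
Your proof is correct, and for the decisive analytic step it takes a genuinely more direct route than the paper. The paper also reduces everything to one-dimensional marginals via independence of increments (it does so by induction on $n$ rather than by a single Abel summation, and it handles the floor-function mismatch by observing that $X_{t_2}^{\varepsilon}-X_{t_1}^{\varepsilon}$ is distributed as one of $X_{t_2-t_1}^{\varepsilon}$ or $X_{t_2-t_1+\varepsilon^{2N}}^{\varepsilon}$ — materially the same bookkeeping as your $m_k^{\varepsilon}=(t_k-t_{k-1})/\varepsilon^{2N}+\mathcal{O}(1)$). Where you differ is in establishing $\mathbb{E}\big(\mathrm{e}^{\mathrm{i}\mu X_t^{\varepsilon}}\big)\to\mathrm{e}^{-c\mu^{2N}t}$: the paper computes the Laplace transform in $t$ of this characteristic function via the double generating function $\mathbf{G}\big(\mathrm{e}^{\mathrm{i}\mu\varepsilon},\mathrm{e}^{-\lambda\varepsilon^{2N}}\big)$ of Proposition~\ref{double-gene}, shows it converges to $1/(\lambda+c\mu^{2N})$, and then passes from convergence of Laplace transforms back to pointwise convergence in $t$ — a step stated rather than argued, and the only fragile point of the paper's proof. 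Your logarithmic expansion $\lfloor t/\varepsilon^{2N}\rfloor\ln\mathfrak{f}(\mu\varepsilon)\to-c\mu^{2N}t$ obtains the same limit directly and avoids that inversion entirely; it buys a shorter, self-contained argument, at the cost of not producing the $\lambda$-potential as a by-product (which is the paper's reason for going through Laplace transforms). Two small remarks. First, your attribution of the role of the hypothesis $c\le 1/2^{2N-1}$ is slightly off: in your argument only the values $\mathfrak{f}(\nu\varepsilon)$ for small $\varepsilon$ matter, and these are automatically close to $1$ whatever $c$ is, so the global bound $M_{\infty}=1$ is not actually needed along your route (it is needed along the paper's route, to make~(\ref{functionGter}) valid for all $\lambda>0$); this is harmless but worth stating accurately. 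Second, your parenthetical caution about the indexing in~(\ref{FT-BM}) is well taken — the Abel rearrangement genuinely produces $\mu_k+\dots+\mu_n$ as the coefficient of the increment on $(t_{k-1},t_k)$, which is what the paper's own two-time computation yields as well, so your limit is the correct one and matches the paper's intended target.
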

%
\begin{proof}
$\bullet$ We begin by computing the Laplace-Fourier transform of $X_t^{\varepsilon}$.
By definition of $X_t^{\varepsilon}$, we have that
$\mathbb{E}\!\left(\mathrm{e}^{\mathrm{i} \mu X_t^{\varepsilon}}\right)
=\mathbb{E}\!\left(\mathrm{e}^{\mathrm{i} \mu\varepsilon S_{\lfloor t/
\varepsilon^{2N}\rfloor}}\right)$ and then
\begin{align*}
\int_0^{+\infty} \mathrm{e}^{-\lambda t} \,\mathbb{E}\!\left(\mathrm{e}^{\mathrm{i}
\mu X_t^{\varepsilon}}\right) \mathrm{d} t
&=
\sum_{n=0}^{\infty} \left(\int_{n\varepsilon^{2N}}^{(n+1)\varepsilon^{2N}}
\mathrm{e}^{-\lambda t} \,\mathrm{d} t\right)
\mathbb{E}\!\left(\mathrm{e}^{\mathrm{i} \mu \varepsilon S_n}\right)
\\
&=
\frac{1-\mathrm{e}^{-\lambda \varepsilon^{2N}}}{\lambda} \sum_{n=0}^{\infty}
\left(\mathrm{e}^{-\lambda \varepsilon^{2N}}\right)^{\!n}
\mathbb{E}\!\left(\mathrm{e}^{\mathrm{i} \mu\varepsilon S_n}\right)
\\
&=
\frac{1-\mathrm{e}^{-\lambda \varepsilon^{2N}}}{\lambda}\sum_{n\in\mathbb{N},
k\in\mathbb{Z}}
\left(\mathrm{e}^{-\lambda \varepsilon^{2N}}\right)^{\!n}\!\left(\mathrm{e}^{\mathrm{i}
\mu\varepsilon}\right)^{\!k} \mathbb{P}\{S_n=k\}
\\
&=
\frac{1-\mathrm{e}^{-\lambda \varepsilon^{2N}}}{\lambda} \,\mathbf{G}\!
\left(\mathrm{e}^{\mathrm{i} \mu\varepsilon},\mathrm{e}^{-\lambda\varepsilon^{2N}}\right)\!.
\end{align*}
By (\ref{functionGbis}), we have that
\begin{equation}\label{functionGter}
\mathbf{G}\!\left(\mathrm{e}^{\mathrm{i} \mu\varepsilon},\mathrm{e}^{-\lambda\varepsilon^{2N}}\right)\!
=\frac{1}{1-\mathrm{e}^{-\lambda\varepsilon^{2N}}+ c\,4^N
\mathrm{e}^{-\lambda\varepsilon^{2N}}\sin^{2N}(\mu\varepsilon/2)}.
\end{equation}
Actually, equality~(\ref{functionGter}) is valid for $\lambda$ such that
$\mathrm{e}^{-\lambda\varepsilon^{2N}}<1/M_{\infty}$, that is, $\lambda>(\ln M_{\infty})/\varepsilon^{2N}$.
Since $c$ is assumed not to be greater than $1/2^{2N-1}$, by~(\ref{bound}), we have
that $M_{\infty}=1$ and (\ref{functionGter}) is valid for any $\lambda>0$.

Now, by using the elementary asymptotics $\sin(\mu\varepsilon/2)
\underset{\varepsilon\to 0^+}{=} \mu\varepsilon/2+o(\varepsilon)$
and $\mathrm{e}^{-\lambda\varepsilon^{2N}}\underset{\varepsilon\to 0^+}{=}
1-\lambda\varepsilon^{2N}+o(\varepsilon)$, we obtain that
\[
\mathbf{G}\!\left(\mathrm{e}^{\mathrm{i} \mu\varepsilon},
\mathrm{e}^{-\lambda\varepsilon^{2N}}\right)\underset{\varepsilon\to 0^+}{\sim}
\frac{1}{\lambda+c\mu^{2N}} \,\frac{1}{\varepsilon^{2N}}.
\]
As a result, for any $\lambda>0$,
\[
\lim_{\varepsilon\to 0^+}\int_0^{+\infty} \mathrm{e}^{-\lambda t} \,
\mathbb{E}\!\left(\mathrm{e}^{\mathrm{i} \mu X_t^{\varepsilon}}\right) \mathrm{d} t
=\frac{1}{\lambda+c\mu^{2N}}=\int_0^{+\infty} \mathrm{e}^{-(\lambda+c\mu^{2N})t}\,\mathrm{d} t
\]
from which and (\ref{FT-BM}) we deduce that
\begin{equation}\label{limit-FT}
\lim_{\varepsilon\to 0^+} \mathbb{E}\!\left(\mathrm{e}^{\mathrm{i}
\mu X_t^{\varepsilon}}\right)\!= \mathrm{e}^{-c\mu^{2N}t}
=\mathbb{E}\!\left(\mathrm{e}^{\mathrm{i} \mu X_t}\right)\!.
\end{equation}
Notice that the Laplace-Fourier of $X_t$ takes the simple form
\[
\int_0^{+\infty} \mathrm{e}^{-\lambda t} \,\mathbb{E}\!
\left(\mathrm{e}^{\mathrm{i} \mu X_t}\right) \mathrm{d} t
=\frac{1}{\lambda+c\mu^{2N}}.
\]

$\bullet$ In the same way, we compute the Laplace-Fourier transform of
$X_{t+\varepsilon^{2n}}^{\varepsilon}$
which will be used further. We have $\mathbb{E}\!\left(\mathrm{e}^{\mathrm{i}
\mu X_{t+\varepsilon^{2n}}^{\varepsilon}}\right)
=\mathbb{E}\!\left(\mathrm{e}^{\mathrm{i} \mu\varepsilon S_{\lfloor t/
\varepsilon^{2N}\rfloor+1}}\right)$. Then
\begin{align*}
\int_0^{+\infty} \mathrm{e}^{-\lambda t} \,\mathbb{E}\!\left(\mathrm{e}^{\mathrm{i}
\mu X_{t+\varepsilon^{2n}}^{\varepsilon}}\right) \mathrm{d} t
&=
\sum_{n=0}^{\infty} \left(\int_{n\varepsilon^{2N}}^{(n+1)\varepsilon^{2N}}
\mathrm{e}^{-\lambda t} \,\mathrm{d} t\right)
\mathbb{E}\!\left(\mathrm{e}^{\mathrm{i} \mu \varepsilon S_{n+1}}\right)
\\
&=
\frac{1-\mathrm{e}^{-\lambda \varepsilon^{2N}}}{\lambda} \sum_{n=0}^{\infty}
\left(\mathrm{e}^{-\lambda \varepsilon^{2N}}\right)^{\!n}
\mathbb{E}\!\left(\mathrm{e}^{\mathrm{i} \mu\varepsilon S_{n+1}}\right)
\\
&=
\frac{1-\mathrm{e}^{-\lambda \varepsilon^{2N}}}{\lambda} \,
\mathrm{e}^{\lambda\varepsilon^{2N}} \sum_{n\in\mathbb{N}^*,k\in\mathbb{Z}}
\left(\mathrm{e}^{-\lambda \varepsilon^{2N}}\right)^{\!n}\!
\left(\mathrm{e}^{\mathrm{i} \mu\varepsilon}\right)^{\!k} \mathbb{P}\{S_n=k\}
\\
&=
\frac{\mathrm{e}^{\lambda \varepsilon^{2N}}-1}{\lambda} \left[\mathbf{G}\!
\left(\mathrm{e}^{\mathrm{i} \mu\varepsilon},\mathrm{e}^{-\lambda\varepsilon^{2N}}\right)
-1\right]\!.
\end{align*}
As for~(\ref{limit-FT}), we immediately extract the following limit:
\begin{equation}\label{limit-FT-bis}
\lim_{\varepsilon\to 0^+} \mathbb{E}\!\left(\mathrm{e}^{\mathrm{i}
\mu X_{t+\varepsilon^{2N}}^{\varepsilon}}\right)\!=
\mathbb{E}\!\left(\mathrm{e}^{\mathrm{i} \mu X_t}\right)\!.
\end{equation}

$\bullet$ We now compute the joint Fourier transform of
$(X_{t_1}^{\varepsilon},X_{t_2}^{\varepsilon})$
for two times $t_1,t_2$ such that $t_1<t_2$. Using the elementary fact that
$\lfloor x\rfloor-\lfloor y\rfloor\in\{\lfloor x-y\rfloor,\lfloor x-y\rfloor+1\}$, we observe that
\[
X_{t_2}^{\varepsilon}-X_{t_1}^{\varepsilon}\stackrel{d}{=}\varepsilon
S_{\lfloor t_2/\varepsilon^{2N}\rfloor-\lfloor t_1/\varepsilon^{2N}\rfloor}
\in\{\varepsilon S_{\lfloor(t_2-t_1)/\varepsilon^{2N}\rfloor},\varepsilon
S_{\lfloor(t_2-t_1)/\varepsilon^{2N}\rfloor+1}\}
=\{X_{t_2-t_1}^{\varepsilon},X_{t_2-t_1+\varepsilon^{2N}}^{\varepsilon}\}.
\]
Then, we get, for $\mu_1,\mu_2\in\mathbb{R}$, that
\[
\mathbb{E}\Big(\mathrm{e}^{\mathrm{i} \big(\mu_1 X_{t_1}^{\varepsilon}
+\mu_2 X_{t_2}^{\varepsilon}\big)}\Big)
\in\left\{\mathbb{E}\!\left(\mathrm{e}^{\mathrm{i} (\mu_1+\mu_2) X_{t_1}^{\varepsilon}}\right)
\mathbb{E}\!\left(\mathrm{e}^{\mathrm{i} \mu_2 X_{t_2-t_1}^{\varepsilon}}\right)\!,
\mathbb{E}\!\left(\mathrm{e}^{\mathrm{i} (\mu_1+\mu_2) X_{t_1}^{\varepsilon}}\right)
\mathbb{E}\!\left(\mathrm{e}^{\mathrm{i} \mu_2 X_{t_2-t_1+\varepsilon^{2N}}^{\varepsilon}}\right)
\right\}\!.
\]
By~(\ref{limit-FT}) and~(\ref{limit-FT-bis}), we obtain the following limit:
\[
\lim_{\varepsilon\to 0^+} \mathbb{E}\!\left(\mathrm{e}^{\mathrm{i}
\mu_2 X_{t_2-t_1}^{\varepsilon}}\right)
=\lim_{\varepsilon\to 0^+} \mathbb{E}\!\left(\mathrm{e}^{\mathrm{i}
\mu_2 X_{t_2-t_1+\varepsilon^{2N}}^{\varepsilon}}\right)
=\mathbb{E}\!\left(\mathrm{e}^{\mathrm{i} \mu_2 X_{t_2-t_1}}\right)
\]
which yields that
\begin{align*}
\lim_{\varepsilon\to 0^+}\mathbb{E}\Big(\mathrm{e}^{\mathrm{i}
\big(\mu_1 X_{t_1}^{\varepsilon}+\mu_2 X_{t_2}^{\varepsilon}\big)}\Big)
&
=\lim_{\varepsilon\to 0^+}\mathbb{E}\!\left(\mathrm{e}^{\mathrm{i}
(\mu_1+\mu_2) X_{t_1}^{\varepsilon}}\right)
\times \lim_{\varepsilon\to 0^+} \mathbb{E}\!\left(\mathrm{e}^{\mathrm{i}
\mu_2 X_{t_2-t_1}^{\varepsilon}}\right)
\\
&
=\mathbb{E}\Big(\mathrm{e}^{\mathrm{i} (\mu_1+\mu_2) X_{t_1}}\Big)
\times \mathbb{E}\!\left(\mathrm{e}^{\mathrm{i} \mu_2 X_{t_2-t_1}}\right)
=\mathbb{E}\Big(\mathrm{e}^{\mathrm{i} \big(\mu_1 X_{t_1}+\mu_2 X_{t_2}\big)}\Big)\!.
\end{align*}

$\bullet$ Finally, we can easily extend the foregoing limiting result
by recurrence as follows:
for $n\in\mathbb{N}^*$, $\mu_1,\dots,\mu_n\in\mathbb{R}$ and for any times
$t_1,\dots,t_n$ such that $t_1<\dots<t_n$,
\[
\lim_{\varepsilon\to 0^+}\mathbb{E}\Big(\mathrm{e}^{\mathrm{i}
\big(\mu_1X_{t_1}^{\varepsilon}+\dots+\mu_nX_{t_n}^{\varepsilon}\big)}\Big)
=\mathbb{E}\!\left(\mathrm{e}^{\mathrm{i}(\mu_1X_{t_1}+\dots+\mu_nX_{t_n})}\right)\!.
\]
The proof of Theorem~\ref{theo-limit} is complete.
\end{proof}

We find it interesting to compute in a similar way
the $\lambda$-potential of the pseudo-process $(X_t)_{t\ge 0}$.
By definition of $X_t^{\varepsilon}$, we have, for any
$\alpha,\beta\in\mathbb{R}$ such that $\alpha<\beta$,
$\mathbb{P}\{X_t^{\varepsilon}\in[\alpha,\beta)\}=
\mathbb{P}\{S_{\lfloor t/\varepsilon^{2N}\rfloor}\in[\alpha/\varepsilon,\beta/\varepsilon)\}.$
Thus,
\begin{align*}
\int_0^{+\infty} \mathrm{e}^{-\lambda t} \,\mathbb{P}\{X_t^{\varepsilon}
\in[\alpha,\beta)\}\,\mathrm{d} t
&=
\sum_{n=0}^{\infty} \left(\int_{n\varepsilon^{2N}}^{(n+1)\varepsilon^{2N}}
\mathrm{e}^{-\lambda t} \,\mathrm{d} t\right)
\mathbb{P}\{S_n\in[\alpha/\varepsilon,\beta/\varepsilon)\}
\\
&=
\frac{1-\mathrm{e}^{-\lambda \varepsilon^{2N}}}{\lambda} \sum_{n=0}^{\infty}
\left(\mathrm{e}^{-\lambda \varepsilon^{2N}}\right)^{\!n}
\left[\!\vphantom{\sum_{n=0}^{\infty}}\right.
\sum_{k\in\mathbb{Z}:\atop\alpha/\varepsilon\le k< \beta/\varepsilon}
\mathbb{P}\{S_n=k\}\left.\vphantom{\sum_{n=0}^{\infty}}\!\right]
\\
&=
\frac{1-\mathrm{e}^{-\lambda \varepsilon^{2N}}}{\lambda}
\sum_{k\in\mathbb{Z}:\atop\alpha/\varepsilon\le k< \beta/\varepsilon} \left[
\sum_{n=0}^{\infty} \mathbb{P}\{S_n=k\}\left(\mathrm{e}^{-\lambda
\varepsilon^{2N}}\right)^{\!n} \right]
\\
&=
\frac{1-\mathrm{e}^{-\lambda \varepsilon^{2N}}}{\lambda}\sum_{k\in\mathbb{Z}:
\atop\alpha/\varepsilon\le k< \beta/\varepsilon}
G_k\!\left(\mathrm{e}^{-\lambda\varepsilon^{2N}}\right).
\end{align*}
Interchanging the two sums in the above computations is justified
by the fact that the series
$\sum_{n=0}^{\infty} \mathbb{P}\{S_n=k\}\left(\mathrm{e}^{-\lambda
\varepsilon^{2N}}\right)^{\!n}$
is absolutely convergent because of the condition $c\le 1/2^{2N-1}$.
Indeed, by~(\ref{bound-Sn-part}), for any $\lambda>0$,
$|\mathbb{P}\{S_n=k\}|< M_{\infty}^n=1$.

Put $u_j(\lambda,\varepsilon)=u_j\!\left(\mathrm{e}^{-\lambda\varepsilon^{2N}}
\right)$. This yields that
\begin{align*}
\int_0^{+\infty} \mathrm{e}^{-\lambda t} \,\mathbb{P}\{X_t^{\varepsilon}
\in[\alpha,\beta)\}\,\mathrm{d} t
&=
\frac{1}{N\lambda} \sum_{j=1}^N \frac{1-u_j(\lambda,\varepsilon)}{1+u_j(\lambda,\varepsilon)}
\sum_{\alpha/\varepsilon\le k< \beta/\varepsilon} u_j(\lambda,\varepsilon)^{|k|}.
\end{align*}
Suppose, e.g., that $0\le\alpha<\beta$. Then,
\[
\sum_{\alpha/\varepsilon\le k<\beta/\varepsilon} u_j(\lambda,\varepsilon)^{|k|}
=\sum_{k=\lceil\alpha/\varepsilon\rceil}^{\lceil\beta/\varepsilon\rceil-1}
u_j(\lambda,\varepsilon)^k
=\frac{u_j(\lambda,\varepsilon)^{\lceil\alpha/\varepsilon\rceil}-
u_j(\lambda,\varepsilon)^{\lceil\beta/\varepsilon\rceil}}{1-u_j(\lambda,\varepsilon)},
\]
where $\lceil\,.\,\rceil$ stands for the usual ceiling function.
By using~(\ref{asymptotic-u}), we deduce that
\begin{equation}\label{asymptotic-v}
u_j(\lambda,\varepsilon)\underset{\varepsilon\to 0^+}{=}
1-\varphi_j\!\!\sqrt[2N\!\!]{\lambda/c}\,\varepsilon+o(\varepsilon)
\end{equation}
which implies that
\[
\lim_{\varepsilon\to 0^+} u_j(\lambda,\varepsilon)^{\lceil\alpha/
\varepsilon\rceil}=\mathrm{e}^{-\varphi_j\!\!\sqrt[2N\!\!]{\lambda/c}\,\alpha}.
\]
Therefore,
\begin{align*}
\lim_{\varepsilon\to 0^+} \int_0^{+\infty} \mathrm{e}^{-\lambda t}
\,\mathbb{P}\{X_t^{\varepsilon}\in[\alpha,\beta)\}\,\mathrm{d} t
&=
\frac{1}{2N\lambda} \sum_{j=1}^N\left(\mathrm{e}^{-\varphi_j\!\!
\sqrt[2N\!\!]{\lambda/c}\,\alpha}-
\mathrm{e}^{-\varphi_j\!\!\sqrt[2N\!\!]{\lambda/c}\,\beta}\right)
\\
&=
\frac{1}{2N\lambda} \sum_{j=1}^N\left(\varphi_j\!\!\sqrt[2N\!\!]{\lambda/c}
\int_{\alpha}^{\beta} \mathrm{e}^{-\varphi_j\!\!\sqrt[2N\!\!]{\lambda/c}\,x}
\,\mathrm{d} x\right)\!.
\end{align*}
The case $\alpha<\beta\le 0$ is similar to treat. We have obtained the following result.
%
\begin{proposition}
The $\lambda$-potential of the pseudo-process $(X_t)_{t\ge 0}$ is given by
\begin{align*}
\int_0^{+\infty} \mathrm{e}^{-\lambda t} \,\big(\mathbb{P}\{X_t\in \mathrm{d} x\}
/\mathrm{d} x\big)\,\mathrm{d} t
&=
\begin{cases}
\displaystyle \frac{1}{2N\!\sqrt[2N\!]{c}\,\lambda^{1-1/(2N)}} \sum_{j=1}^N \varphi_j
\mathrm{e}^{-\varphi_j\!\!\sqrt[2N\!\!]{\lambda/c}\,x} & \text{if $x\ge0$},
\\
\displaystyle -\frac{1}{2N\!\sqrt[2N\!]{c}\,\lambda^{1-1/(2N)}} \sum_{j=1}^N \varphi_j
\mathrm{e}^{\varphi_j\!\!\sqrt[2N\!\!]{\lambda/c}\,x} & \text{if $x\le0$}.
\end{cases}
\end{align*}
\end{proposition}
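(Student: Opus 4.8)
The idea is to obtain the statement directly from the discrete approximation, carrying on the computation made just above. The heart of the matter is the explicit evaluation, valid for $0\le\alpha<\beta$,
\[
\int_0^{+\infty}\mathrm{e}^{-\lambda t}\,\mathbb{P}\{X_t^\varepsilon\in[\alpha,\beta)\}\,\mathrm{d} t
=\frac{1}{N\lambda}\sum_{j=1}^N\frac{1-u_j(\lambda,\varepsilon)}{1+u_j(\lambda,\varepsilon)}\,
\frac{u_j(\lambda,\varepsilon)^{\lceil\alpha/\varepsilon\rceil}-u_j(\lambda,\varepsilon)^{\lceil\beta/\varepsilon\rceil}}{1-u_j(\lambda,\varepsilon)},
\]
obtained by inserting the closed form of Theorem~\ref{theo-gene-function} for $G_k$ and using $\frac{1-\mathrm{e}^{-\lambda\varepsilon^{2N}}}{\lambda}\cdot\frac{1}{N(1-\mathrm{e}^{-\lambda\varepsilon^{2N}})}=\frac{1}{N\lambda}$. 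First I would let $\varepsilon\to 0^+$ term by term: grouping $\frac{1-u_j}{1+u_j}\cdot\frac{1}{1-u_j}=\frac{1}{1+u_j}\to\frac12$ and using $u_j(\lambda,\varepsilon)^{\lceil\alpha/\varepsilon\rceil}\to\mathrm{e}^{-\varphi_j\sqrt[2N]{\lambda/c}\,\alpha}$ from~(\ref{asymptotic-v}), this yields
\[
\lim_{\varepsilon\to 0^+}\int_0^{+\infty}\mathrm{e}^{-\lambda t}\,\mathbb{P}\{X_t^\varepsilon\in[\alpha,\beta)\}\,\mathrm{d} t
=\frac{1}{2N\lambda}\sum_{j=1}^N\Bigl(\mathrm{e}^{-\varphi_j\sqrt[2N]{\lambda/c}\,\alpha}-\mathrm{e}^{-\varphi_j\sqrt[2N]{\lambda/c}\,\beta}\Bigr).
\]

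Next I would turn the right-hand side into an integral against a density. Since $\mathrm{e}^{-\varphi_j\sqrt[2N]{\lambda/c}\,\alpha}-\mathrm{e}^{-\varphi_j\sqrt[2N]{\lambda/c}\,\beta}=\varphi_j\sqrt[2N]{\lambda/c}\int_\alpha^\beta\mathrm{e}^{-\varphi_j\sqrt[2N]{\lambda/c}\,x}\,\mathrm{d} x$ and $\frac{\varphi_j\sqrt[2N]{\lambda/c}}{2N\lambda}=\frac{\varphi_j}{2N\sqrt[2N]{c}\,\lambda^{1-1/(2N)}}$, the bracketed integrand is precisely the announced expression for $x\ge0$. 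The case $\alpha<\beta\le0$ is handled by the same computation, every $k$ in the summation range being now negative so that $u_j^{|k|}=u_j^{-k}$ and the telescoping produces the corresponding formula for $x\le0$; the case $\alpha<0<\beta$ follows by splitting $[\alpha,\beta)=[\alpha,0)\cup[0,\beta)$. Equivalently, both reduce at once to the previous one through the symmetry $\mathbb{P}\{X_t\in-\mathrm{d} x\}=\mathbb{P}\{X_t\in\mathrm{d} x\}$, inherited from $\mathbb{P}\{S_n=-k\}=\mathbb{P}\{S_n=k\}$.

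The single point that needs real care — the step I expect to be the main obstacle — is the identification of this explicit limit with the $\lambda$-potential of $(X_t)_{t\ge0}$, namely
\[
\lim_{\varepsilon\to 0^+}\int_0^{+\infty}\mathrm{e}^{-\lambda t}\,\mathbb{P}\{X_t^\varepsilon\in[\alpha,\beta)\}\,\mathrm{d} t
=\int_\alpha^\beta\Bigl(\int_0^{+\infty}\mathrm{e}^{-\lambda t}\,\bigl(\mathbb{P}\{X_t\in\mathrm{d} x\}/\mathrm{d} x\bigr)\,\mathrm{d} t\Bigr)\mathrm{d} x ,
\]
after which the density is read off from the arbitrariness of $\alpha,\beta$. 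I would justify this by combining Theorem~\ref{theo-limit}, which for each fixed $t$ gives the convergence of the Fourier transform of $X_t^\varepsilon$ to $\mathrm{e}^{-ct\mu^{2N}}$, hence of $\mathbb{P}\{X_t^\varepsilon\in\mathrm{d} x\}/\mathrm{d} x$ to $p(t;x)$ and of $\mathbb{P}\{X_t^\varepsilon\in[\alpha,\beta)\}$ to $\int_\alpha^\beta p(t;x)\,\mathrm{d} x$, with a dominated convergence argument under $\int_0^{+\infty}\mathrm{e}^{-\lambda t}(\cdot)\,\mathrm{d} t$; the uniform-in-$\varepsilon$ domination is exactly where the hypothesis $c\le1/2^{2N-1}$ enters, through $M_\infty=1$ in~(\ref{bound}) and the estimate~(\ref{bound-Sn-part}). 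Should this domination be awkward to make uniform, a safe fallback is to compute the right-hand side directly from $p(t;x)=\frac{1}{2\pi}\int_{-\infty}^{+\infty}\mathrm{e}^{-\mathrm{i}\mu x-ct\mu^{2N}}\,\mathrm{d}\mu$: by Fubini $\int_0^{+\infty}\mathrm{e}^{-\lambda t}p(t;x)\,\mathrm{d} t$ has Fourier transform $1/(\lambda+c\mu^{2N})$, and a residue computation over the $2N$-th roots of $-\lambda/c$ recovers the stated formula, the discrete computation then serving as the conceptual explanation of where the $\varphi_j$'s come from.
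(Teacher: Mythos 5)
Your proposal is correct and follows essentially the same route as the paper: insert the closed form of $G_k$ from Theorem~\ref{theo-gene-function} into the Laplace transform of $\mathbb{P}\{X_t^{\varepsilon}\in[\alpha,\beta)\}$, sum the geometric series over $k$, pass to the limit using~(\ref{asymptotic-v}) so that $1/(1+u_j)\to 1/2$ and $u_j^{\lceil\alpha/\varepsilon\rceil}\to\mathrm{e}^{-\varphi_j\sqrt[2N]{\lambda/c}\,\alpha}$, and rewrite the difference of exponentials as an integral to read off the density. Your additional care about identifying the $\varepsilon\to0^+$ limit with the $\lambda$-potential of the limiting pseudo-process (and the residue-computation fallback) goes beyond what the paper records, which treats that identification as implicit in its ad hoc convergence framework.
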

%

\newpage
\begin{center}
\textbf{\Large Part II --- First overshooting time of a single threshold}
\end{center}
\vspace{\baselineskip}

\section{On the pseudo-distribution of $(\sigma_b^+,S_b^+)$}\label{section-sigma-b}

Let $b\in\mathbb{N}^*$. In this section, we explicitly compute the
generating function of $(\sigma_b^+,S_b^+)$. Set, for $\ell\in\{b,b+1,\dots,b+N-1\}$,
\[
H_{b,\ell}^+(z)=\mathbb{E}\!\left(z^{\sigma_b^+}\ind_{\{S_b^+=\ell,\sigma_b^+<+\infty\}}\right)
=\sum_{k\in\mathbb{N}} \mathbb{P}\{\sigma_b^+=k,S_b^+=\ell\}z^k.
\]
We are able to provide an explicit expression of $H_{b,\ell}^+(z)$.
Before tackling this problem, we need an \textit{a priori} estimate for
$\mathbb{P}\{\sigma_b^+=k,S_b^+=\ell\}$. By (\ref{bound-Fbis}),
we immediately derive that
$|\mathbb{P}\{\sigma_b^+=k,S_b^+=\ell\}|=
|\mathbb{P}\{S_1<b,\dots, S_{k-1}<b,S_k=\ell\}|\le M_1^k.$
Hence, the power series defining $H_{b,\ell}^+(z)$ absolutely converges for
$|z|<1/M_1$.

\subsection{Joint pseudo-distribution of $(\sigma_b^+,S_b^+)$}

%
\begin{theorem}\label{theorem-joint-dist}
The pseudo-distribution of $(\sigma_b^+,S_b^+)$ is characterized by the identity,
valid for any $z\in(0,1)$ and any $\ell\in\{b,b+1,\dots,N-1\}$,
\begin{equation}\label{joint-dist}
\mathbb{E}\!\left(z^{\sigma_b^+}\ind_{\{S_b^+=\ell,\sigma_b^+<+\infty\}}\right)
=(-1)^{\ell-b}\sum_{k=1}^N\frac{s_{k,\ell-b}^+(z)}{p_k^+(z)}\,u_k(z)^{b+N-1},
\end{equation}
where $s_{k,0}^+(z)=1$ and for $k\in\{1,\dots,N\}$, $\ell\in\{1,\dots,N-1\}$,
\[
s_{k,\ell}^+(z)=\sum_{1\le i_1<\dots<i_{\ell}\le N\atop i_1,\dots,i_{\ell}\neq k}
u_{i_1}(z)\cdots u_{i_{\ell}}(z),\quad
p_k^+(z)=\prod_{1\le j\le N\atop j\neq k} [u_k(z)-u_j(z)].
\]
\end{theorem}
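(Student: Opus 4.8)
The plan is to derive a closed system of linear equations for the generating functions $H_{b,\ell}^+(z)$ by conditioning on the first step and exploiting the spatial homogeneity of the walk, then solve that system explicitly using the partial–fraction machinery already developed for $\mathbf{G}(\zeta,z)$. First I would extend the definition of $H_{b,\ell}^+(z)$ to all integers $b$ (not only $b\ge 1$): for $b\le 0$ the overshoot event is trivial — the walk already sits at or above the threshold at time $0$ is not allowed since $\sigma_b^+\ge 1$, but after one step the relevant quantities are captured by the values $H_{b,\ell}^+$ with $b-k$ possibly nonpositive, so I must fix the convention $H_{b,\ell}^+(z)=\delta_{b\ell}$ (or a similarly explicit boundary value) for $b\le 0$ and $\ell\in\{0,\dots,N-1\}$. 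With this convention in place, a one–step decomposition $S_1=U_1=m$ gives, for every $b\ge 1$,
\[
H_{b,\ell}^+(z)=z\sum_{m=-N}^{N} p_m\, H_{b-m,\ell}^+(z),
\]
where on the right the terms with $b-m\le 0$ are the known boundary data. This is a linear recurrence in $b$ with the same characteristic polynomial $P_z(u)=(1-z)u^N-\kappa_{_{\!N}}cz(u-1)^{2N}$ that governs $\mathbf{G}$, because $1/z=\mathbb{E}(\zeta^{U_1})$ at $\zeta=u$ is exactly $P_z(u)=0$ after clearing denominators.

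Next I would solve the recurrence. Since $H_{b,\ell}^+(z)$ must stay bounded (indeed $\to 0$) as $b\to+\infty$ — this uses the a priori bound $|\mathbb{P}\{\sigma_b^+=k,S_b^+=\ell\}|\le M_1^k$ together with the fact that $\sigma_b^+\ge b/N\to\infty$, so for fixed $z\in(0,1)$ the series is dominated by $\sum_{k\ge b/N}M_1^k z^k$ which decays geometrically — only the decaying modes survive, i.e. $H_{b,\ell}^+(z)=\sum_{k=1}^N c_{k,\ell}(z)\,u_k(z)^{b}$ for $b$ large, where $u_1(z),\dots,u_N(z)$ are precisely the $N$ roots of $P_z$ inside the unit circle identified in Theorem~\ref{theo-gene-function}. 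The coefficients $c_{k,\ell}(z)$ are then pinned down by matching the boundary values at $b=1,\dots,N$ (or equivalently $b=0,-1,\dots,-(N-1)$), which produces an $N\times N$ linear system whose matrix is essentially the Vandermonde-type matrix $\big(u_k(z)^{b}\big)$. Solving it by Cramer's rule is what manufactures the symmetric functions $s_{k,\ell}^+(z)$ in the numerator and the product $p_k^+(z)=\prod_{j\neq k}[u_k(z)-u_j(z)]$ in the denominator: $s_{k,\ell}^+(z)$ is (up to sign) the elementary symmetric polynomial of degree $\ell$ in the roots other than $u_k$, which is exactly the cofactor expansion one gets when inverting a Vandermonde matrix. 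The sign $(-1)^{\ell-b}$ and the exponent $b+N-1$ should drop out of the bookkeeping once the boundary data $H_{b,\ell}^+=\delta_{b\ell}$ on $\{-(N-1),\dots,0\}$ are substituted and one reindexes.

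The main obstacle, I expect, is getting the boundary conditions exactly right and controlling the combinatorics of the finite overshoot window $\{b,\dots,b+N-1\}$. The subtlety is that the recurrence $H_{b,\ell}^+(z)=z\sum_m p_m H_{b-m,\ell}^+(z)$ only holds for $b\ge 1$, and for $1\le b\le N$ the right-hand side genuinely involves the nonpositive-index boundary terms; moreover one must verify that the naive choice $H_{b,\ell}^+(z)=\delta_{b\ell}$ for $b\le 0$ is consistent with "$S_b^+=\ell$ with $\ell\in\{b,\dots,b+N-1\}$" reinterpreted when $b\le 0$, i.e. that it correctly encodes "the walk started at $-b\ge 0$ distance below and is already at level $\ell$." Once this is pinned down, the rest is linear algebra: set up the $N$ equations from $b=0,-1,\dots,-(N-1)$, apply Cramer's rule, recognize the determinants as (generalized) Vandermonde determinants, and simplify the ratios to the stated form using the identity $\prod_{1\le\ell<m\le N}(u_m-u_\ell)=\pm\prod_k p_k^+(z)^{1/2}$-type relations. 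I would double-check the final formula by specializing to $N=1$, where $u_1(z)$ is the unique root in $(0,1)$ of $(1-z)u-cz(u-1)^2=0$, $s_{1,0}^+=1$, $p_1^+=1$, and \eqref{joint-dist} should reduce to the classical geometric-type formula $\mathbb{E}(z^{\sigma_b^+})=u_1(z)^{b}$ for the nearest-neighbour walk, and to the known $\varepsilon\to0$ limit recovering the result of~\cite{la2,la3}.
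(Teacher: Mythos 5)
Your route is genuinely different from the paper's: you propose a first-step (renewal) decomposition in the threshold variable $b$, yielding a linear recurrence whose characteristic polynomial is $P_z$, then discard the growing modes $v_k(z)^b$ by an a priori decay estimate and fix the remaining $N$ coefficients by boundary matching. The paper instead factorizes $\mathbb{P}\{S_n=k\}$ for $k\ge b$ over the overshooting time, passes to generating functions to get $G_k(z)=\sum_{\ell=b}^{b+N-1}G_{k-\ell}(z)H_{b,\ell}^+(z)$, substitutes $G_m(z)=\sum_j\alpha_j(z)u_j(z)^m$ and kills the coefficients of $u_j(z)^k$ by a homogeneous Vandermonde argument, arriving directly at the clean system $\sum_{\ell=b}^{b+N-1}H_{b,\ell}^+(z)v_j(z)^{\ell}=1$, $1\le j\le N$, which is then solved by Cramer's rule. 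Your approach can be made to work and in fact lands on an equivalent computation: writing $H_{b,b+r}^+(z)=\sum_k c_k^{(r)}(z)u_k(z)^b$, the boundary data force $\sum_k c_k^{(r)}(z)v_k(z)^j=\delta_{jr}$ for $0\le j\le N-1$, so the matrix $\big(c_k^{(r)}\big)$ is the inverse of the Vandermonde matrix in the $v_k(z)$'s, whose entries are precisely the Lagrange data $s_{k,r}^+(z)u_k(z)^{N-1}/p_k^+(z)$ up to sign. Both proofs therefore end at the same inverse-Vandermonde identity.

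However, the setup as you wrote it contains two concrete errors. First, the recurrence $H_{b,\ell}^+(z)=z\sum_m p_m H_{b-m,\ell}^+(z)$ is false: after a first step to $m<b$ the remaining walk must overshoot level $b-m$ and terminate at $\ell-m$, so the correct relation is $H_{b,\ell}^+(z)=z\sum_m p_m H_{b-m,\ell-m}^+(z)$; with $\ell$ held fixed the window $\ell\in\{b,\dots,b+N-1\}$ does not travel with $b$ and the recurrence does not close. You must index by the overshoot $r=\ell-b$, which is preserved by the shift. Second, the boundary convention $H_{b,\ell}^+=\delta_{b\ell}$ for $b\le 0$ is wrong: a walk that has already overshot sits at its current position, not at the threshold, so the convention making the one-step identity valid for every $b\ge 1$ is $H_{b',\ell'}^+=\delta_{\ell'0}$ for $b'\le 0$ (i.e.\ $\delta_{r,-b'}$ in overshoot coordinates). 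This matters structurally: it is exactly this extension that makes the homogeneous recurrence hold for all $b\ge 1$ and hence constrains the sequence down to index $-(N-1)$, so that the $c_k^{(r)}$ are determined by the \emph{known} values at $b=0,-1,\dots,-(N-1)$; matching at $b=1,\dots,N$, which you offer as an equivalent option, is circular since those values are the unknowns. Finally, extracting the precise constants $(-1)^{\ell-b}$, $s_{k,\ell-b}^+(z)$, $p_k^+(z)$ and the exponent $b+N-1$ is not bookkeeping that "drops out" — it is the determinant expansion occupying most of the paper's proof — so as it stands your plan defers the substantive part of the argument.
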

%
\begin{proof}
Pick an integer $k\ge b$. If $S_n=k$, then an overshoot of the threshold $b$
occurs before time $n$: $\sigma_b^+\le n$. This remark and the independence of the
increments of the pseudo-random walk entail that
\begin{align}
\mathbb{P}\{S_n=k\}
&
=\mathbb{P}\{S_n=k,\sigma_b^+\le n\}=\sum_{j=0}^n\sum_{\ell=b}^{b+N-1}
\mathbb{P}\{S_n=k,\sigma_b^+=j,S_b^+=\ell\}
\nonumber\\
&
=\sum_{j=0}^n\sum_{\ell=b}^{b+N-1} \mathbb{P}\{\sigma_b^+=j,S_b^+=\ell\}
\mathbb{P}\{S_{n-j}=k-\ell\}.
\label{convol1}
\end{align}
Since the series defining $G_k(z)$ and $H_{b,\ell}^+(z)$ absolutely
converge respectively for $z\in(0,1)$ and $|z|<1/M_1$, and since
$M_1\ge 1$, we can apply the generating function to the convolution
equality~(\ref{convol1}). We get, for $z\in(0,1/M_1)$, that
\[
G_k(z)=\sum_{\ell=b}^{b+N-1} G_{k-\ell}(z) H_{b,\ell}^+(z).
\]
Using expression~(\ref{expression-Gk}) of $G_k$, namely
$G_k(z)=\sum_{j=1}^N \alpha_j(z)\,u_j(z)^{k}$ for $k\ge 0$, where
$\alpha_j(z)=\frac{1}{N(1-z)}\frac{1-u_j(z)}{1+u_j(z)}$, we obtain that
\begin{equation}\label{system-equations-inter}
\sum_{j=1}^N \alpha_j(z)\,u_j(z)^{k}
\left(\sum_{\ell=b}^{b+N-1} \frac{H_{b,\ell}^+(z)}{u_j(z)^{\ell}}-1\right)=0,
\quad k\ge b+N-1.
\end{equation}
Recalling that $v_j(z)=1/u_j(z)$ and setting
$\tilde{\alpha}_j(z)=\alpha_j(z)\left(\sum_{\ell=b}^{b+N-1}
H_{b,\ell}^+(z)v_j(z)^{\ell}-1\right)$, system~(\ref{system-equations-inter})
reads $\sum_{j=1}^N \tilde{\alpha}_j(z)\,u_j(z)^{k}=0$, $k\ge b+N-1$.
When limiting the range of $k$ to the set $\{b+N,b+N+1,\dots,b+2N-1\}$,
this becomes an homogeneous Vandermonde system whose solution is trivial:
$\tilde{\alpha}_j(z)=0$, $1\le j\le N$. Thus, we get the Vandermonde system below:
\begin{equation}\label{system-equations}
\sum_{\ell=b}^{b+N-1} H_{b,\ell}^+(z)v_j(z)^{\ell}=1, \quad 1\le j\le N.
\end{equation}
System~(\ref{system-equations}) can be explicitly solved.
In order to simplify the settings, we shall omit the variable $z$ in the
sequel of the proof. It is convenient to rewrite
(\ref{system-equations}) as
\begin{equation}\label{system-equationsbis}
\sum_{\ell=b}^{b+N-1} H_{b,\ell}^+v_j^{\ell-b}=u_j^b, \quad 1\le j\le N.
\end{equation}
Cramer's formulae yield
\begin{equation}\label{exp-Hbl}
H_{b,\ell}^+=\frac{V_{\ell}(v_1,\dots,v_N)}{V(v_1,\dots,v_N)},
\quad b\le \ell\le b+N-1
\end{equation}
where
\[
V(v_1,\dots,v_N)=\begin{vmatrix}
1      & v_1    & \dots & v_1^{N-1} \\
1      & v_2    & \dots & v_2^{N-1} \\
\vdots & \vdots &       & \vdots    \\
1      & v_N    & \dots & v_N^{N-1}
\end{vmatrix}
=\prod_{1\le i< j\le N} (v_j-v_i)
\]
and, for any $\ell\in\{b,\dots,b+N-1\}$,
\[
V_{\ell}(v_1,\dots,v_N)
=\begin{vmatrix}
1      & v_1    & \dots & v_1^{\ell-b-1} & u_1^b  & v_1^{\ell-b+1} & \dots & v_1^{N-1} \\
1      & v_2    & \dots & v_2^{\ell-b-1} & u_2^b  & v_2^{\ell-b+1} & \dots & v_2^{N-1} \\
\vdots & \vdots &       & \vdots         & \vdots & \vdots         &       & \vdots    \\
1      & v_N    & \dots & v_N^{\ell-b-1} & u_N^b  & v_N^{\ell-b+1} & \dots & v_N^{N-1}
\end{vmatrix}\!.
\]
This last determinant can be expanded as
$\sum_{k=1}^N u_k^b V_{k\ell}(v_1,\dots,v_{k-1},v_{k+1},\dots,v_N)$
with, for $k\in\{1,\dots, N\}$,
\[
V_{k\ell}(v_1,\dots,v_{k-1},v_{k+1},\dots,v_N)=
\begin{vmatrix}
1      & v_1     & \dots & v_1^{\ell-b-1}     & 0      & v_1^{\ell-b+1}     & \dots & v_1^{N-1}     \\
\vdots & \vdots  &       & \vdots             & \vdots & \vdots             &       & \vdots        \\
1      & v_{k-1} & \dots & v_{k-1}^{\ell-b-1} & 0      & v_{k-1}^{\ell-b+1} & \dots & v_{k-1}^{N-1} \\[0.5ex]
1      & v_k     &\dots  & v_k^{\ell-b-1}     & 1      & v_k^{\ell-b+1}     & \dots & v_k^{N-1}     \\[0.5ex]
1      & v_{k+1} & \dots & v_{k+1}^{\ell-b-1} & 0      & v_{k+1}^{\ell-b+1} & \dots & v_{k+1}^{N-1} \\
\vdots & \vdots  &       & \vdots             & \vdots & \vdots             &       & \vdots        \\
1      & v_N     &\dots  & v_N^{\ell-b-1}     & 0      & v_N^{\ell-b+1}     & \dots & v_N^{N-1}
\end{vmatrix}\!.
\]
In fact, the quantity
$V_{k\ell}(v_1,\dots,v_{k-1},v_{k+1},\dots,v_N)$ is the coefficient of
$x^{\ell-b}$ in the polynomial
\[
x\longmapsto \begin{vmatrix}
1      & v_1     & \dots & v_1^{N-1}     \\
\vdots & \vdots  &       & \vdots        \\
1      & v_{k-1} & \dots & v_{k-1}^{N-1} \\[0.5ex]
1      & x       & \dots & x^{N-1}       \\
1      & v_{k+1} & \dots & v_{k+1}^{N-1} \\
\vdots & \vdots  &       & \vdots        \\
1      & v_N     & \dots & v_N^{N-1}
\end{vmatrix}
\]
which is nothing but $V(v_1,\dots,v_{k-1},x,v_{k+1},\dots,v_N)$,
the value of which is
\begin{align*}
\lqn{
\prod_{1\le i< j\le N\atop i,j\neq k} (v_j-v_i)
\prod_{1\le i\le k-1} (x-v_i)\prod_{k+1\le i\le N} (v_i-x)}
&
=\frac{\prod_{1\le i< j\le N} (v_j-v_i)}{\prod_{1\le i\le N\atop i\neq k} (v_k-v_i)}
\prod_{1\le i\le N\atop i\neq k} (x-v_i)
=V(v_1,\dots,v_N) \prod_{1\le i\le N\atop i\neq k} \frac{x-v_i}{v_k-v_i}
\\
&
=(-1)^{N-1}V(v_1,\dots,v_N) \prod_{1\le i\le N\atop i\neq k}
\left(u_k\,\frac{u_ix-1}{u_k-u_i}\right)
\\
&
=(-1)^{N-1} \frac{u_k^{N-1}}{p_k^+}\,V(v_1,\dots,v_N)
\prod_{1\le i\le N\atop i\neq k}(u_ix-1).
\end{align*}
Using the elementary expansion $\prod_{1\le i\le N\atop i\neq k}(u_ix-1)
=\sum_{\ell=0}^{N-1} (-1)^{N-1-\ell} s_{k,\ell}^+ \,x^{\ell}$, we obtain
by identification that
\[
V_{k\ell}(v_1,\dots,v_{k-1},v_{k+1},\dots,v_N)
=(-1)^{\ell-b} \frac{u_k^{N-1}}{p_k^+} \,s_{k,\ell-b}^+V(v_1,\dots,v_N).
\]
Plugging this expression into~(\ref{exp-Hbl}), we then derive
for $H_{b,\ell}^+(z)$ representation~(\ref{joint-dist})
which is valid at least for $z\in(0,1/M_1)$. Finally, we observe
that (\ref{joint-dist}) defines an analytical function in $(0,1)$
and that $H_{b,\ell}^+(z)$ is a power series. Thus, by analytical continuation,
(\ref{joint-dist}) holds true for any $z\in(0,1)$.
\end{proof}
%
\begin{example}\label{example-joint-dist}
For $N=2$, the settings of Theorem~\ref{theorem-joint-dist} write
\[
s_{1,0}^+(z)=s_{2,0}^+(z)=1,\;s_{1,1}^+(z)=u_2(z),\;s_{2,1}^+(z)=u_1(z),
\]
\[
p_1^+(z)=u_1(z)-u_2(z),\;p_2^+(z)=u_2(z)-u_1(z),
\]
where $u_1(z)$ and $u_2(z)$ are given in Example~\ref{example-roots}
and (\ref{joint-dist}) reads
\begin{align*}
\mathbb{E}\!\left(z^{\sigma_b^+}\ind_{\{S_b^+=b,\sigma_b^+<+\infty\}}\right)
&
=\frac{u_1(z)^{b+1}-u_2(z)^{b+1}}{u_1(z)-u_2(z)},
\\
\mathbb{E}\!\left(z^{\sigma_b^+}\ind_{\{S_b^+=b+1,\sigma_b^+<+\infty\}}\right)
&
=\frac{u_1(z)u_2(z)^{b+1}-u_2(z)u_1(z)^{b+1}}{u_1(z)-u_2(z)}.
\end{align*}
\end{example}
%
\begin{remark}
We have the similar expression related to $\sigma_a^-$ below. The analogous
system to (\ref{system-equations}) writes as
\[
\sum_{\ell=a-N+1}^a H_{a,\ell}^-(z)u_j(z)^{\ell}=1, \quad 1\le j\le N
\]
where $H_{a,\ell}^-(z)=\mathbb{E}\!\left(z^{\sigma_a^-}
\ind_{\{S_a^-=\ell,\sigma_a^-<+\infty\}}\right)$.
The solution is given by
\[
\mathbb{E}\!\left(z^{\sigma_a^-}\ind_{\{S_a^-=\ell,\sigma_a^-<+\infty\}}\right)
=(-1)^{\ell-a}\sum_{k=1}^N\frac{s_{k,\ell-a}^-(z)}{p_k^-(z)}\,u_k(z)^{a+N-1}
\]
where $s_{k,0}^-(z)=1$ and, for $k\in\{1,\dots,N\}$, $\ell\in\{1,\dots,N-1\}$,
\[
s_{k,\ell}^-(z)=\sum_{1\le i_1<\dots<i_{\ell}\le N\atop i_1,\dots,i_{\ell}\neq k}
v_{i_1}(z)\cdots v_{i_{\ell}}(z),\quad
p^-_k(z)=\prod_{1\le j\le N\atop j\neq k} [v_k(z)-v_j(z)].
\]
\end{remark}

The double generating function of $(\sigma_b^+,S_b^+)$ defined by
\[
\mathbb{E}\!\left(z^{\sigma_b^+}\zeta^{S_b^+}\ind_{\{\sigma_b^+<+\infty\}}\right)
=\sum_{\ell=b}^{b+N-1}\mathbb{E}\!\left(z^{\sigma_b^+}\ind_{\{S_b^+=\ell\}}\right)\zeta^{\ell}
\]
admits an interesting representation by means of Lagrange interpolation
polynomials that we display in the theorem below.
%
\begin{theorem}
The double generating function of $(\sigma_b^+,S_b^+)$ is given,
for any $z\in(0,1/M_1)$ and $\zeta\in\mathbb{C}$,
by
\begin{equation}\label{double-gene-sigma}
\mathbb{E}\!\left(z^{\sigma_b^+}\zeta^{S_b^+}\ind_{\{\sigma_b^+<+\infty\}}\right)
=\sum_{k=1}^N \mathbf{L}_k(z,\zeta) (u_k(z)\zeta)^b
\end{equation}
where
\[
\mathbf{L}_k(z,\zeta)=\prod_{1\le j\le N\atop j\neq k} \frac{\zeta -v_j(z)}{v_k(z)-v_j(z)},
\quad k\in\{1,\dots,N\},
\]
are the Lagrange interpolation polynomials with respect to the variable $\zeta$
such that $\mathbf{L}_k(z,v_j(z))=\delta_{jk}$.
\end{theorem}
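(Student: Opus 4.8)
The plan is to start from the single-location generating functions $H_{b,\ell}^+(z)=\mathbb{E}(z^{\sigma_b^+}\ind_{\{S_b^+=\ell\}})$ already obtained in Theorem~\ref{theorem-joint-dist}, and simply reorganize the sum $\sum_{\ell=b}^{b+N-1}H_{b,\ell}^+(z)\zeta^{\ell}$. Recall that system~(\ref{system-equationsbis}) reads $\sum_{\ell=b}^{b+N-1}H_{b,\ell}^+(z)\,v_j(z)^{\ell-b}=u_j(z)^b$ for $1\le j\le N$, i.e. the polynomial $Q(z,\zeta)=\sum_{\ell=b}^{b+N-1}H_{b,\ell}^+(z)\,\zeta^{\ell-b}$ of degree $\le N-1$ in $\zeta$ takes the value $u_j(z)^b$ at the $N$ distinct points $\zeta=v_j(z)$. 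The first step is therefore to invoke the uniqueness of Lagrange interpolation: a polynomial of degree $\le N-1$ is determined by its values at $N$ points, so
\[
Q(z,\zeta)=\sum_{k=1}^N u_k(z)^b\prod_{1\le j\le N\atop j\neq k}\frac{\zeta-v_j(z)}{v_k(z)-v_j(z)}=\sum_{k=1}^N u_k(z)^b\,\mathbf{L}_k(z,\zeta).
\]

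Next I would reinstate the factor $\zeta^b$ to pass from $Q(z,\zeta)$ to the genuine double generating function: since $\mathbb{E}(z^{\sigma_b^+}\zeta^{S_b^+}\ind_{\{\sigma_b^+<+\infty\}})=\sum_{\ell=b}^{b+N-1}H_{b,\ell}^+(z)\,\zeta^{\ell}=\zeta^b\,Q(z,\zeta)$, multiplying the interpolation formula by $\zeta^b$ and absorbing it into each term gives $\sum_{k=1}^N \mathbf{L}_k(z,\zeta)\,(u_k(z)\zeta)^b$, which is exactly~(\ref{double-gene-sigma}). One should also check the normalization property $\mathbf{L}_k(z,v_j(z))=\delta_{jk}$, which is immediate from the product definition, and note that $\sum_{k=1}^N\mathbf{L}_k(z,\zeta)=1$ (interpolation of the constant polynomial $1$), which serves as a useful sanity check.

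The only genuinely delicate point is the domain of validity. The series defining $H_{b,\ell}^+(z)$ converges absolutely for $|z|<1/M_1$ (as established just before Theorem~\ref{theorem-joint-dist}), and for such real $z$ the roots $u_j(z),v_j(z)$ are well defined and distinct, so the interpolation identity is an algebraic identity between the finitely many quantities $H_{b,\ell}^+(z)$, valid for every $z\in(0,1/M_1)$ and every $\zeta\in\mathbb{C}$; there is no interchange of limits to justify beyond what~(\ref{system-equationsbis}) already encodes. The main obstacle is thus essentially bookkeeping: making sure that the Lagrange basis is written with respect to the nodes $v_j(z)$ (not $u_j(z)$), that the degree count $\ell-b\in\{0,\dots,N-1\}$ is correct, and that the distinctness of the $v_j(z)$—guaranteed by $|u_j(z)|<1<|v_j(z)|$ together with $v_j(z)=1/u_j(z)$ and the explicit non-collinearity of the $\theta_j$—is invoked so that the interpolation problem is non-degenerate. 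No further analytic continuation is needed here, since the statement is already restricted to $z\in(0,1/M_1)$.
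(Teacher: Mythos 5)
Your proof is correct, and it reaches the result by a shorter route than the paper. Both arguments rest on the same foundation — the Vandermonde system~(\ref{system-equationsbis}), $\sum_{\ell=b}^{b+N-1}H_{b,\ell}^+(z)\,v_j(z)^{\ell-b}=u_j(z)^b$ — but the paper solves it by Cramer's formulae, expands the numerator determinant $V_\ell$ along the modified column, resums over $\ell$, and only then recognizes the quotient $V(v_1,\dots,v_{k-1},\zeta,v_{k+1},\dots,v_N)/V(v_1,\dots,v_N)$ as the Lagrange polynomial $\mathbf{L}_k(z,\zeta)$. You instead read the system directly as the statement that the degree-$(N-1)$ polynomial $Q(z,\cdot)$ interpolates the values $u_j(z)^b$ at the $N$ nodes $v_j(z)$, and invoke uniqueness of the interpolating polynomial; this bypasses the determinant bookkeeping entirely and is, to my mind, the cleaner derivation. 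The two are of course equivalent (Lagrange interpolation is Cramer for a Vandermonde matrix), so nothing is lost or gained in generality; what your version buys is brevity, at the small cost of having to state explicitly that the nodes $v_j(z)$ are pairwise distinct — which the paper needs just as much (its $V(v_1,\dots,v_N)$ must be nonzero) but leaves equally implicit. Your remarks on the domain of validity $z\in(0,1/M_1)$ and on the sanity check $\sum_k\mathbf{L}_k(z,\zeta)=1$ are both sound.
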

%
\begin{proof}
By~(\ref{exp-Hbl}) and by omitting the variable $z$ as previously, we have that
\begin{align*}
\mathbb{E}\!\left(z^{\sigma_b^+}\zeta^{S_b^+}\ind_{\{\sigma_b^+<+\infty\}}\right)
&
=\sum_{\ell=b}^{b+N-1} H_{b,\ell}^+ \,\zeta^{\ell}
=\sum_{\ell=b}^{b+N-1} \frac{V_{\ell}(v_1,\dots,v_N)}{V(v_1,\dots,v_N)}\,\zeta^{\ell}
\\
&
=\sum_{\ell=b}^{b+N-1} \left(\sum_{k=1}^N u_k^b\frac{V_{k\ell}(v_1,\dots,v_N)}
{V(v_1,\dots,v_N)}\right) \zeta^{\ell}
\\
&
=\sum_{k=1}^N \left(\,\sum_{\ell=b}^{b+N-1}
\frac{V_{k\ell}(v_1,\dots,v_N)}{V(v_1,\dots,v_N)} \,\zeta^{\ell-b}\right)(u_k\zeta)^b
\\
&
=\sum_{k=1}^N \frac{V(v_1,\dots,v_{k-1},\zeta,v_{k+1},\dots,v_N)}
{V(v_1,\dots,v_N)} \,(u_k\zeta)^b.
\end{align*}
It is clear that the quantity $V(v_1,\dots,v_{k-1},\zeta,v_{k+1},\dots,v_N)/V(v_1,\dots,v_N)$,
which explicitly writes as
\[
\begin{vmatrix}
1      & v_1     & \dots & v_1^{N-1}     \\[-0.5ex]
\vdots & \vdots  &       & \vdots        \\
1      & v_{k-1} & \dots & v_{k-1}^{N-1} \\
1      & \zeta   & \dots & \zeta^{N-1}   \\
1      & v_{k+1} & \dots & v_{k+1}^{N-1} \\[-0.5ex]
\vdots & \vdots  &       & \vdots        \\
1      & v_N     & \dots & v_N^{N-1}
\end{vmatrix}
/
\begin{vmatrix}
1      & v_1    & \dots & v_1^{N-1} \\
\vdots & \vdots &       & \vdots    \\
1      & v_N    & \dots & v_N^{N-1}
\end{vmatrix}\!,
\]
defines a polynomial of the variable $\zeta$ of degree $N-1$ which vanishes at
$v_1,\dots,v_{k-1},$ $v_{k+1},\dots,v_N$
and equals $1$ at $v_k$. Hence, by putting back the variable $z$,
it coincides with the Lagrange polynomial $\mathbf{L}_k(z,\zeta)$ and
formula~(\ref{double-gene-sigma}) immediately ensues.
\end{proof}
%
\begin{example}
For $N=2$, (\ref{double-gene-sigma}) reads
\begin{align*}
\mathbb{E}\!\left(z^{\sigma_b^+}\zeta^{S_b^+}\ind_{\{\sigma_b^+<+\infty\}}\right)\!
&=
\zeta^b\left(u_1(z)^b\frac{\zeta-v_2(z)}{v_1(z)-v_2(z)}
+u_2(z)^b\frac{\zeta-v_1(z)}{v_2(z)-v_1(z)}\right)
\\
&=
\frac{\zeta^b}{u_1(z)-u_2(z)}\left[\left(u_1(z)^{b+1}-u_2(z)^{b+1}\right)\right.
\\
&\hphantom{=\;}
\left.+\left(u_1(z)u_2(z)^{b+1}-u_2(z)u_1(z)^{b+1}\right)\zeta\right]\!.
\end{align*}
This is in good agreement with the formulae of Example~\ref{example-joint-dist}.
We retrieve a result of~\cite{sato}.
\end{example}
%

\subsection{Pseudo-distribution of $S_b^+$}

In order to derive the pseudo-distribution of $S_b^+$ which is characterized
by the numbers $H_{b,\ell}^+(1), \ell\in\{b,b+1,\dots,b+N-1\}$, we solve the
system obtained by taking the limit in~(\ref{system-equations}) as $z\to 1^-$.
%
\begin{lemma}\label{lemma-syst-H}
The following system holds:
\begin{equation}\label{system-H(1)0}
\sum_{\ell=k+b}^{b+N-1} \binom{\ell-b}{k}  H_{b,\ell}^+(1)=(-1)^k \binom{b+k-1}{b-1},
\quad 0\le k\le N-1.
\end{equation}
\end{lemma}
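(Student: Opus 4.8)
The plan is to pass to the limit $z\to1^-$ in the Vandermonde system~\eqref{system-equations} obtained in the proof of Theorem~\ref{theorem-joint-dist} and, despite the fact that all of $v_1(z),\dots,v_N(z)$ collapse to the single point $1$, to recover exactly $N$ independent scalar identities with the help of the refined asymptotics~\eqref{asymptotic-u}. First I would note that, although the relations $\sum_{\ell=b}^{b+N-1}H_{b,\ell}^+(z)\,v_j(z)^{\ell}=1$, $1\le j\le N$, were derived only for $z\in(0,1/M_1)$, each $H_{b,\ell}^+$ extends analytically to $(0,1)$ through the explicit formula~\eqref{joint-dist} and each $v_j=1/u_j$ is analytic there, so by analytic continuation these relations hold for every $z\in(0,1)$. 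I will write $H_{b,\ell}^+(1):=\lim_{z\to1^-}H_{b,\ell}^+(z)$, whose existence and finiteness is addressed at the end.

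Next I would encode the system by the polynomial $\tilde R_z(y)=\sum_{\ell=b}^{b+N-1}H_{b,\ell}^+(z)\,y^{\ell}-1$, of degree at most $b+N-1$: by the above it vanishes at the $N$ points $v_1(z),\dots,v_N(z)$, which are pairwise distinct for $z$ near $1$ since, by~\eqref{asymptotic-u}, $v_j(z)=1/u_j(z)=1+\varphi_j\sqrt[2N]{(1-z)/c}+o\bigl(\sqrt[2N]{1-z}\bigr)$ with the $\varphi_j$ pairwise distinct. Letting $z\to1^-$, the coefficients of $\tilde R_z$ converge to those of $\tilde R_1(y)=\sum_{\ell=b}^{b+N-1}H_{b,\ell}^+(1)\,y^{\ell}-1$ while its $N$ roots $v_j(z)$ all tend to $1$; since $\tilde R_1(0)=-1\neq0$, the polynomial $\tilde R_1$ is not identically zero, so the continuity of the roots of a family of polynomials of uniformly bounded degree (Hurwitz, via the argument principle on a small circle around $1$ that avoids the origin) forces $\tilde R_1$ to have a zero of multiplicity at least $N$ at $y=1$, i.e. $(y-1)^N\mid\tilde R_1(y)$.

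It then remains to unwind this divisibility. Setting $y=1+x$ and cancelling the unit $(1+x)^b$ of $\mathbb Q[[x]]$,
\begin{align*}
(y-1)^N\mid\tilde R_1(y)
&\ \Longleftrightarrow\ x^N\mid\Bigl(\sum_{\ell=b}^{b+N-1}H_{b,\ell}^+(1)(1+x)^{\ell}-1\Bigr)\\
&\ \Longleftrightarrow\ \sum_{\ell=b}^{b+N-1}H_{b,\ell}^+(1)(1+x)^{\ell-b}=(1+x)^{-b}+O(x^N),
\end{align*}
and identifying the coefficient of $x^k$ for $0\le k\le N-1$ — using $\binom{\ell-b}{k}=0$ when $\ell<k+b$ and the negative-binomial identity $\binom{-b}{k}=(-1)^k\binom{b+k-1}{k}=(-1)^k\binom{b+k-1}{b-1}$ — yields precisely~\eqref{system-H(1)0}. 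Equivalently, binomial-expanding~\eqref{system-equations} as $\sum_{k\ge0}c_k(z)\,(v_j(z)-1)^k=1$ with $c_k(z)=\sum_{\ell}\binom{\ell}{k}H_{b,\ell}^+(z)$ and inverting the Vandermonde matrix $\bigl((v_j(z)-1)^k\bigr)_{1\le j\le N,\,0\le k\le N-1}$ (whose leading behaviour is governed by the distinct $\varphi_j$), one reads off $c_k(1)=\delta_{k0}$ for $0\le k\le N-1$ from the Lagrange identity $\sum_{j}\prod_{i\neq j}\frac{y-y_i}{y_j-y_i}=1$, which is the same statement.

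The step I expect to be the main obstacle is justifying that $\lim_{z\to1^-}H_{b,\ell}^+(z)$ exists and is finite. In~\eqref{joint-dist} each summand develops an apparent pole at $z=1$ because $p_k^+(z)=\prod_{j\neq k}[u_k(z)-u_j(z)]\to0$; by~\eqref{asymptotic-u}, $u_k(z)-u_j(z)\sim-(\varphi_k-\varphi_j)\sqrt[2N]{(1-z)/c}$, so the leading singularities cancel thanks to the classical identity $\sum_{k=1}^N \frac{1}{\prod_{j\neq k}(\varphi_k-\varphi_j)}=0$ valid for $N\ge2$ (it is the leading coefficient of the degree-$(N-1)$ Lagrange interpolant of the constant function $1$ at $\varphi_1,\dots,\varphi_N$), the subleading terms being controlled in the same manner. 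Carrying out this cancellation carefully is what makes $H_{b,\ell}^+(1)$ meaningful and legitimises the passage to the limit above; granting it, the argument delivers Lemma~\ref{lemma-syst-H}.
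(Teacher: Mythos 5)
Your argument is essentially the paper's, repackaged: the paper also passes to the limit in the system~(\ref{system-equations}) using the asymptotics~(\ref{asymptotic-u}), and your ``equivalently'' reformulation at the end of the third paragraph (expand in powers of $v_j(z)-1$, invert the $N\times N$ Vandermonde in the $\varepsilon_j(z)$'s) is literally what the paper does: it sets $M_k(z)=(-1)^k\sum_{\ell}\binom{\ell-b}{k}H_{b,\ell}^+(z)-\binom{b+k-1}{b-1}$, pushes the powers $\varepsilon_j(z)^k$ with $k\ge N$ into a remainder $R_j(z)=\mathcal{O}(\sqrt{1-z})$, and shows $M_k(z)\to0$ by Cramer together with the estimates $\tilde V(\varepsilon_1,\dots,\varepsilon_N)\asymp(1-z)^{(N-1)/4}$ and $\tilde V_k=o\bigl((1-z)^{(N-1)/4}\bigr)$. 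Your primary packaging via Hurwitz and the divisibility $(y-1)^N\mid\tilde R_1$ is clean and correct \emph{granted} the convergence of the coefficients, and the unwinding via $(1+x)^{-b}$ and $\binom{-b}{k}=(-1)^k\binom{b+k-1}{b-1}$ is exactly right.

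The one genuine issue is the step you yourself flag: the existence of $\lim_{z\to1^-}H_{b,\ell}^+(z)$ is an \emph{input} to the Hurwitz argument, and your proposed justification (order-by-order cancellation of the $(1-z)^{-m/(2N)}$ singularities in~(\ref{joint-dist}) via Lagrange identities $\sum_k P(\varphi_k)/\prod_{j\neq k}(\varphi_k-\varphi_j)=0$, $\deg P\le N-2$) is only sketched; verifying that every subleading coefficient of $s^+_{k,\ell-b}(z)u_k(z)^{b+N-1}$ is of the required polynomial form in $\varphi_k$ is nontrivial bookkeeping. The paper's organization dissolves this difficulty: since the affine map $(H_{b,\ell}^+(z))_\ell\mapsto(M_k(z))_k$ is invertible with constant coefficients, and the Cramer formula expresses $M_k(z)$ purely in terms of the explicit quantities $\varepsilon_j(z)$ and $R_j(z)$, the convergence $M_k(z)\to0$ yields the existence of the limits and the identities simultaneously, with no separate cancellation analysis. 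If you carry out your ``equivalent'' Vandermonde inversion with the $k\ge N$ remainder terms handled as in the paper, your proof closes without the gap. Finally, note that you redefine $H_{b,\ell}^+(1)$ as the Abel limit; to use the lemma as the paper does (to identify $\mathbb{P}\{S_b^+=\ell,\sigma_b^+<+\infty\}$), one still needs the Tauberian step identifying this limit with the sum of the series at $z=1$, which the paper invokes and you omit.
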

%
\begin{proof}
By~(\ref{asymptotic-u}), we have the expansion $v_j(z)=1/u_j(z)=1-\varepsilon_j(z)$
where $\varepsilon_j(z)\underset{z\to 1^-}{=}\mathcal{O}\!\left(\!\!\sqrt[2N\!]{1-z}\,\right)$
for any $j\in\{1,\dots,N\}$.
Putting this into (\ref{system-equations}), we get that
\[
\sum_{\ell=b}^{b+N-1} (1-\varepsilon_j(z))^{\ell-b} H_{b,\ell}^+(z)=(1-\varepsilon_j(z))^{-b},
\]
that is,
\begin{align}\label{eqH}
\sum_{k=0}^{N-1} (-1)^k\left(\,\sum_{\ell=b+k}^{b+N-1} \binom{\ell-b}{k}
H_{b,\ell}^+(z) \right) \varepsilon_j(z)^k
&
=\sum_{k=0}^{\infty} (-1)^k\binom{-b}{k} \varepsilon_j(z)^k
\nonumber\\
&
= \sum_{k=0}^{\infty} \binom{b+k-1}{b-1} \varepsilon_j(z)^k.
\end{align}
Set
\[
M_k(z)=(-1)^k\sum_{\ell=b+k}^{b+N-1} \binom{\ell-b}{k} H_{b,\ell}^+(z)
-\binom{b+k-1}{b-1},
\quad R_j(z) =\sum_{\ell=N}^{\infty} \binom{b+\ell-1}{b-1} \varepsilon_j(z)^{\ell}.
\]
Then, equality~(\ref{eqH}) reads
\begin{align*}
\sum_{k=0}^{N-1} M_k(z)\varepsilon_j(z)^k = R_j(z),\quad 1\le j\le N.
\end{align*}
This is a Vandermonde system the solution of which is given by
\[
M_k(z)=\frac{\tilde{V}_k\!\left(\!\!\!\begin{array}{c}
\varepsilon_1(z),\dots,\varepsilon_N(z)\\[-.2ex]
\displaystyle R_1(z),\dots,R_N(z)\end{array}\!\!\!\right)\!}{\tilde{V}
(\varepsilon_1(z),\dots,\varepsilon_N(z))},
\quad 0\le k\le N-1
\]
where
\[
\tilde{V}(\varepsilon_1(z),\dots,\varepsilon_N(z))=\begin{vmatrix}
\,1      & \varepsilon_1(z) & \dots & \varepsilon_1(z)^{N-1} \\
\,1      & \varepsilon_2(z) & \dots & \varepsilon_2(z)^{N-1} \\
\,\vdots & \vdots  &       & \vdots        \\
\,1      & \varepsilon_N(z) & \dots & \varepsilon_N(z)^{N-1}
\end{vmatrix}\!,
\]
\[
\tilde{V}_k\!\left(\!\!\!\begin{array}{c} \varepsilon_1(z),\dots,\varepsilon_N(z)\\[-.2ex]
\displaystyle R_1(z),\dots,R_N(z)\end{array}\!\!\!\right)=\begin{vmatrix}
\,1      & \varepsilon_1(z) & \dots & \varepsilon_1(z)^{k-1} & R_1(z) & \varepsilon_1(z)^{k+1} & \dots & \varepsilon_1(z)^{N-1} \\
\,1      & \varepsilon_2(z) & \dots & \varepsilon_2(z)^{k-1} & R_2(z) & \varepsilon_2(z)^{k+1} & \dots & \varepsilon_2(z)^{N-1} \\
\,\vdots & \vdots           &       & \vdots                 & \vdots & \vdots                 &       & \vdots        \\
\,1      & \varepsilon_N(z) & \dots & \varepsilon_N(z)^{k-1} & R_N(z) & \varepsilon_N(z)^{k+1} & \dots & \varepsilon_N(z)^{N-1}
\end{vmatrix}\!.
\]
Since, by~(\ref{asymptotic-u}), $\varepsilon_j(z)\underset{z\to 1^-}{\sim}
constant\times \!\sqrt[2N\!]{1-z}$ for any $j\in\{1,\dots,N\}$,
we have that
\[
\tilde{V}(\varepsilon_1(z),\dots,\varepsilon_N(z))
=\prod_{1\le \ell<m\le N}[\varepsilon_m(z)-\varepsilon_{\ell}(z)]
\underset{z\to 1^-}{\sim} constant\times(1-z)^{(N-1)/4}
\]
and second,
\[
R_j(z)\underset{z\to 1^-}{\sim} \binom{b+N-1}{b-1} \varepsilon_j(z)^N
\sim constant\times \sqrt{1-z}
\]
which implies, for $k\in\{0,\dots,N-1\}$, that
\begin{align*}
\tilde{V}_k\!\left(\!\!\!\begin{array}{c} \varepsilon_1(z),\dots,\varepsilon_N(z)\\[-.2ex]
\displaystyle R_1(z),\dots,R_N(z)\end{array}\!\!\!\right)
&
\underset{z\to 1^-}{=}
\mathcal{O}\!\left[(1-z)^{1/2+\left(\sum_{1\le m\le N-1,m\neq k} m\right)/(2N)}\right]
=o\!\left[(1-z)^{(N-1)/4}\right]\!.
\end{align*}
Therefore $\lim_{z\to 1^-} M_k(z)=0$.
On the other hand, for $z\in(0,1)$, referring to the definition of $M_k(z)$,
we can see that the quantity $H_{b,\ell}^+(z)$ can be
expressed as a linear combination of the $M_k(z)$'s plus a constant.
Hence, the limit $\lim_{z\to 1^-} H_{b,\ell}^+(z)$ exists and,
by appealing to a Tauberian theorem, it coincides with $H_{b,\ell}^+(1)$.
This finishes the proof of~(\ref{system-H(1)0}).
\end{proof}
%
\begin{theorem}\label{theo-dist-Sb}
The pseudo-distribution of $S_b^+$ is characterized by the following pseudo-probabilities:
for any $\ell\in\{b,b+1,\dots,b+N-1\}$,
\begin{equation}\label{pseudo-dist-Sb}
\mathbb{P}\{S_b^+=\ell,\sigma_b^+<+\infty\}=(-1)^{b+\ell} \,\frac{b}{\ell}\binom{N-1}{\ell-b}
\!\binom{b+N-1}{b}.
\end{equation}
Moreover, $\mathbb{P}\{\sigma_b^+<+\infty\}=1$.
\end{theorem}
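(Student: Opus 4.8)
The plan is to identify the numbers $H_{b,\ell}^+(1)=\mathbb{P}\{S_b^+=\ell,\sigma_b^+<+\infty\}$, $\ell\in\{b,\dots,b+N-1\}$, by solving the linear system~(\ref{system-H(1)0}) furnished by Lemma~\ref{lemma-syst-H}. Reading the equations for $k=N-1,N-2,\dots,0$ in turn, one observes that the $k$-th equation involves only the unknowns $H_{b,\ell}^+(1)$ with $\ell\ge b+k$ and that the coefficient of the ``leading'' unknown $H_{b,b+k}^+(1)$ equals $\binom{k}{k}=1$; thus, in the ordering $H_{b,b+N-1}^+(1),\dots,H_{b,b}^+(1)$, the system is triangular with unit diagonal, hence uniquely solvable. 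It therefore suffices to verify that the right-hand side of~(\ref{pseudo-dist-Sb}) satisfies~(\ref{system-H(1)0}). Observe moreover that the equation indexed by $k=0$ reads $\sum_{\ell=b}^{b+N-1}H_{b,\ell}^+(1)=\binom{b-1}{b-1}=1$, which is precisely the claim $\mathbb{P}\{\sigma_b^+<+\infty\}=1$; here the existence of the limits $H_{b,\ell}^+(1)$ and their identification with $\mathbb{P}\{S_b^+=\ell,\sigma_b^+<+\infty\}$ were already obtained in the proof of Lemma~\ref{lemma-syst-H} (via the vanishing of the $M_k(z)$'s and a Tauberian argument).

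Plugging~(\ref{pseudo-dist-Sb}) into the left-hand side of~(\ref{system-H(1)0}), putting $m=\ell-b$ and using $(-1)^{b+\ell}=(-1)^m$, the verification amounts to the binomial identity
\[
\binom{b+N-1}{b}\sum_{m=k}^{N-1}(-1)^m\binom{m}{k}\binom{N-1}{m}\,\frac{b}{b+m}
=(-1)^k\binom{b+k-1}{b-1},\qquad 0\le k\le N-1.
\]
To prove it, I would combine the identity $\binom{m}{k}\binom{N-1}{m}=\binom{N-1}{k}\binom{N-1-k}{m-k}$ with the integral representation $\frac{b}{b+m}=b\int_0^1 t^{b+m-1}\,\mathrm{d}t$. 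The inner sum over $m$ then collapses by the binomial theorem, giving
\[
\sum_{m=k}^{N-1}(-1)^m\binom{m}{k}\binom{N-1}{m}\,\frac{b}{b+m}
=(-1)^k\,b\binom{N-1}{k}\int_0^1 t^{b+k-1}(1-t)^{N-1-k}\,\mathrm{d}t
=(-1)^k\,b\binom{N-1}{k}\,\frac{(b+k-1)!\,(N-1-k)!}{(b+N-1)!},
\]
the last equality being the value of the Euler Beta integral $B(b+k,N-k)$. Multiplying by $\binom{b+N-1}{b}$ and simplifying the factorials produces $(-1)^k\,b\,\frac{(b+k-1)!}{k!\,b!}=(-1)^k\binom{b+k-1}{b-1}$, which is the desired right-hand side.

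There is no genuine obstacle in this argument. The two points deserving a little care are: (i) checking that~(\ref{system-H(1)0}) really does constitute a triangular system of $N$ equations in the $N$ unknowns $H_{b,b}^+(1),\dots,H_{b,b+N-1}^+(1)$, so that the uniqueness of its solution legitimises the guess-and-verify approach; and (ii) carrying out the binomial/Beta-integral computation cleanly (one could alternatively derive the identity from the Chu--Vandermonde formula after a partial-fraction expansion of $1/(b+m)$, or recognise the sum as a terminating ${}_2F_1$ evaluated at $1$). As already noted, the statement $\mathbb{P}\{\sigma_b^+<+\infty\}=1$ requires no extra work, being the $k=0$ instance of the very system we solve.
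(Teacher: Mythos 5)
Your proposal is correct. It rests on the same foundation as the paper's proof, namely the linear system~(\ref{system-H(1)0}) of Lemma~\ref{lemma-syst-H}, but the execution differs in two respects. The paper solves the system forwards: it recognises the coefficient matrix as the Pascal matrix $\bigl(\binom{\ell}{k}\bigr)_{0\le k,\ell\le N-1}$, applies its known inverse $\bigl((-1)^{k+\ell}\binom{\ell}{k}\bigr)$, and evaluates the resulting sum by the hockey-stick identity $\sum_{k=\ell}^{N-1}\binom{b+k-1}{b+\ell-1}=\binom{b+N-1}{b+\ell}$, thereby \emph{deriving} formula~(\ref{pseudo-dist-Sb}); you instead observe that the system is unitriangular, hence uniquely solvable, and \emph{verify} the candidate solution via the trinomial-revision identity and a Beta integral. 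Both binomial computations are routine and of comparable length, so neither route has a clear advantage there; the derivation has the merit of not requiring the answer in advance, while your verification is self-contained once the formula is guessed. Where your argument is genuinely slicker is the second assertion: you note that the $k=0$ equation of~(\ref{system-H(1)0}) reads $\sum_{\ell=b}^{b+N-1}H_{b,\ell}^+(1)=1$, which \emph{is} the statement $\mathbb{P}\{\sigma_b^+<+\infty\}=1$, whereas the paper re-sums the explicit pseudo-probabilities through a separate Beta-integral computation. Your point (i) about checking triangularity is correctly disposed of (the $k$-th equation involves only $\ell\ge b+k$ with leading coefficient $\binom{k}{k}=1$), and the appeal to Lemma~\ref{lemma-syst-H} for the existence of the limits $H_{b,\ell}^+(1)$ and their probabilistic meaning is exactly what the paper also uses.
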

%
\begin{proof}
We explicitly solve system~(\ref{system-H(1)0}) rewritten as
\[
\sum_{\ell=k}^{N-1} \binom{\ell}{k}  H_{b,\ell+b}^+(1)=(-1)^k \binom{b+k-1}{b-1},
\quad 0\le k\le N-1.
\]
The matrix of the system is $\left(\binom{\ell}{k}\right)_{0\le k,\ell\le N-1}$
which admits $\left((-1)^{k+\ell}\binom{\ell}{k}\right)_{0\le k,\ell\le N-1}$
as an inverse with the convention of settings $\binom{\ell}{k}=0$ if $k>\ell$.
The solution of the system is given, for $\ell\in\{0,1,\dots,N-1\}$, by
\begin{align*}
H_{b,\ell+b}^+(1)
&
=\sum_{k=\ell}^{N-1} (-1)^{k+\ell} \binom{k}{\ell} \times(-1)^{k}\binom{b+k-1}{b-1}
=(-1)^{\ell} \binom{b+\ell-1}{b-1} \sum_{k=\ell}^{N-1} \binom{b+k-1}{b+\ell-1}
\\
&
=(-1)^{\ell} \binom{b+\ell-1}{b-1}\!\binom{b+N-1}{b+\ell}
=(-1)^{\ell} \,\frac{b}{\ell+b}\binom{N-1}{\ell}\!\binom{b+N-1}{b}.
\end{align*}
This proves~(\ref{pseudo-dist-Sb}). Now, by summing the
$\mathbb{P}\{S_b^+=\ell,\sigma_b^+<+\infty\}$,
$b\le \ell\le b+N-1$, given by~(\ref{pseudo-dist-Sb}), we obtain that
\begin{align*}
\mathbb{P}\{\sigma_b^+<+\infty\}
&
=(-1)^{b}b\binom{b+N-1}{b} \sum_{\ell=b}^{b+N-1}\frac{(-1)^{\ell}}{\ell}
\binom{N-1}{\ell-b}
\\
&
=b\binom{b+N-1}{b} \sum_{\ell=0}^{N-1} \frac{(-1)^{\ell}}{\ell+b}
\binom{N-1}{\ell}.
\end{align*}
Writing $\frac{1}{\ell+b}=\int_0^1 x^{\ell+b-1}\,\mathrm{d} x$, we see that
\begin{align*}
\sum_{\ell=0}^{N-1} \frac{(-1)^{\ell}}{\ell+b} \binom{N-1}{\ell}
&
=\int_0^1 \left(\sum_{\ell=0}^{N-1} (-1)^{\ell} \binom{N-1}{\ell}
x^{\ell}\right)x^{b-1}\,\mathrm{d} x
\\
&
=\int_0^1 (1-x)^{N-1}x^{b-1}\,\mathrm{d} x=\frac{(N-1)!(b-1)!}{(b+N-1)!}.
\end{align*}
Hence $\mathbb{P}\{\sigma_b^+<+\infty\}=1$. The proof of Theorem~\ref{theo-dist-Sb} is finished.
\end{proof}
%
In the sequel, when considering $S_b^+$, we shall omit the condition $\sigma_b^+<+\infty$.
%
\begin{example}
Let us have a look on the particular values $1,2,3,4$ of $N$.
\begin{itemize}
\item
\textsl{Case $N=1$.} Evidently, in this case $S_b^+=b$ and then
\[
\mathbb{P}\{S_b^+=b\}=1.
\]
This is the case of the ordinary random walk!

\item
\textsl{Case $N=2$.} In this case $S_b^+\in\{b,b+1\}$ and
\[
\mathbb{P}\{S_b^+=b\}=b+1,\quad \mathbb{P}\{S_b^+=b+1\}=-b.
\]

\item
\textsl{Case $N=3$.} In this case $S_b^+\in\{b,b+1,b+2\}$ and
\[
\mathbb{P}\{S_b^+=b\}=\frac12(b+1)(b+2),\quad \mathbb{P}\{S_b^+=b+1\}=-b(b+2),
\quad \mathbb{P}\{S_b^+=b+2\}=\frac12 \,b(b+1).
\]

\item
\textsl{Case $N=4$.} In this case $S_b^+\in\{b,b+1,b+2,b+3\}$ and
\begin{align*}
&\mathbb{P}\{S_b^+=b\}=\frac16(b+1)(b+2)(b+3),&&\mathbb{P}\{S_b^+=b+1\}=-\frac12 \, b(b+2)(b+3),
\\
&\mathbb{P}\{S_b^+=b+2\}=\frac12 \,b(b+1)(b+3),&&\mathbb{P}\{S_b^+=b+3\}=-\frac16 \, b(b+1)(b+2).
\end{align*}
\end{itemize}
\end{example}

\subsection{Pseudo-moments of $S_b^+$}\label{subsection-pseudo-moments-Sb}

In the sequel, we use the notation $(i)_n=i(i-1)(i-2)\cdots (i-n+1)$ for any $i\in\mathbb{Z}$
and any $n\in\mathbb{N}^*$ and $(i)_0=1$. Of course, $(i)_n=i!/(i-n)!$ and
$(i)_n/n!=\binom{i}{n}$ if $i\ge n$.
We also use the conventions $1/i!=0$ for any negative integer $i$ and
$\sum_{k=i}^j=0$ if $i>j$.

In this section, we compute several functionals related to the pseudo-moments
of $S_b^+$. More precisely, we provide formulae for
$\mathbb{E}\!\left[\left(S_b^+-\beta\right)_n\right]$ (Theorem~\ref{th-moment-beta}),
$\mathbb{E}\!\left[\left(S_b^+-b\right)_n\right]$ (Corollary~\ref{cor1}),
$\mathbb{E}\!\left[\left(S_b^+\right)_n\right]$ and
$\mathbb{E}\big[\big(S_b^+\big)^n\big]$ (Theorem~\ref{theorem-moments}).

Putting the elementary identity $1/(\ell+b)=\int_0^1 x^{\ell+b-1}\,\mathrm{d} x$
into the equality
\begin{align*}
\mathbb{E}\!\left[f\left(S_b^+\right)\right]
&
=\binom{b+N-1}{b} \sum_{\ell=b}^{b+N-1} (-1)^{b+\ell} \,\frac{b}{\ell}
\binom{N-1}{\ell-b}f(\ell)
\\
&
=b\binom{b+N-1}{b} \sum_{\ell=0}^{N-1} (-1)^{\ell}\binom{N-1}{\ell}
\frac{f(\ell+b)}{\ell+b},
\end{align*}
we get the following integral representation of $\mathbb{E}\!\left[f\left(S_b^+\right)\right]$.
%
\begin{theorem}
For any function $f$ defined on $\{b,\dots,b+N-1\}$,
\begin{equation}\label{integral-rep}
\mathbb{E}\!\left[f\left(S_b^+\right)\right]=b\binom{b+N-1}{b}
\int_0^1\left(\sum_{\ell=0}^{N-1} (-1)^{\ell}\binom{N-1}{\ell}
f(\ell+b)x^{\ell}\right) x^{b-1}\,\mathrm{d} x.
\end{equation}
\end{theorem}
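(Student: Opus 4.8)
The plan is to derive the identity directly from the pseudo-distribution of $S_b^+$ computed in Theorem~\ref{theo-dist-Sb}, so that the whole argument reduces to manipulating a finite sum.

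First I would record that, by Theorem~\ref{theo-dist-Sb}, one has $\mathbb{P}\{\sigma_b^+<+\infty\}=1$ and $S_b^+$ takes its values in the finite set $\{b,b+1,\dots,b+N-1\}$; hence there is neither a convergence issue nor a defect of mass, and
\[
\mathbb{E}\!\left[f\left(S_b^+\right)\right]=\sum_{\ell=b}^{b+N-1} f(\ell)\,\mathbb{P}\{S_b^+=\ell,\sigma_b^+<+\infty\}.
\]
Substituting the explicit pseudo-probabilities~(\ref{pseudo-dist-Sb}), namely $\mathbb{P}\{S_b^+=\ell,\sigma_b^+<+\infty\}=(-1)^{b+\ell}\frac{b}{\ell}\binom{N-1}{\ell-b}\binom{b+N-1}{b}$, factoring out $b\binom{b+N-1}{b}$ and performing the change of summation index $\ell\mapsto\ell+b$ yields
\[
\mathbb{E}\!\left[f\left(S_b^+\right)\right]=b\binom{b+N-1}{b}\sum_{\ell=0}^{N-1}(-1)^{\ell}\binom{N-1}{\ell}\frac{f(\ell+b)}{\ell+b},
\]
which is exactly the intermediate identity displayed just before the theorem.

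Next I would insert the elementary identity $\frac{1}{\ell+b}=\int_0^1 x^{\ell+b-1}\,\mathrm{d} x$ (valid for every term since $\ell+b\ge b\ge 1$), factor $x^{b-1}$ out of the integrand, and interchange the finite sum with the integral — which requires no justification — to obtain
\[
\mathbb{E}\!\left[f\left(S_b^+\right)\right]=b\binom{b+N-1}{b}\int_0^1\left(\sum_{\ell=0}^{N-1}(-1)^{\ell}\binom{N-1}{\ell}f(\ell+b)x^{\ell}\right)x^{b-1}\,\mathrm{d} x,
\]
which is precisely~(\ref{integral-rep}).

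There is no genuine obstacle here: every step is a routine algebraic rearrangement of a finite sum, and the only point worth a brief remark is that the denominators $\ell+b$ never vanish (because $b\in\mathbb{N}^*$), so that the integral substitution is licit termwise. The substance of the result is entirely contained in Theorem~\ref{theo-dist-Sb}, on which this theorem is a short corollary.
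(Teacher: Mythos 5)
Your proof is correct and follows exactly the route the paper takes: the theorem is presented there as an immediate consequence of the pseudo-distribution~(\ref{pseudo-dist-Sb}) of $S_b^+$, obtained by shifting the summation index and inserting the identity $1/(\ell+b)=\int_0^1 x^{\ell+b-1}\,\mathrm{d} x$ into the finite sum. Nothing is missing.
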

%
\begin{theorem}\label{th-moment-beta}
For any integers $n\ge 0$ and $\beta$, the factorial
pseudo-moment of $(S_b^+-\beta)$ of order $n$ is given by
\begin{align}
\lqn{\mathbb{E}\!\left[\left(S_b^+-\beta\right)_n\right]}
=\begin{cases}
\displaystyle\frac{(b-\beta)!}{(b-1)!} \sum_{k=0\vee(n+\beta-b)}^{n\wedge(N-1)} (-1)^k
\frac{(k+b-1)!}{(k+b-\beta-n)!} \binom{n}{k} & \text{if $\beta\le b$,}
\\
\displaystyle\frac{(-1)^n}{(b-1)!(\beta-b-1)!} \sum_{k=0}^{n\wedge(N-1)}
(k+b-1)!(\beta-b+n-k-1)! \binom{n}{k} & \text{if $\beta\ge b+1$.}
\end{cases}
\label{moment-beta1}
\end{align}
If $n\le N-1$, we simply have that
\begin{equation}\label{moment-beta2}
\mathbb{E}\!\left[\left(S_b^+-\beta\right)_n\right]
=(-\beta)_n=(-1)^n\beta(\beta+1)\cdots(\beta+n-1).
\end{equation}
\end{theorem}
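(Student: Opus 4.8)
The plan is to apply the integral representation~(\ref{integral-rep}) to the polynomial $f(y)=(y-\beta)_n$ and to recast the resulting sum as a derivative of a Beta-type integrand. Set $g(x)=x^{b-\beta}(1-x)^{N-1}$ for $x\in(0,1)$. Expanding $(1-x)^{N-1}$ by the binomial theorem and differentiating term by term gives the identity
\[
x^{\beta+n-1}g^{(n)}(x)=\sum_{\ell=0}^{N-1}(-1)^{\ell}\binom{N-1}{\ell}(\ell+b-\beta)_n\,x^{\ell+b-1},\qquad x\in(0,1),
\]
which is valid whatever the sign of $b-\beta$ (in particular it accommodates $\beta\ge b+1$). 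Hence~(\ref{integral-rep}) with $f(y)=(y-\beta)_n$ reads
\[
\mathbb{E}\!\left[\left(S_b^+-\beta\right)_n\right]=b\binom{b+N-1}{b}\int_0^1 x^{\beta+n-1}g^{(n)}(x)\,\mathrm{d}x,
\]
and the two parts of the theorem amount to two evaluations of this integral; since $b\ge1$ the integrand is $\mathcal{O}(x^{b-1})$ near $x=0$, hence integrable.

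To obtain~(\ref{moment-beta2}) I would integrate by parts $n$ times. At the $j$-th step ($1\le j\le n$) the boundary term is a multiple of $\big[x^{\beta+n-j}\,g^{(n-j)}(x)\big]_0^1$; at $x=0$ this is $\mathcal{O}(x^{b})\to0$ because $b\ge1$, and at $x=1$ it vanishes because $g^{(m)}(1)=0$ whenever $m\le N-2$, which holds for all orders $m=n-1,\dots,0$ occurring here as soon as $n\le N-1$. So every boundary contribution drops and
\[
\int_0^1 x^{\beta+n-1}g^{(n)}(x)\,\mathrm{d}x=(-1)^n(\beta+n-1)_n\int_0^1 x^{b-1}(1-x)^{N-1}\,\mathrm{d}x=(-\beta)_n\,\frac{(b-1)!\,(N-1)!}{(b+N-1)!},
\]
using $(-1)^n(\beta+n-1)_n=(-\beta)_n$ together with the Beta integral; multiplying by $b\binom{b+N-1}{b}=(b+N-1)!/\big[(b-1)!\,(N-1)!\big]$ gives exactly $(-\beta)_n$.

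For the general formula~(\ref{moment-beta1}) I would instead expand $g^{(n)}$ by the Leibniz rule,
\[
g^{(n)}(x)=\sum_{k=0}^{n\wedge(N-1)}\binom{n}{k}(b-\beta)_{n-k}(-1)^k(N-1)_k\,x^{b-\beta-n+k}(1-x)^{N-1-k},
\]
the upper limit being $n\wedge(N-1)$ since $(N-1)_k=0$ for $k\ge N$. Integrating each term against $x^{\beta+n-1}$ produces the Beta integral $\int_0^1 x^{b+k-1}(1-x)^{N-1-k}\,\mathrm{d}x=(b+k-1)!\,(N-1-k)!/(b+N-1)!$, and the crucial point is that the prefactor collapses, $b\binom{b+N-1}{b}\cdot(N-1)_k(N-1-k)!/(b+N-1)!=1/(b-1)!$, because $(N-1)_k(N-1-k)!=(N-1)!$. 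This leaves the compact identity
\[
\mathbb{E}\!\left[\left(S_b^+-\beta\right)_n\right]=\frac{1}{(b-1)!}\sum_{k=0}^{n\wedge(N-1)}(-1)^k\binom{n}{k}(b-\beta)_{n-k}(k+b-1)!,
\]
from which the two displayed cases follow by rewriting $(b-\beta)_{n-k}$ with ordinary factorials: for $\beta\le b$ it equals $(b-\beta)!/(b-\beta-n+k)!$, which by the convention $1/i!=0$ for $i<0$ also forces the lower summation limit $k\ge n+\beta-b$; for $\beta\ge b+1$ it equals $(-1)^{n-k}(\beta-b+n-k-1)!/(\beta-b-1)!$, the sign $(-1)^{n-k}$ combining with $(-1)^k$ into the global factor $(-1)^n$.

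The bulk of the work is this sign-and-factorial bookkeeping and the collapse of the binomial prefactor; the only analytic point requiring care is the legitimacy of the termwise differentiation of $g$ and of the integrations by parts when $\beta>b$, where $g$ has an integrable singularity at $0$ — but the hypothesis $b\ge1$ ensures that every boundary term still carries a strictly positive power of $x$ and therefore vanishes. Since the whole argument is an identity between finite sums and elementary integrals, no convergence issues arise.
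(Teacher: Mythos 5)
Your proof is correct. For formula~(\ref{moment-beta1}) you follow essentially the same route as the paper: you apply the integral representation~(\ref{integral-rep}) to $f(y)=(y-\beta)_n$, recognize the resulting sum as $x^{n+\beta-1}\,\frac{\mathrm{d}^n}{\mathrm{d}x^n}\big(x^{b-\beta}(1-x)^{N-1}\big)$, expand by Leibniz's rule, and evaluate the Beta integrals; your ``compact identity'' $\frac{1}{(b-1)!}\sum_{k=0}^{n\wedge(N-1)}(-1)^k\binom{n}{k}(b-\beta)_{n-k}(k+b-1)!$ and the subsequent factorial bookkeeping reproduce both cases of~(\ref{moment-beta1}) exactly. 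Where you genuinely depart from the paper is in the proof of~(\ref{moment-beta2}): the paper derives it \emph{from}~(\ref{moment-beta1}) by simplifying the sum, which forces three separate case analyses ($\beta\le b$ with $n\ge 1-\beta$, then $n\le-\beta$, then $\beta\ge b+1$), each invoking a different representation of the factorial ratio as a derivative at $x=1$ or as a Beta integral. You instead integrate $\int_0^1 x^{\beta+n-1}g^{(n)}(x)\,\mathrm{d}x$ by parts $n$ times, observing that $n\le N-1$ annihilates every boundary term at $x=1$ (since $1$ is a root of $g$ of multiplicity $N-1$) while $b\ge1$ annihilates them at $x=0$ even when $\beta>b$ makes $g$ singular there; this yields $(-1)^n(\beta+n-1)_n\,B(b,N)=(-\beta)_n\,B(b,N)$ in one stroke, uniformly in $\beta$. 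Your variant buys a shorter, case-free argument for the second half of the theorem; the paper's version has the minor advantage of exhibiting~(\ref{moment-beta2}) as a combinatorial consequence of~(\ref{moment-beta1}) rather than as an independent evaluation of the same integral. Both the vanishing of the boundary terms and the integrability near $0$ are justified exactly as you state, and since $g$ is a finite sum of power functions the termwise differentiation needs no further defense.
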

%
\begin{proof}
By~(\ref{integral-rep}), we have that
\begin{equation}\label{expect}
\mathbb{E}\!\left[\left(S_b^+-\beta\right)_n\right]
=b\binom{b+N-1}{b}\int_0^1 \left(\sum_{\ell=0}^{N-1}(-1)^{\ell}
\binom{N-1}{\ell} (\ell+b-\beta)_n\,x^{\ell+b-1}\right)\mathrm{d} x.
\end{equation}
Next, by observing that
$(\ell+b-\beta)_n\,x^{\ell+b-1}=x^{n+\beta-1}\frac{\mathrm{d}^n}{\mathrm{d} x^n}
\big(x^{\ell+b-\beta}\big)$, we obtain that
\begin{align}
\sum_{\ell=0}^{N-1} (-1)^{\ell} \binom{N-1}{\ell}(\ell+b-\beta)_n \,x^{\ell+b-1}
&
=\sum_{\ell=0}^{N-1} \left((-1)^{\ell} \binom{N-1}{\ell}
\frac{\mathrm{d}^n}{\mathrm{d} x^n}\big(x^{\ell+b-\beta}\big)\right)x^{n+\beta-1}
\nonumber\\
&
=\frac{\mathrm{d}^n}{\mathrm{d} x^n}\left(\sum_{\ell=0}^{N-1} (-1)^{\ell} \binom{N-1}{\ell}
x^{\ell+b-\beta}\right)x^{n+\beta-1}
\nonumber\\
&
=\frac{\mathrm{d}^n}{\mathrm{d} x^n}\big((1-x)^{N-1}x^{b-\beta}\big)x^{n+\beta-1}.
\label{integ1}
\end{align}
Applying Leibniz rule to (\ref{integ1}), we see that
\begin{align*}
\lqn{\sum_{\ell=0}^{N-1} (-1)^{\ell} \binom{N-1}{\ell}(\ell+b-\beta)_n \,x^{\ell+b-1}}
&
=\left(\sum_{k=0}^n \binom{n}{k} \frac{\mathrm{d}^k}{\mathrm{d} x^k}\left((1-x)^{N-1}\right)
\frac{\mathrm{d}^{n-k}}{\mathrm{d} x^{n-k}}\big(x^{b-\beta}\big) \right)x^{n+\beta-1}
\\
&
=\sum_{k=0}^{n\wedge (N-1)} (-1)^k\binom{n}{k}
(N-1)_k(b-\beta)_{n-k}(1-x)^{N-1-k}x^{k+b-1}.
\end{align*}
Therefore,
\begin{align}
\lqn{\int_0^1\left(\sum_{\ell=0}^{N-1} (-1)^{\ell} \binom{N-1}{\ell}(\ell+b-\beta)_n
\,x^{\ell+b-1}\right) \mathrm{d} x}
&
=\sum_{k=0}^{n\wedge (N-1)} (-1)^k (N-1)_k(b-\beta)_{n-k} \binom{n}{k}
\int_0^1 (1-x)^{N-1-k}x^{k+b-1}\,\mathrm{d} x
\nonumber\\
&
=\sum_{k=0}^{n\wedge (N-1)} (-1)^k (N-1)_k(b-\beta)_{n-k} \binom{n}{k}
\frac{(N-1-k)!(k+b-1)!}{(b+N-1)!}
\nonumber\\
&
=\begin{cases}
\displaystyle\frac{(b-\beta)!(N-1)!}{(b+N-1)!}\sum_{k=0\vee(n+\beta-b)}^{n\wedge (N-1)} (-1)^k
\frac{(k+b-1)!}{(k+b-\beta-n)!}\binom{n}{k} & \text{if $\beta\le b$,}
\\
\displaystyle\frac{(-1)^n(N-1)!}{(\beta-b-1)!(b+N-1)!}\sum_{k=0}^{n\wedge (N-1)}
(k+b-1)!(\beta-b+n-k-1)!\binom{n}{k} & \text{if $\beta\ge b+1$.}
\end{cases}
\label{integ2}
\end{align}
Finally, plugging (\ref{integ2}) into (\ref{integ1}) and (\ref{expect}) yields
(\ref{moment-beta1}).

Assume now that $n\le N-1$ and $\beta\le b$. If $n\ge 1-\beta$, we can write
in (\ref{moment-beta1}) that
\[
\frac{(k+b-1)!}{(k+b-\beta-n)!}=\left.\frac{\mathrm{d}^{\beta+n-1}}{\mathrm{d}
x^{\beta+n-1}}\big(x^{k+b-1}\big)\right|_{x=1}.
\]
Then,
\begin{align}
\lqn{\sum_{k=0\vee(n+\beta-b)}^{n\wedge (N-1)} (-1)^k\frac{(k+b-1)!}{(k+b-\beta-n)!}\binom{n}{k}}
&
=\sum_{k=0}^{n} (-1)^k\frac{(k+b-1)!}{(k+b-\beta-n)!}\binom{n}{k}
\nonumber\\
&
=\left.\frac{\mathrm{d}^{\beta+n-1}}{\mathrm{d} x^{\beta+n-1}}\left[\left(\sum_{k=0}^{n}
(-1)^k\binom{n}{k}x^k \right)x^{b-1}\right]\right|_{x=1}
=\left.\frac{\mathrm{d}^{\beta+n-1}}{\mathrm{d} x^{\beta+n-1}}\big((1-x)^n x^{b-1}\big)\right|_{x=1}
\nonumber\\
&
=\left.\sum_{k=0}^{n} (-1)^k (n)_{k} (b-1)_{\beta+n-k-1}\binom{\beta+n-1}{k}
\big((1-x)^{n-k} x^{k+b-\beta-n}\big)\right|_{x=1}
\nonumber\\
&
=(-1)^n (n)_{n} (b-1)_{\beta-1}\binom{\beta+n-1}{n}
=(-1)^n \frac{(b-1)!(\beta+n-1)!}{(b-\beta)!(\beta-1)!}
\nonumber\\
&
=(-1)^n \frac{(b-1)!}{(b-\beta)!}\,\beta(\beta+1)\cdots(\beta+n-1).
\label{sum3}
\end{align}
Putting (\ref{sum3}) into (\ref{moment-beta1}) yields (\ref{moment-beta2}).
If $n\le -\beta$ (which requires that $\beta\le 0$), in (\ref{moment-beta1}),
we write instead that
\[
\frac{(k+b-1)!}{(k+b-\beta-n)!}=\frac{1}{(-n-\beta)!}\int_0^1 x^{k+b-1}
(1-x)^{-n-\beta}\,\mathrm{d} x.
\]
Then,
\begin{align}
\lqn{\sum_{k=0\vee(n+\beta-b)}^{n\wedge (N-1)} (-1)^k\frac{(k+b-1)!}{(k+b-\beta-n)!}\binom{n}{k}}
&
=\sum_{k=0}^{n} (-1)^k\frac{(k+b-1)!}{(k+b-\beta-n)!}\binom{n}{k}
\nonumber\\
&
=\frac{1}{(-n-\beta)!}\int_0^1 \left(\sum_{k=0}^{n} (-1)^k\binom{n}{k} x^{k}\right)
x^{b-1}(1-x)^{-n-\beta}\,\mathrm{d} x
\nonumber\\
&
=\frac{1}{(-n-\beta)!}\int_0^1 x^{b-1}(1-x)^{-\beta}\,\mathrm{d} x
\nonumber\\
&
=\frac{(b-1)!(-\beta)!}{(b-\beta)!(-n-\beta)!}
=(-1)^n\frac{(b-1)!}{(b-\beta)!}\,\beta(\beta+1)\cdots(\beta+n-1).
\label{sum3bis}
\end{align}
Putting (\ref{sum3bis}) into (\ref{moment-beta1}) yields (\ref{moment-beta2})
in this case too.

Assume finally that $n\le N-1$ and $\beta\ge b+1$. We write in (\ref{moment-beta1})
that
\[
(k+b-1)!(\beta-b+n-k-1)!= (\beta+n-1)! \int_0^1 x^{k+b-1} (1-x)^{\beta-b+n-k-1}\,\mathrm{d} x.
\]
Then
\begin{align}
\lqn{\sum_{k=0}^{n\wedge(N-1)} (k+b-1)!(\beta-b+n-k-1)! \binom{n}{k}}
&
= (\beta+n-1)! \int_0^1 \left[\,\sum_{k=0}^n \binom{n}{k}\!
\left(\frac{x}{1-x}\right)^{\!k}\right] x^{b-1} (1-x)^{\beta-b+n-1}\,\mathrm{d} x
\nonumber\\
&
= (\beta+n-1)! \int_0^1 x^{b-1} (1-x)^{\beta-b-1}\,\mathrm{d} x
= (b-1)! (\beta-b-1)! \,\frac{(\beta+n-1)!}{(\beta-1)!}.
\label{sum3ter}
\end{align}
Putting (\ref{sum3ter}) into (\ref{moment-beta1}) yields (\ref{moment-beta2}).
\end{proof}
%
By choosing $\beta=b$ in Theorem~\ref{th-moment-beta}, we derive that
\[
\mathbb{E}\!\left[\left(S_b^+-b\right)_n\right]
=\frac{1}{(b-1)!} \sum_{k=n}^{n\wedge(N-1)} (-1)^k \frac{(k+b-1)!}{(k-n)!} \binom{n}{k}.
\]
We immediately obtain the following particular result which will
be used in Theorem~\ref{theo-exp-f}.
%
\begin{corollary}\label{cor1}
The factorial pseudo-moments of $(S_b^+-b)$ are given by
\[
\mathbb{E}\!\left[\left(S_b^+-b\right)_n\right]
=\begin{cases}
(-b)_n & \text{if $0\le n\le N-1$,}
\\
0  & \text{if $n\ge N$.}
\end{cases}
\]
\end{corollary}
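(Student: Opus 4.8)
The plan is to obtain the corollary as a direct specialization of Theorem~\ref{th-moment-beta}, taking $\beta=b$, and then to read off the value of the resulting sum, which degenerates either to a single term or to nothing.

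First I would note that $\beta=b$ falls in the regime $\beta\le b$, so the first branch of~(\ref{moment-beta1}) is the relevant one. Substituting $\beta=b$ there makes $(b-\beta)!=1$, turns the denominator $(k+b-\beta-n)!$ into $(k-n)!$, and moves the lower summation index $0\vee(n+\beta-b)$ to $n$, which yields exactly the identity
\[
\mathbb{E}\!\left[\left(S_b^+-b\right)_n\right]
=\frac{1}{(b-1)!}\sum_{k=n}^{n\wedge(N-1)}(-1)^k\frac{(k+b-1)!}{(k-n)!}\binom{n}{k}
\]
displayed just before the corollary. Since $\binom{n}{k}=0$ whenever $k>n$, the only term that can survive in this sum is the one with $k=n$, and that term is actually present precisely when $n\le N-1$.

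Then I would split into the two announced cases. If $n\ge N$, the upper index $n\wedge(N-1)=N-1$ lies strictly below the lower index $n$, so the sum is empty (equivalently, its single candidate term $k=n$ is out of range), giving $\mathbb{E}\!\left[\left(S_b^+-b\right)_n\right]=0$. If $0\le n\le N-1$, the sum collapses to its $k=n$ term,
\[
\frac{1}{(b-1)!}\,(-1)^n\,\frac{(n+b-1)!}{0!}\,\binom{n}{n}
=(-1)^n\,\frac{(b+n-1)!}{(b-1)!}
=(-1)^n\,b(b+1)\cdots(b+n-1)=(-b)_n,
\]
using the elementary identity $(-b)_n=(-1)^n(b+n-1)!/(b-1)!$; alternatively this case is already contained verbatim in~(\ref{moment-beta2}) with $\beta=b$. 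There is no real obstacle here, since all the analytic work has been carried out in Theorem~\ref{th-moment-beta}; the only point that requires a little care is the bookkeeping of the summation range, namely recognizing that the lower index $n$ forces a single term when $n\le N-1$ and an empty sum when $n\ge N$.
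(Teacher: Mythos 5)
Your proposal is correct and follows essentially the same route as the paper: specialize Theorem~\ref{th-moment-beta} at $\beta=b$, note that the summation range $\sum_{k=n}^{n\wedge(N-1)}$ collapses to the single term $k=n$ when $n\le N-1$ (yielding $(-1)^n(b+n-1)!/(b-1)!=(-b)_n$) and is empty when $n\ge N$. The paper additionally remarks that the vanishing for $n\ge N$ can be seen directly from $S_b^+-b\in\{0,\dots,N-1\}$, which forces $(S_b^+-b)_n=0$ pointwise, but this is only a supplementary observation and your argument is complete without it.
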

%
The above identity can be rewritten, if $0\le n\le N-1$, as
\begin{align}\label{moment_comb}
\mathbb{E}\!\left[\binom{S_b^+-b}{n}\right]=(-1)^n\binom{n+b-1}{b-1}.
\end{align}
Moreover, since $S_b^+\in\{b,b+1,\dots,b+N-1\}$, it is clear that
$\big(S_b^+-b\big)\big(S_b^+-b-1\big)\cdots\big(S_b^+-b-N+1\big)=0$
which immediately entails that $\left(S_b^+-b\right)_n=0$ for any $n\ge N$;
then $\mathbb{E}\!\left[\left(S_b^+-b\right)_n\right]=0$ for $n\ge N$
as stated in Corollary~\ref{cor1}.

By choosing $\beta=0$ in Theorem~\ref{th-moment-beta}, we plainly extract that
$\mathbb{E}\!\left[\left(S_b^+\right)_n\right]=0$ for $n\in\{1,\dots,N-1\}$.
Moreover, as previously, $\left(S_b^+\right)_n=0$ for any $n\ge b+N$;
then $\mathbb{E}\!\left[\left(S_b^+\right)_n\right]=0$ for $n\ge b+N$.
Actually, we can compute the factorial pseudo-moments of $S_b^+$,
$\mathbb{E}\!\left[\left(S_b^+\right)_n\right]$, for $n\in\{N,N+1,\dots,b+N\}$.
The formula of Theorem~\ref{th-moment-beta} seems to be untractable,
so we provide another way for evaluating them.
%
\begin{theorem}\label{theorem-moments}
The factorial pseudo-moments of $S_b^+$ are given by
\[
\mathbb{E}\!\left[\left(S_b^+\right)_n\right]
=\begin{cases}
\displaystyle (-1)^{N-1}\binom{n-1}{N-1} (b+N-1)_n & \text{if $N\le n\le b+N-1$,}
\\[1ex]
0 & \text{if $1\le n\le N-1$ or $n\ge b+N$.}
\end{cases}
\]
Moreover, for $n\in\{1,\dots,N-1\}$, the pseudo-moment of $S_b^+$ of order $n$ vanishes:
\[
\mathbb{E}\big[\big(S_b^+\big)^n\big]=0
\]
and
\[
\mathbb{E}\Big[\big(S_b^+\big)^N\Big]=(-1)^{N-1}\frac{(b+N-1)!}{(b-1)!}=-(-b)_N.
\]
\end{theorem}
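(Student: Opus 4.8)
The plan is to obtain all the factorial pseudo-moments $\mathbb{E}\big[(S_b^+)_n\big]$ at once from a single generating function, rather than specialising the (admittedly awkward) formula of Theorem~\ref{th-moment-beta} at $\beta=0$, and then to read off the ordinary pseudo-moments by expanding powers into falling factorials. Since $S_b^+$ takes its values in the finite set $\{b,\dots,b+N-1\}$, for every $t$ one has $(1+t)^{S_b^+}=\sum_{n\ge 0}\binom{S_b^+}{n}t^n=\sum_{n\ge 0}(S_b^+)_n\,t^n/n!$, a polynomial identity, hence $\mathbb{E}\big[(1+t)^{S_b^+}\big]=\sum_{n\ge 0}\mathbb{E}\big[(S_b^+)_n\big]\,t^n/n!$. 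I would evaluate the left-hand side from the integral representation~(\ref{integral-rep}) taken with $f(j)=(1+t)^j$, which gives $\mathbb{E}\big[(1+t)^{S_b^+}\big]=b\binom{b+N-1}{b}(1+t)^b\int_0^1\big(1-(1+t)x\big)^{N-1}x^{b-1}\,\mathrm{d}x$, and the change of variable $u=(1+t)x$ then turns this into the clean form
\[
\mathbb{E}\big[(1+t)^{S_b^+}\big]=b\binom{b+N-1}{b}\int_0^{1+t}u^{b-1}(1-u)^{N-1}\,\mathrm{d}u .
\]

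Differentiating this identity $n$ times at $t=0$ and using the fundamental theorem of calculus, I get $\mathbb{E}\big[(S_b^+)_n\big]=b\binom{b+N-1}{b}\,h^{(n-1)}(1)$ for every $n\ge 1$, where $h(u)=u^{b-1}(1-u)^{N-1}$. The number $h^{(n-1)}(1)$ is then computed by Leibniz's rule, just as in the proof of Theorem~\ref{th-moment-beta}: in $h^{(n-1)}(u)=\sum_{j}\binom{n-1}{j}\big(u^{b-1}\big)^{(j)}\big((1-u)^{N-1}\big)^{(n-1-j)}$ the only summand whose value at $u=1$ is nonzero is the one in which $(1-u)^{N-1}$ is differentiated exactly $N-1$ times, i.e.\ $j=n-N$; this forces $n\ge N$, and then the value at $u=1$ of the remaining factor $\big(u^{b-1}\big)^{(n-N)}$, namely $(b-1)_{n-N}$, vanishes as soon as $n-N\ge b$. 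Hence $h^{(n-1)}(1)=0$ for $1\le n\le N-1$ and for $n\ge b+N$, while for $N\le n\le b+N-1$ it equals $(-1)^{N-1}(N-1)!\binom{n-1}{N-1}(b-1)_{n-N}$. Plugging this in and simplifying the factorials, using $b\binom{b+N-1}{b}(b-1)!\,(N-1)!=(b+N-1)!$ and $(b-1)_{n-N}=(b-1)!/(b+N-1-n)!$, yields exactly $\mathbb{E}\big[(S_b^+)_n\big]=(-1)^{N-1}\binom{n-1}{N-1}(b+N-1)_n$ on the range $N\le n\le b+N-1$ and $0$ otherwise, which is the first assertion.

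For the ordinary pseudo-moments I would expand the monomial $x^n$ as a linear combination of the falling factorials $(x)_k$, $1\le k\le n$ (no $(x)_0$ term appears when $n\ge 1$, and $(x)_n$ occurs with coefficient $1$), and apply $\mathbb{E}$: when $1\le n\le N$, every $(S_b^+)_k$ with $1\le k\le n-1\le N-1$ has vanishing pseudo-expectation by the first part, whence $\mathbb{E}\big[(S_b^+)^n\big]=\mathbb{E}\big[(S_b^+)_n\big]$. This is $0$ for $1\le n\le N-1$, and for $n=N$ it equals $(-1)^{N-1}(b+N-1)_N=(-1)^{N-1}(b+N-1)!/(b-1)!=-(-b)_N$. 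The argument has no real conceptual difficulty; the only point demanding care is the middle step, where the three regimes $n\le N-1$, $N\le n\le b+N-1$ and $n\ge b+N$ must be read off correctly from the single Leibniz term surviving at $u=1$, and one must verify that the closed form $(-1)^{N-1}\binom{n-1}{N-1}(b+N-1)_n$ indeed collapses to $0$ on the two degenerate ranges (through $\binom{n-1}{N-1}=0$ for $n<N$, and a vanishing factor in $(b+N-1)_n$ for $n\ge b+N$).
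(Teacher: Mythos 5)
Your argument is correct. The generating-function identity $\mathbb{E}\big[(1+t)^{S_b^+}\big]=b\binom{b+N-1}{b}\int_0^{1+t}u^{b-1}(1-u)^{N-1}\,\mathrm{d}u$ follows correctly from~(\ref{integral-rep}) with $f(j)=(1+t)^j$ and the substitution $u=(1+t)x$, and differentiating $n$ times at $t=0$ does give $\mathbb{E}\big[(S_b^+)_n\big]=b\binom{b+N-1}{b}\,h^{(n-1)}(1)$ with $h(u)=u^{b-1}(1-u)^{N-1}$; the Leibniz evaluation of $h^{(n-1)}(1)$ and the ensuing simplification of factorials are also right, as is the Stirling-type expansion of $x^n$ into falling factorials for the final assertions (the latter being the same linear-algebra argument the paper uses). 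The route differs from the paper's only in how one reaches the derivative $h^{(n-1)}(1)$: the paper, restricting to $N\le n\le b+N-1$, writes $(\ell+b-1)_{n-1}$ as $\frac{\mathrm{d}^{n-1}}{\mathrm{d}x^{n-1}}(x^{\ell+b-1})\big|_{x=1}$ and interchanges sum and differentiation, then disposes of the ranges $1\le n\le N-1$ and $n\ge b+N$ separately (by specialising Theorem~\ref{th-moment-beta} at $\beta=0$ and by noting $(S_b^+)_n=0$ identically for $n\ge b+N$), whereas your generating-function packaging produces a single formula valid for all $n\ge 1$ and lets the three regimes fall out of one Leibniz computation. The core calculation is the same; your version is somewhat more uniform, at the cost of introducing the auxiliary variable $t$.
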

%
\begin{proof}
We focus on the case where $N\le n\le b+N-1$. We have that
\begin{align*}
\mathbb{E}\!\left[\left(S_b^+\right)_n\right]
&
=\sum_{\ell=0}^{N-1} \mathbb{P}\{S_b^+=\ell+b\} (\ell+b)_n
\\
&
=b\binom{b+N-1}{b} \sum_{\ell=0}^{N-1} (-1)^{\ell}(\ell+b-1)_{n-1}
\binom{N-1}{\ell}.
\end{align*}
The intermediate sum lying in the last displayed equality can be
evaluated as follows: by observing that
$(\ell+b-1)_{n-1}=(\mathrm{d}^{n-1}/\mathrm{d} x^{n-1})(x^{\ell+b-1})|_{x=1}$ 
and appealing to Leibniz rule, we obtain that
\begin{align*}
\lqn{\sum_{\ell=0}^{N-1} (-1)^{\ell}(\ell+b-1)_{n-1}\binom{N-1}{\ell}}
&
=\left.\frac{\mathrm{d}^{n-1}}{\mathrm{d} x^{n-1}}
\left(\sum_{\ell=0}^{N-1} (-1)^{\ell} \binom{N-1}{\ell}
x^{\ell+b-1}\right)\right|_{x=1}
=\left.\frac{\mathrm{d}^{n-1}}{\mathrm{d} x^{n-1}}\big((1-x)^{N-1} x^{b-1}\big)\right|_{x=1}
\\
&
=\sum_{j=N-1}^{n-1} \binom{n-1}{j} \left.\frac{\mathrm{d}^j}{\mathrm{d} x^j}
\left((1-x)^{N-1}\right)\right|_{x=1}
\left.\frac{\mathrm{d}^{n-1-j}}{\mathrm{d} x^{n-1-j}}\big(x^{b-1}\big)\right|_{x=1}
\\
&
=(-1)^{N-1}(N-1)! \binom{n-1}{N-1} (b-1)_{n-N}
=(-1)^{N-1}\frac{(b-1)!(n-1)!}{(n-N)!(b+N-n-1)!}.
\end{align*}
Consequently,
\begin{align*}
\mathbb{E}\!\left[\left(S_b^+\right)_n\right]
&
=(-1)^{N-1}b\binom{b+N-1}{b}\frac{(b-1)!(n-1)!}{(n-N)!(b+N-n-1)!}
\\
&
=(-1)^{N-1}\frac{(n-1)!(b+N-1)!}{(n-N)!(N-1)!(b+N-n-1)!}.
\end{align*}
This is the result announced in Theorem~\ref{theorem-moments} when $N\le n\le b+N-1$.

Next, concerning the pseudo-moments of $S_b^+$, we appeal to an elementary
argument of linear algebra: the family $(1,X_1,X_2,\dots,X_n)$
(recall that $X_k=X(X-1)\cdots(X-k+1)$) is a basis of the space of polynomials
of degree not greater than $n$. So, $X^n$ can be written as a linear
combination of $X_1,X_2,\dots,X_n$. Then $\mathbb{E}\!\left[\left(S_b^+\right)^n\right]$
can be written as a linear combination of the factorial pseudo-moments of
$S_b^+$ of order between $1$ and $n$. These latter vanish
for $n\in\{1,\dots, N-1\}$. As a result, $\mathbb{E}\!\left[\left(S_b^+\right)^n\right]=0$.

The same argument ensures the equalities
$\mathbb{E}\!\left[\left(S_b^+\right)_N\right]=\mathbb{E}\!\left[\left(S_b^+\right)^N
\right]$, which is equal to $(-1)^{N-1}(b+N-1)_N$, and
$\mathbb{E}\!\left[\left(S_b^+-b\right)_N\right]=\mathbb{E}\!\left[\left(S_b^+\right)^N\right]
+(-b)^N$ which vanishes. Each of them yields the value of
$\mathbb{E}\!\left[\left(S_b^+\right)^N\right]$.

The proof of Theorem~\ref{theorem-moments} is completed.
\end{proof}

\section{Link with the high-order finite-difference operator}

Set $\mathbf{\Delta}^{\!+}f(i)=f(i+1)-f(i)$ for any $i\in\mathbb{Z}$ and
$(\mathbf{\Delta}^{\!+})^j=\underbrace{\mathbf{\Delta}^{\!+}\circ\dots\circ
\mathbf{\Delta}^{\!+}}_{j\text{ times}}$
for any $j\in\mathbb{N}^*$. Set also $(\mathbf{\Delta}^{\!+})^0f=f$.
The quantity $(\mathbf{\Delta}^{\!+})^j$ is the iterated forward finite-difference operator given by
\[
(\mathbf{\Delta}^{\!+})^jf(i)=\sum_{k=0}^j (-1)^{j+k} \binom{j}{k} f(i+k).
\]
Conversely, $f(i+k)$ can be expressed by means of $(\mathbf{\Delta}^{\!+})^jf(i)$, $0\le j\le k$,
according as
\begin{align}\label{difference}
f(i+k)=\sum_{j=0}^k \binom{k}{j} \,(\mathbf{\Delta}^{\!+})^jf(i).
\end{align}
We have the following expression for any functional of the pseudo-random
variable $S_b^+$.
%
\begin{theorem}\label{theo-exp-f}
We have, for any function $f$ defined on $\{b,b+1,\dots,b+N-1\}$, that
\begin{equation}\label{exp-f-Sb}
\mathbb{E}\!\left[f(S_b^+)\right]
=\sum_{j=0}^{N-1} (-1)^j \binom{j+b-1}{b-1} \,(\mathbf{\Delta}^{\!+})^jf(b).
\end{equation}
\end{theorem}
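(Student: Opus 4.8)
Reducing everything to the factorial pseudo-moments of $S_b^+-b$ computed in Corollary~\ref{cor1} (equivalently, to identity~\eqref{moment_comb}) via the finite-difference inversion formula~\eqref{difference} is the natural route. First I would write $S_b^+=b+K$; since $S_b^+\in\{b,b+1,\dots,b+N-1\}$, the pseudo-random variable $K$ takes its values in $\{0,1,\dots,N-1\}$. Applying~\eqref{difference} with $i=b$ and $k=K$ gives $f(S_b^+)=f(b+K)=\sum_{j=0}^{K}\binom{K}{j}(\mathbf{\Delta}^{\!+})^jf(b)$. Because $\binom{K}{j}=0$ whenever $j>K$ and $K\le N-1$, the upper summation limit may be raised to $N-1$ uniformly in the outcome, so $f(S_b^+)=\sum_{j=0}^{N-1}\binom{S_b^+-b}{j}(\mathbf{\Delta}^{\!+})^jf(b)$ as an identity between pseudo-random variables on the finite set $\{b,\dots,b+N-1\}$ where $f$ is defined.

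Next I would take the pseudo-expectation term by term — legitimate since the sum is finite — to obtain $\mathbb{E}[f(S_b^+)]=\sum_{j=0}^{N-1}\mathbb{E}\!\left[\binom{S_b^+-b}{j}\right](\mathbf{\Delta}^{\!+})^jf(b)$. By Corollary~\ref{cor1}, $\mathbb{E}\!\left[\binom{S_b^+-b}{j}\right]=\frac{1}{j!}\,\mathbb{E}[(S_b^+-b)_j]=\frac{(-b)_j}{j!}$ for $0\le j\le N-1$, and the elementary identity $(-b)_j=(-1)^j b(b+1)\cdots(b+j-1)=(-1)^j\frac{(b+j-1)!}{(b-1)!}$ shows that $\frac{(-b)_j}{j!}=(-1)^j\binom{j+b-1}{b-1}$, which is precisely~\eqref{moment_comb}. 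Substituting yields~\eqref{exp-f-Sb}.

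There is no serious obstacle here: the only points worth a word of care are the uniform extension of the summation range to $N-1$, which rests on the bounded-overshoot fact $K\le N-1$, and the conversion of the factorial pseudo-moment into $\binom{j+b-1}{b-1}$; both are already recorded in the excerpt, so the statement follows in two lines from~\eqref{difference} and Corollary~\ref{cor1}.
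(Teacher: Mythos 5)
Your proof is correct and follows exactly the paper's own argument: expand $f(S_b^+)$ via the inversion formula~(\ref{difference}), extend the summation range to $N-1$ using the bounded overshoot, and conclude with~(\ref{moment_comb}) (equivalently Corollary~\ref{cor1}). Nothing to add.
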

%
\begin{proof}
By (\ref{difference}), we see that
\[
\mathbb{E}\!\left[f(S_b^+)\right]
=\mathbb{E}\!\left(\sum_{j=0}^{S_b^+-b} \binom{S_b^+-b}{j} (\mathbf{\Delta}^{\!+})^jf(b)\right)
=\sum_{j=0}^{N-1} \mathbb{E}\!\left[ \binom{S_b^+-b}{j}\right] (\mathbf{\Delta}^{\!+})^jf(b)
\]
which immediately yields~(\ref{exp-f-Sb}) thanks to~(\ref{moment_comb}).
\end{proof}
%
\begin{corollary}
The generating function of $S_b^+$ is given by
\begin{equation}\label{gene-Sb}
\mathbb{E}\!\left(\zeta^{S_b^+}\right)
=\zeta^b \sum_{j=0}^{N-1} \binom{j+b-1}{b-1} (1-\zeta)^j.
\end{equation}
\end{corollary}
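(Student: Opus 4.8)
The plan is to read this off directly from Theorem~\ref{theo-exp-f} by specializing the test function to the exponential $f(\ell)=\zeta^\ell$, which is well defined on $\{b,b+1,\dots,b+N-1\}$ for every $\zeta\in\mathbb{C}$. The only thing to compute is the action of the iterated forward difference operator $(\mathbf{\Delta}^{\!+})^j$ on this exponential, and then to substitute and tidy up a sign.

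First I would note that $\mathbf{\Delta}^{\!+}f(i)=\zeta^{i+1}-\zeta^i=(\zeta-1)\zeta^i$, so that the exponential is, up to the scalar $\zeta-1$, an eigenfunction of $\mathbf{\Delta}^{\!+}$. Iterating gives $(\mathbf{\Delta}^{\!+})^jf(i)=(\zeta-1)^j\zeta^i$ for every $j\in\mathbb{N}$, hence in particular $(\mathbf{\Delta}^{\!+})^jf(b)=(\zeta-1)^j\zeta^b$. (Alternatively one may check this from the explicit formula $(\mathbf{\Delta}^{\!+})^jf(i)=\sum_{k=0}^j(-1)^{j+k}\binom{j}{k}\zeta^{i+k}=\zeta^i(\zeta-1)^j$ via the binomial theorem, but the eigenfunction remark is quicker.)

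Then I would plug this into formula~(\ref{exp-f-Sb}):
\[
\mathbb{E}\!\left(\zeta^{S_b^+}\right)
=\sum_{j=0}^{N-1} (-1)^j \binom{j+b-1}{b-1}(\zeta-1)^j\zeta^b
=\zeta^b \sum_{j=0}^{N-1} \binom{j+b-1}{b-1} (1-\zeta)^j,
\]
where the last equality uses $(-1)^j(\zeta-1)^j=(1-\zeta)^j$. This is exactly~(\ref{gene-Sb}).

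There is essentially no obstacle here, since the entire content sits in Theorem~\ref{theo-exp-f} and this corollary is just its specialization to exponentials. As consistency checks one may observe that setting $\zeta=1$ yields $\mathbb{E}(1^{S_b^+})=1$, recovering $\mathbb{P}\{\sigma_b^+<+\infty\}=1$ from Theorem~\ref{theo-dist-Sb}, and that extracting the coefficient of $\zeta^{b+i}$ in~(\ref{gene-Sb}) reproduces the pseudo-probabilities $\mathbb{P}\{S_b^+=b+i\}$ of~(\ref{pseudo-dist-Sb}).
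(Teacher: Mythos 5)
Your proof is correct and is exactly the paper's own argument: apply Theorem~\ref{theo-exp-f} to $f(i)=\zeta^i$, note that $(\mathbf{\Delta}^{\!+})^jf(i)=(\zeta-1)^j\zeta^i=(-1)^j\zeta^i(1-\zeta)^j$, and substitute into~(\ref{exp-f-Sb}). The consistency checks at $\zeta=1$ and on coefficients are a nice addition but not needed.
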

%
\begin{proof}
Let us apply Theorem~\ref{theo-exp-f} to the function $f(i)=\zeta^i$ for which
we plainly have $(\mathbf{\Delta}^{\!+})^jf(i)=(-1)^j\zeta^i(1-\zeta)^j$.
This immediately yields~(\ref{gene-Sb}).
\end{proof}
%
\begin{remark}
A direct computation with~(\ref{pseudo-dist-Sb}) yields the alternative representation:
\[
\mathbb{E}\!\left(\zeta^{S_b^+}\right)=b\binom{b+N-1}{b} \zeta^b \int_0^1 x^{b-1}
(1-\zeta x)^{N-1}\,\mathrm{d} x.
\]
\end{remark}
%

Of special interest is the case when the starting point of the pseudo-random walk
is any point $x\in\mathbb{Z}$. By translating $b$ into $b-x$ and the function
$f$ into the shifted function $f(.+x)$ in formula~(\ref{exp-f-Sb}),
we get that
\[
\mathbb{E}_x\!\left[f(S_b^+)\right]\stackrel{\text{def}}{=}
\mathbb{E}\!\left[f(x+S_{b-x}^+)\right]
=\sum_{j=0}^{N-1} (-1)^j \binom{j+b-x-1}{j} \,(\mathbf{\Delta}^{\!+})^jf(b).
\]
Thus, we obtain the following result.
%
\begin{theorem}\label{theo-exp-fxb}
We have, for any function $f$ defined on $\{b,b+1,\dots,b+N-1\}$, that
\begin{equation}\label{formula-exp-fxb}
\mathbb{E}_x\!\left[f(S_b^+)\right]=\sum_{j=0}^{N-1} \mathbf{P}_{b,j}^+(x)
\,(\mathbf{\Delta}^{\!+})^jf(b)
\end{equation}
with $\mathbf{P}_{b,0}^+(x)=1$ and, for $j\in\{1,\dots,N-1\}$,
\[
\mathbf{P}_{b,j}^+(x)=\frac{1}{j!}\prod_{k=0}^{j-1} (x-b-k).
\]
The $\mathbf{P}_{b,j}^+$, $0\le j\le N-1$, are Newton interpolation polynomials.
They are of degree not greater than $(N-1)$ and characterized,
for any $k\in\{0,\dots,N-1\}$, by
\[
(\mathbf{\Delta}^{\!+})^k\mathbf{P}_{b,j}^+(b)=\delta_{jk}.
\]
\end{theorem}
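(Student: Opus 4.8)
The plan is to derive the statement from Theorem~\ref{theo-exp-f} by a shift of the starting point — which is exactly the computation displayed just before the statement — and then to recognise the resulting coefficients as the polynomials $\mathbf{P}_{b,j}^+$ and verify the announced properties directly. First I would record the elementary fact that the forward finite-difference operator commutes with spatial translation: if $g(i)=f(i+x)$, then $(\mathbf{\Delta}^{\!+})^jg(i)=(\mathbf{\Delta}^{\!+})^jf(i+x)$ for every $j$ and $i$, straight from the definition of $\mathbf{\Delta}^{\!+}$. Since $\mathbb{E}_x[f(S_b^+)]=\mathbb{E}[f(x+S_{b-x}^+)]=\mathbb{E}[g(S_{b-x}^+)]$, applying~(\ref{exp-f-Sb}) with $b$ replaced by $b-x$ and $f$ replaced by $g$ gives
\[
\mathbb{E}_x[f(S_b^+)]=\sum_{j=0}^{N-1}(-1)^j\binom{j+b-x-1}{j}\,(\mathbf{\Delta}^{\!+})^jf(b),
\]
which is the display preceding the theorem.

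Next I would identify the coefficients. Expanding the (generalized) binomial coefficient as a product of $j$ consecutive factors, $\binom{j+b-x-1}{j}=\frac{1}{j!}\prod_{m=0}^{j-1}(b-x+m)$, so that $(-1)^j\binom{j+b-x-1}{j}=\frac{1}{j!}\prod_{m=0}^{j-1}(x-b-m)=\mathbf{P}_{b,j}^+(x)$, the empty product for $j=0$ giving $\mathbf{P}_{b,0}^+\equiv1$. This is precisely~(\ref{formula-exp-fxb}); note that this rewriting makes the identity a polynomial identity in $x$, valid regardless of the sign or size of the entries. The degree claim is then immediate, since $\mathbf{P}_{b,j}^+$ is a product of $j\le N-1$ affine functions of $x$, hence a polynomial of degree exactly $j$.

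For the characterisation $(\mathbf{\Delta}^{\!+})^k\mathbf{P}_{b,j}^+(b)=\delta_{jk}$, I would first evaluate $\mathbf{P}_{b,j}^+$ on the lattice: for an integer $i\ge0$, $\mathbf{P}_{b,j}^+(b+i)=\frac{1}{j!}\prod_{m=0}^{j-1}(i-m)=\binom{i}{j}$, with the usual convention $\binom{i}{j}=0$ for $i<j$. Hence, using $\binom{k}{i}\binom{i}{j}=\binom{k}{j}\binom{k-j}{i-j}$ and setting $\ell=i-j$,
\[
(\mathbf{\Delta}^{\!+})^k\mathbf{P}_{b,j}^+(b)=\sum_{i=0}^k(-1)^{k+i}\binom{k}{i}\binom{i}{j}=(-1)^{k+j}\binom{k}{j}\sum_{\ell=0}^{k-j}(-1)^{\ell}\binom{k-j}{\ell},
\]
and the right-hand side vanishes unless $k=j$ (the sum being empty for $k<j$ and a binomial expansion of zero for $k>j$), in which case it equals $1$. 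Thus $(\mathbf{\Delta}^{\!+})^k\mathbf{P}_{b,j}^+(b)=\delta_{jk}$, and since the $\mathbf{P}_{b,j}^+$, $0\le j\le N-1$, are therefore the polynomials of degree $\le N-1$ dual to the functionals $g\mapsto(\mathbf{\Delta}^{\!+})^kg(b)$, they are the associated Newton interpolation polynomials.

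No step presents a genuine obstacle; the argument is essentially bookkeeping built on the already-established Theorem~\ref{theo-exp-f}. The only points requiring a little care are the conventions for binomial coefficients with negative or out-of-range entries and for empty products (needed to match $\binom{j+b-x-1}{j}$ with the product form of $\mathbf{P}_{b,j}^+$), the commutation of $\mathbf{\Delta}^{\!+}$ with translation in Step~1, and the Vandermonde-type identity $\binom{k}{i}\binom{i}{j}=\binom{k}{j}\binom{k-j}{i-j}$ used in the last step.
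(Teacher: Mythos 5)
Your proof is correct, and its skeleton is the same as the paper's: translate Theorem~\ref{theo-exp-f} by the starting point (the display preceding the statement), identify the coefficients with the $\mathbf{P}_{b,j}^+$, and verify $(\mathbf{\Delta}^{\!+})^k\mathbf{P}_{b,j}^+(b)=\delta_{jk}$ via the identity $\binom{k}{\ell}\binom{\ell}{j}=\binom{k}{j}\binom{k-j}{\ell-j}$ — that last computation is literally the one in the paper. The one place the routes diverge is how the lattice values $\mathbf{P}_{b,j}^+(b+\ell)=\binom{\ell}{j}\ind_{\{j\le\ell\}}$ and the degree bound are obtained: you read both off the closed product form after the purely algebraic rewriting $(-1)^j\binom{j+b-x-1}{j}=\frac{1}{j!}\prod_{m=0}^{j-1}(x-b-m)$, which is shorter and makes the polynomial identity in $x$ transparent; the paper instead re-derives the coefficient as the pseudo-expectation $\mathbf{P}_{b,j}^+(x)=\mathbb{E}_x\bigl[\binom{S_b^+-b}{j}\bigr]$ and evaluates it through the pseudo-distribution of $S_{b-x}^+$ and the auxiliary polynomials $K_m(x)=\prod_{0\le k\le N-1,\,k\neq m}(b-x+k)$. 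The paper's detour is not wasted effort, though: the probabilistic representation of the coefficients is exactly what gets generalized to the two-sided exit problem (Theorem~\ref{theo-exp-fxab}, where no closed product form is available), so it is worth keeping in mind even if it is dispensable here.
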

%
\begin{proof}
Coming back to the proof of Theorem~\ref{theo-exp-f} and appealing
to Theorem~\ref{theo-dist-Sb}, we write that
\begin{align}
\mathbf{P}_{b,j}^+(x)
&
=\mathbb{E}_x\!\left[\!\binom{S_b^+-b}{j}\!\right]
=\mathbb{E}\!\left[\!\binom{S_{b-x}^++x-b}{j}\!\right]
=\sum_{m=j}^{N-1} \binom{m}{j}\mathbb{P}\{S_{b-x}^+=b-x+m\}
\nonumber\\
&
=\frac{1}{(N-1)!} \sum_{m=j}^{N-1} (-1)^m \binom{m}{j}\!
\binom{N-1}{m} K_m(x)
\label{formula-Pb+}
\end{align}
where, for any $m\in\{0,\dots,N-1\}$,
\[
K_m(x)=\left(\prod_{k=0}^{N-1} (b-x+k)\right)\!/(b-x+m)
=\prod_{0\le k\le N-1\atop k\neq m} (b-x+k).
\]
The expression $K_m(x)$ defines a polynomial of the variable $x$
of degree $(N-1)$, so $\mathbf{P}_{b,j}^+$ is a polynomial of degree not greater
that $(N-1)$. It is obvious that $K_m(b+\ell)=0$ for
$\ell\in\{0,\dots,N-1\}\backslash\{m\}$. On the other hand,
$K_{\ell}(b+\ell)=(-1)^{\ell}(N-1)!/\binom{N-1}{\ell}.$
By putting this into~(\ref{formula-Pb+}), we get that
\[
\mathbf{P}_{b,j}^+(b+\ell)=\binom{\ell}{j}\ind_{\{j\le\ell\}}.
\]
Next, we obtain, for any $k\in\{0,\dots,N-1\}$, that
\begin{align*}
(\mathbf{\Delta}^{\!+})^k\mathbf{P}_{b,j}^+(b)
&
=\sum_{\ell=0}^k (-1)^{k+\ell} \binom{k}{\ell}\mathbf{P}_{b,j}^+(b+\ell)
=\sum_{\ell=j}^k (-1)^{k+\ell} \binom{k}{\ell}\!\binom{\ell}{j}
\\
&
=(-1)^{j+k}\binom{k}{j}\sum_{\ell=0}^{k-j} (-1)^{\ell} \binom{k-j}{\ell}=
\delta_{jk}.
\end{align*}
The proof of Theorem~\ref{theo-exp-fxb} is finished.
\end{proof}
%
We complete this paragraph by stating a strong pseudo-Markov property related to
time $\sigma_b^+$.
%
\begin{theorem}
We have, for any function $f$ defined on $\mathbb{Z}$ and any $n\in\mathbb{N}$,
that
\begin{equation}\label{Markov-b}
\mathbb{E}_x\!\big[f\big(S_{\sigma_b^++n}\big)\big]=\sum_{j=0}^{N-1}
\mathbf{P}_{b,j}^+(x)\,(\mathbf{\Delta}^{\!+})^j \mathbb{E}_b[f(S_n)].
\end{equation}
In (\ref{Markov-b}), the operator $(\mathbf{\Delta}^{\!+})^j$ acts on the variable $b$.
\end{theorem}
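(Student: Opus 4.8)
The plan is to deduce (\ref{Markov-b}) from the strong pseudo-Markov property at time $\sigma_b^+$ together with formula~(\ref{formula-exp-fxb}) of Theorem~\ref{theo-exp-fxb}. For $y\in\mathbb{Z}$ introduce the function $g(y)=\mathbb{E}_y[f(S_n)]=\sum_{k=-Nn}^{Nn}f(y+k)\,\mathbb{P}\{S_n=k\}$; this is a finite sum, so $g$ is well defined on all of $\mathbb{Z}$. The first step is to prove
\begin{equation}\label{markov-reduction}
\mathbb{E}_x\!\big[f\big(S_{\sigma_b^++n}\big)\big]=\mathbb{E}_x\!\big[g(S_b^+)\big].
\end{equation}
To this end I would decompose over the value $k$ of $\sigma_b^+$ and the overshoot position $\ell=S_b^+\in\{b,\dots,b+N-1\}$, and use that on each event $\{\sigma_b^+=k\}$ the increment $S_{\sigma_b^++n}-S_{\sigma_b^+}=U_{k+1}+\dots+U_{k+n}$ has the same pseudo-distribution as $S_n$ and is independent of $(U_1,\dots,U_k)$, hence of the pair $(\sigma_b^+,S_b^+)$. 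This gives
\[
\mathbb{E}_x\!\big[f\big(S_{\sigma_b^++n}\big)\big]
=\sum_{k\ge 0}\sum_{\ell=b}^{b+N-1}\mathbb{P}_x\{\sigma_b^+=k,\,S_b^+=\ell\}\,\mathbb{E}[f(\ell+S_n)],
\]
and, since the $\ell$-sum is finite, interchanging the two summations and using $\sum_{k\ge 0}\mathbb{P}_x\{\sigma_b^+=k,S_b^+=\ell\}=\mathbb{P}_x\{S_b^+=\ell\}$ (which is $\mathbb{P}\{\sigma_b^+<+\infty\}=1$ read off from Theorem~\ref{theo-dist-Sb}) yields $\sum_{\ell=b}^{b+N-1}\mathbb{P}_x\{S_b^+=\ell\}\,g(\ell)=\mathbb{E}_x[g(S_b^+)]$, i.e. (\ref{markov-reduction}).

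The second step is immediate: since $S_b^+$ takes values in $\{b,\dots,b+N-1\}$, $\mathbb{E}_x[g(S_b^+)]$ depends only on the restriction of $g$ to that set, and applying Theorem~\ref{theo-exp-fxb} to this restriction gives
\[
\mathbb{E}_x\!\big[g(S_b^+)\big]=\sum_{j=0}^{N-1}\mathbf{P}_{b,j}^+(x)\,(\mathbf{\Delta}^{\!+})^jg(b).
\]
Finally, for $0\le j\le N-1$ one has $(\mathbf{\Delta}^{\!+})^jg(b)=\sum_{k=0}^j(-1)^{j+k}\binom{j}{k}g(b+k)=\sum_{k=0}^j(-1)^{j+k}\binom{j}{k}\mathbb{E}_{b+k}[f(S_n)]$, which is exactly $(\mathbf{\Delta}^{\!+})^j$ applied, in the variable $b$, to $b\mapsto\mathbb{E}_b[f(S_n)]$. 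Substituting into (\ref{markov-reduction}) yields (\ref{Markov-b}).

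The one point requiring care is the justification of (\ref{markov-reduction}): the a priori bound $|\mathbb{P}\{\sigma_b^+=k,S_b^+=\ell\}|\le M_1^k$ with $M_1\ge 1$ does not make the series over $k$ absolutely convergent, so the identity $\sum_{k\ge0}\mathbb{P}_x\{\sigma_b^+=k,S_b^+=\ell\}=\mathbb{P}_x\{S_b^+=\ell\}$ and the accompanying manipulation of pseudo-expectations must be read in the Abel sense and validated by a Tauberian argument, exactly as in the proofs of Lemma~\ref{lemma-syst-H} and Theorem~\ref{theo-dist-Sb}. Once this is granted, the rest is a direct application of Theorem~\ref{theo-exp-fxb}.
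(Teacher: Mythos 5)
Your proposal is correct and follows essentially the same route as the paper: decompose over the values of $(\sigma_b^+,S_b^+)$, use independence of the increments to reduce to $\mathbb{E}_x[g(S_b^+)]$ with $g(y)=\mathbb{E}_y[f(S_n)]$, and then apply Theorem~\ref{theo-exp-fxb}. Your closing remark on Abel summability is a point of rigor the paper itself passes over silently, but it does not alter the argument.
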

%
\begin{proof}
We denote by $\mathbb{P}_{\!x}$ the pseudo-probability associated with
the pseudo-expectation $\mathbb{E}_x$. Actually, it represents the
pseudo-probability related to the pseudo-random walk started at point $x$ at time $0$.
We have, by independence of the $U_j$'s, that
\begin{align*}
\mathbb{E}_x\!\big[f\big(S_{\sigma_b^++n}\big)\big]
&
=\sum_{k,\ell\in\mathbb{N}:\atop b\le \ell\le b+N-1}
\mathbb{E}_x\!\big[
\ind_{\{\sigma_b^+=k,S_b^+=\ell\}} f\!\left(S_b^++U_{k+1}+\dots+U_{k+n}\right)\!\!\big]
\\
&
=\sum_{k,\ell\in\mathbb{N}:\atop b\le \ell\le b+N-1}
\mathbb{P}_{\!x}\{\sigma_b^+=k,S_b^+=\ell\} \,\mathbb{E}[f(\ell+U_1+\dots+U_n)]
\\
&
=\sum_{\ell=b}^{b+N-1} \mathbb{P}_{\!x}\{S_b^+=\ell\} \,\mathbb{E}_{\ell}[f(S_n)]
=\mathbb{E}_x\big[\mathbb{E}_{S_b^+}[f(S_n)]\big].
\end{align*}
Hence, by setting $g(x)=\mathbb{E}_x[f(S_n)]$, we have obtained that
\[
\mathbb{E}_x\!\big[f\big(S_{\sigma_b^++n}\big)\big]=\mathbb{E}_x\big[g(S_b^+)\big]
\]
which proves~(\ref{Markov-b}) thanks to~(\ref{formula-exp-fxb}).
\end{proof}
%
\begin{example}
Below, we display the form of (\ref{Markov-b}) for the particular values
$1,2,3$ of $N$.
\begin{itemize}
\item
For $N=1$, (\ref{Markov-b}) reads
\[
\mathbb{E}_x\!\big[f\big(S_{\sigma_b^++n}\big)\big]
=\mathbb{E}_b[f(S_n)]
\]
which is of course trivial! This is the strong Markov property for the
ordinary random walk.

\item
For $N=2$, (\ref{Markov-b}) reads
\begin{align*}
\mathbb{E}_x\!\big[f\big(S_{\sigma_b^++n}\big)\big]
&
=\mathbb{E}_b[f(S_n)]+(x-b)\,\mathbf{\Delta}^{\!+} \mathbb{E}_b[f(S_n)]
\\
&
=(b-x+1)\,\mathbb{E}_b[f(S_n)]+(x-b)\,\mathbb{E}_{b+1}[f(S_n)].
\end{align*}

\item
For $N=3$, (\ref{Markov-b}) reads
\begin{align*}
\mathbb{E}_x\!\big[f\big(S_{\sigma_b^++n}\big)\big]
&
=\mathbb{E}_b[f(S_n)]+(x-b)\,\mathbf{\Delta}^{\!+} \mathbb{E}_b[f(S_n)]
\\
&
\hphantom{=\;}
+\frac12\,(x-b)(x-b-1)\,(\mathbf{\Delta}^{\!+})^2\, \mathbb{E}_b[f(S_n)]
\\
&
=\frac12\,(x-b-1)(x-b-2)\,\mathbb{E}_b[f(S_n)]-(x-b)(x-b-2)\,\mathbb{E}_{b+1}[f(S_n)]
\\
&
\hphantom{=\;}
+\frac12\,(x-b)(x-b-1)\,\mathbb{E}_{b+2}[f(S_n)].
\end{align*}
\end{itemize}
\end{example}

\section{Joint pseudo-distribution of $\big(\tau_b^+,X_b^+\big)$}\label{section-tau-b}

Below, we give an ad hoc definition for the convergence of a family of exit times.
%
\begin{definition}\label{def2}
Let $((X_t^{\varepsilon})_{t\ge 0})_{\varepsilon>0}$ be a family of pseudo-processes which converges
towards a pseudo-process $(X_t)_{t\ge 0}$ when $\varepsilon\to 0^+$
in the sense of Definition~\ref{def1}.
Let $I$ be a subset of $\mathbb{R}$ and set $\tau_I^{\varepsilon}
=\inf\{t\ge 0:X_t^{\varepsilon}\notin I\}$,
$X_I^{\varepsilon}=X_{\tau_I^{\varepsilon}}^{\varepsilon}$ and
$\tau_I^{\vphantom{\varepsilon}}=\inf\{t\ge 0:X_t\notin I\}$,
$X_I^{\vphantom{\varepsilon}}=X_{\tau_I^{\vphantom{\varepsilon}}}$.

We say that
\[
\big(\tau_I^{\varepsilon},X_I^{\varepsilon}\big) \underset{\varepsilon\to 0^+}{\longrightarrow}
\big(\tau_I^{\vphantom{\varepsilon}},X_I^{\vphantom{\varepsilon}}\big)
\]
if and only if
\[
\forall \lambda>0,\,\forall \mu\in\mathbb{R},
\quad\mathbb{E}\Big(\mathrm{e}^{-\lambda\tau_I^{\varepsilon}
+\mathrm{i} \mu X_I^{\varepsilon}}\ind_{\{\tau_I^{\varepsilon}<+\infty\}}\Big)
\underset{\varepsilon\to 0^+}{\longrightarrow}\mathbb{E}
\Big(\mathrm{e}^{-\lambda\tau_I^{\vphantom{\varepsilon}}+\mathrm{i} \mu X_I^{\vphantom{\varepsilon}}}
\ind_{\{\tau_I^{\vphantom{\varepsilon}}<+\infty\}}\Big).
\]
We say that
\[
X_I^{\varepsilon} \underset{\varepsilon\to 0^+}{\longrightarrow} X_I^{\vphantom{\varepsilon}}
\]
if and only if
\[
\forall \mu\in\mathbb{R},\quad\mathbb{E} \Big(\mathrm{e}^{\mathrm{i}
\mu X_I^{\varepsilon}}\ind_{\{\tau_I^{\varepsilon}<+\infty\}}\Big)
\underset{\varepsilon\to 0^+}{\longrightarrow}
\mathbb{E}\Big(\mathrm{e}^{\mathrm{i} \mu X_I^{\vphantom{\varepsilon}}}
\ind_{\{\tau_I^{\vphantom{\varepsilon}}<+\infty\}}\Big).
\]
\end{definition}
%
As in Section~\ref{section-limit}, we choose for the family
$((X_t^{\varepsilon})_{t\ge 0})_{\varepsilon>0}$
the pseudo-processes defined, for any $\varepsilon>0$, by
\[
X_t^{\varepsilon}=\varepsilon S_{\lfloor t/\varepsilon^{2N}\rfloor}, \quad t\ge 0,
\]
and for the pseudo-process $(X_t)_{t\ge 0}$ the pseudo-Brownian motion.
For $I$, we choose the interval $(-\infty,b)$ so that $\tau_I^{\varepsilon}
=\tau_b^{\varepsilon+}$, $X_I^{\varepsilon}=X_b^{\varepsilon+}$
and $\tau_I^{\vphantom{\varepsilon}}=\tau_b^+$, $X_I^{\vphantom{\varepsilon}}=X_b^+$.
Set $b_{\varepsilon}=\lceil b/\varepsilon\rceil$
where $\lceil\,.\,\rceil$ is the usual ceiling function.
We have $\tau_b^{\varepsilon+}=\varepsilon^{2N}\sigma_{b_{\varepsilon}}^+$
and $X_b^{\varepsilon+}=\varepsilon S_{b_{\varepsilon}}^+$.
Recall the setting $\varphi_j=-\mathrm{i} \,\mathrm{e}^{\mathrm{i}
\frac{2j-1}{2N} \pi}$, $1\le j\le N$.
%
\begin{theorem}\label{theo-conv}
Assume that $c\le 1/2^{2N-1}$. The following convergence holds:
\[
\big(\tau_b^{\varepsilon+},X_b^{\varepsilon+}\big) \underset{\varepsilon\to 0^+}{\longrightarrow}
\big(\tau_b^+,X_b^+\big)
\]
where, for any $\lambda>0$ and any $\mu\in\mathbb{R}$,
\[
\mathbb{E}\Big(\mathrm{e}^{-\lambda\tau_b^++\mathrm{i} \mu X_b^+}
\ind_{\{\tau_b<+\infty\}}\Big)
=\mathrm{e}^{\mathrm{i} \mu b} \sum_{k=1}^N \prod_{1\le j\le N\atop j\neq k}
\frac{\varphi_j}{\varphi_j-\varphi_k}
\prod_{1\le j\le N\atop j\neq k}
\bigg(1-\frac{\mathrm{i}\,\overline{\varphi_j}\,\mu}{\sqrt[2N\!\!]{\lambda/c}}\bigg)\,
\mathrm{e}^{-\varphi_k\!\!\sqrt[2N\!\!]{\lambda/c}\,b}.
\]
\end{theorem}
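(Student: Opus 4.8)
The plan is to reduce the statement to the generating-function identity~(\ref{double-gene-sigma}) and to let $\varepsilon\to 0^+$ with the help of the asymptotics~(\ref{asymptotic-v}). Recall that $X_t^{\varepsilon}=\varepsilon S_{\lfloor t/\varepsilon^{2N}\rfloor}$ and $b_{\varepsilon}=\lceil b/\varepsilon\rceil$, so that $\tau_b^{\varepsilon+}=\varepsilon^{2N}\sigma_{b_{\varepsilon}}^+$ and $X_b^{\varepsilon+}=\varepsilon\,S_{b_{\varepsilon}}^+$. Hence, writing $z=\mathrm{e}^{-\lambda\varepsilon^{2N}}$ and $\zeta=\mathrm{e}^{\mathrm{i}\mu\varepsilon}$,
\[
\mathbb{E}\Big(\mathrm{e}^{-\lambda\tau_b^{\varepsilon+}+\mathrm{i}\mu X_b^{\varepsilon+}}\ind_{\{\tau_b^{\varepsilon+}<+\infty\}}\Big)
=\mathbb{E}\Big(z^{\sigma_{b_{\varepsilon}}^+}\zeta^{S_{b_{\varepsilon}}^+}\ind_{\{\sigma_{b_{\varepsilon}}^+<+\infty\}}\Big).
\]
For $\lambda>0$ we have $z\in(0,1)$, and since~(\ref{double-gene-sigma}) remains valid on $(0,1)$ (the right-hand side of~(\ref{joint-dist}) was extended to $(0,1)$ by analytic continuation in Theorem~\ref{theorem-joint-dist}, and~(\ref{double-gene-sigma}) is derived from~(\ref{exp-Hbl}) by a purely algebraic manipulation), the last expectation equals
\[
\sum_{k=1}^N \Bigg(\prod_{1\le j\le N\atop j\neq k}\frac{\zeta-v_j(z)}{v_k(z)-v_j(z)}\Bigg)\big(u_k(z)\,\zeta\big)^{b_{\varepsilon}}.
\]

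We then pass to the limit term by term. Put $\varpi=\sqrt[2N]{\lambda/c}$. By~(\ref{asymptotic-v}), $u_j(z)=1-\varphi_j\varpi\,\varepsilon+o(\varepsilon)$, hence $v_j(z)=1/u_j(z)=1+\varphi_j\varpi\,\varepsilon+o(\varepsilon)$, while $\zeta=1+\mathrm{i}\mu\,\varepsilon+o(\varepsilon)$. Since $|u_k(z)\,\zeta|<1$ and $b_{\varepsilon}\varepsilon\to b$, a principal logarithm gives
\[
\big(u_k(z)\,\zeta\big)^{b_{\varepsilon}}=\exp\!\Big(b_{\varepsilon}\big[(\mathrm{i}\mu-\varphi_k\varpi)\varepsilon+o(\varepsilon)\big]\Big)\underset{\varepsilon\to 0^+}{\longrightarrow}\mathrm{e}^{\mathrm{i}\mu b}\,\mathrm{e}^{-\varphi_k\varpi b},
\]
and, the $(2N)^{\mathrm{th}}$ roots $\varphi_1,\dots,\varphi_N$ of $\kappa_{_{\!N}}$ being pairwise distinct,
\[
\frac{\zeta-v_j(z)}{v_k(z)-v_j(z)}\underset{\varepsilon\to 0^+}{\longrightarrow}\frac{\mathrm{i}\mu-\varphi_j\varpi}{(\varphi_k-\varphi_j)\varpi}=\frac{\mathrm{i}\mu/\varpi-\varphi_j}{\varphi_k-\varphi_j}.
\]
Using $|\varphi_j|=1$, that is $1/\varphi_j=\overline{\varphi_j}$, we rewrite this last limit as
\[
\frac{\mathrm{i}\mu/\varpi-\varphi_j}{\varphi_k-\varphi_j}=\frac{-\varphi_j\big(1-\mathrm{i}\,\overline{\varphi_j}\,\mu/\varpi\big)}{-(\varphi_j-\varphi_k)}=\frac{\varphi_j}{\varphi_j-\varphi_k}\Big(1-\frac{\mathrm{i}\,\overline{\varphi_j}\,\mu}{\varpi}\Big).
\]
Multiplying these limits over $j\neq k$ and summing over $k\in\{1,\dots,N\}$ produces exactly the announced closed form.

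Finally, under the hypothesis $c\le 1/2^{2N-1}$ we know from Theorem~\ref{theo-limit} that $X_t^{\varepsilon}\to X_t$ in the sense of Definition~\ref{def1}, so the convergence just obtained is precisely $\big(\tau_b^{\varepsilon+},X_b^{\varepsilon+}\big)\to\big(\tau_b^+,X_b^+\big)$ in the sense of Definition~\ref{def2}, with limiting Laplace--Fourier transform the displayed expression (this also recovers the result of~\cite{la2,la3}). The one point that requires care is the passage to the limit in the power $\big(u_k(z)\,\zeta\big)^{b_{\varepsilon}}$: one uses that $b_{\varepsilon}=\lceil b/\varepsilon\rceil$ satisfies both $b_{\varepsilon}\varepsilon\to b$ and $b_{\varepsilon}\cdot o(\varepsilon)\to 0$, so that the exponent $b_{\varepsilon}\log\!\big(u_k(z)\,\zeta\big)$ converges to $b(\mathrm{i}\mu-\varphi_k\varpi)$; every other ingredient in~(\ref{double-gene-sigma}) is a finite sum of explicit functions for which the limit follows by continuity from~(\ref{asymptotic-v}).
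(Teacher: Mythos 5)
Your proof is correct and follows essentially the same route as the paper's: substitute $z=\mathrm{e}^{-\lambda\varepsilon^{2N}}$, $\zeta=\mathrm{e}^{\mathrm{i}\mu\varepsilon}$ into~(\ref{double-gene-sigma}), apply the asymptotics~(\ref{asymptotic-v}) to each Lagrange factor and to $u_k(z)^{b_{\varepsilon}}$, and identify the limit using $\overline{\varphi_j}=1/\varphi_j$. The only differences are cosmetic (you work with the $v_j$'s directly instead of rewriting the factors in terms of $1-u_j\zeta$ and $u_k-u_j$, and you justify the validity of~(\ref{double-gene-sigma}) for $z$ near $1$ by analytic continuation rather than via the bound $M_{\infty}=1$), and if anything your treatment of the power $\big(u_k(z)\zeta\big)^{b_{\varepsilon}}$ is slightly more careful than the paper's.
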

%
\begin{proof}
We already pointed out that the assumption $c\le 1/2^{2N-1}$ entails
that $M_{\infty}=1$. Therefore, (\ref{double-gene-sigma}) holds for
$z=\mathrm{e}^{-\lambda\varepsilon^{2N}}<1/M_{\infty}=1$, i.e., for $\lambda>0$.
So, by~(\ref{double-gene-sigma}), we have, for $\lambda>0$, that
\begin{align*}
\mathbb{E}\Big(\mathrm{e}^{-\lambda\tau_b^{\varepsilon+}+\mathrm{i} \mu X_b^{\varepsilon+}}
\ind_{\{\tau_b^{\varepsilon+}<+\infty\}}\Big)
&
=\mathbb{E}\Big(\mathrm{e}^{-\lambda\varepsilon^{2N}\sigma_{b_{\varepsilon}}^+
+\mathrm{i} \mu\varepsilon S_{b_{\varepsilon}}^+}\ind_{\{\sigma_{b_{\varepsilon}}^+<+\infty\}}\Big)
\\
&
=\mathrm{e}^{\mathrm{i} \mu\varepsilon b_{\varepsilon}} \sum_{k=1}^{N}
\mathbf{L}_k\Big(\mathrm{e}^{-\lambda\varepsilon^{2N}},\mathrm{e}^{\mathrm{i} \mu\varepsilon}\Big)
u_k\Big(\mathrm{e}^{-\lambda\varepsilon^{2N}}\Big)^{b_{\varepsilon}}.
\end{align*}
Recall that we previously set $u_j(\lambda,\varepsilon)
=u_j\big(\mathrm{e}^{-\lambda\varepsilon^{2N}}\big)$.
Thanks to asymptotics~(\ref{asymptotic-v}), we get that
\[
u_k(\lambda,\varepsilon)-u_j(\lambda,\varepsilon)\underset{\varepsilon\to 0^+}{\sim}
(\varphi_j-\varphi_k)\!\!\sqrt[2N\!\!]{\lambda/c}\,\varepsilon,\quad
1-u_j(\lambda,\varepsilon)\mathrm{e}^{\mathrm{i} \mu\varepsilon}
\underset{\varepsilon\to 0^+}{\sim} (\varphi_j\!\!\sqrt[2N\!\!]{\lambda/c}-\mathrm{i} \mu)
\,\varepsilon.
\]
Thus,
\[
\lim_{\varepsilon\to 0^+} \mathbf{L}_k\Big(\mathrm{e}^{-\lambda\varepsilon^{2N}},
\mathrm{e}^{\mathrm{i} \mu\varepsilon}\Big)=
\prod_{1\le j\le N\atop j\neq k}\frac{\varphi_j\!\!\sqrt[2N\!\!]{\lambda/c}-\mathrm{i} \mu}
{(\varphi_j-\varphi_k)\!\!\sqrt[2N\!\!]{\lambda/c}}
=\prod_{1\le j\le N\atop j\neq k}\frac{\varphi_j}{\varphi_j-\varphi_k}
\prod_{1\le j\le N\atop j\neq k}
\bigg(1-\frac{\mathrm{i}\,\overline{\varphi_j}\,\mu}{\sqrt[2N\!\!]{\lambda/c}}\bigg).
\]
Finally, we can easily conclude with the help of the elementary limits
\[
\lim_{\varepsilon\to 0^+} \mathrm{e}^{\mathrm{i} \mu\varepsilon b_{\varepsilon}}
= \mathrm{e}^{\mathrm{i} \mu b},\quad
\lim_{\varepsilon\to 0^+} u_k(\lambda,\varepsilon)^{b_{\varepsilon}}
=\mathrm{e}^{-\varphi_k\!\!\sqrt[2N\!\!]{\lambda/c}\,b}.
\]
\end{proof}
%
\begin{theorem}\label{theo-conv-bis}
The following convergence holds:
\[
X_b^{\varepsilon+} \underset{\varepsilon\to 0^+}{\longrightarrow} X_b^+
\]
where, for any $\mu\in\mathbb{R}$,
\[
\mathbb{E}\Big(\mathrm{e}^{\mathrm{i} \mu X_b^+}\ind_{\{\tau_b^+<+\infty\}}\Big)
=\mathrm{e}^{\mathrm{i} \mu b}\sum_{j=0}^{N-1} \frac{(-\mathrm{i} \mu b)^j}{j!}.
\]
This is the Fourier transform of the pseudo-random variable $X_b^+$.
Moreover,
\[
\mathbb{P}\{\tau_b^+<+\infty\}=1.
\]
\end{theorem}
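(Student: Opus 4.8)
The plan is to deduce Theorem~\ref{theo-conv-bis} from the explicit generating function~(\ref{gene-Sb}) of $S_b^+$ by carrying out the spatial normalization $X_b^{\varepsilon+}=\varepsilon S_{b_{\varepsilon}}^+$ with $b_{\varepsilon}=\lceil b/\varepsilon\rceil$ and letting $\varepsilon\to 0^+$. This is the analogue of the proof of Theorem~\ref{theo-conv} in the degenerate case $\lambda=0$, but here it causes no trouble: formula~(\ref{gene-Sb}) is a \emph{polynomial} in $\zeta$ which we merely evaluate at the unimodular point $\zeta=\mathrm{e}^{\mathrm{i}\mu\varepsilon}$, so the passage to the limit is a term-by-term matter.

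Concretely, I would first replace $b$ by $b_{\varepsilon}$ and set $\zeta=\mathrm{e}^{\mathrm{i}\mu\varepsilon}$ in~(\ref{gene-Sb}), using $X_b^{\varepsilon+}=\varepsilon S_{b_{\varepsilon}}^+$ and $\mathbb{P}\{\sigma_{b_{\varepsilon}}^+<+\infty\}=1$ (Theorem~\ref{theo-dist-Sb}) to make the indicator harmless, getting
\[
\mathbb{E}\Big(\mathrm{e}^{\mathrm{i}\mu X_b^{\varepsilon+}}\ind_{\{\tau_b^{\varepsilon+}<+\infty\}}\Big)
=\mathbb{E}\Big(\big(\mathrm{e}^{\mathrm{i}\mu\varepsilon}\big)^{S_{b_{\varepsilon}}^+}\Big)
=\mathrm{e}^{\mathrm{i}\mu\varepsilon b_{\varepsilon}}\sum_{j=0}^{N-1}\binom{j+b_{\varepsilon}-1}{b_{\varepsilon}-1}\big(1-\mathrm{e}^{\mathrm{i}\mu\varepsilon}\big)^j.
\]
Then I would feed in the elementary asymptotics $\varepsilon b_{\varepsilon}\to b$, $1-\mathrm{e}^{\mathrm{i}\mu\varepsilon}=-\mathrm{i}\mu\varepsilon+o(\varepsilon)$ and $\binom{j+b_{\varepsilon}-1}{b_{\varepsilon}-1}=\binom{j+b_{\varepsilon}-1}{j}\sim b_{\varepsilon}^{\,j}/j!\sim(b/\varepsilon)^j/j!$, so that for each fixed $j$
\[
\binom{j+b_{\varepsilon}-1}{b_{\varepsilon}-1}\big(1-\mathrm{e}^{\mathrm{i}\mu\varepsilon}\big)^j\underset{\varepsilon\to 0^+}{\longrightarrow}\frac{(-\mathrm{i}\mu b)^j}{j!}.
\]
Since the sum over $j$ is finite, limit and sum commute, yielding $\mathbb{E}\big(\mathrm{e}^{\mathrm{i}\mu X_b^{\varepsilon+}}\ind_{\{\tau_b^{\varepsilon+}<+\infty\}}\big)\to\mathrm{e}^{\mathrm{i}\mu b}\sum_{j=0}^{N-1}(-\mathrm{i}\mu b)^j/j!$.

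To conclude, I would identify this limit, in the sense of Definition~\ref{def2}, with the Fourier transform $\mathbb{E}\big(\mathrm{e}^{\mathrm{i}\mu X_b^+}\ind_{\{\tau_b^+<+\infty\}}\big)$ of $X_b^+$ for pseudo-Brownian motion, and check that it matches~(\ref{dirac}): since $\delta_b^{(j)}$ has Fourier transform $(-\mathrm{i}\mu)^j\mathrm{e}^{\mathrm{i}\mu b}$, the measure $\sum_{j=0}^{N-1}\frac{b^{\,j}}{j!}\delta_b^{(j)}$ has Fourier transform $\mathrm{e}^{\mathrm{i}\mu b}\sum_{j=0}^{N-1}(-\mathrm{i}\mu b)^j/j!$, so we recover the distribution of $X_b^+$ from~\cite{la2,la3}. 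Setting $\mu=0$ (only the $j=0$ term survives) gives $\mathbb{P}\{\tau_b^+<+\infty\}=1$; alternatively this is immediate from $\mathbb{P}\{\tau_b^{\varepsilon+}<+\infty\}=\mathbb{P}\{\sigma_{b_{\varepsilon}}^+<+\infty\}=1$. The only genuine obstacle would arise on the \emph{alternative} route, namely letting $\lambda\to 0^+$ directly in Theorem~\ref{theo-conv}: there each factor $\prod_{j\neq k}\big(1-\mathrm{i}\,\overline{\varphi_j}\,\mu/\sqrt[2N]{\lambda/c}\big)$ diverges, and one must exhibit the cancellation of the $(\lambda/c)^{-(N-1)/(2N)}$ singularities through a Lagrange-interpolation (divided-difference) identity before the finite limit can be extracted — which is precisely why the discrete route above, where everything lives on the unit circle, is the one to prefer.
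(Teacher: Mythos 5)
Your proposal is correct and follows essentially the same route as the paper: the author likewise substitutes $b_{\varepsilon}$ and $\zeta=\mathrm{e}^{\mathrm{i}\mu\varepsilon}$ into the generating function of $S_b^+$ (via Theorem~\ref{theo-exp-f}, which yields exactly~(\ref{gene-Sb})) and concludes with the same three elementary asymptotics $\mathrm{e}^{\mathrm{i}\mu\varepsilon b_{\varepsilon}}\to\mathrm{e}^{\mathrm{i}\mu b}$, $\binom{j+b_{\varepsilon}-1}{b_{\varepsilon}-1}\sim b^{\,j}/(j!\,\varepsilon^j)$ and $(1-\mathrm{e}^{\mathrm{i}\mu\varepsilon})^j\sim(-\mathrm{i}\mu\varepsilon)^j$. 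Your remarks on recovering $\mathbb{P}\{\tau_b^+<+\infty\}=1$ at $\mu=0$ and on why the discrete route is preferable to letting $\lambda\to 0^+$ in Theorem~\ref{theo-conv} are sound additions but not departures from the paper's argument.
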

%
\begin{proof}
By (\ref{exp-f-Sb}), we have that
\begin{align*}
\mathbb{E}\Big(\mathrm{e}^{\mathrm{i} \mu X_b^{\varepsilon+}}
\ind_{\{\tau_b^{\varepsilon+}<+\infty\}}\Big)
&
=\mathbb{E}\Big(\mathrm{e}^{\mathrm{i} \mu\varepsilon S_{b_{\varepsilon}}^+}
\ind_{\{\sigma_{b_{\varepsilon}}^+<+\infty\}}\Big)
\\
&
=\mathrm{e}^{\mathrm{i} \mu\varepsilon b_{\varepsilon}} \sum_{k=0}^{N-1}
\binom{j+b_{\varepsilon}-1}{b_{\varepsilon}-1}
\left(1-\mathrm{e}^{\mathrm{i} \mu\varepsilon}\right)^j.
\end{align*}
We can easily conclude by using the elementary asymptotics
\[
\lim_{\varepsilon\to 0^+} \mathrm{e}^{\mathrm{i} \mu\varepsilon b_{\varepsilon}}
= \mathrm{e}^{\mathrm{i} \mu b},\quad
\binom{j+b_{\varepsilon}-1}{b_{\varepsilon}-1}\underset{\varepsilon\to 0^+}{\sim}
\frac{b^{\,j}}{j!\,\varepsilon^j},\quad
\left(1-\mathrm{e}^{\mathrm{i} \mu\varepsilon}\right)^j
\underset{\varepsilon\to 0^+}{\sim}(-\mathrm{i} \mu\varepsilon)^j.
\]
\end{proof}
%
\begin{corollary}
The pseudo-distribution of $X_b^+$ is given by
\[
\mathbb{P}\{X_b^+\in \mathrm{d} z\}/\mathrm{d} z=\sum_{j=0}^{N-1}
\frac{b^{\,j}}{j!} \,\delta_b^{(j)}(z).
\]
\end{corollary}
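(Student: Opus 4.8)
The plan is to identify the pseudo-distribution of $X_b^+$ through its Fourier transform, which has already been computed in Theorem~\ref{theo-conv-bis}. Since $X_b^+$ arises as the $\varepsilon\to 0^+$ limit of $X_b^{\varepsilon+}=\varepsilon S_{b_\varepsilon}^+$ and $S_{b_\varepsilon}^+$ takes its values in the finite cluster $\{b_\varepsilon,b_\varepsilon+1,\dots,b_\varepsilon+N-1\}$, the limiting pseudo-distribution is a compactly supported Schwartz distribution concentrated at the single point $b$; hence it must be a finite linear combination of $\delta_b$ and its first $N-1$ derivatives, and such a distribution is uniquely determined by its Fourier transform.

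First I would compute the Fourier transform of the candidate $\Lambda:=\sum_{j=0}^{N-1}\frac{b^{\,j}}{j!}\,\delta_b^{(j)}$. Testing $\Lambda$ against the function $z\mapsto\mathrm{e}^{\mathrm{i}\mu z}$ and using the defining relation $\langle\delta_b^{(j)},\phi\rangle=(-1)^j\phi^{(j)}(b)$ together with $\phi^{(j)}(b)=(\mathrm{i}\mu)^j\mathrm{e}^{\mathrm{i}\mu b}$ for $\phi(z)=\mathrm{e}^{\mathrm{i}\mu z}$, one gets $\langle\delta_b^{(j)},\mathrm{e}^{\mathrm{i}\mu\,\cdot}\rangle=(-\mathrm{i}\mu)^j\mathrm{e}^{\mathrm{i}\mu b}$, so that
\[
\Big\langle\Lambda,\mathrm{e}^{\mathrm{i}\mu\,\cdot}\Big\rangle
=\mathrm{e}^{\mathrm{i}\mu b}\sum_{j=0}^{N-1}\frac{(-\mathrm{i}\mu b)^j}{j!}.
\]
This is exactly the expression for $\mathbb{E}\big(\mathrm{e}^{\mathrm{i}\mu X_b^+}\ind_{\{\tau_b^+<+\infty\}}\big)$ given in Theorem~\ref{theo-conv-bis}, and since $\mathbb{P}\{\tau_b^+<+\infty\}=1$ the indicator may be dropped.

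It then remains to invoke uniqueness: two compactly supported distributions with the same Fourier transform coincide, whence $\mathbb{P}\{X_b^+\in\mathrm{d} z\}/\mathrm{d} z=\Lambda(z)$, which is the claimed formula. The only point deserving a sentence of care is the justification that the pseudo-distribution of $X_b^+$ genuinely is a compactly supported Schwartz distribution — so that the Fourier-transform uniqueness argument applies — which follows from the construction of $X_b^+$ as the limit of the discrete overshoot locations, all lying within distance $(N-1)\varepsilon$ of $b$; alternatively, one may simply match the two polynomials in $\mu$ term by term, reading off the coefficients directly, which sidesteps any functional-analytic input altogether. I expect this bookkeeping step (ensuring the distributional identification is legitimate) to be the only genuine obstacle; the algebraic computation above is routine.
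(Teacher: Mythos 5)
Your proposal is correct and follows essentially the same route as the paper: the corollary is read off directly from the Fourier transform computed in Theorem~\ref{theo-conv-bis}, by noting that $\langle\delta_b^{(j)},\mathrm{e}^{\mathrm{i}\mu\,\cdot}\rangle=(-\mathrm{i}\mu)^j\mathrm{e}^{\mathrm{i}\mu b}$ so that the candidate combination of Dirac derivatives reproduces $\mathrm{e}^{\mathrm{i}\mu b}\sum_{j=0}^{N-1}(-\mathrm{i}\mu b)^j/j!$. Your extra remarks on compact support and uniqueness (or, equivalently, term-by-term matching of the polynomials in $\mu$) are exactly the justification the paper leaves implicit, and the paper's subsequent reformulation $\mathbb{E}[f(X_b^+)]=\sum_{j=0}^{N-1}(-1)^j\frac{b^{\,j}}{j!}f^{(j)}(b)$ confirms that the identity is to be understood in this distributional sense.
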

%
This formula should be understood as follows: for any $(N-1)$-times
differentiable function $f$, by omitting the condition $\tau_b^+<+\infty$,
\[
\mathbb{E}\!\left[f\left(X_b^+\right)\right]
=\sum_{j=0}^{N-1} (-1)^j\,\frac{b^{\,j}}{j!} \,f^{(j)}(b).
\]
We retrieve a result of~\cite{la2} and, in the case $N=2$, a pioneering
result of~\cite{nish2}.

\newpage
\begin{center}
\textbf{\Large Part III --- First exit time from a bounded interval}
\end{center}
\vspace{\baselineskip}

\section{On the pseudo-distribution of $(\sigma_{ab},S_{ab})$}

Let $a,b$ be two integers such that $a<0<b$ and let
$\mathcal{E}=\{a,a-1,\dots,a-N+1\}\cup\{b,b+1,\dots,b+N-1\}$.
In this section, we explicitly compute the
generating function of $(\sigma_{ab},S_{ab})$. Set, for $\ell\in \mathcal{E}$,
\[
H_{ab,\ell}(z)=\mathbb{E}\!\left(z^{\sigma_{ab}}\ind_{\{S_{ab}=\ell,\sigma_{ab}<+\infty\}}\right)
=\sum_{k\in\mathbb{N}} \mathbb{P}\{\sigma_{ab}=k,S_{ab}=\ell\}z^k.
\]
We are able to provide an explicit expression of $H_{ab,\ell}(z)$.
As in Section~\ref{section-sigma-b}, due to~(\ref{bound-Fbis}),
we have the following \textit{a priori} estimate:
$|\mathbb{P}\{\sigma_{ab}=k,S_{ab}=\ell\}|\le
|\mathbb{P}\{S_1\in (a,b),\dots,S_{k-1}\in (a,b),S_k=\ell\}|\le M_1^k$.
As a byproduct, the power series defining $H_{ab,\ell}(z)$ absolutely converges
for $|z|<1/M_1$.

\subsection{Joint pseudo-distribution of $(\sigma_{ab},S_{ab})$}

%
\begin{theorem}\label{theo-sigma-ab}
The pseudo-distribution of $(\sigma_{ab},S_{ab})$ is characterized by the identity,
valid for any $z\in(0,1)$,
\begin{equation}\label{pseudo-dist-sigma-ab}
\mathbb{E}\!\left(z^{\sigma_{ab}}\ind_{\{S_{ab}=\ell,\sigma_{ab}<+\infty\}}\right)
=\frac{W_{\ell}(u_1(z),\dots,u_{2N}(z))}{W(u_1(z),\dots,u_{2N}(z))}
\end{equation}
where
\[
W(u_1,\dots,u_{2N}) =\begin{vmatrix}
1      & u_1    & \dots & u_1^{N-1}    & u_1^{b-a+N-1}    & \dots & u_1^{b-a+2N-2}   \\
\vdots & \vdots &       & \vdots       & \vdots           &       & \vdots           \\[1ex]
1      & u_{2N} & \dots & u_{2N}^{N-1} & u_{2N}^{b-a+N-1} & \dots & u_{2N}^{b-a+2N-2}
\end{vmatrix}
\]
and, if $a-N+1\le\ell\le a$, $W_{\ell}(u_1,\dots,u_{2N})$ is the determinant
\[
\left|\!\!\begin{array}{ccccccccccc}
1      & u_1     & \dots & u_1^{\ell+N-a-2}    & u_1^{N-a-1}    & u_1^{\ell+N-a}    & \dots  & u_1^{N-1}    & u_1^{b-a+N-1}    & \dots & u_1^{b-a+2N-2}    \\
\vdots & \vdots  &       & \vdots              & \vdots         & \vdots            &        & \vdots       & \vdots           &       & \vdots            \\[1ex]
1      & u_{2N}  & \dots & u_{2N}^{\ell+N-a-2} & u_{2N}^{N-a-1} & u_{2N}^{\ell+N-a} & \dots  & u_{2N}^{N-1} & u_{2N}^{b-a+N-1} & \dots & u_{2N}^{b-a+2N-2} \\
\end{array}\!\!\right|\!,
\]
if $b\le\ell\le b+N-1$, $W_{\ell}(u_1,\dots,u_{2N})$ is the determinant
\[
\left|\!\!\begin{array}{ccccccccccc}
1      & u_1     & \dots & u_1^{N-1}    & u_1^{b-a+N-1}     & \dots & u_1^{\ell+N-a-2}    & u_1^{N-a-1}    & u_1^{\ell+N-a}    & \dots  & u_1^{b-a+2N-2}    \\
\vdots & \vdots  &       & \vdots       & \vdots            &       & \vdots              & \vdots         & \vdots            &        & \vdots            \\[1ex]
1      & u_{2N}  & \dots & u_{2N}^{N-1} & u_{2N}^{b-a+N-1}  & \dots & u_{2N}^{\ell+N-a-2} & u_{2N}^{N-a-1} & u_{2N}^{\ell+N-a} & \dots  & u_{2N}^{b-a+2N-2} \\
\end{array}\!\!\right|\!.
\]
\end{theorem}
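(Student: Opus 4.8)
The plan is to mimic the proof of Theorem~\ref{theorem-joint-dist}, replacing the one-sided overshoot analysis by a two-sided one. The starting point is the same convolution identity: for an integer $k$ lying outside the interval $(a,b)$, if $S_n=k$ then the walk must have exited $(a,b)$ at some earlier time, so
\begin{equation}\label{convol-ab}
\mathbb{P}\{S_n=k\}=\sum_{j=0}^n\sum_{\ell\in\mathcal{E}}\mathbb{P}\{\sigma_{ab}=j,S_{ab}=\ell\}\,\mathbb{P}\{S_{n-j}=k-\ell\}.
\end{equation}
Applying generating functions to \eqref{convol-ab} (legitimate because the $H_{ab,\ell}$-series converge for $|z|<1/M_1$ and $M_1\ge1$) gives, for all such $k$, the identity $G_k(z)=\sum_{\ell\in\mathcal{E}}G_{k-\ell}(z)\,H_{ab,\ell}(z)$. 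Now I must use the representation of $G_k(z)$ on \emph{both} sides of the interval: for $k\ge0$ one has $G_k(z)=\sum_{j=1}^N\alpha_j(z)u_j(z)^k$ from \eqref{expression-Gk}, while for $k\le0$, since $G_k=G_{-k}$, one has $G_k(z)=\sum_{j=1}^N\alpha_j(z)u_j(z)^{-k}=\sum_{j=1}^N\alpha_j(z)v_j(z)^{k}$. This is why $2N$ roots $u_1,\dots,u_{2N}$ naturally appear: it is convenient to relabel $\{u_1,\dots,u_N\}\cup\{v_1,\dots,v_N\}$ as $\{u_1(z),\dots,u_{2N}(z)\}$, so that on the full range $k\in\mathbb{Z}$ (away from $(a,b)$) the relevant geometric building blocks are the $2N$ powers $u_r(z)^k$.

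Next I would convert the functional equation into a linear system. Writing $k-\ell$ on the right forces, for each root $u_r(z)$, a relation of the form $\sum_{\ell\in\mathcal{E}}H_{ab,\ell}(z)\,u_r(z)^{-\ell}=1$, i.e.
\begin{equation}\label{syst-ab}
\sum_{\ell\in\mathcal{E}}H_{ab,\ell}(z)\,u_r(z)^{-\ell}=1,\qquad 1\le r\le 2N.
\end{equation}
The derivation of \eqref{syst-ab} is the analogue of the passage from \eqref{system-equations-inter} to \eqref{system-equations}: one writes the identity for $k$ ranging over a long enough block of consecutive integers, exploits that a (generalised) Vandermonde determinant in the $2N$ distinct quantities $u_r(z)$ is nonzero, and concludes that each bracketed coefficient vanishes. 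Since $\mathcal{E}$ has exactly $2N$ elements, \eqref{syst-ab} is a square $2N\times2N$ linear system in the unknowns $H_{ab,\ell}(z)$. After multiplying the $r$-th equation through by $u_r(z)^{b-a+N-1}$ (to clear negative exponents; recall $\mathcal{E}\subset\{a-N+1,\dots,a\}\cup\{b,\dots,b+N-1\}$ and $-(a-N+1)=N-a-1$, $b-a+2N-2$ being the top exponent), the system becomes a genuine polynomial Vandermonde-type system whose coefficient matrix is exactly $W(u_1,\dots,u_{2N})$ and whose right-hand side replaces the appropriate column. Cramer's rule then yields $H_{ab,\ell}(z)=W_\ell(u_1(z),\dots,u_{2N}(z))/W(u_1(z),\dots,u_{2N}(z))$, where the index $\ell$ determines which column of $W$ is replaced — the two cases $a-N+1\le\ell\le a$ and $b\le\ell\le b+N-1$ accounting for the two displayed forms of $W_\ell$. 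Finally, as in the proof of Theorem~\ref{theorem-joint-dist}, the right-hand side is analytic in $z\in(0,1)$ and $H_{ab,\ell}$ is a power series, so analytic continuation extends the identity from $(0,1/M_1)$ to all of $(0,1)$.

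The main obstacle is bookkeeping rather than conceptual: one must check carefully that the $2N$ quantities $u_1(z),\dots,u_N(z),v_1(z),\dots,v_N(z)$ are pairwise distinct for $z\in(0,1)$ (so that the Vandermonde steps are valid) — here the already-established strict inequalities $|u_j(z)|<1<|v_j(z)|$ separate the two groups, and within each group distinctness follows from the explicit formulas, with the degenerate value $z$ where two roots collide handled by continuity — and, above all, that the exponents in \eqref{syst-ab} line up precisely with the columns $1,u,\dots,u^{N-1},u^{b-a+N-1},\dots,u^{b-a+2N-2}$ of $W$ after the normalisation $u_r^{\,b-a+N-1}$. Getting the index shifts right in $W_\ell$ (the ``$\ell+N-a$'' breakpoints) is the delicate point, but it is a direct translation of which power $u_r^{-\ell}$ each $H_{ab,\ell}$ multiplies. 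No new analytic input beyond Theorem~\ref{theo-gene-function} and the bound \eqref{bound-Fbis} is needed.
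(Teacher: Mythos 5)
Your proposal follows essentially the same route as the paper: the two-sided convolution identity, the representation of $G_k$ by the $u_j$'s for $k\ge 0$ and by the $v_j=1/u_j$'s for $k\le 0$ (whence the $2N$ roots), the reduction via homogeneous Vandermonde blocks to a $2N\times 2N$ lacunary Vandermonde system, Cramer's rule, and analytic continuation from $(0,1/M_1)$ to $(0,1)$. The only blemish is the normalising power: with your form $\sum_{\ell\in\mathcal{E}}H_{ab,\ell}(z)\,u_r(z)^{-\ell}=1$ the correct multiplier is $u_r(z)^{b+N-1}$ (the paper works with the equivalent form $\sum_{\ell\in\mathcal{E}}H_{ab,\ell}(z)\,u_j(z)^{\ell}=1$ normalised by $u_j(z)^{N-a-1}$), not $u_r(z)^{b-a+N-1}$ --- but since you state the correct target columns of $W$, this is a clerical slip rather than a gap.
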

%
\begin{proof}
Pick an integer $k$ such that $k\le a$ or $k\ge b$. If $S_n=k$, then an exit of
the interval $(a,b)$ occurs before time $n$: $\sigma_{ab}\le n$. This remark and the independence of the
increments of the pseudo-random walk entail that
\begin{align}
\mathbb{P}\{S_n=k\}
&
=\mathbb{P}\{S_n=k,\sigma_{ab}\le n\}=\sum_{j=0}^n\sum_{\ell\in \mathcal{E}} \mathbb{P}\{S_n=k,\sigma_{ab}=j,S_{ab}=\ell\}
\nonumber\\
&
=\sum_{j=0}^n\sum_{\ell\in \mathcal{E}} \mathbb{P}\{\sigma_{ab}=j,S_{ab}=\ell\}
\mathbb{P}\{S_{n-j}=k-\ell\}.
\label{convol2}
\end{align}
Thanks to the absolute convergence of the series defining $G_k(z)$ and
$H_{ab,\ell}(z)$ for $z\in(0,1)$ and $|z|<1/M_1$ respectively,
we can apply the generating function to
equality~(\ref{convol2}). We get, for $z\in(0,1/M_1)$, that
\[
G_k(z)=\sum_{\ell\in \mathcal{E}} G_{k-\ell}(z) H_{ab,\ell}(z).
\]
Using expression~(\ref{expression-Gk}) of $G_k$, namely
$G_k(z)=\sum_{j=1}^N \alpha_j(z)\,u_j(z)^{|k|}$, we get,
for $k\ge b+N-1$ (recall that $v_j(z)=1/u_j(z)$), that
\begin{equation}\label{system2-equations-inter1}
\sum_{j=1}^N \alpha_j(z)\,u_j(z)^{k}
\left(\sum_{\ell\in \mathcal{E}} H_{ab,\ell}(z)v_j(z)^{\ell}-1\right)=0,
\end{equation}
and, for $k\le a-N+1$, that
\begin{equation}\label{system2-equations-inter2}
\sum_{j=1}^N \alpha_j(z)\,v_j(z)^{k}
\left(\sum_{\ell\in \mathcal{E}} H_{ab,\ell}(z)u_j(z)^{\ell}-1\right)=0.
\end{equation}
When limiting the range of $k$ to the set $\{b+N,b+N+1,\dots,b+2N-1\}$
in (\ref{system2-equations-inter1}) and to the set
$\{a-2N+1,a-2N+2,\dots,a-N\}$ in (\ref{system2-equations-inter2}),
we see that (\ref{system2-equations-inter1}) and
(\ref{system2-equations-inter2}) are homogeneous
Vandermonde systems whose solutions are trivial, that is, the terms within
parentheses in~(\ref{system2-equations-inter1}) and
(\ref{system2-equations-inter2}) vanish. Thus, we get the two systems below:
\[
\sum_{\ell\in \mathcal{E}} H_{ab,\ell}(z)u_j(z)^{\ell}=1, \quad 1\le j\le N.
\]
and
\[
\sum_{\ell\in \mathcal{E}} H_{ab,\ell}(z)v_j(z)^{\ell}=1, \quad 1\le j\le N.
\]
It will be convenient to relabel the $u_j(z)$'s and $v_j(z)$'s,
$1\le j\le N$, as $u_j(z)=v_{j+N}(z)$ and $v_j(z)=u_{j+N}(z)$;
note that $v_j(z)=1/u_j(z)$ for any $j\in\{1,\dots,2N\}$
and $\{u_1(z),\dots,u_{2N}(z)\}$ $=\{v_1(z),\dots,v_{2N}(z)\}$.
By using the relabeling $u_j(z),v_j(z)$, $1\le j\le 2N$,
we obtain the two equivalent following systems of $2N$ equations
and $2N$ unknowns, $u_1(z),\dots,u_{2N}(z)$ for the first one,
$v_1(z),\dots,v_{2N}(z)$ for the second one:
\begin{equation}\label{system-equations1}
\sum_{\ell\in \mathcal{E}} H_{ab,\ell}(z)u_j(z)^{\ell}=1, \quad 1\le j\le 2N,
\end{equation}
and
\begin{equation}\label{system-equations2}
\sum_{\ell\in \mathcal{E}} H_{ab,\ell}(z) v_j(z)^{\ell}=1, \quad 1\le j\le 2N.
\end{equation}
Systems (\ref{system-equations1}) and (\ref{system-equations2}) are ``lacunary''
Vandermonde systems (some powers of $u_j(z)$ are missing).
For instance, let us rewrite system~(\ref{system-equations1})
as
\begin{equation}\label{system-equations3}
\sum_{\ell\in \mathcal{E}} H_{ab,\ell}(z)u_j(z)^{\ell+N-a-1}=u_j(z)^{N-a-1},
\quad 1\le j\le 2N.
\end{equation}
Cramer's formulae immediately yield~(\ref{pseudo-dist-sigma-ab}) at least
for $z\in(0,1/M_1)$. By analyticity of the $u_j$'s on $(0,1)$,
it is easily seen that~(\ref{pseudo-dist-sigma-ab}) holds true
for $z\in(0,1)$. Systems (\ref{system-equations1}) and (\ref{system-equations2})
will be used in Lemma~\ref{lemma-syst-Hbis}.
\end{proof}

A method for computing the determinants exhibited in Theorem~\ref{theo-sigma-ab}
and solving system~(\ref{system-equations3}) is proposed in
Appendix~\ref{appendix-determinants}.
In particular, we can deduce from Proposition~\ref{proposition-cramer-lacunary}
an alternative representation of $\mathbb{E}\!\left(z^{\sigma_{ab}}
\ind_{\{S_{ab}=\ell,\sigma_{ab}<+\infty\}}\right)$ which can be seen as the
analogous of (\ref{joint-dist}). Set
$s_0(z)=1$ and, for $k,\ell\in\{1,\dots,2N\}$,
\[
s_{\ell}(z)=\sum_{1\le i_1<\dots<i_{\ell}\le 2N} u_{i_1}(z)\cdots u_{i_{\ell}}(z),
\quad
p_k(z)=\prod_{1\le i\le 2N\atop i\neq k} [u_k(z)-u_i(z)],
\]
$s_{k,0}(z)=1$, $s_{k,m}(z)=0$
for any integer $m$ such that $m\le -1$ or $m\ge 2N$ and, for $m\in\{1,\dots,2N-1\}$,
\[
s_{k,m}(z)=\sum_{1\le i_1<\dots<i_m\le 2N\atop
i_1,\dots,i_m\neq k} u_{i_1}(z)\cdots u_{i_m}(z).
\]
Set also
\[
\tilde{W}(z)=\begin{vmatrix}
s_N(z)         & s_{N-1}(z)     & \dots & s_{N-b+a+2}(z) \\
s_{N+1}(z)     & s_N(z)         & \dots & s_{N-b+a+1}(z) \\
\vdots         & \vdots         &       & \vdots         \\
s_{N+b-a-2}(z) & s_{N+b-a-3}(z) & \dots & s_N(z)
\end{vmatrix}\!,
\]
\[
\tilde{W}_{k\ell}(z)=\begin{vmatrix}
s_{k,N}(z)          & s_{k,N-1}(z)         & \dots & s_{k,N-b+a+1}(z)    \\
s_{k,N+1}(z)        & s_{k,N}(z)           & \dots & s_{k,N-b+a}(z)      \\
\vdots              & \vdots               &       & \vdots              \\
s_{k,N+b-a-2}(z)    & s_{k,N+b-a-3}(z)     & \dots & s_{k,N-1}(z)        \\
s_{k,N+b-\ell-1}(z) & s_{k,N+b-\ell-2}(z) & \dots & s_{k,N+a-\ell}(z)
\end{vmatrix}\!.
\]
Then, applying Proposition~\ref{proposition-cramer-lacunary} with the choices
$p=r=N$ and $q=b-a-1$ leads, for any $\ell\in\mathcal{E}$, to
\begin{equation}\label{pseudo-dist-sigma-ab-bis}
\mathbb{E}\!\left(z^{\sigma_{ab}}\ind_{\{S_{ab}=\ell,\sigma_{ab}<+\infty\}}\right)
=\frac{(-1)^{\ell+N-a-1}}{\tilde{W}(z)} \sum_{k=1}^{2N}
\frac{\tilde{W}_{k\ell}(z)}{p_k(z)}\,u_k(z)^{N-a-1}.
\end{equation}

The double generating function defined by
\[
\mathbb{E}\!\left(z^{\sigma_{ab}}\zeta^{S_{ab}}\ind_{\{\sigma_{ab}<+\infty\}}\right)
=\sum_{\ell\in \mathcal{E}} \mathbb{E}\!\left(z^{\sigma_{ab}}\ind_{\{S_{ab}
=\ell,\sigma_{ab}<+\infty\}}\right)\zeta^{\ell}
\]
admits an interesting representation by means of interpolation polynomials that
we display in the following theorem.
%
\begin{theorem}\label{theo-sigma-ab2}
The double generating function of $(\sigma_{ab},S_{ab})$ is given,
for $z\in(0,1/M_1)$ and $\zeta\in\mathbb{C}$, by
\begin{equation}\label{double-gene-sigma-ab}
\mathbb{E}\!\left(z^{\sigma_{ab}}\zeta^{S_{ab}}\ind_{\{\sigma_{ab}<+\infty\}}\right)
=\sum_{k=1}^{2N} \mathbf{\tilde{L}}_k(z,\zeta) \left(v_k(z)\zeta\right)^{a-N+1}
\end{equation}
where
\[
\mathbf{\tilde{L}}_k(z,\zeta)=P_k(z,\zeta)\prod_{1\le j\le 2N\atop j\neq k}
\frac{\zeta-u_j(z)}{u_k(z)-u_j(z)},\quad k\in\{1,\dots,2N\}
\]
are interpolation polynomials with respect to the variable $\zeta$
satisfying $\mathbf{\tilde{L}}_k(z,u_j(z))=\delta_{jk}$ and $P_k(z,\zeta)$
are some polynomials with respect to the variable $\zeta$ of degree $(b-a-1)$.
\end{theorem}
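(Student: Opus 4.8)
The plan is to mimic the proof of Theorem~\ref{theorem-joint-dist} (the single-threshold case), where the double generating function was rewritten via Cramer's formulae as a sum over $k$ of Lagrange polynomials times powers of the roots. Here the relevant Vandermonde-type system is the ``lacunary'' system~(\ref{system-equations3}), with $2N$ equations in the $2N$ unknowns $H_{ab,\ell}(z)$, $\ell\in\mathcal{E}$. Starting from
\[
\mathbb{E}\!\left(z^{\sigma_{ab}}\zeta^{S_{ab}}\ind_{\{\sigma_{ab}<+\infty\}}\right)
=\sum_{\ell\in \mathcal{E}} H_{ab,\ell}(z)\,\zeta^{\ell}
=\zeta^{a-N+1}\sum_{\ell\in \mathcal{E}} H_{ab,\ell}(z)\,\zeta^{\ell-a+N-1},
\]
I would substitute the Cramer expression~(\ref{pseudo-dist-sigma-ab}) for $H_{ab,\ell}(z)$ and expand the numerator determinant $W_{\ell}$ along its ``moving'' column (the column of powers $u_i^{N-a-1}$, which in $W_\ell$ sits in the $(\ell-a+N-1)$-th slot). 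This Laplace expansion writes $W_\ell(u_1,\dots,u_{2N})=\sum_{k=1}^{2N} u_k^{N-a-1}\,W_{k\ell}$, where $W_{k\ell}$ is, up to sign, a minor that is independent of $\ell$ except through the position of the deleted column. The key algebraic observation — exactly as in the single-threshold proof — is that $W_{k\ell}/W$, viewed as the coefficient of $\zeta^{\ell-a+N-1}$, assembles into the value at $\zeta$ of a determinant obtained from $W$ by replacing $u_k$ by $\zeta$ in the ``non-lacunary'' block of rows while keeping the two lacunary power blocks.

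Concretely, I expect the sum $\sum_{\ell\in\mathcal{E}} (W_{k\ell}/W)\,\zeta^{\ell-a+N-1}$ to equal $W(u_1,\dots,u_{k-1},\zeta,u_{k+1},\dots,u_{2N})/W(u_1,\dots,u_{2N})$, and this ratio is a polynomial in $\zeta$ of degree $b-a+2N-2$ that vanishes at $\zeta=u_j$ for $j\neq k$ and equals $1$ at $\zeta=u_k$. Factoring out the ordinary Lagrange factor $\prod_{j\neq k}\frac{\zeta-u_j(z)}{u_k(z)-u_j(z)}$ (which already vanishes at all $u_j$, $j\neq k$, and equals $1$ at $u_k$) leaves behind a polynomial factor $P_k(z,\zeta)$; counting degrees, $\deg_\zeta P_k = (b-a+2N-2)-(2N-1) = b-a-1$. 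After the relabeling $u_{j+N}=v_j$, $v_{j+N}=u_j$, one rewrites $u_k^{N-a-1}\zeta^{a-N+1}$-type terms in terms of $(v_k\zeta)^{a-N+1}$ as in the statement, and~(\ref{double-gene-sigma-ab}) follows; specializing to $\zeta=u_j(z)$ makes all but the $k=j$ term vanish, confirming $\mathbf{\tilde L}_k(z,u_j(z))=\delta_{jk}$ and $P_k(z,u_k(z))=1$, consistently with $\mathbb{E}(z^{\sigma_{ab}}\cdot 1)$ being recovered.

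The main obstacle is the bookkeeping of the lacunary determinant expansion: unlike the single-threshold case, $W$ has two separated power blocks (exponents $0,\dots,N-1$ and $b-a+N-1,\dots,b-a+2N-2$), so the ``replace one node by $\zeta$'' determinant is not a full Vandermonde and its column-expansion coefficients must be matched against the $W_{k\ell}$ appearing for $\ell$ in both pieces of $\mathcal{E}$ (below $a$ and above $b$), with the signs in Theorem~\ref{theo-sigma-ab} tracked carefully. I would handle this cleanly by invoking the determinant machinery of Appendix~\ref{appendix-determinants} (Proposition~\ref{proposition-cramer-lacunary}) — which already underlies the alternative representation~(\ref{pseudo-dist-sigma-ab-bis}) — to express both $W$ and $W(\dots,\zeta,\dots)$ through the complete-homogeneous-symmetric-function determinants $\tilde W$, $\tilde W_{k\ell}$; the polynomial $P_k(z,\zeta)$ then emerges as the generating polynomial $\sum_{m} (\text{minor in }\tilde W_{k\ell})\,\zeta^{m}$ divided by the elementary-symmetric factor, and its degree $b-a-1$ is read off from the size $b-a-1$ of the $\tilde W$-type determinants. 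The remaining steps (degree count, the vanishing/normalization at $\zeta=u_j(z)$, and the passage from the $u_j$-system to the $v_j$-relabeling) are routine, and analytic continuation from $(0,1/M_1)$ is not needed here since $\zeta$ ranges over all of $\mathbb{C}$ while $z$ stays in $(0,1/M_1)$.
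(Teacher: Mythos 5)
Your proposal follows essentially the same route as the paper's proof: expand each $W_{\ell}$ along its moving column, recognize that $\sum_{\ell\in\mathcal{E}}W_{k\ell}\,\zeta^{\ell+N-a-1}$ reassembles (via the two halves of $\mathcal{E}$) into the determinant $W(u_1,\dots,u_{k-1},\zeta,u_{k+1},\dots,u_{2N})$, and then read off the interpolation property and the degree $b-a-1$ of $P_k$ by the same degree count. The appeal to Appendix~\ref{appendix-determinants} is an optional bookkeeping device the paper does not need here, but the core argument is identical and correct.
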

%
\begin{proof}
By (\ref{pseudo-dist-sigma-ab}), we have that
\[
\mathbb{E}\!\left(z^{\sigma_{ab}}\zeta^{S_{ab}}\ind_{\{S_{ab}\le a,\sigma_{ab}<+\infty\}}\right)
=\sum_{\ell=a-N+1}^a H_{ab,\ell}(z) \zeta^{\ell}
=\sum_{\ell=a-N+1}^a \frac{W_{\ell}(u_1(z),\dots,u_{2N}(z))}{W(u_1(z),\dots,u_{2N}(z))} \,\zeta^{\ell}.
\]
In order to simplify the text, we omit the variable $z$.
We expand the determinant $W_{\ell}(u_1,\dots,u_{2N})$, $a-N+1\le\ell\le a$ with respect
to its $(\ell+N-a)$th column:
\[
W_{\ell}(u_1,\dots,u_{2N})=\sum_{k=1}^{2N} u_k^{N-a-1} W_{k\ell}(u_1,\dots,u_{2N})
\]
where $W_{k\ell}(u_1,\dots,u_{2N})$, $1\le k\le 2N$, is the determinant
\[
\left|\!\!\begin{array}{ccccccccccc}
1      & u_1     & \dots & u_1^{\ell+N-a-2}     & 0      & u_1^{\ell+N-a}     & \dots  & u_1^{N-1}     & u_1^{b-a+N-1}     & \dots & u_1^{b-a+2N-2}     \\
\vdots & \vdots  &       & \vdots               & \vdots & \vdots             &        & \vdots        & \vdots            &       & \vdots             \\[1ex]
1      & u_{k-1} & \dots & u_{k-1}^{\ell+N-a-2} & 0      & u_{k-1}^{\ell+N-a} & \dots  & u_{k-1}^{N-1} & u_{k-1}^{b-a+N-1} & \dots & u_{k-1}^{b-a+2N-2} \\[1ex]
1      & u_k     & \dots & u_k^{\ell+N-a-2}     & 1      & u_k^{\ell+N-a}     & \dots  & u_k^{N-1}     & u_k^{b-a+N-1}     & \dots & u_k^{b-a+2N-2}     \\[1ex]
1      & u_{k+1} & \dots & u_{k+1}^{\ell+N-a-2} & 0      & u_{k+1}^{\ell+N-a} & \dots  & u_{k+1}^{N-1} & u_{k+1}^{b-a+N-1} & \dots & u_{k+1}^{b-a+2N-2} \\
\vdots & \vdots  &       & \vdots               & \vdots & \vdots             &        & \vdots        & \vdots            &       & \vdots             \\[1ex]
1      & u_{2N}  & \dots & u_{2N}^{\ell+N-a-2}  & 0      & u_{2N}^{\ell+N-a}  & \dots  & u_{2N}^{N-1}  & u_{2N}^{b-a+N-1}  & \dots & u_{2N}^{b-a+2N-2}  \\
\end{array}\!\!\right|
\]
which plainly coincides with
\[
\left|\!\!\begin{array}{c@{\hspace{.9em}}c@{\hspace{.9em}}c@{\hspace{.9em}}c@{\hspace{.9em}}c@{\hspace{.9em}}c@{\hspace{.9em}}c@{\hspace{.9em}}c@{\hspace{.9em}}c@{\hspace{.9em}}c@{\hspace{.9em}}c}
1      & u_1     & \dots & u_1^{\ell+N-a-2}     & u_1^{\ell+N-a-1}     & u_1^{\ell+N-a}     & \dots  & u_1^{N-1}     & u_1^{b-a+N-1}     & \dots & u_1^{b-a+2N-2}     \\
\vdots & \vdots  &       & \vdots               & \vdots               & \vdots             &        & \vdots        & \vdots            &       & \vdots             \\[1ex]
1      & u_{k-1} & \dots & u_{k-1}^{\ell+N-a-2} & u_{k-1}^{\ell+N-a-1} & u_{k-1}^{\ell+N-a} & \dots  & u_{k-1}^{N-1} & u_{k-1}^{b-a+N-1} & \dots & u_{k-1}^{b-a+2N-2} \\[1ex]
0      & 0       & \dots & 0                    & 1                    & 0                  & \dots  & 0             & 0                 & \dots & 0                  \\[1ex]
1      & u_{k+1} & \dots & u_{k+1}^{\ell+N-a-2} & u_{k+1}^{\ell+N-a-1} & u_{k+1}^{\ell+N-a} & \dots  & u_{k+1}^{N-1} & u_{k+1}^{b-a+N-1} & \dots & u_{k+1}^{b-a+2N-2} \\
\vdots & \vdots  &       & \vdots               & \vdots               & \vdots             &        & \vdots        & \vdots            &       & \vdots             \\[1ex]
1      & u_{2N}  & \dots & u_{2N}^{\ell+N-a-2}  & u_{2N}^{\ell+N-a-1}  & u_{2N}^{\ell+N-a}  & \dots  & u_{2N}^{N-1}  & u_{2N}^{b-a+N-1}  & \dots & u_{2N}^{b-a+2N-2}  \\
\end{array}\!\!\right|\!.
\]
Therefore, we obtain that
\begin{align*}
\sum_{\ell=a-N+1}^a W_{\ell}(u_1,\dots,u_{2N}) \zeta^{\ell}
&
=\sum_{\ell=a-N+1}^a\left(\sum_{k=1}^{2N} u_k^{N-a-1} W_{k\ell}(u_1,\dots,u_{2N}) \right)\zeta^{\ell}
\\
&
=\zeta^{a-N+1}\sum_{k=1}^{2N}\left(\sum_{\ell=a-N+1}^a
W_{k\ell}(u_1,\dots,u_{2N})\zeta^{\ell+N-a-1} \right) v_k^{a-N+1}.
\end{align*}
Next, we can see that the foregoing sum within parentheses is the
expansion of the following determinant with respect to its $k$th row
(by putting back the variable $z$):
\[
D_k^-(z,\zeta) =\begin{vmatrix}
1      & u_1(z)     & \dots & u_1^{N-1}(z)     & u_1^{b-a+N-1}(z)     & \dots & u_1^{b-a+2N-2}(z)    \\
\vdots & \vdots     &       & \vdots           & \vdots               &       & \vdots               \\[1ex]
1      & u_{k-1}(z) & \dots & u_{k-1}^{N-1}(z) & u_{k-1}^{b-a+N-1}(z) & \dots & u_{k-1}^{b-a+2N-2}(z)\\[1ex]
1      & \zeta      & \dots & \zeta^{N-1}      & 0                    & \dots & 0                    \\[1ex]
1      & u_{k+1}(z) & \dots & u_{k+1}^{N-1}(z) & u_{k+1}^{b-a+N-1}(z) & \dots & u_{k+1}^{b-a+2N-2}(z)\\
\vdots & \vdots     &       & \vdots           & \vdots               &       & \vdots               \\[1ex]
1      & u_{2N}(z)  & \dots & u_{2N}^{N-1}(z)  & u_{2N}^{b-a+N-1}(z)  & \dots & u_{2N}^{b-a+2N-2}(z) \\
\end{vmatrix}\!,
\]
As a result, by setting
\begin{align*}
D(z)&=W(u_1(z),\dots,u_{2N}(z))
\\
&=\begin{vmatrix}
1      & u_1(z)    & \dots & u_1^{N-1}(z)    & u_1^{b-a+N-1}(z)    & \dots & u_1^{b-a+2N-2}(z)   \\
\vdots & \vdots    &       & \vdots          & \vdots              &       & \vdots              \\[1ex]
1      & u_{2N}(z) & \dots & u_{2N}^{N-1}(z) & u_{2N}^{b-a+N-1}(z) & \dots & u_{2N}^{b-a+2N-2}(z)
\end{vmatrix}\!,
\end{align*}
we obtain that
\begin{equation}\label{db-gene1}
\mathbb{E}\!\left(z^{\sigma_{ab}}\zeta^{S_{ab}}\ind_{\{S_{ab}\le a,\sigma_{ab}<+\infty\}}\right)
=\frac{1}{D(z)}\sum_{k=1}^N D_k^-(z,\zeta)(v_k(z)\zeta)^{a-N+1}.
\end{equation}
Similarly, we could check that
\begin{equation}\label{db-gene2}
\mathbb{E}\!\left(z^{\sigma_{ab}}\zeta^{S_{ab}}\ind_{\{S_{ab}\ge b,\sigma_{ab}<+\infty\}}\right)
=\frac{1}{D(z)}\sum_{k=1}^N D_k^+(z,\zeta)(v_k(z)\zeta)^{a-N+1}
\end{equation}
where $D_k^+(z,\zeta)$ is the determinant
\[
D_k^+(z,\zeta) =\begin{vmatrix}
1      & u_1(z)     & \dots & u_1^{N-1}(z)     & u_1^{b-a+N-1}(z)     & \dots & u_1^{b-a+2N-2}(z)    \\
\vdots & \vdots     &       & \vdots           & \vdots               &       & \vdots               \\[1ex]
1      & u_{k-1}(z) & \dots & u_{k-1}^{N-1}(z) & u_{k-1}^{b-a+N-1}(z) & \dots & u_{k-1}^{b-a+2N-2}(z)\\[1ex]
0      & 0          & \dots & 0                & \zeta^{b-a+ N-1}     & \dots & \zeta^{b-a+ 2N-2}    \\[1ex]
1      & u_{k+1}(z) & \dots & u_{k+1}^{N-1}(z) & u_{k+1}^{b-a+N-1}(z) & \dots & u_{k+1}^{b-a+2N-2}(z)\\
\vdots & \vdots     &       & \vdots           & \vdots               &       & \vdots               \\[1ex]
1      & u_{2N}(z)  & \dots & u_{2N}^{N-1}(z)  & u_{2N}^{b-a+N-1}(z)  & \dots & u_{2N}^{b-a+2N-2}(z) \\
\end{vmatrix}.
\]
By adding~(\ref{db-gene1}) and~(\ref{db-gene2}) and setting
\[
D_k(z,\zeta)=D_k^+(z,\zeta)+D_k^-(z,\zeta)
=W(u_1(z),\dots,u_{k-1}(z),\zeta,u_{k+1}(z),\dots,u_{2N}(z)),
\]
we obtain that
\begin{equation}\label{double-gene-sigmabis}
\mathbb{E}\!\left(z^{\sigma_{ab}}\zeta^{S_{ab}}\ind_{\{\sigma_{ab}<+\infty\}}\right)
=\frac{1}{D(z)}\sum_{k=1}^{2N} D_k(z,\zeta) \left(v_k(z)\zeta\right)^{a-N+1}.
\end{equation}
We observe that the polynomials $\mathbf{\tilde{L}}_k(z,\zeta)=D_k(z,\zeta)/D(z)$ with respect
to the variable $\zeta$ are of degree $b-a+2N-2$ and satisfy the equalities
$\mathbf{\tilde{L}}_k(z,u_j(z))=\delta_{jk}$ for all $j\in \mathcal{E}$ and $k\in\{1,\dots,2N\}$.
Hence they can be expressed by means of the elementary Lagrange polynomials as
displayed in Theorem~\ref{theo-sigma-ab2}.
\end{proof}
%
\begin{example}
For $N=2$, (\ref{double-gene-sigma-ab}) reads
\begin{align*}
\mathbb{E}\!\left(z^{\sigma_{ab}}\zeta^{S_{ab}}\ind_{\{\sigma_{ab}<+\infty\}}\right)
&
=\mathbf{\tilde{L}}_1(z,\zeta) \left(v_1(z)\zeta\right)^{a-1}
+\mathbf{\tilde{L}}_2(z,\zeta) \left(v_2(z)\zeta\right)^{a-1}
\\
&
\hphantom{=\;}+\mathbf{\tilde{L}}_3(z,\zeta) \left(v_3(z)\zeta\right)^{a-1}
+\mathbf{\tilde{L}}_4(z,\zeta) \left(v_4(z)\zeta\right)^{a-1}
\end{align*}
where
\[
\mathbf{\tilde{L}}_1(z,\zeta)=\frac{W(\zeta,u_2(z),u_3(z),u_4(z))}{W(u_1(z),u_2(z),u_3(z),u_4(z))},\quad
\mathbf{\tilde{L}}_2(z,\zeta)=\frac{W(u_1(z),\zeta,u_3(z),u_4(z))}{W(u_1(z),u_2(z),u_3(z),u_4(z))},
\]
\[
\mathbf{\tilde{L}}_3(z,\zeta)=\frac{W(u_1(z),u_2(z),\zeta,u_4(z))}{W(u_1(z),u_2(z),u_3(z),u_4(z))},\quad
\mathbf{\tilde{L}}_4(z,\zeta)=\frac{W(u_1(z),u_2(z),u_3(z),\zeta)}{W(u_1(z),u_2(z),u_3(z),u_4(z))}.
\]
All the polynomials $\mathbf{\tilde{L}}_k(z,\zeta)$, $1\le k\le 4$, have the form
$A_{k,a-1}(z)+A_{k,a}(z)\zeta+A_{k,b}(z)\zeta^{b-a+1}$ $+A_{k,b+1}(z)\zeta^{b-a+2}$.
\end{example}
%
\begin{remark}
By expanding the determinant $D_k(z,\zeta)$ with respect to its $k$th raw,
we obtain an expansion for the polynomial $\mathbf{\tilde{L}}(z,\zeta)$ as a linear combination
of $1,\zeta,\dots,\zeta^{N-1},$ $\zeta^{b-a+N-1},\zeta^{b-a+N},\dots,$ $\zeta^{b-a+2N-2}$,
that is, an expansion of the form
\[
\mathbf{\tilde{L}}_k(z,\zeta)=\sum_{\ell=0}^{N-1} A_{k,\ell+a-N+1}(z) \zeta^{\ell}
+\sum_{\ell=b-a+N-1}^{b-a+2N-2} A_{k,\ell+a-N+1}(z) \zeta^{\ell}
=\zeta^{N-a-1}\sum_{\ell\in \mathcal{E}} A_{k\ell}(z)\zeta^{\ell}.
\]
Hence,
\[
\mathbb{E}\!\left(z^{\sigma_{ab}}\zeta^{S_{ab}}\ind_{\{\sigma_{ab}<+\infty\}}\right)
=\sum_{k=1}^{2N}\left(\sum_{\ell\in \mathcal{E}} A_{k\ell}(z)\zeta^{\ell}\right)v_k(z)^{a-N+1}
=\sum_{\ell\in \mathcal{E}} \left(\sum_{k=1}^{2N} A_{k\ell}(z) v_k(z)^{a-N+1}\right) \zeta^{\ell}
\]
from which we extract, for any $\ell\in \mathcal{E}$, that
\[
\mathbb{E}\!\left(z^{\sigma_{ab}}\ind_{\{S_{ab}=\ell,\sigma_{ab}<+\infty\}}\right)
=\sum_{k=1}^{2N} A_{k\ell}(z) u_k(z)^{N-a-1}.
\]
Actually, the foregoing sum comes from the quotient $W_{\ell}(u_1(z),\dots,u_{2N}(z))
/W(u_1(z),\dots,$ $u_{2N}(z))$ given by~(\ref{pseudo-dist-sigma-ab})
by expanding the determinant $W_{\ell}(u_1(z),\dots,u_{2N}(z))$
with respect to the $(\ell+N-a)$th column or $(\ell+N-b+1)$th column
according as $a-N+1\le \ell\le a$ or $b\le \ell\le b+N-1$.
\end{remark}
%

\subsection{Pseudo-distribution of $S_{ab}$}

In order to derive the pseudo-distribution of $S_{ab}$ which is characterized
by the numbers $H_{ab,\ell}(1)$, $\ell\in \mathcal{E}$, we solve the systems obtained
by taking the limit in~(\ref{double-gene-sigmabis}) as $z\to 1^-$.
%
\begin{lemma}\label{lemma-syst-Hbis}
The following identities hold: for $N\le k\le 2N-1$,
\begin{align}
&\sum_{\ell=b}^{b+N-1} \binom{\ell+N-a-1}{k} H_{ab,\ell}(1)=\binom{N-a-1}{k},
\label{system-H(1)1}\\
&\sum_{\ell=a-N+1}^{a} \binom{b+N-1-\ell}{k} H_{ab,\ell}(1)=\binom{b+N-1}{k}.
\label{system-H(1)2}
\end{align}
\end{lemma}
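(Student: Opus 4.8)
The plan is to take the limit $z\to1^-$ in the two linear systems~(\ref{system-equations1}) and~(\ref{system-equations2}) obtained in the proof of Theorem~\ref{theo-sigma-ab}, along the lines of Lemma~\ref{lemma-syst-H}. These two systems are in fact equivalent: together they assert that $\sum_{\ell\in\mathcal{E}}H_{ab,\ell}(z)\,w^{\ell}=1$ for every $w$ in the set $\mathcal{R}(z)$ of the $2N$ roots of $P_z$, and $\mathcal{R}(z)$ is stable under $w\mapsto1/w$ because $v_j(z)=1/u_j(z)$. Multiplying by $w^{N-a-1}$, resp.\ replacing $w$ by $1/w$ and multiplying by $w^{b+N-1}$, gives, for every $z\in(0,1)$ and every $w\in\mathcal{R}(z)$,
\[
\sum_{\ell\in\mathcal{E}}H_{ab,\ell}(z)\,w^{\ell+N-a-1}=w^{N-a-1},\qquad
\sum_{\ell\in\mathcal{E}}H_{ab,\ell}(z)\,w^{b+N-1-\ell}=w^{b+N-1},
\]
in which all powers of $w$ are nonnegative integers, the right-hand exponent sitting in the ``gap'' between the block $\{0,\dots,N-1\}$ (coming from $\ell\in\{a-N+1,\dots,a\}$, resp.\ from $\ell\in\{b,\dots,b+N-1\}$) and the upper block (coming from the other values of $\ell$). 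One may notice that the identities to be proved then read $\mathbb{E}_0\big[g(S_{ab})\ind_{\{\sigma_{ab}<+\infty\}}\big]=g(0)$ for the polynomials $g(x)=\binom{x+N-a-1}{k}$ and $g(x)=\binom{b+N-1-x}{k}$, $0\le k\le 2N-1$, i.e.\ for polynomials of degree $\le 2N-1$, for which $\mathbf{\Delta}^{\!N}g=0$; this explains their shape.

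Next I would substitute $w=u_j(z)=1+\epsilon_j(z)$ (so $\epsilon_j(z)=u_j(z)-1$), $1\le j\le2N$, into the first identity. By~(\ref{asymptotic-u}) and $v_j=1/u_j$, all $2N$ roots satisfy $\epsilon_j(z)=\mathcal{O}\big((1-z)^{1/(2N)}\big)$, are pairwise distinct for $z\in(0,1)$, and their directions $\lim_{z\to1^-}\epsilon_j(z)/\sqrt[2N]{1-z}$ are the $2N$ distinct nonzero numbers $\pm\varphi_j/\sqrt[2N]{c}$, $1\le j\le N$; hence $\prod_{1\le i<j\le 2N}(\epsilon_i(z)-\epsilon_j(z))\asymp(1-z)^{(2N-1)/2}$ as $z\to1^-$. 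Expanding by the binomial theorem and collecting powers of $\epsilon_j(z)$ turns the first identity into $\sum_{k\ge0}D_k(z)\,\epsilon_j(z)^k=0$, $1\le j\le2N$, a finite sum, with
\[
D_k(z)=\sum_{\ell\in\mathcal{E}}\binom{\ell+N-a-1}{k}H_{ab,\ell}(z)-\binom{N-a-1}{k};
\]
for $k\ge N$ the terms with $\ell\in\{a-N+1,\dots,a\}$ drop out (then $0\le\ell+N-a-1\le N-1<k$). The same treatment of the second identity yields $\sum_{k\ge0}\widehat{D}_k(z)\epsilon_j(z)^k=0$ with $\widehat{D}_k(z)=\sum_{\ell\in\mathcal{E}}\binom{b+N-1-\ell}{k}H_{ab,\ell}(z)-\binom{b+N-1}{k}$, the terms with $\ell\in\{b,\dots,b+N-1\}$ dropping out for $k\ge N$.

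The heart of the matter is to show that $D_k(z)\to0$ and $\widehat{D}_k(z)\to0$ as $z\to1^-$ for $0\le k\le2N-1$, together with the existence of $\lim_{z\to1^-}H_{ab,\ell}(z)$. Unlike in Lemma~\ref{lemma-syst-H}, the exponent set $\mathcal{E}$ spans a range larger than $2N-1$, so no shift makes the unknown part a polynomial of degree $\le2N-1$ in $w$: moving the tail $k\ge2N$ to the right-hand side, $\sum_{k=0}^{2N-1}D_k(z)\epsilon_j(z)^k=R_j(z):=-\sum_{k\ge2N}D_k(z)\epsilon_j(z)^k$, the quantity $R_j(z)$ genuinely involves the unknowns. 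Still, since $\sum_{k\ge2N}\binom{p}{k}\epsilon^k=(1+\epsilon)^p-\sum_{k=0}^{2N-1}\binom{p}{k}\epsilon^k=\mathcal{O}(\epsilon^{2N})$ for each fixed $p\ge0$, one gets the a priori bound $|R_j(z)|\le C(1-z)\big(1+H^{*}(z)\big)$, where $H^{*}(z):=\max_{\ell\in\mathcal{E}}|H_{ab,\ell}(z)|$. Solving the $2N\times2N$ Vandermonde system for $D_0(z),\dots,D_{2N-1}(z)$ by Cramer's rule and estimating orders in $(1-z)^{1/(2N)}$ exactly as in Lemma~\ref{lemma-syst-H}, one obtains $|D_k(z)|\le C(1-z)^{1/(2N)}\big(1+H^{*}(z)\big)$ for $0\le k\le2N-1$ (and likewise for $\widehat{D}_k$).

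To close the loop, I would invert the linear relations between the $H_{ab,\ell}(z)$ and the $D_k(z)$: since the $2N$ integers $\ell+N-a-1$ ($\ell\in\mathcal{E}$) are distinct, the matrix $\big[\binom{\ell+N-a-1}{k}\big]_{\ell\in\mathcal{E},\,0\le k\le 2N-1}$ is invertible (Newton interpolation in degree $\le2N-1$), so each $H_{ab,\ell}(z)$ is an affine function of $D_0(z),\dots,D_{2N-1}(z)$; this gives $H^{*}(z)\le C'(1-z)^{1/(2N)}\big(1+H^{*}(z)\big)+C''$, and absorbing the $H^{*}$-term for $z$ close enough to $1$ shows that $H^{*}(z)$ stays bounded near $z=1$. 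Hence $D_k(z)\to0$ and $\widehat{D}_k(z)\to0$, each $H_{ab,\ell}(z)$ converges as $z\to1^-$, and a Tauberian theorem identifies the limit with $H_{ab,\ell}(1)$. Letting $z\to1^-$ in $D_k(z)\to0$ and $\widehat{D}_k(z)\to0$ for $N\le k\le2N-1$ then yields~(\ref{system-H(1)1}) and~(\ref{system-H(1)2}). The main obstacle is precisely the circularity just described — the tail $R_j(z)$ depending on the unknowns — which the a priori Cramer estimate combined with the bootstrap on $H^{*}(z)$ resolves; everything else is routine binomial bookkeeping and order estimates.
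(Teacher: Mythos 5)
Your proof follows essentially the same route as the paper's: substitute $u_j(z)=1+\varepsilon_j(z)$ for all $2N$ roots into systems (\ref{system-equations1})--(\ref{system-equations2}), expand binomially, truncate at order $2N-1$, and solve the resulting Vandermonde system by Cramer's rule with the same order-of-magnitude estimates in powers of $(1-z)^{1/(2N)}$ as in Lemma~\ref{lemma-syst-H} (with $N$ replaced by $2N$). Your bootstrap on $H^{*}(z)$ is a welcome refinement rather than a different approach: unlike in Lemma~\ref{lemma-syst-H}, the remainder $R_j(z)$ here genuinely involves the unknowns $H_{ab,\ell}(z)$ through the coefficients $M_k(z)$, $k\ge 2N$, a circularity the paper disposes of only with the phrase ``we can check that $\lim_{z\to 1^-}M_k(z)=0$''; your a priori Cramer estimate combined with the inversion of the matrix $\big[\binom{\ell+N-a-1}{k}\big]$ (invertible because the $2N$ exponents are distinct and the $\binom{x}{k}$, $0\le k\le 2N-1$, span the polynomials of degree at most $2N-1$) makes that step rigorous.
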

%
\begin{proof}
By~(\ref{asymptotic-u}), we have the expansion $u_j(z)=1+\varepsilon_j(z)$
where $\varepsilon_j(z)\underset{z\to 1^-}{=}\mathcal{O}\!\left(\!\!\sqrt[2N\!]{1-z}\,\right)$
for any $j\in\{1,\dots,N\}$.
Actually such asymptotics holds true for any $j\in\{1,\dots,2N\}$ because
of the equality $u_j=1/u_{j-N}$ for $j\in\{N+1,\dots,2N\}$.
We put this into systems (\ref{system-equations1}) and
(\ref{system-equations2}). For doing this, it is convenient to rewrite these latter as
\begin{align*}
&\sum_{\ell\in \mathcal{E}} H_{ab,\ell}(z)u_j(z)^{\ell+N-a-1}=u_j(z)^{N-a-1}, \quad 1\le j\le 2N,
\\
&\sum_{\ell\in \mathcal{E}} H_{ab,\ell}(z)u_j(z)^{b+N-1-\ell}=u_j(z)^{b+N-1}, \quad 1\le j\le 2N.
\end{align*}
We obtain that
\begin{align}
&\sum_{\ell\in \mathcal{E}} (1+\varepsilon_j(z))^{\ell+N-a-1} H_{ab,\ell}(z)=(1+\varepsilon_j(z))^{N-a-1}, \quad 1\le j\le 2N,
\label{system-asymptotics1}\\
&\sum_{\ell\in \mathcal{E}} (1+\varepsilon_j(z))^{b+N-1-\ell} H_{ab,\ell}(z)=(1+\varepsilon_j(z))^{b+N-1}, \quad 1\le j\le 2N.
\label{system-asymptotics2}
\end{align}
System~(\ref{system-asymptotics1}) writes
\begin{align}
\lqn{\sum_{k=0}^{b-a+2N-2} \left(\sum_{\ell\in \mathcal{E}:\atop \ell\ge k+a-N+1} \binom{\ell+N-a-1}{k}
H_{ab,\ell}(z) \right) \varepsilon_j(z)^k}
&=\sum_{k=0}^{N-a-1} \binom{N-a-1}{k} \varepsilon_j(z)^k.
\label{eqHbis}
\end{align}
Set
\begin{align*}
M_k(z)&=\sum_{\ell\in \mathcal{E}:\atop \ell\ge k+a-N+1} \binom{\ell+N-a-1}{k} H_{ab,\ell}(z)
-\binom{N-a-1}{k},
\\
R_j(z) &=-\sum_{k=2N}^{b-a+2N-2} M_k(z)\varepsilon_j(z)^k.
\end{align*}
Then, equality~(\ref{eqHbis}) reads
\begin{align*}
\sum_{k=0}^{2N-1} M_k(z)\varepsilon_j(z)^k = R_j(z),\quad 1\le j\le 2N.
\end{align*}
This is a Vandermonde system which can be solved as in the proof of
Lemma~\ref{lemma-syst-H} upon changing $N$ into $2N$.
We can check that $\lim_{z\to 1^-} M_k(z)=0$, which entails that
\begin{align}
\sum_{\ell\in \mathcal{E}: \atop \ell\ge k+a-N+1} \binom{\ell+N-a-1}{k} H_{ab,\ell}(1)
=\binom{N-a-1}{k}, \quad 0\le k\le 2N-1.
\label{system-H(1)1bis}
\end{align}
Similarly, using~(\ref{system-asymptotics2}), we can prove that
\begin{align}
\sum_{\ell\in \mathcal{E}: \atop \ell\le b+N-1-k} \binom{b+N-\ell-1}{k} H_{ab,\ell}(1)
=\binom{N+b-1}{k}, \quad 0\le j\le 2N-1.
\label{system-H(1)2bis}
\end{align}
Actually, we shall only use (\ref{system-H(1)1bis}) and
(\ref{system-H(1)2bis}) restricted to $j\in\{N,\dots,2N-1\}$ which immediately
yields system~(\ref{system-H(1)1})-(\ref{system-H(1)2}).
\end{proof}

%
Now, we state one of the most important result of this work. We solve
the famous problem of the ``gambler's ruin'' in the context of the pseudo-random
walk.
%
\begin{theorem}\label{theo-dist-Sab}
The pseudo-distribution of $S_{ab}$ is given, for $\ell\in\{0,1,\dots,N-1\}$, by
\begin{align*}
\mathbb{P}\{S_{ab}=a-\ell,\sigma_{ab}<+\infty\}
=(-1)^{\ell}\frac{KN}{\ell-a}\binom{N-1}{\ell}/\binom{\ell+b-a+N-1}{N},
\\[1ex]
\mathbb{P}\{S_{ab}=b+\ell,\sigma_{ab}<+\infty\}
=(-1)^{\ell}\frac{KN}{\ell+b}\binom{N-1}{\ell}/\binom{\ell+b-a+N-1}{N}
\end{align*}
where
\[
K=\binom{N-a-1}{N}\!\binom{N+b-1}{N}=\frac{(-1)^N}{N!}\,a(a-1)\dots(a-N+1)
\,b(b+1)\dots(b+N-1).
\]
Moreover, $\mathbb{P}\{\sigma_{ab}<+\infty\}=1$ and
\begin{align*}
\mathbb{P}\{\sigma_b^+<\sigma_a^-\}&=KN^2
\int\!\!\!\int_{\mathcal{D}^+} u^{-a-1}(1-u)^{N-1} v^{b-1}(1-v)^{N-1} \,\mathrm{d} u\,\mathrm{d} v,
\\
\mathbb{P}\{\sigma_a^-<\sigma_b^+\}&=KN^2
\int\!\!\!\int_{\mathcal{D}^-} u^{-a-1}(1-u)^{N-1} v^{b-1}(1-v)^{N-1} \,\mathrm{d} u\,\mathrm{d} v,
\end{align*}
where
\begin{align*}
\mathcal{D}^+ =\{(u,v)\in\mathbb{R}^2:0\le v\le u\le 1\},\quad
\mathcal{D}^- =\{(u,v)\in\mathbb{R}^2:0\le u\le v\le 1\}.
\end{align*}
\end{theorem}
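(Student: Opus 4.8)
The plan is to determine the pseudo-probabilities $H_{ab,\ell}(1)=\mathbb{P}\{S_{ab}=\ell,\sigma_{ab}<+\infty\}$ by solving the linear systems of Lemma~\ref{lemma-syst-Hbis}, and then to extract the ruin pseudo-probabilities and the total mass $\mathbb{P}\{\sigma_{ab}<+\infty\}$ by turning the resulting finite sums into Eulerian Beta integrals.

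First I would observe that the two systems of Lemma~\ref{lemma-syst-Hbis} \emph{decouple}: for $N\le k\le 2N-1$ the coefficient $\binom{\ell+N-a-1}{k}$ vanishes on every lower value $\ell\in\{a-N+1,\dots,a\}$, so (\ref{system-H(1)1}) is a closed $N\times N$ system for the unknowns $H_{ab,b+m}(1)$, $0\le m\le N-1$, and symmetrically (\ref{system-H(1)2}) is a closed $N\times N$ system for the $H_{ab,a-m}(1)$. Since the reflection $S_n\mapsto -S_n$ preserves the pseudo-distribution and exchanges $(a,b)$ with $(-b,-a)$ — hence the two systems with one another — it is enough to solve one of them, say the one governing exit through the top. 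That system has the lacunary Vandermonde matrix $\bigl(\binom{m+b-a+N-1}{N+j}\bigr)_{0\le j,m\le N-1}$; I would solve it explicitly, either by exhibiting its inverse (mimicking the inversion of $\bigl(\binom{\ell}{k}\bigr)_{0\le k,\ell\le N-1}$ in the proof of Theorem~\ref{theo-dist-Sb}) or, since the answer is already displayed, by checking that the claimed values satisfy it: inserting $\bigl[\binom{m+b-a+N-1}{N}\bigr]^{-1}=N\int_0^1 u^{\,m+b-a-1}(1-u)^{N-1}\,\mathrm{d} u$ and $(m+b)^{-1}=\int_0^1 v^{\,m+b-1}\,\mathrm{d} v$, carrying the sum over $m$ through the integrals by means of $\sum_{m=0}^{N-1}(-1)^m\binom{N-1}{m}t^m=(1-t)^{N-1}$, and reducing each equation to a Beta evaluation. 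Finally, exactly as in the proof of Lemma~\ref{lemma-syst-H}, $H_{ab,\ell}(z)$ is a fixed linear combination of quantities admitting a limit as $z\to1^-$, so that limit exists and, by a Tauberian theorem, equals $H_{ab,\ell}(1)$; this yields the announced pseudo-distribution of $S_{ab}$.

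For the ruin pseudo-probabilities I would use that, since $a<0<b$, the events $\{\sigma_b^+=\sigma_a^-\}$ are impossible, so $\{\sigma_b^+<\sigma_a^-\}=\{\sigma_{ab}<+\infty,\ S_{ab}\ge b\}$ and $\{\sigma_a^-<\sigma_b^+\}=\{\sigma_{ab}<+\infty,\ S_{ab}\le a\}$. Summing the explicit pseudo-probabilities over $m=0,\dots,N-1$ and using the same two integral representations, the sum over $m$ produces the factor $\sum_{m}(-1)^m\binom{N-1}{m}(uv)^m=(1-uv)^{N-1}$, after which the substitution $w=uv$ at fixed $u$ maps the unit square onto the triangle $\mathcal{D}^+$ and gives
\[
\mathbb{P}\{\sigma_b^+<\sigma_a^-\}=KN^2\!\!\int\!\!\!\int_{\mathcal{D}^+}\! u^{-a-1}(1-u)^{N-1}v^{\,b-1}(1-v)^{N-1}\,\mathrm{d} u\,\mathrm{d} v ;
\]
the mirror-image computation gives the $\mathcal{D}^-$ formula. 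Adding the two, and since $\mathcal{D}^+\cup\mathcal{D}^-=[0,1]^2$ up to a null set, the integral factorizes into $\bigl(\int_0^1 u^{-a-1}(1-u)^{N-1}\,\mathrm{d} u\bigr)\bigl(\int_0^1 v^{\,b-1}(1-v)^{N-1}\,\mathrm{d} v\bigr)=\bigl[N\binom{N-a-1}{N}\bigr]^{-1}\bigl[N\binom{N+b-1}{N}\bigr]^{-1}$, and using $K=\binom{N-a-1}{N}\binom{N+b-1}{N}$ this collapses to $\mathbb{P}\{\sigma_{ab}<+\infty\}=1$, which in turn justifies omitting the indicator $\ind_{\{\sigma_{ab}<+\infty\}}$ throughout.

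The genuine obstacle is the exact resolution of the lacunary Vandermonde system in the first step. Unlike the single-threshold matrix $\bigl(\binom{\ell}{k}\bigr)$ of Theorem~\ref{theo-dist-Sb}, the row index here is shifted by $N$ and the entries $\binom{m+b-a+N-1}{N+j}$ do not form a triangular Pascal array, so either producing a closed form for its inverse or pushing the verification identity through — choosing the integral representations so that everything telescopes to a single Beta function, while correctly handling the cases in which the right-hand side $\binom{N-a-1}{N+j}$ vanishes — requires real care. Everything downstream (the Tauberian passage and the double-integral changes of variables) is then routine.
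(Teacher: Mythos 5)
Your outline reproduces the paper's strategy faithfully: the decoupling of the two systems of Lemma~\ref{lemma-syst-Hbis} (for $k\ge N$ the binomial coefficients kill the lower values $\ell\le a$), the reduction to a single $N\times N$ system by the symmetry $p_k=p_{-k}$, the passage from the finite sums to double integrals via $1/(\ell+b)=\int_0^1 v^{\ell+b-1}\,\mathrm{d} v$ and $1/\binom{\ell+b-a+N-1}{N}=N\int_0^1 u^{\ell+b-a-1}(1-u)^{N-1}\,\mathrm{d} u$ followed by the substitution $(u,v)=(x,xy)$ onto $\mathcal{D}^+$, and the factorization over $[0,1]^2$ into a product of Beta functions giving $\mathbb{P}\{\sigma_{ab}<+\infty\}=1$. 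All of this is exactly what the paper does, and it is correct as far as it goes.

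The gap is that the one substantive step — actually producing the values $H_{ab,b+m}(1)$ from the lacunary system — is named but not carried out. The paper devotes Appendix~\ref{appendix-matrices} to it: writing the system as $Ax=B$ with $A=\bigl[\binom{\ell+b-a+N-1}{k+N}\bigr]_{0\le k,\ell\le N-1}$, it builds an explicit $LU$-type factorization of $A$ by Gauss elimination (Lemmas~\ref{lemma1}--\ref{lemma3}) and computes $A^{-1}B$ in Theorem~\ref{theorem-matrix-inv}; specializing $\alpha=b-a+N-1$ and $\beta=N-a-1$ there yields precisely the claimed pseudo-probabilities. Your alternative — verifying the displayed solution by substitution — is viable but will not ``telescope to a single Beta evaluation'' as directly as you suggest: after the simplification $\binom{m+b-a+N-1}{N+j}/\binom{m+b-a+N-1}{N}=\binom{m+b-a-1}{j}/\binom{N+j}{j}$, the sum over $m$ still carries the $m$-dependent factor $\binom{m+b-a-1}{j}$, a polynomial of degree $j$ in $m$, so $\sum_m(-1)^m\binom{N-1}{m}(\cdot)$ does not reduce to $(1-uv)^{N-1}$; one is led instead to an expression of the form $\frac{1}{j!}\int_0^1 v^{a+j}\,\frac{\mathrm{d}^j}{\mathrm{d} v^j}\bigl[v^{b-a-1}(1-v)^{N-1}\bigr]\mathrm{d} v$, to be evaluated by Leibniz's rule and Beta integrals in the spirit of Lemma~\ref{lemma-sum} and the proof of Theorem~\ref{th-moment-beta}. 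Either route can be completed, but each is a genuine computation of comparable weight to the paper's appendix, and your proof is incomplete without one of them.
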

%
\begin{proof}
We have to solve system~(\ref{system-H(1)1})-(\ref{system-H(1)2}).
For (\ref{system-H(1)1}) for instance, the principal matrix and the right-hand
side matrix are
\[
\begin{bmatrix}\displaystyle\binom{\ell+b-a+N-1}{k+N}\end{bmatrix}_{0\le k,\ell\le N-1}\quad
\text{and}\quad\begin{bmatrix}\displaystyle\binom{N-a-1}{k+N}\end{bmatrix}_{0\le k\le N-1}
\]
and the matrix form of the solution is given by
\[
\begin{bmatrix}\displaystyle\binom{\ell+b-a+N-1}{k+N}\end{bmatrix}_{0\le k,\ell\le N-1}^{-1}
\begin{bmatrix}\displaystyle\binom{N-a-1}{k+N}\end{bmatrix}_{0\le k\le N-1}.
\]
The computation of this product being quite fastidious, we postponed
it to Appendix~\ref{appendix-matrices}. The result is given by
Theorem~\ref{theorem-matrix-inv}:
\[
\begin{bmatrix}\displaystyle(-1)^{\ell}\frac{KN}{\ell+b}\binom{N-1}{\ell}
/\binom{\ell+b-a+N-1}{N}\end{bmatrix}_{0\le \ell\le N-1}\!.
\]
The entries of this matrix provide the pseudo-probabilities
$\mathbb{P}\{S_{ab}=b+\ell,\sigma_{ab}<+\infty\}$, $0\le\ell\le N-1$, which are
exhibited in Theorem~\ref{theo-dist-Sab}. The analogous formula for
$\mathbb{P}\{S_{ab}=a-\ell,\sigma_{ab}<+\infty\}$ holds true in the same way.

Next, by observing that $\sigma_{ab}=\min(\sigma_a^-,\sigma_b^+)$ and that
$\{\sigma_{ab}<+\infty\}=\{\sigma_a^-<+\infty\}\cup\{\sigma_b^+<+\infty\}$,
we have that
\begin{align}
\mathbb{P}\{\sigma_b^+<\sigma_a^-\}
&
=\mathbb{P}\{\sigma_b^+<\sigma_a^-,\sigma_{ab}<+\infty\}=\mathbb{P}\{S_{ab}\ge b,\sigma_{ab}<+\infty\}
\\
&
=\sum_{\ell=0}^{N-1}\mathbb{P}\{S_{ab}=b+\ell,\sigma_{ab}<+\infty\}
\nonumber\\
&=K\sum_{\ell=0}^{N-1} (-1)^{\ell}\frac{N}{\ell+b}\binom{N-1}{\ell}
/\binom{\ell+b-a+N-1}{N}.
\label{proba-b-before-a}
\end{align}
Noticing that
$1/(\ell+b)=\int_0^1 y^{\ell+b-1}\,\mathrm{d} y$ and $1/\binom{\ell+b-a+N-1}{N}
=N\int_0^1 x^{\ell+b-a-1}(1-x)^{N-1}\,\mathrm{d} x$, we get that
\begin{align*}
\lqn{\sum_{\ell=0}^{N-1} (-1)^{\ell}\frac{N}{\ell+b}\binom{N-1}{\ell}
/\binom{\ell+b-a+N-1}{N}}
&=N^2\int_0^1\!\!\!\int_0^1 \left(\sum_{\ell=0}^{N-1}
(-1)^{\ell}\binom{N-1}{\ell}(xy)^{\ell}\right)x^{b-a-1}(1-x)^{N-1}y^{b-1}\,\mathrm{d} x\,\mathrm{d} y
\\
&=N^2\int_0^1\!\!\!\int_0^1 x^{b-a-1}(1-x)^{N-1}y^{b-1}(1-xy)^{N-1}\,\mathrm{d} x\,\mathrm{d} y.
\end{align*}
The computations can be pursued by performing the change of variables
$(u,v)=(x,xy)$ in the above integral:
\begin{align}
\lqn{\sum_{\ell=0}^{N-1} (-1)^{\ell}\frac{N}{\ell+b}\binom{N-1}{\ell}
/\binom{\ell+b-a+N-1}{N}}
&=N^2\int\!\!\!\int_{\mathcal{D}^+} u^{-a-1}(1-u)^{N-1}v^{b-1}(1-v)^{N-1}\,\mathrm{d} u\,\mathrm{d} v.
\label{sum}
\end{align}
Putting~(\ref{sum}) into~(\ref{proba-b-before-a}) yields the expression
of $\mathbb{P}\{\sigma_b^+<\sigma_a^-\}$ displayed in Theorem~\ref{theo-dist-Sab}.
The similar expression for $\mathbb{P}\{\sigma_a^-<\sigma_b^+\}$ holds true. Finally,
\begin{align*}
\mathbb{P}\{\sigma_{ab}<+\infty\} &=\mathbb{P}\{\sigma_a^-<\sigma_b^+,\sigma_{ab}<+\infty\}
+\mathbb{P}\{\sigma_b^+<\sigma_a^-,\sigma_{ab}<+\infty\}
\\
&=\mathbb{P}\{\sigma_a^-<\sigma_b^+\}+\mathbb{P}\{\sigma_b^+<\sigma_a^-\}
\\
&=KN^2\int_0^1\!\!\!\int_0^1 u^{-a-1}(1-u)^{N-1} v^{b-1}(1-v)^{N-1} \,\mathrm{d} u\,\mathrm{d} v.
\end{align*}
The foregoing integral is quite elementary:
\begin{align*}
\lqn{\int_0^1\!\!\!\int_0^1 u^{-a-1}(1-u)^{N-1} v^{b-1}(1-v)^{N-1} \,\mathrm{d} u\,\mathrm{d} v}
&=\int_0^1 u^{-a-1}(1-u)^{N-1}\,\mathrm{d} u \int_0^1v^{b-1}(1-v)^{N-1} \,\mathrm{d} v
\\
&=B(N,-a)B(N,b)=\frac{1}{N^2\binom{N-a-1}{N}\!\binom{b+N-1}{N}}=\frac{1}{KN^2}
\end{align*}
which entails that $\mathbb{P}\{\sigma_{ab}<+\infty\}=1$.
\end{proof}
%
In the sequel, when considering $S_{ab}$, we shall omit the condition $\sigma_{ab}<+\infty$.
%
\begin{example}
Let us have a look on the particular values $1,2,3$ of $N$.
\begin{itemize}
\item
\textsl{Case $N=1$.} In this case $S_{ab}\in\{a,b\}$ and
\[
\mathbb{P}\{S_{ab}=a\}=\mathbb{P}\{\sigma_a^-<\sigma_b^+\}=\frac{b}{b-a},
\quad\mathbb{P}\{S_{ab}=b\}=\mathbb{P}\{\sigma_b^+<\sigma_a^-\}=-\frac{a}{b-a}.
\]
We retrieve one of the most well-know and important result for the ordinary random walk:
this is the famous problem of the gambler's ruin!
\item
\textsl{Case $N=2$.} In this case $S_{ab}\in\{a-1,a,b,b+1\}$ and
\[
\mathbb{P}\{S_{ab}=a-1\}=\frac{ab(b+1)}{(b-a+1)(b-a+2)},\quad \mathbb{P}\{S_{ab}=a\}
=-\frac{(a-1)b(b+1)}{(b-a)(b-a+1)},
\]
\[
\mathbb{P}\{S_{ab}=b\}=\frac{a(a-1)(b+1)}{(b-a)(b-a+1)},\quad \mathbb{P}\{S_{ab}=b+1\}
=-\frac{a(a-1)b}{(b-a+1)(b-a+2)},
\]
\begin{align*}
\mathbb{P}\{\sigma_a^-<\sigma_b^+\}&=\frac{b(b+1)(b-3a+2)}{(b-a)(b-a+1)(b-a+2)},
\\
\mathbb{P}\{\sigma_b^+<\sigma_a^-\}&=\frac{a(a-1)(3b-a+2)}{(b-a)(b-a+1)(b-a+2)}.
\end{align*}
\item
\textsl{Case $N=3$.} In this case $S_{ab}\in\{a-2,a-1,a,b,b+1,b+2\}$ and
\begin{align*}
\mathbb{P}\{S_{ab}=a-2\}&=\frac{a(a-1)b(b+1)(b+2)}{(b-a+2)(b-a+3)(b-a+4)},\\
\mathbb{P}\{S_{ab}=a-1\}&=-\frac{a(a-2)b(b+1)(b+2)}{(b-a+1)(b-a+2)(b-a+3)},\\
\mathbb{P}\{S_{ab}=a\}  &=\frac{(a-1)(a-2)b(b+1)(b+2)}{(b-a)(b-a+1)(b-a+2)},\\
\mathbb{P}\{S_{ab}=b\}  &=-\frac{a(a-1)(a-2)(b+1)(b+2)}{(b-a)(b-a+1)(b-a+2)},\\
\mathbb{P}\{S_{ab}=b+1\}&=\frac{a(a-1)(a-2)b(b+2)}{(b-a+1)(b-a+2)(b-a+3)},\\
\mathbb{P}\{S_{ab}=b+2\}&=-\frac{a(a-1)(a-2)b(b+1)}{(b-a+2)(b-a+3)(b-a+4)},
\end{align*}
\begin{align*}
\mathbb{P}\{\sigma_a^-<\sigma_b^+\}&=\frac{b(b+1)(b+2)(10a^2-5ab+b^2-25a+7b+12)}{
(b-a)(b-a+1)(b-a+2)(b-a+3)(b-a+4)},
\\
\mathbb{P}\{\sigma_b^+<\sigma_a^-\}&=-\frac{a(a-1)(a-2)(a^2-5ab+10b^2-7a+25b+12)}{
(b-a)(b-a+1)(b-a+2)(b-a+3)(b-a+4)}.
\end{align*}
\end{itemize}

\end{example}
%
\subsection{Pseudo-moments of $S_{ab}$}

Let us recall the notation we previously introduced in
Section~\ref{subsection-pseudo-moments-Sb}:
$(i)_n=i(i-1)(i-2)\cdots (i-n+1)$ for any $i\in\mathbb{Z}$ and any $n\in\mathbb{N}^*$ and
$(i)_0=1$, as well as the conventions $1/i!=0$ for any negative integer $i$ and
$\sum_{k=i}^j=0$ if $i>j$.

In this section, we compute several functionals related to the pseudo-moments
of $S_{ab}$. Namely, we provide formulae for
$\mathbb{E}\!\left[S_{ab}(S_{ab}-b)_{n-1}\right]$ (Theorem~\ref{theo-moments-Sab}),
$\mathbb{E}\!\left[(S_{ab})^n\right]$ (Corollary~\ref{theo-moments-Sab-bis}) and
$\mathbb{E}\!\left[(S_{ab}-b)_n\right]$ (Theorem~\ref{theo-moments-Sab-ter}).
This schedule may seem surprising; actually, we have been able to carry
out the calculations by following this chronology.

Putting the identities
$1/\binom{\ell+b-a+N-1}{N}=N\int_0^1 x^{\ell+b-a-1}(1-x)^{N-1}\,\mathrm{d} x$
and $1/(\ell+b)=\int_0^1 y^{\ell+b-1}\,\mathrm{d} y$ into the equality
\[
\mathbb{E}\!\left[f(S_{ab})\ind_{\{S_{ab}\ge b\}}\right]
=KN \sum_{\ell=0}^{N-1} (-1)^{\ell}\frac{f(\ell+b)}{\ell+b}
\binom{N-1}{\ell}/\binom{\ell+b-a+N-1}{N},
\]
we immediately get the following integral representations
for $\mathbb{E}\big[f(S_{ab})\ind_{\{S_{ab}\ge b\}}\big]$,
and the analogous ones hold true for
$\mathbb{E}\big[f(S_{ab})\ind_{\{S_{ab}\le a\}}\big]$.
%
\begin{theorem}\label{th-moment-betabis}
For any function $f$ defined on $\mathcal{E}$,
\begin{align}
\lqn{\mathbb{E}\!\left[f(S_{ab})\ind_{\{S_{ab}\ge b\}}\right]}
&
=KN^2\int_0^1\left[\sum_{\ell=0}^{N-1} (-1)^{\ell}\binom{N-1}{\ell}
\frac{f(\ell+b)}{\ell+b}\,x^{\ell}\right] x^{b-a-1}(1-x)^{N-1}\,\mathrm{d} x
\label{moment-beta1bis}\\
&
=KN^2\int_0^1\!\!\!\int_0^1\left[\sum_{\ell=0}^{N-1} (-1)^{\ell}\binom{N-1}{\ell}
f(\ell+b)(xy)^{\ell}\right] x^{b-a-1}(1-x)^{N-1}y^{b-1}\,\mathrm{d} x\,\mathrm{d} y,
\label{moment-beta1bisbis}\\
\lqn{\mathbb{E}\!\left[f(S_{ab})\ind_{\{S_{ab}\le a\}}\right]}
&
=KN^2\int_0^1\left[\sum_{\ell=0}^{N-1} (-1)^{\ell}\binom{N-1}{\ell}
\,\frac{f(a-\ell)}{\ell-a}\,x^{\ell}\right] x^{b-a-1}(1-x)^{N-1}\,\mathrm{d} x
\label{moment-beta2bis}\\
&
=KN^2\int_0^1\!\!\!\int_0^1\left[\sum_{\ell=0}^{N-1} (-1)^{\ell}\binom{N-1}{\ell}
f(a-\ell)(xy)^{\ell}\right] x^{b-a-1}(1-x)^{N-1}y^{-a-1}\,\mathrm{d} x\,\mathrm{d} y.
\label{moment-beta2bisbis}
\end{align}
\end{theorem}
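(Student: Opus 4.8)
The plan is to read off all four identities directly from the explicit pseudo-distribution of $S_{ab}$ established in Theorem~\ref{theo-dist-Sab}. Summing those pseudo-probabilities against an arbitrary $f$ defined on $\mathcal{E}$ gives the two finite sums
\[
\mathbb{E}\!\left[f(S_{ab})\ind_{\{S_{ab}\ge b\}}\right]
=KN \sum_{\ell=0}^{N-1} (-1)^{\ell}\,\frac{f(\ell+b)}{\ell+b}\,
\binom{N-1}{\ell}\Big/\binom{\ell+b-a+N-1}{N},
\]
\[
\mathbb{E}\!\left[f(S_{ab})\ind_{\{S_{ab}\le a\}}\right]
=KN \sum_{\ell=0}^{N-1} (-1)^{\ell}\,\frac{f(a-\ell)}{\ell-a}\,
\binom{N-1}{\ell}\Big/\binom{\ell+b-a+N-1}{N},
\]
which are the starting point of the whole computation.

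Next I would insert the Beta-type identity
\[
\frac{1}{\binom{\ell+b-a+N-1}{N}}=N\,B(N,\ell+b-a)
=N\int_0^1 x^{\ell+b-a-1}(1-x)^{N-1}\,\mathrm{d} x,
\]
which is valid because $a<0<b$ and $N\ge 1$ force both exponents in the integrand to be nonnegative. Since each sum over $\ell$ has only $N$ terms, the interchange of sum and integral is immediate; factoring the $\ell$-independent quantity $x^{b-a-1}(1-x)^{N-1}$ out of the sum yields \eqref{moment-beta1bis}, and the same manipulation with $a-\ell$ replacing $\ell+b$ produces \eqref{moment-beta2bis}.

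To pass to the double-integral forms I would further write $1/(\ell+b)=\int_0^1 y^{\ell+b-1}\,\mathrm{d} y$ and $1/(\ell-a)=\int_0^1 y^{\ell-a-1}\,\mathrm{d} y$, both legitimate since $\ell+b\ge 1$ and $\ell-a\ge 1$ (the latter because $a<0$), substitute them into the single-integral representations, again exchange the finite sum with the integral, and collect the $y$-dependent powers; this gives \eqref{moment-beta1bisbis} and \eqref{moment-beta2bisbis}.

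I do not expect a genuine obstacle: the proof is the substitution of two classical integral identities followed by trivial interchanges of finite sums with integrals. The only points that deserve an explicit word are the nonnegativity of the exponents in the Beta integrands (guaranteed by $a<0<b$ and $N\ge 1$) and the fact that $\ell-a\ge 1$, so that $1/(\ell-a)$ genuinely admits the stated integral representation; everything else is bookkeeping of the powers of $x$ and $y$.
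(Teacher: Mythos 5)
Your proposal is correct and follows exactly the route the paper takes: it starts from the pseudo-probabilities of Theorem~\ref{theo-dist-Sab}, inserts $1/\binom{\ell+b-a+N-1}{N}=N\int_0^1 x^{\ell+b-a-1}(1-x)^{N-1}\,\mathrm{d} x$ to obtain the single integrals, and then $1/(\ell+b)=\int_0^1 y^{\ell+b-1}\,\mathrm{d} y$ (resp. $1/(\ell-a)=\int_0^1 y^{\ell-a-1}\,\mathrm{d} y$) for the double integrals. The justifications you single out (nonnegativity of the exponents, finiteness of the sums) are exactly the ones needed, so nothing is missing.
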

%
In view of~(\ref{moment-beta1bis}) and~(\ref{moment-beta2bis}) and in order
to compute the pseudo-moments of $S_{ab}$, it is convenient to introduce
the function $f_n$ defined by $f_n(i)=i(i-b)_{n-1}$ for any integers $i$
and $n$ such that $n\ge 1$. In particular, $f_1(i)=i$.
We immediately see that, by choosing $f=f_1$ in Theorem~\ref{th-moment-betabis},
quantities (\ref{moment-beta1bis}) and (\ref{moment-beta2bis}) are opposite.
As a by product, $\mathbb{E}\!\left[S_{ab}\right]=0$. More generally,
we have the results below.
%
\begin{theorem}\label{theo-moments-Sab}
For any positive integer $n$,
\begin{align}
\lqn{\mathbb{E}\!\left[S_{ab}(S_{ab}-b)_{n-1}\ind_{\{S_{ab}\ge b\}}\right]}
&=\begin{cases}
\displaystyle(-1)^{n-1} \frac{KN}{2N-n}\,\frac{N!}{(N-n)!}\,
/\binom{2N+b-a-2}{2N-n} & \text{if $1\le n\le N$},
\\
0  & \text{if $n\ge N+1$},
\end{cases}
\label{moment-beta1ter}
\end{align}
\begin{align}
\lqn{\mathbb{E}\!\left[S_{ab}(S_{ab}-b)_{n-1}\ind_{\{S_{ab}\le a\}}\right]}
&=\begin{cases}
\displaystyle(-1)^n \frac{KN}{2N-n}\,\frac{N!}{(N-n)!}\,/\binom{2N+b-a-2}{2N-n} &
\text{if $1\le n\le N$},
\\
0  & \text{if $N+1\le n\le 2N-1$},
\\
\displaystyle (-1)^{N+n-1}KN N!(n-N-1)!\binom{n+b-a-2}{n-2N}
& \text{if $n\ge 2N$}.
\end{cases}
\label{moment-beta2ter}
\end{align}
In particular, for any $n\in\{1,\dots,2N-1\}$,
\[
\mathbb{E}\!\left[S_{ab}(S_{ab}-b)_{n-1}\right]=0.
\]
\end{theorem}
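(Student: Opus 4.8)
The plan is to feed the test function $f_n$, defined by $f_n(i)=i(i-b)_{n-1}$ (so that $f_n(S_{ab})=S_{ab}(S_{ab}-b)_{n-1}$), into the integral representations of Theorem~\ref{th-moment-betabis}. A one-line computation gives $f_n(\ell+b)/(\ell+b)=(\ell)_{n-1}$ and $f_n(a-\ell)/(\ell-a)=-(a-b-\ell)_{n-1}$, so by~(\ref{moment-beta1bis}) and~(\ref{moment-beta2bis}), up to the common constant $KN^2$, the two halves $\mathbb{E}[S_{ab}(S_{ab}-b)_{n-1}\ind_{\{S_{ab}\ge b\}}]$ and $\mathbb{E}[S_{ab}(S_{ab}-b)_{n-1}\ind_{\{S_{ab}\le a\}}]$ become, respectively,
\[
\int_0^1\Big(\sum_{\ell=0}^{N-1}(-1)^{\ell}\binom{N-1}{\ell}(\ell)_{n-1}x^{\ell}\Big)x^{b-a-1}(1-x)^{N-1}\,\mathrm{d} x
\]
and the same integral with $-(a-b-\ell)_{n-1}$ in place of $(\ell)_{n-1}$; for $n=1$ this recovers the already-noted fact that the two halves are opposite.

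The first key step is to recognise both inner sums as derivatives of powers of $(1-x)$. Using $(\ell)_{n-1}x^{\ell}=x^{n-1}\frac{\mathrm{d}^{n-1}}{\mathrm{d} x^{n-1}}x^{\ell}$ gives
\[
\sum_{\ell=0}^{N-1}(-1)^{\ell}\binom{N-1}{\ell}(\ell)_{n-1}x^{\ell}=x^{n-1}\frac{\mathrm{d}^{n-1}}{\mathrm{d} x^{n-1}}(1-x)^{N-1},
\]
and, since $b-a\ge2$, the elementary identity $(a-b-\ell)_{n-1}x^{b-a-1+\ell}=(-1)^{n-1}\frac{\mathrm{d}^{n-1}}{\mathrm{d} x^{n-1}}x^{b-a+n-2+\ell}$ gives
\[
\sum_{\ell=0}^{N-1}(-1)^{\ell}\binom{N-1}{\ell}(a-b-\ell)_{n-1}x^{b-a-1+\ell}=(-1)^{n-1}\frac{\mathrm{d}^{n-1}}{\mathrm{d} x^{n-1}}\big(x^{b-a+n-2}(1-x)^{N-1}\big).
\]
Since $\frac{\mathrm{d}^{n-1}}{\mathrm{d} x^{n-1}}(1-x)^{N-1}$ vanishes for $n\ge N+1$ and equals $(-1)^{n-1}(N-1)_{n-1}(1-x)^{N-n}$ for $n\le N$, the $\{S_{ab}\ge b\}$ half is immediately $0$ for $n\ge N+1$, and for $n\le N$ it reduces to the single Beta integral $B(b-a+n-1,2N-n)$; a short rearrangement of factorials yields exactly~(\ref{moment-beta1ter}).

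For the $\{S_{ab}\le a\}$ half, rewrite it as $(-1)^{n}KN^2\int_0^1\big(\frac{\mathrm{d}^{n-1}}{\mathrm{d} x^{n-1}}h\big)\,g\,\mathrm{d} x$ with $h(x)=x^{b-a+n-2}(1-x)^{N-1}$ and $g(x)=(1-x)^{N-1}$, and integrate by parts $n-1$ times. Every boundary contribution at $x=0$ vanishes because $h$ has a zero of order $b-a+n-2\ge n$ there; at $x=1$ the decisive remark is that $g$ is a polynomial of degree $N-1$, so $g^{(j)}(1)=0$ unless $j=N-1$. Hence: for $n\le N$ the bulk term $(-1)^{n-1}\int_0^1 h\,g^{(n-1)}\,\mathrm{d} x$ survives and equals the negative of the $\{S_{ab}\ge b\}$ value; for $N+1\le n\le 2N-1$ the bulk term is $0$ and the only surviving boundary term, proportional to $h^{(n-1-N)}(1)$, would need $N-1\le n-1-N$, i.e.\ $n\ge 2N$, hence it too vanishes; and for $n\ge 2N$ precisely that boundary term remains, equal up to sign to $(N-1)!\,h^{(n-1-N)}(1)$, where a Leibniz expansion of $h^{(n-1-N)}(1)$ — only the summand applying $N-1$ derivatives to $(1-x)^{N-1}$ being nonzero at $x=1$ — produces $(-1)^{N+n-1}KN\,N!\,(n-N-1)!\binom{n+b-a-2}{n-2N}$, which is~(\ref{moment-beta2ter}). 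Adding the two halves gives the final assertion: they cancel for $1\le n\le N$ and both vanish for $N+1\le n\le 2N-1$, so $\mathbb{E}[S_{ab}(S_{ab}-b)_{n-1}]=0$ on $\{1,\dots,2N-1\}$.

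I expect the main obstacle to be the bookkeeping in the last paragraph: tracking, through the $n-1$ successive integrations by parts, exactly which boundary terms are nonzero (this is precisely what forces the three-way split $n\le N$, $N+1\le n\le 2N-1$, $n\ge 2N$), and then collapsing the Leibniz expansion of $h^{(n-1-N)}(1)$ to its single surviving summand and matching it with the claimed binomial closed form. Everything else is routine manipulation of factorials and Beta integrals.
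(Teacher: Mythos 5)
Your proposal is correct, and for the half $\{S_{ab}\ge b\}$ it is essentially the paper's argument (the paper collapses the inner sum via the identity $\binom{N-1}{\ell}(\ell)_{n-1}=(N-1)_{n-1}\binom{N-n}{\ell-n+1}$ where you recognize it as $x^{n-1}\frac{\mathrm{d}^{n-1}}{\mathrm{d} x^{n-1}}(1-x)^{N-1}$ — the same closed form either way, followed by the same Beta integral). For the half $\{S_{ab}\le a\}$ your route is genuinely different: the paper abandons the integral representation, sums directly against the explicit pseudo-distribution of Theorem~\ref{theo-dist-Sab} to get $(-1)^nKNN!\sum_{\ell}(-1)^{\ell}\binom{N-1}{\ell}\frac{(\ell+b-a+n-2)!}{(\ell+b-a+N-1)!}$, and then splits cases by representing the factorial ratio either as a Beta integral (for $n\le N$) or as a derivative evaluated at $x=1$ (for $n\ge N+1$); you instead stay inside~(\ref{moment-beta2bis}), write the bracket as $(-1)^{n-1}h^{(n-1)}$ with $h(x)=x^{b-a+n-2}(1-x)^{N-1}$, and integrate by parts $n-1$ times against $g(x)=(1-x)^{N-1}$. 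Your version makes the three-way split $n\le N$ / $N+1\le n\le 2N-1$ / $n\ge 2N$ transparent (it is dictated by which single boundary term $j=N-1$ can survive and by the order of the zero of $h$ at $x=1$), and it converges with the paper at the very end, since the surviving boundary term is exactly the paper's $\frac{\mathrm{d}^{n-N-1}}{\mathrm{d} x^{n-N-1}}\big(x^{n+b-a-2}(1-x)^{N-1}\big)\big|_{x=1}$, evaluated by the same one-term Leibniz expansion. All the auxiliary facts you rely on check out: $b-a\ge 2$ guarantees the vanishing of the boundary terms at $x=0$, $(a-b-\ell)_{n-1}=(-1)^{n-1}(b-a+n-2+\ell)_{n-1}$ justifies the derivative rewriting, and the constants match ($N^2(N-1)!=N\cdot N!$).
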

%
\begin{proof}
By (\ref{moment-beta1bis}), we get that
\[
\mathbb{E}\!\left[f_n(S_{ab})\ind_{\{S_{ab}\ge b\}}\right]
=KN^2\int_0^1\left[\sum_{\ell=0}^{N-1} (-1)^{\ell}\binom{N-1}{\ell}
(\ell)_{n-1}\,x^{\ell+b-a-1}\right] (1-x)^{N-1}\,\mathrm{d} x.
\]
By noticing that $(\ell)_{n-1}=0$ for $\ell\in\{0,\dots,n-2\}$ and
$\binom{N-1}{\ell}(\ell)_{n-1}=\binom{N-n}{\ell-n+1}(N-1)_{n-1}$ for $\ell\ge n-1$,
we obtain that
\begin{align*}
\sum_{\ell=0}^{N-1} (-1)^{\ell}\binom{N-1}{\ell}(\ell)_{n-1}\,x^{\ell}
&
=(N-1)_{n-1} \ind_{\{1\le n\le N\}}\sum_{\ell=n-1}^{N-1}
(-1)^{\ell}\binom{N-n}{\ell-n+1}\,x^{\ell}
\\
&
=(N-1)_{n-1} \ind_{\{1\le n\le N\}}\, x^{n-1}\sum_{\ell=0}^{N-n}
(-1)^{\ell+n-1}\binom{N-n}{\ell}\,x^{\ell}
\\
&=\begin{cases}
\displaystyle(-1)^{n-1}\frac{(N-1)!}{(N-n)!} \,x^{n-1}(1-x)^{N-n}
&\text{if $1\le n\le N$},
\\
0 &\text{if $n\ge N+1$}.
\end{cases}
\end{align*}
Hence, if $1\le n\le N$,
\begin{align*}
\mathbb{E}\!\left[f_n(S_{ab})\ind_{\{S_{ab}\ge b\}}\right]
&=(-1)^{n-1}KN^2\frac{(N-1)!}{(N-n)!}\int_0^1  x^{n+b-a-2}(1-x)^{2N-n-1}\,\mathrm{d} x
\\
&=(-1)^{n-1}\frac{KNN!}{(N-n)!} \times\frac{(n+b-a-2)!(2N-n-1)!}{(2N+b-a-2)!}
\end{align*}
and we arrive at (\ref{moment-beta1ter}). Moreover, if $n\ge N+1$
and $S_{ab}\ge b$, we have $S_{ab}\in\{b,b+1,$ $\dots,b+N-1\}$.
Then, it is clear that $f_n(S_{ab})=0$ and (\ref{moment-beta1ter})
still holds in this case.

On the other hand, by Theorem~\ref{theo-dist-Sab}, we get that
\begin{align*}
\mathbb{E}\!\left[f_n(S_{ab})\ind_{\{S_{ab}\le a\}}\right]
&=\sum_{\ell=0}^{N-1}\mathbb{P}\{S_{ab}=a-\ell\} f_n(a-\ell)
\\
&=KN\sum_{\ell=0}^{N-1}(-1)^{\ell-1}(a-b-\ell)_{n-1}\binom{N-1}{\ell}
/\binom{\ell+b-a+N-1}{N}
\\
&=(-1)^nKNN!\sum_{\ell=0}^{N-1} (-1)^{\ell}\binom{N-1}{\ell}
\frac{(\ell+b-a+n-2)!}{(\ell+b-a+N-1)!}.
\end{align*}
For $n\le N$, we can write that
\begin{align*}
\frac{(\ell+b-a+n-2)!}{(\ell+b-a+N-1)!}
&=\frac{1}{(N-n)!}\,\frac{1}{(\ell+b-a+N-1)\binom{\ell+b-a+N-2}{N-n}}
\\
&=\frac{1}{(N-n)!}\int_0^1 x^{\ell+n+b-a-2}(1-x)^{N-n}\,\mathrm{d} x.
\end{align*}
Then,
\begin{align*}
\mathbb{E}\!\left[f_n(S_{ab})\ind_{\{S_{ab}\le a\}}\right]
&=(-1)^n\frac{KNN!}{(N-n)!} \int_0^1\sum_{\ell=0}^{N-1} (-1)^{\ell}\binom{N-1}{\ell}
x^{\ell+n+b-a-2}(1-x)^{N-n}\,\mathrm{d} x
\\
&=(-1)^n\frac{KNN!}{(N-n)!} \int_0^1 x^{n+b-a-2}(1-x)^{2N-n-1}\,\mathrm{d} x
\\
&=(-1)^n\frac{KNN!}{(N-n)!}\times\frac{(n+b-a-2)!(2N-n-1)!}{(2N+b-a-2)!}
\end{align*}
which proves (\ref{moment-beta2ter}). For $n\ge N+1$, we write instead that
\[
\frac{(\ell+b-a+n-2)!}{(\ell+b-a+N-1)!}
=\left.\frac{\mathrm{d}^{n-N-1}}{\mathrm{d} x^{n-N-1}}\big(x^{\ell+n+b-a-2}\big)\right|_{x=1}.
\]
Therefore,
\begin{align*}
\mathbb{E}\!\left[f_n(S_{ab})\ind_{\{S_{ab}\le a\}}\right]
&=(-1)^n KNN! \left.\frac{\mathrm{d}^{n-N-1}}{\mathrm{d} x^{n-N-1}}\left(\sum_{\ell=0}^{N-1}
(-1)^{\ell}\binom{N-1}{\ell} x^{\ell+n+b-a-2}\right)\right|_{x=1}
\\
&=(-1)^n KNN! \left.\frac{\mathrm{d}^{n-N-1}}{\mathrm{d} x^{n-N-1}} \left(x^{n+b-a-2}
(1-x)^{N-1}\right)\right|_{x=1}.
\end{align*}
If $N+1\le n\le 2N-1$, the above derivative vanishes since $1$ is a root of
multiplicity $N-1$ of the polynomial $x^{\ell+n+b-a-2}(1-x)^{N-1}$.
Finally, for $n\ge 2N$, we appeal to Leibniz rule for evaluating the derivative
of interest:
\begin{align*}
\lqn{\left.\frac{\mathrm{d}^{n-N-1}}{\mathrm{d} x^{n-N-1}} \left(x^{n+b-a-2}
(1-x)^{N-1}\right)\right|_{x=1}}
&
=\sum_{k=0}^{n-N-1} (-1)^k\binom{n-N-1}{k}\frac{(n+b-a-2)!}{(k+N+b-a-1)!}
\,(N-1)_k \,\delta_{k,N-1}
\\
&
=(-1)^{N-1}(N-1)!\,\frac{(n+b-a-2)!}{(2N+b-a-2)!}\binom{n-N-1}{N-1}
\\
&
=(-1)^{N-1}(n-N-1)!\binom{n+b-a-2}{n-2N}.
\end{align*}
This proves (\ref{moment-beta2ter}) in this case.
\end{proof}
%
\begin{corollary}\label{theo-moments-Sab-bis}
For $n\in\{1,\dots,2N-1\}$, the pseudo-moment of $S_{ab}$ of order $n$ vanishes:
\[
\mathbb{E}\!\left[(S_{ab})^n\right]=0.
\]
Moreover,
\begin{equation}\label{moment-2N}
\mathbb{E}\!\left[(S_{ab})^{2N}\right]=-a(a-1)\cdots(a-N+1)b(b+1)\cdots (b+N-1).
\end{equation}
\end{corollary}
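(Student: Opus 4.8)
The plan is to reduce everything to Theorem~\ref{theo-moments-Sab} by the same linear-algebra device used in the proof of Theorem~\ref{theorem-moments}. For any integer $n\ge 1$, the polynomials $(X-b)_0,(X-b)_1,\dots,(X-b)_{n-1}$ form a basis of the space of polynomials of degree at most $n-1$ (the $k$-th one has degree $k$ and leading coefficient $1$), so $X^{n-1}=\sum_{k=1}^{n}d_k\,(X-b)_{k-1}$ for suitable scalars $d_k$, and hence $X^n=X\cdot X^{n-1}=\sum_{k=1}^{n}d_k\,X(X-b)_{k-1}$, where comparing leading coefficients forces $d_n=1$. Applying the pseudo-expectation and recalling the notation $f_k(i)=i(i-b)_{k-1}$ introduced above, this gives $\mathbb{E}\big[(S_{ab})^n\big]=\sum_{k=1}^{n}d_k\,\mathbb{E}\big[f_k(S_{ab})\big]$. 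When $1\le n\le 2N-1$ every index $k$ occurring here lies in $\{1,\dots,2N-1\}$, so the last assertion of Theorem~\ref{theo-moments-Sab} ($\mathbb{E}\big[S_{ab}(S_{ab}-b)_{k-1}\big]=\mathbb{E}\big[f_k(S_{ab})\big]=0$ for such $k$) makes every term vanish; hence $\mathbb{E}\big[(S_{ab})^n\big]=0$.

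For $n=2N$ the same expansion yields $\mathbb{E}\big[(S_{ab})^{2N}\big]=\sum_{k=1}^{2N}d_k\,\mathbb{E}\big[f_k(S_{ab})\big]=\mathbb{E}\big[f_{2N}(S_{ab})\big]$, since $d_{2N}=1$ and the terms with $1\le k\le 2N-1$ vanish as before. It then remains to evaluate $\mathbb{E}\big[f_{2N}(S_{ab})\big]=\mathbb{E}\big[S_{ab}(S_{ab}-b)_{2N-1}\ind_{\{S_{ab}\ge b\}}\big]+\mathbb{E}\big[S_{ab}(S_{ab}-b)_{2N-1}\ind_{\{S_{ab}\le a\}}\big]$ using Theorem~\ref{theo-moments-Sab} at $n=2N$. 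The first summand is $0$ by~(\ref{moment-beta1ter}), since $2N\ge N+1$. The second summand is given by the third case of~(\ref{moment-beta2ter}) with $n=2N$; there $\binom{2N+b-a-2}{0}=1$, so it equals $(-1)^{3N-1}KN\,N!\,(N-1)!$. Using $(-1)^{3N-1}=(-1)^{N+1}$ and $N\cdot N!\cdot(N-1)!=(N!)^2$, this is $(-1)^{N+1}K(N!)^2$.

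Finally I would substitute $K=\binom{N-a-1}{N}\binom{N+b-1}{N}$ and the elementary identities $N!\binom{N-a-1}{N}=(-1)^N\,a(a-1)\cdots(a-N+1)$ and $N!\binom{N+b-1}{N}=b(b+1)\cdots(b+N-1)$, which give $K(N!)^2=(-1)^N\,a(a-1)\cdots(a-N+1)\,b(b+1)\cdots(b+N-1)$; therefore $\mathbb{E}\big[(S_{ab})^{2N}\big]=(-1)^{2N+1}\,a(a-1)\cdots(a-N+1)\,b(b+1)\cdots(b+N-1)=-a(a-1)\cdots(a-N+1)\,b(b+1)\cdots(b+N-1)$, which is~(\ref{moment-2N}). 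I do not anticipate any genuine obstacle; the only point demanding care is the sign-and-factorial bookkeeping at $n=2N$, in particular checking $(-1)^{N+n-1}\big|_{n=2N}=(-1)^{N+1}$ and $N\cdot N!\cdot(N-1)!=(N!)^2$, together with the correct closed form of $K$.
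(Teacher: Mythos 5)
Your proposal is correct and follows the paper's own route: the same linear-algebra reduction of $X^n$ to a combination of $f_1(X),\dots,f_n(X)$ with $d_n=1$, followed by an application of Theorem~\ref{theo-moments-Sab} (the $n\le 2N-1$ vanishing for the first claim, and the case $n=2N$ of (\ref{moment-beta1ter})--(\ref{moment-beta2ter}) for the value $(-1)^{N-1}KN!^2$ of the $2N$-th moment). Your sign-and-factorial bookkeeping, including $KN!^2=(-1)^N a(a-1)\cdots(a-N+1)\,b(b+1)\cdots(b+N-1)$, checks out.
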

%
\begin{proof}
As in the proof of Theorem~\ref{theorem-moments}, we appeal to the following
argument: the polynomial $X^n$ is a linear combination of
$f_1(X),\dots,f_n(X)$. Then $\mathbb{E}\!\left[(S_{ab})^n\right]$ can be
written as a linear combination of $\mathbb{E}[f_1(S_{ab})],\dots,\mathbb{E}[f_n(S_{ab})]$
which vanish when $1\le n\le 2N-1$. Thus,
$\mathbb{E}\!\left[(S_{ab})^n\right]=0$. The same argument entails that
\[
\mathbb{E}\!\left[(S_{ab})^{2N}\right]=\mathbb{E}[f_{2N}(S_{ab})]=(-1)^{N-1} KN!^2
\]
which proves~(\ref{moment-2N}).
\end{proof}
%
In the theorem below, we provide an integral representation for
certain factorial pseudo-moments of $(S_{ab}-a)$ and $(S_{ab}-b)$
which will be used in next section.
%
\begin{theorem}\label{theo-moments-Sab-ter}
For any integer $n\in\{0,\dots,N-1\}$,
\begin{align}
\mathbb{E}\!\left[(S_{ab}-b)_n\ind_{\{S_{ab}\ge b\}}\right]
&
=(-1)^n KN^2 (N-1)_n
\nonumber\\
&
\hphantom{=\;}\times
\int\!\!\!\int_{\mathcal{D}^+} u^{-a-1}(1-u)^{N-1} v^{n+b-1}(1-v)^{N-n-1}\,\mathrm{d} u\,\mathrm{d} v,
\label{integral-rep1}\\
\mathbb{E}\!\left[(a-S_{ab})_n\ind_{\{S_{ab}\le a\}}\right]
&
=(-1)^n KN^2 (N-1)_n
\nonumber\\
&
\hphantom{=\;}\times
\int\!\!\!\int_{\mathcal{D}^-} u^{n-a-1}(1-u)^{N-n-1} v^{b-1}(1-v)^{N-1}\,\mathrm{d} u\,\mathrm{d} v.
\label{integral-rep2}
\end{align}
\end{theorem}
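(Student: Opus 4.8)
The plan is to deduce both identities from the integral representations~(\ref{moment-beta1bisbis}) and~(\ref{moment-beta2bisbis}) of Theorem~\ref{th-moment-betabis}, by specializing the test function and then invoking the change of variables already used in the proof of Theorem~\ref{theo-dist-Sab}. For~(\ref{integral-rep1}) I would apply~(\ref{moment-beta1bisbis}) to the function $f(i)=(i-b)_n$, for which $f(\ell+b)=(\ell)_n$; for~(\ref{integral-rep2}) I would apply~(\ref{moment-beta2bisbis}) to $f(i)=(a-i)_n$, for which $f(a-\ell)=(\ell)_n$ as well. Note that $S_{ab}\ge b$ forces $S_{ab}-b\in\{0,\dots,N-1\}$ and $S_{ab}\le a$ forces $a-S_{ab}\in\{0,\dots,N-1\}$, so these factorial quantities are well defined and truncating the sums at $\ell=N-1$ loses nothing. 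In both cases the bracketed sum to be evaluated is $\sum_{\ell=0}^{N-1}(-1)^{\ell}\binom{N-1}{\ell}(\ell)_n\,t^{\ell}$ with $t=xy$.

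The key computational step is the evaluation of that sum, which proceeds exactly as in the proof of Theorem~\ref{theo-moments-Sab}: since $(\ell)_n=0$ for $\ell\le n-1$ and $\binom{N-1}{\ell}(\ell)_n=(N-1)_n\binom{N-1-n}{\ell-n}$ for $\ell\ge n$, the binomial theorem yields $\sum_{\ell=0}^{N-1}(-1)^{\ell}\binom{N-1}{\ell}(\ell)_n\,t^{\ell}=(-1)^n(N-1)_n\,t^n(1-t)^{N-1-n}$. Here the hypothesis $n\in\{0,\dots,N-1\}$ is precisely what guarantees $N-1-n\ge 0$, so that $(1-xy)^{N-1-n}$ is a genuine polynomial.

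Substituting $t=xy$ and collecting the powers of $x$ and $y$, formula~(\ref{moment-beta1bisbis}) becomes $\mathbb{E}[(S_{ab}-b)_n\ind_{\{S_{ab}\ge b\}}]=(-1)^nKN^2(N-1)_n\int_0^1\!\!\!\int_0^1 x^{n+b-a-1}(1-x)^{N-1}y^{n+b-1}(1-xy)^{N-1-n}\,\mathrm{d} x\,\mathrm{d} y$, and~(\ref{moment-beta2bisbis}) gives the same expression with $y^{n+b-1}$ replaced by $y^{n-a-1}$. Then I would perform the change of variables $(u,v)=(x,xy)$, whose Jacobian gives $\mathrm{d} x\,\mathrm{d} y=\mathrm{d} u\,\mathrm{d} v/u$ and which maps the unit square onto $\mathcal{D}^+$; collecting the exponent of $u$ produces $u^{-a-1}(1-u)^{N-1}v^{n+b-1}(1-v)^{N-1-n}$ in the first case, which is exactly~(\ref{integral-rep1}), and $u^{b-1}(1-u)^{N-1}v^{n-a-1}(1-v)^{N-1-n}$ over $\mathcal{D}^+$ in the second. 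For the latter, relabelling $u\leftrightarrow v$ (which carries $\mathcal{D}^+$ onto $\mathcal{D}^-$) produces precisely the expression~(\ref{integral-rep2}).

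There is no genuine obstacle here: the argument is a routine combination of the combinatorial identity above with a change of variables that already appears in the paper. The only mild care needed is the bookkeeping of the exponents of $x$, $y$ — and then of $u$ — through the substitution, together with the observation, noted above, that the range conditions on $S_{ab}$ make $(S_{ab}-b)_n$ and $(a-S_{ab})_n$ vanish outside the relevant window, so that passing to the finite sum over $\ell\in\{0,\dots,N-1\}$ is legitimate.
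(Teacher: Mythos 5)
Your proposal is correct and follows essentially the same route as the paper: specialize the integral representations~(\ref{moment-beta1bisbis}) and~(\ref{moment-beta2bisbis}) to the factorial functions, evaluate $\sum_{\ell=0}^{N-1}(-1)^{\ell}\binom{N-1}{\ell}(\ell)_n t^{\ell}=(-1)^n(N-1)_n t^n(1-t)^{N-n-1}$, and change variables via $(u,v)=(x,xy)$. The paper only writes out the $S_{ab}\ge b$ case and asserts the other is analogous; your exponent bookkeeping and the final relabelling $u\leftrightarrow v$ carrying $\mathcal{D}^+$ onto $\mathcal{D}^-$ correctly supply the omitted details.
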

%
The above identities can be rewritten as
\begin{align}
\mathbb{E}\!\left[\binom{S_{ab}-b}{n}\ind_{\{S_{ab}\ge b\}}\right]
&
=(-1)^n KN^2 \binom{N-1}{n}
\nonumber\\
&
\hphantom{=\;}\times
\int\!\!\!\int_{\mathcal{D}^+} u^{-a-1}(1-u)^{N-1} v^{n+b-1}(1-v)^{N-n-1}\,\mathrm{d} u\,\mathrm{d} v,
\label{moment-comb-ab-bis1}\\
\mathbb{E}\!\left[\binom{a-S_{ab}}{n}\ind_{\{S_{ab}\le a\}}\right]
&
=(-1)^n KN^2 \binom{N-1}{n}
\nonumber\\
&
\hphantom{=\;}\times
\int\!\!\!\int_{\mathcal{D}^-} u^{n-a-1}(1-u)^{N-n-1} v^{b-1}(1-v)^{N-1}\,\mathrm{d} u\,\mathrm{d} v.
\label{moment-comb-ab-bis2}
\end{align}
%
\begin{proof}
By~(\ref{moment-beta1bisbis}), we have that
\begin{align*}
\lqn{\mathbb{E}\!\left[\left(S_{ab}-b\right)_n\ind_{\{S_{ab}\ge b\}}\right]}
=KN^2\int_0^1\!\!\!\int_0^1\left[\sum_{\ell=0}^{N-1} (-1)^{\ell}\binom{N-1}{\ell}
(\ell)_n(xy)^{\ell}\right] x^{b-a-1}(1-x)^{N-1}y^{b-1}\,\mathrm{d} x\,\mathrm{d} y.
\end{align*}
The sum lying in the above integral can be easily calculated:
\begin{align*}
\sum_{\ell=0}^{N-1} (-1)^{\ell}\binom{N-1}{\ell}(\ell)_n(xy)^{\ell}
&
=(N-1)_n\sum_{\ell=n}^{N-1} (-1)^{\ell}\binom{N-n-1}{\ell-n}(xy)^{\ell}
\\
&
=(-1)^n(N-1)_n(xy)^{n}(1-xy)^{N-n-1}.
\end{align*}
Hence
\begin{align*}
\lqn{\mathbb{E}\!\left[\left(S_{ab}-b\right)_n\ind_{\{S_{ab}\ge b\}}\right]}
=(-1)^n(N-1)_n KN^2\int_0^1\!\!\!\int_0^1 x^{n+b-a-1}(1-x)^{N-1}y^{n+b-1}
(1-xy)^{N-n-1}\,\mathrm{d} x\,\mathrm{d} y.
\end{align*}
Performing the change of variables $(u,v)=(x,xy)$ in the foregoing
integral immediately yields~(\ref{integral-rep1}).
Formula~(\ref{integral-rep2}) can be deduced from~(\ref{moment-beta2bisbis})
exactly in the same way.
\end{proof}

\section{Link with high-order finite-difference equations}

Set $\mathbf{\Delta}^{\!+}f(i)=f(i+1)-f(i)$ and
$\mathbf{\Delta}^{\!-}f(i)=f(i)-f(i-1)$ for any $i\in\mathbb{Z}$ and
$(\mathbf{\Delta}^{\!+})^j=\underbrace{\mathbf{\Delta}^{\!+}\circ\dots\circ
\mathbf{\Delta}^{\!+}}_{j\text{ times}}$
and $(\mathbf{\Delta}^{\!-})^j=\underbrace{\mathbf{\Delta}^{\!-}\circ\dots
\circ\mathbf{\Delta}^{\!-}}_{j\text{ times}}$
for any $j\in\mathbb{N}^*$. Set also $(\mathbf{\Delta}^{\!+})^0f
=(\mathbf{\Delta}^{\!-})^0f=f$.
The quantities $(\mathbf{\Delta}^{\!+})^j$ and $(\mathbf{\Delta}^{\!-})^j$
are the iterated forward and backward
finite-difference operators given by
\[
(\mathbf{\Delta}^{\!+})^jf(i)=\sum_{k=0}^j (-1)^{j+k} \binom{j}{k} f(i+k),\quad
(\mathbf{\Delta}^{\!-})^jf(i)=\sum_{k=0}^j (-1)^{k} \binom{j}{k} f(i-k).
\]
Conversely, $f(i+k)$ and $f(i-k)$ can be expressed by means of $(\mathbf{\Delta}^{\!+})^jf(i),
(\mathbf{\Delta}^{\!-})^jf(i)$, $0\le j\le k$,
according as
\begin{align}\label{differencebis}
f(i+k)=\sum_{j=0}^k \binom{k}{j} (\mathbf{\Delta}^{\!+})^jf(i),\quad
f(i-k)=\sum_{j=0}^k (-1)^j\binom{k}{j} (\mathbf{\Delta}^{\!-})^jf(i).
\end{align}
We have the following expression for any functional of the pseudo-random
variable $S_{ab}$.
%
\begin{theorem}\label{theo-exp-fSab}
We have, for any function $f$ defined on $\mathcal{E}$, that
\begin{equation}\label{exp-f-Sab}
\mathbb{E}[f(S_{ab})]= \sum_{j=0}^{N-1} I_{ab,j}^- \,(\mathbf{\Delta}^{\!-})^jf(a)
+\sum_{j=0}^{N-1} I_{ab,j}^+ \,(\mathbf{\Delta}^{\!+})^jf(b)
\end{equation}
with
\begin{align*}
I_{ab,j}^-&=KN^2 \binom{N-1}{j} \int\!\!\!\int_{\mathcal{D}^-}
u^{j-a-1}(1-u)^{N-j-1} v^{b-1}(1-v)^{N-1}\,\mathrm{d} u\,\mathrm{d} v,
\\
I_{ab,j}^+&=(-1)^j KN^2 \binom{N-1}{j}\int\!\!\!\int_{\mathcal{D}^+}
u^{-a-1}(1-u)^{N-1} v^{j+b-1}(1-v)^{N-j-1}\,\mathrm{d} u\,\mathrm{d} v.
\end{align*}
\end{theorem}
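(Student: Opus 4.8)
The plan is to decompose the pseudo-expectation according to which side of the interval the pseudo-random walk exits through, and then convert the finitely many boundary values $f(a-k)$, $f(b+k)$ into finite differences of $f$ at $a$ and at $b$ by inverting the forward/backward difference operators via~(\ref{differencebis}). Since $S_{ab}\in\mathcal{E}$ and $a<0<b$, the events $\{S_{ab}\le a\}$ and $\{S_{ab}\ge b\}$ are disjoint and cover all possible values of $S_{ab}$, whence
\[
\mathbb{E}[f(S_{ab})]=\mathbb{E}\big[f(S_{ab})\ind_{\{S_{ab}\le a\}}\big]+\mathbb{E}\big[f(S_{ab})\ind_{\{S_{ab}\ge b\}}\big],
\]
and the two terms are handled separately, producing the two sums in~(\ref{exp-f-Sab}).

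On the event $\{S_{ab}\ge b\}$ we have $S_{ab}-b\in\{0,1,\dots,N-1\}$, so the first identity in~(\ref{differencebis}) applied with $i=b$ and $k=S_{ab}-b$ gives $f(S_{ab})=\sum_{j=0}^{S_{ab}-b}\binom{S_{ab}-b}{j}(\mathbf{\Delta}^{\!+})^jf(b)$. Because the upper summation limit never exceeds $N-1$ and $\binom{S_{ab}-b}{j}=0$ once $j>S_{ab}-b$, we may freely extend the sum to $j\in\{0,\dots,N-1\}$, interchange this finite sum with the pseudo-expectation, and obtain
\[
\mathbb{E}\big[f(S_{ab})\ind_{\{S_{ab}\ge b\}}\big]=\sum_{j=0}^{N-1}\mathbb{E}\!\left[\binom{S_{ab}-b}{j}\ind_{\{S_{ab}\ge b\}}\right](\mathbf{\Delta}^{\!+})^jf(b).
\]
By~(\ref{moment-comb-ab-bis1}), the coefficient of $(\mathbf{\Delta}^{\!+})^jf(b)$ is exactly $I_{ab,j}^+$.

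For the left-hand exit, on $\{S_{ab}\le a\}$ we have $a-S_{ab}\in\{0,1,\dots,N-1\}$, and the second identity in~(\ref{differencebis}) with $i=a$ and $k=a-S_{ab}$ yields $f(S_{ab})=\sum_{j=0}^{a-S_{ab}}(-1)^j\binom{a-S_{ab}}{j}(\mathbf{\Delta}^{\!-})^jf(a)$. Arguing exactly as above,
\[
\mathbb{E}\big[f(S_{ab})\ind_{\{S_{ab}\le a\}}\big]=\sum_{j=0}^{N-1}(-1)^j\,\mathbb{E}\!\left[\binom{a-S_{ab}}{j}\ind_{\{S_{ab}\le a\}}\right](\mathbf{\Delta}^{\!-})^jf(a),
\]
and~(\ref{moment-comb-ab-bis2}) shows that $(-1)^j\,\mathbb{E}[\binom{a-S_{ab}}{j}\ind_{\{S_{ab}\le a\}}]=I_{ab,j}^-$, the two factors $(-1)^j$ cancelling. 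Adding the two contributions gives~(\ref{exp-f-Sab}). The only point requiring care is the bookkeeping of the $(-1)^j$ factors introduced by the backward-difference inversion, together with the truncation of the summation at $N-1$ — legitimate precisely because $S_{ab}$ always lies within distance $N-1$ of the boundary of $(a,b)$; everything else is a direct substitution of the integral representations~(\ref{moment-comb-ab-bis1})--(\ref{moment-comb-ab-bis2}) established in Theorem~\ref{theo-moments-Sab-ter}.
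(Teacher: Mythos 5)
Your proof is correct and follows exactly the paper's argument: split the expectation over the two exit events, expand $f(S_{ab})$ via~(\ref{differencebis}) at $a$ and at $b$, extend the (finitely supported) sums to $j\le N-1$, and identify the resulting coefficients with $I_{ab,j}^{\pm}$ through~(\ref{moment-comb-ab-bis1}) and~(\ref{moment-comb-ab-bis2}). The sign bookkeeping you carried out (the $(-1)^j$ from the backward-difference inversion cancelling the $(-1)^j$ in~(\ref{moment-comb-ab-bis2})) is exactly what makes the left-hand coefficient come out as $I_{ab,j}^-$, as in the paper.
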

%
\begin{proof}
By (\ref{differencebis}), we see that
\begin{align}
\mathbb{E}[f(S_{ab})]
&
=\mathbb{E}\!\left(\ind_{\{S_{ab}\le a\}}\sum_{j=0}^{a-S_{ab}}
(-1)^j\binom{a-S_{ab}}{j} (\mathbf{\Delta}^{\!-})^jf(a)\right)
\nonumber\\
&
\hphantom{=\;}+\mathbb{E}\!\left(\ind_{\{S_{ab}\ge b\}}\sum_{j=0}^{S_{ab}-b}
\binom{S_{ab}-b}{j} (\mathbf{\Delta}^{\!+})^jf(b)\right)
\nonumber\\
&
=\sum_{j=0}^{N-1} (-1)^j\mathbb{E}\!\left[\ind_{\{S_{ab}\le a\}}
\binom{a-S_{ab}}{j}\!\right] (\mathbf{\Delta}^{\!-})^jf(a)
\nonumber\\
&
\hphantom{=\;}+\sum_{j=0}^{N-1} \mathbb{E}\!\left[\ind_{\{S_{ab}\ge b\}}
\binom{S_{ab}-b}{j}\!\right] (\mathbf{\Delta}^{\!+})^jf(b)
\label{exp-f-Sab-comb}
\end{align}
which immediately yields~(\ref{exp-f-Sab}) thanks to~(\ref{moment-comb-ab-bis1})
and (\ref{moment-comb-ab-bis2}).
\end{proof}
%
\begin{corollary}
The generating function of $S_{ab}$ is given by
\begin{equation}\label{gene-Sab}
\mathbb{E}\!\left(\zeta^{S_{ab}}\right)
=\zeta^a \sum_{j=0}^{N-1} I_{ab,j}^- (1-1/\zeta)^j+
\zeta^b \sum_{j=0}^{N-1} I_{ab,j}^+ (\zeta-1)^j.
\end{equation}
\end{corollary}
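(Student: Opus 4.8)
The plan is to specialize the general formula~(\ref{exp-f-Sab}) of Theorem~\ref{theo-exp-fSab} to the exponential test function $f(i)=\zeta^i$, exactly mimicking the passage from Theorem~\ref{theo-exp-f} to formula~(\ref{gene-Sb}) in Part~II. First I would compute the action of the iterated finite-difference operators on this $f$. Since $\mathbf{\Delta}^{\!+}f(i)=\zeta^{i+1}-\zeta^i=(\zeta-1)\zeta^i$ and $\mathbf{\Delta}^{\!-}f(i)=\zeta^i-\zeta^{i-1}=(1-1/\zeta)\zeta^i$, an immediate induction on $j$ yields
\[
(\mathbf{\Delta}^{\!+})^jf(i)=(\zeta-1)^j\zeta^i,\qquad (\mathbf{\Delta}^{\!-})^jf(i)=(1-1/\zeta)^j\zeta^i
\]
for every $j\in\mathbb{N}$. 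In particular $(\mathbf{\Delta}^{\!+})^jf(b)=(\zeta-1)^j\zeta^b$ and $(\mathbf{\Delta}^{\!-})^jf(a)=(1-1/\zeta)^j\zeta^a$.

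Next I would substitute these two expressions into~(\ref{exp-f-Sab}): the factor $\zeta^a$ comes out of the sum over the backward differences at $a$ and the factor $\zeta^b$ out of the sum over the forward differences at $b$, and one reads off precisely~(\ref{gene-Sab}). There is no genuine obstacle here; the only point to verify is the elementary difference-operator identities displayed above, and everything else is a direct substitution. (Alternatively one could derive~(\ref{gene-Sab}) from the explicit pseudo-distribution of $S_{ab}$ in Theorem~\ref{theo-dist-Sab} together with the integral representations~(\ref{moment-comb-ab-bis1})--(\ref{moment-comb-ab-bis2}), splitting $\mathbb{E}(\zeta^{S_{ab}})$ over the events $\{S_{ab}\le a\}$ and $\{S_{ab}\ge b\}$ and recognizing the binomial sums; but channeling the computation through Theorem~\ref{theo-exp-fSab} is considerably shorter and makes the structural parallel with formula~(\ref{gene-Sb}) transparent.)
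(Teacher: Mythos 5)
Your proposal is correct and follows exactly the paper's own proof: both apply Theorem~\ref{theo-exp-fSab} to $f(i)=\zeta^i$, using $(\mathbf{\Delta}^{\!+})^jf(i)=\zeta^i(\zeta-1)^j$ and $(\mathbf{\Delta}^{\!-})^jf(i)=\zeta^i(1-1/\zeta)^j$, and then read off~(\ref{gene-Sab}) by direct substitution. Nothing further is needed.
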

%
\begin{proof}
Let us apply Theorem~\ref{theo-exp-fSab} to the function $f(i)=\zeta^i$ for which
we plainly have $(\mathbf{\Delta}^{\!+})^jf(i)=\zeta^i(\zeta-1)^j$ and
$(\mathbf{\Delta}^{\!-})^jf(i)=\zeta^i(1-1/\zeta)^j$. This immediately yields~(\ref{gene-Sab}).
\end{proof}
%

Of special interest is the case when the starting point of the pseudo-random walk
is some point $x\in\mathbb{Z}$. By translating $a,b$ into $a-x,b-x$ and the function
$f$ into the shifted function $f(.+x)$, we have that
\[
\mathbb{E}_x[f(S_{ab})]=\mathbb{E}[f(x+S_{a-x,b-x})]
=\sum_{j=0}^{N-1} I_{a-x,b-x,j}^- \,(\mathbf{\Delta}^{\!-})^jf(a)
+\sum_{j=0}^{N-1} I_{a-x,b-x,j}^+ \,(\mathbf{\Delta}^{\!+})^jf(b).
\]
More precisely, we have the following result.
%
\begin{theorem}\label{theo-exp-fxab}
We have, for any function $f$ defined on $\mathcal{E}$, that
\begin{equation}\label{exp-f(x)-Sab}
\mathbb{E}_x[f(S_{ab})]= \sum_{j=0}^{N-1} \mathbf{P}_{ab,j}^-(x)
\,(\mathbf{\Delta}^{\!-})^jf(a)
+\sum_{j=0}^{N-1} \mathbf{P}_{ab,j}^+(x) \,(\mathbf{\Delta}^{\!+})^jf(b)
\end{equation}
where $\mathbf{P}_{ab,j}^-$ and $\mathbf{P}_{ab,j}^+$, $0\le j\le N-1$,
are polynomials of degree not
greater than $2N-1$ characterized, for any $k\in\{0,\dots,N-1\}$, by
\begin{equation}\label{bound-cond}
(\mathbf{\Delta}^{\!-})^k\mathbf{P}_{ab,j}^-(a)=(\mathbf{\Delta}^{\!+})^k
\mathbf{P}_{ab,j}^+(b)=\delta_{jk},\quad
(\mathbf{\Delta}^{\!-})^k\mathbf{P}_{ab,j}^+(a)=(\mathbf{\Delta}^{\!+})^k
\mathbf{P}_{ab,j}^-(b)=0.
\end{equation}
\end{theorem}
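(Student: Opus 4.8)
The plan is to take as starting point the shifted identity displayed just before the statement,
\[
\mathbb{E}_x[f(S_{ab})]=\sum_{j=0}^{N-1} I_{a-x,b-x,j}^-\,(\mathbf{\Delta}^{\!-})^jf(a)+\sum_{j=0}^{N-1} I_{a-x,b-x,j}^+\,(\mathbf{\Delta}^{\!+})^jf(b),
\]
valid for integers $x$ with $a<x<b$, and to \emph{define} $\mathbf{P}_{ab,j}^-(x)=I_{a-x,b-x,j}^-$ and $\mathbf{P}_{ab,j}^+(x)=I_{a-x,b-x,j}^+$. With this definition~(\ref{exp-f(x)-Sab}) holds for every integer $x\in(a,b)$ by construction, so the theorem reduces to three points: $x\mapsto\mathbf{P}_{ab,j}^\pm(x)$ extends to a polynomial of degree at most $2N-1$; these polynomials satisfy the boundary relations~(\ref{bound-cond}); and~(\ref{bound-cond}) determines them uniquely. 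The last point is immediate: $\mathcal{E}$ has exactly $2N$ elements, a polynomial of degree $\le 2N-1$ is fixed by its values there, and by the inversion formula~(\ref{differencebis}) the $2N$ conditions in~(\ref{bound-cond}) amount precisely to prescribing the values of $\mathbf{P}_{ab,j}^\pm$ on $\mathcal{E}$.

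For the polynomiality, I would first record the discrete form of the coefficients. Matching~(\ref{exp-f-Sab}) with~(\ref{exp-f-Sab-comb}) (equivalently using~(\ref{moment-comb-ab-bis1})--(\ref{moment-comb-ab-bis2})) gives $I_{ab,j}^+=\sum_{\ell=j}^{N-1}\binom{\ell}{j}\mathbb{P}\{S_{ab}=b+\ell\}$ and $I_{ab,j}^-=(-1)^j\sum_{\ell=j}^{N-1}\binom{\ell}{j}\mathbb{P}\{S_{ab}=a-\ell\}$. Substituting the closed expressions of Theorem~\ref{theo-dist-Sab} with $(a-x,b-x)$ in place of $(a,b)$, one sees that $\mathbf{P}_{ab,j}^+(x)$ is a linear combination of the functions
\[
\binom{N-a+x-1}{N}\cdot\frac{1}{\ell+b-x}\binom{N+b-x-1}{N},\qquad j\le\ell\le N-1,
\]
and $\mathbf{P}_{ab,j}^-(x)$ of the analogous functions with the first two factors interchanged and $\ell+b-x$ replaced by $\ell-a+x$. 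The key remark is that $\binom{N+b-x-1}{N}\,N!=\prod_{k=0}^{N-1}(b-x+k)$, so $\frac{1}{\ell+b-x}\binom{N+b-x-1}{N}$ is in fact the \emph{polynomial} $\frac1{N!}\prod_{0\le k\le N-1,\,k\neq\ell}(b-x+k)$, of degree $N-1$; since $\binom{N-a+x-1}{N}$ has degree $N$, each summand, hence $\mathbf{P}_{ab,j}^\pm$, is a polynomial in $x$ of degree $\le 2N-1$.

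It remains to evaluate these polynomials at the points of $\mathcal{E}$. At $x=b+m$ with $0\le m\le N-1$ the factor $\binom{N+b-x-1}{N}=\binom{N-m-1}{N}$ vanishes naively while $\frac1{\ell+b-x}=\frac1{\ell-m}$ is singular at $\ell=m$; reading the product as the polynomial $\frac1{N!}\prod_{k\neq\ell}(b-x+k)$ first, only $\ell=m$ contributes, it equals $\frac{(-1)^m m!(N-1-m)!}{N!}$, and after the obvious cancellations one gets $\mathbf{P}_{ab,j}^+(b+m)=\binom{m}{j}$, whereas $\mathbf{P}_{ab,j}^-(b+m)=0$ since then the surviving factor $\binom{N+b-x-1}{N}$ vanishes with no compensating pole. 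Symmetrically, at $x=a-m$ one finds $\mathbf{P}_{ab,j}^-(a-m)=(-1)^j\binom{m}{j}$ and $\mathbf{P}_{ab,j}^+(a-m)=0$. Plugging these values into the finite-difference sums defining $(\mathbf{\Delta}^{\!\pm})^k\mathbf{P}_{ab,j}^\pm$ and invoking the identity $\sum_{\ell=0}^{k}(-1)^{\ell}\binom{k}{\ell}\binom{\ell}{j}=(-1)^j\delta_{jk}$ (already used in the proof of Theorem~\ref{theo-exp-fxb}) yields exactly~(\ref{bound-cond}), finishing the proof.

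I expect the only real difficulty to be the bookkeeping of the $0/0$ cancellation at $x=b+m$ (and $x=a-m$): one must consistently interpret $\frac1{\ell+b-x}\binom{N+b-x-1}{N}$ as the polynomial $\frac1{N!}\prod_{k\neq\ell}(b-x+k)$ \emph{before} substituting, or else the relevant contribution is silently dropped; everything else is routine manipulation of binomial coefficients.
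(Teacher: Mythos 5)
Your proposal is correct and follows essentially the same route as the paper: define $\mathbf{P}_{ab,j}^{\pm}(x)=I_{a-x,b-x,j}^{\pm}$, recognize via Theorem~\ref{theo-dist-Sab} that each coefficient is (after the cancellation you describe) the polynomial $\prod_{k=0}^{N-1}(x-a+k)\prod_{k\neq\ell}(b-x+k)$ up to constants, evaluate at the $2N$ points of $\mathcal{E}$ to get $\mathbf{P}_{ab,j}^+(b+m)=\binom{m}{j}\ind_{\{j\le m\}}$ and the vanishing on the opposite side, and conclude with the identity $\sum_{\ell}(-1)^{\ell}\binom{k}{\ell}\binom{\ell}{j}=(-1)^j\delta_{jk}$. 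The only (welcome) addition is your explicit uniqueness remark, which the paper leaves implicit in the word ``characterized.''
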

%
\begin{proof}
By setting $\mathbf{P}_{ab,j}^-(x)=I_{a-x,b-x,j}^-$ and $\mathbf{P}_{ab,j}^+(x)=I_{a-x,b-x,j}^+$,
(\ref{exp-f-Sab-comb}) immediately yields (\ref{exp-f(x)-Sab}).
By observing that $\mathbf{P}_{ab,j}^-(x)=(-1)^j\mathbf{P}_{ab,j}^+(a+b-x)$, it is enough
to work with, e.g., $\mathbf{P}_{ab,j}^+$.
Coming back to the proof of Theorem~\ref{theo-exp-fSab} and appealing
to Theorem~\ref{theo-dist-Sab}, we write that
\begin{align}
\mathbf{P}_{ab,j}^+(x)
&
=\mathbb{E}_x\!\left[\ind_{\{S_{ab}\ge b\}}\binom{S_{ab}-b}{j}\!\right]
=\sum_{m=j}^{N-1} \binom{m}{j}\mathbb{P}_x\{S_{ab}=b+m\}
\nonumber\\
&
=\frac{1}{(N-1)!N!} \sum_{m=j}^{N-1} (-1)^m\left[\binom{m}{j}\!
\binom{N-1}{m}/\binom{m+b-a+N-1}{N}\!\right] \tilde{K}_m(x)
\label{formula-P+}
\end{align}
where, for any $m\in\{0,\dots,N-1\}$,
\[
\tilde{K}_m(x)=\left(\prod_{k=0}^{N-1} [(x-a+k)(b-x+k)]\right)\!/(b-x+m)
=\prod_{k=0}^{N-1} (x-a+k)\prod_{0\le k\le N-1\atop k\neq m} (b-x+k).
\]
The expression $\tilde{K}_m(x)$ defines a polynomial of the variable $x$
of degree $2N-1$, so $\mathbf{P}_{ab,j}^+$ is a polynomial of degree not greater
that $2N-1$.

It is obvious that $\tilde{K}_m(a-\ell)=0$ for $\ell,m\in\{0,\dots,N-1\}$.
Then, $\mathbf{P}_{ab,j}^+(a-\ell)=0$ which implies that
$(\mathbf{\Delta}^{\!-})^k\mathbf{P}_{ab,j}^+(a)=0$
for any $k\in\{0,\dots,N-1\}$.
Now, let us evaluate $\mathbf{P}_{ab,j}^+(b+\ell)$ for $\ell\in\{0,\dots,N-1\}$.
We plainly have that $\tilde{K}_m(b+\ell)=0$ for $\ell\neq m$ and that
\[
\tilde{K}_{\ell}(b+\ell)=(-1)^{\ell}(N-1)!N!\binom{\ell+b-a+N-1}{N}/\binom{N-1}{\ell}.
\]
By putting this into~(\ref{formula-P+}), we get that
\[
\mathbf{P}_{ab,j}^+(b+\ell)=\binom{\ell}{j}\ind_{\{j\le\ell\}}.
\]
Next, we obtain, for any $k\in\{0,\dots,N-1\}$, that
\begin{align*}
(\mathbf{\Delta}^{\!+})^k\mathbf{P}_{ab,j}^+(b)
&
=\sum_{\ell=0}^k (-1)^{k+\ell} \binom{k}{\ell}\mathbf{P}_{ab,j}^+(b+\ell)
=\sum_{\ell=j}^k (-1)^{k+\ell} \binom{k}{\ell}\!\binom{\ell}{j}
\\
&
=(-1)^{j+k}\binom{k}{j}\sum_{\ell=0}^{k-j} (-1)^{\ell} \binom{k-j}{\ell}=
\delta_{jk}.
\end{align*}
The proof of Theorem~\ref{theo-exp-fxab} is finished.
\end{proof}
%
\begin{example}
In the case where $N=2$, (\ref{exp-f(x)-Sab}) writes as
\[
\mathbb{E}_x[f(S_{ab})]= \mathbf{P}_{ab,0}^-(x) f(a)+\mathbf{P}_{ab,1}^-(x)\mathbf{\Delta}^{\!-}f(a)
+\mathbf{P}_{ab,0}^+(x) f(b)+\mathbf{P}_{ab,1}^+(x)\mathbf{\Delta}^{\!+}f(b)
\]
with
\[
\mathbf{P}_{ab,0}^-(x)=\frac{(x-b)(x-b-1)(2x-3a+b+2)}{(b-a)(b-a+1)(b-a+2)},\quad
\mathbf{P}_{ab,1}^-(x)=\frac{(x-a)(x-b)(x-b-1)}{(b-a+1)(b-a+2)},
\]
\[
\mathbf{P}_{ab,0}^+(x)=-\frac{(x-a)(x-a+1)(2x+a-3b-2)}{(b-a)(b-a+1)(b-a+2)},\quad
\mathbf{P}_{ab,1}^+(x)=\frac{(x-a)(x-a+1)(x-b)}{(b-a+1)(b-a+2)}.
\]
\end{example}
%
Below, we state a strong pseudo-Markov property related to time $\sigma_{ab}$.
%
\begin{theorem}
We have, for any function $f$ defined on $\mathbb{Z}$ and any $n\in\mathbb{N}$,
that
\begin{equation}\label{Markov-ab}
\mathbb{E}_x\!\big[f\big(S_{\sigma_{ab}+n}\big)\big]=
\sum_{j=0}^{N-1}\mathbf{P}_{ab,j}^-(x)\,(\mathbf{\Delta}^{\!-})^j \mathbb{E}_a[f(S_n)]
+\sum_{j=0}^{N-1}\mathbf{P}_{ab,j}^+(x)\,(\mathbf{\Delta}^{\!+})^j \mathbb{E}_b[f(S_n)].
\end{equation}
In (\ref{Markov-ab}), the operators $(\mathbf{\Delta}^{\!-})^j$ and
$(\mathbf{\Delta}^{\!+})^j$ act on the variables $a$ and $b$.
\end{theorem}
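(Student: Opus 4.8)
The plan is to transpose, almost verbatim, the argument that proved the strong pseudo-Markov property at the single threshold $\sigma_b^+$ (formula~(\ref{Markov-b})), now with the exit time $\sigma_{ab}$ from $(a,b)$ in place of the overshooting time of $b$. Write $\mathbb{P}_{\!x}$ for the pseudo-probability of the pseudo-random walk started at $x$, and recall that $\mathcal{E}=\{a-N+1,\dots,a\}\cup\{b,\dots,b+N-1\}$ is finite and that $\mathbb{P}_{\!x}\{\sigma_{ab}<+\infty\}=1$ by Theorem~\ref{theo-dist-Sab}. First I would decompose according to the value of the couple $(\sigma_{ab},S_{ab})$ and use the independence and stationarity of the increments $U_j$: on the event $\{\sigma_{ab}=k,S_{ab}=\ell\}$ one has $S_{\sigma_{ab}+n}=\ell+U_{k+1}+\dots+U_{k+n}$, hence
\[
\mathbb{E}_x\!\big[f\big(S_{\sigma_{ab}+n}\big)\big]
=\sum_{k\in\mathbb{N}}\sum_{\ell\in\mathcal{E}}\mathbb{P}_{\!x}\{\sigma_{ab}=k,S_{ab}=\ell\}\,\mathbb{E}[f(\ell+U_1+\dots+U_n)]
=\sum_{\ell\in\mathcal{E}}\mathbb{P}_{\!x}\{S_{ab}=\ell\}\,\mathbb{E}_{\ell}[f(S_n)]
=\mathbb{E}_x\!\big[g(S_{ab})\big],
\]
where $g(y)=\mathbb{E}_y[f(S_n)]$ is well defined for every $y\in\mathbb{Z}$ because $S_n$ takes only finitely many values. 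Interchanging the finite $\ell$-sum with the $k$-sum and resumming $\sum_k\mathbb{P}_{\!x}\{\sigma_{ab}=k,S_{ab}=\ell\}=\mathbb{P}_{\!x}\{S_{ab}=\ell\}$ is legitimate exactly as in the derivation of the pseudo-distribution of $S_{ab}$; the \textit{a priori} bound stemming from~(\ref{bound-Fbis}) together with $\mathbb{P}_{\!x}\{\sigma_{ab}<+\infty\}=1$ ensures that nothing is lost.

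Second, I would feed the function $g|_{\mathcal{E}}$ into Theorem~\ref{theo-exp-fxab}, i.e. into formula~(\ref{exp-f(x)-Sab}), which gives
\[
\mathbb{E}_x\!\big[g(S_{ab})\big]=\sum_{j=0}^{N-1}\mathbf{P}_{ab,j}^-(x)\,(\mathbf{\Delta}^{\!-})^jg(a)
+\sum_{j=0}^{N-1}\mathbf{P}_{ab,j}^+(x)\,(\mathbf{\Delta}^{\!+})^jg(b),
\]
the finite-difference operators acting on the argument of $g$. Since $g(a-m)=\mathbb{E}_{a-m}[f(S_n)]$ and $g(b+m)=\mathbb{E}_{b+m}[f(S_n)]$ for $0\le m\le N-1$, we get $(\mathbf{\Delta}^{\!-})^jg(a)=(\mathbf{\Delta}^{\!-})^j\mathbb{E}_a[f(S_n)]$ and $(\mathbf{\Delta}^{\!+})^jg(b)=(\mathbf{\Delta}^{\!+})^j\mathbb{E}_b[f(S_n)]$, where now $(\mathbf{\Delta}^{\!-})^j$ acts on $a$ and $(\mathbf{\Delta}^{\!+})^j$ acts on $b$. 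Combining the two displays yields precisely~(\ref{Markov-ab}).

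The only point that demands care is purely notational: one must keep track that the operators appearing in Theorem~\ref{theo-exp-fxab} differentiate $g$ in its own variable, and that under the substitution $g(\,\cdot\,)=\mathbb{E}_{\,\cdot\,}[f(S_n)]$ this becomes differentiation with respect to the endpoints $a$ and $b$ — which is exactly the convention stated right after~(\ref{Markov-ab}). No genuinely new estimate is needed beyond what has already been established; the proof is a concatenation of the independence decomposition with the already-proved boundary formula~(\ref{exp-f(x)-Sab}).
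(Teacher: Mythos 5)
Your proof is correct and follows essentially the same route as the paper: the author likewise reduces $\mathbb{E}_x\big[f\big(S_{\sigma_{ab}+n}\big)\big]$ to $\mathbb{E}_x[g(S_{ab})]$ with $g(y)=\mathbb{E}_y[f(S_n)]$ via the decomposition over $(\sigma_{ab},S_{ab})$ and independence of the increments (mirroring the one-sided case), and then invokes the boundary representation of $\mathbb{E}_x[g(S_{ab})]$ in terms of $\mathbf{P}_{ab,j}^{\pm}$. No gap; your remark on the finiteness of $\mathcal{E}$ and the interchange of sums is a harmless elaboration of what the paper leaves implicit.
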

%
\begin{proof}
Formula~(\ref{Markov-ab}) can be proved exactly in the same way as~(\ref{Markov-b}):
by setting $g(x)=\mathbb{E}_x[f(S_n)]$, we have that
$\mathbb{E}_x\!\big[f\big(S_{\sigma_{ab}+n}\big)\big]=\mathbb{E}_x\big[g(S_{ab})\big].$
This proves~(\ref{Markov-ab}) thanks to~(\ref{exp-f(x)-Sab}).
\end{proof}
%
\begin{example}
Below, we display the form of (\ref{Markov-ab}) for the particular values
$1,2$ of $N$.
\begin{itemize}
\item
For $N=1$, (\ref{Markov-b}) reads as
\[
\mathbb{E}_x\!\big[f\big(S_{\sigma_{ab}+n}\big)\big]
=\frac{b-x}{b-a}\,\mathbb{E}_a[f(S_n)]+\frac{x-a}{b-a}\,\mathbb{E}_b[f(S_n)]
\]
which is of course well-known! This is the strong Markov property for the
ordinary random walk.

\item
For $N=2$, (\ref{Markov-b}) reads as
\begin{align*}
\mathbb{E}_x\!\big[f\big(S_{\sigma_{ab}+n}\big)\big]
&
=\frac{(x-b)(x-b-1)(2x-3a+b+2)}{(b-a)(b-a+1)(b-a+2)} \,\mathbb{E}_a[f(S_n)]
\\
&
\hphantom{=\;}
+\frac{(x-a)(x-b)(x-b-1)}{(b-a+1)(b-a+2)} \,\mathbf{\Delta}^{\!-} \mathbb{E}_a[f(S_n)]
\\
&
\hphantom{=\;}
-\frac{(x-a)(x-a+1)(2x+a-3b-2)}{(b-a)(b-a+1)(b-a+2)} \,\mathbb{E}_b[f(S_n)]
\\
&
\hphantom{=\;}
+\frac{(x-a)(x-a+1)(x-b)}{(b-a+1)(b-a+2)} \,\mathbf{\Delta}^{\!+} \mathbb{E}_b[f(S_n)]
\\
&
=\frac{(x-a+1)(x-b)(x-b-1)}{(b-a)(b-a+1)} \,\mathbb{E}_a[f(S_n)]
\\
&
\hphantom{=\;}
-\frac{(x-a)(x-b)(x-b-1)}{(b-a+1)(b-a+2)} \,\mathbb{E}_{a-1}[f(S_n)]
\\
&
\hphantom{=\;}
-\frac{(x-a)(x-a+1)(x-b-1)}{(b-a)(b-a+1)} \,\mathbb{E}_b[f(S_n)]
\\
&
\hphantom{=\;}
+\frac{(x-a)(x-a+1)(x-b)}{(b-a+1)(b-a+2)} \,\mathbb{E}_{b+1}[f(S_n)]
\end{align*}

\end{itemize}
\end{example}

Now, we consider the discrete Laplacian $\mathbf{\Delta}
=\mathbf{\Delta}^{\!+}\circ\mathbf{\Delta}^{\!-}
=\mathbf{\Delta}^{\!-}\circ\mathbf{\Delta}^{\!+}$.
It is explicitly defined by $\mathbf{\Delta} f(i)=f(i+1)-2f(i)+f(i-1)$.
Let us introduce the iterated Laplacian $\mathbf{\Delta}^{\!N}
=(\mathbf{\Delta}^{\!+})^N\circ(\mathbf{\Delta}^{\!-})^N
=(\mathbf{\Delta}^{\!-})^N\circ(\mathbf{\Delta}^{\!+})^N$.
We compute $\mathbf{\Delta}^{\!N}\!f(i)$ for any function $f$
and any $i\in\mathbb{Z}$:
\begin{align*}
\mathbf{\Delta}^{\!N}\!f(i)
&=(\mathbf{\Delta}^{\!-})^N\!\left(\vphantom{\sum_{k=0}^N}\right.\!
\sum_{j=0}^N (-1)^{j+N} \binom{N}{j} f(.+j)\!\!\left.\vphantom{\sum_{k=0}^N}\right)\!(i)
\\
&=\sum_{j=0}^N (-1)^{j+N} \binom{N}{j} \!\!\left(\sum_{k=0}^N (-1)^k
\binom{N}{k} f(i+j-k)\right)
\\
&=(-1)^N\sum_{0\le j,k\le N} (-1)^{j-k} \binom{N}{j} \!\binom{N}{k}f(i+j-k)
\\
&=(-1)^N\sum_{\ell=-N}^N (-1)^{\ell} \!\left(\vphantom{\sum_{k=0}^N}\right.\!
\sum_{k=(-\ell)\vee 0}^{(N-\ell)\wedge N}\binom{N}{k}\! \binom{N}{k+\ell}
\!\!\left.\vphantom{\sum_{k=0}^N}\right)\! f(i+\ell).
\end{align*}
By using the elementary identity
$\sum_{k=(\ell-q)\vee 0}^{p\wedge \ell} \binom{p}{k}\! \binom{q}{\ell-k}
=\binom{p+q}{\ell}$, we get that
\[
\sum_{k=(-\ell)\vee 0}^{(N-\ell)\wedge N}\binom{N}{k}\! \binom{N}{k+\ell}
=\sum_{k=(-\ell)\vee 0}^{(N-\ell)\wedge N}\binom{N}{k}\! \binom{N}{N-k-\ell}
=\binom{2N}{\ell+N}.
\]
As a result, we obtain the expression of $\mathbf{\Delta}^{\!N}\!f(i)$ announced in
the introduction, namely
\[
\mathbf{\Delta}^{\!N}\!f(i)=\sum_{\ell=-N}^N (-1)^{\ell+N} \binom{2N}{\ell+N} f(i+\ell).
\]
%
\begin{example}
Fix a nonnegative integer $j$ and put $f_j(i)=(i)_j$ for any $i\in\mathbb{Z}$.
It is plain that, if $k\le j$,
$(\mathbf{\Delta}^{\!+})^k f_j(i)= (j)_k \,f_{j-k}(i),\,
(\mathbf{\Delta}^{\!-})^k f_j(i)= (j)_k \,f_{j-k}(i-1)
$
and if $k>j$, $(\mathbf{\Delta}^{\!+})^k f_j(i)=(\mathbf{\Delta}^{\!-})^k f_j(i)=0$.
Therefore, if $2k\le j$,
$
\mathbf{\Delta}^{\!k} f_j(i)=(j)_k(j-k)_k \,f_{j-2k}(i-1)
$
and if $2k>j$, $\mathbf{\Delta}^{\!k} f_j(i)=0$.
By using a linear algebra argument, we deduce that
$\mathbf{\Delta}^{\!N}\!P=0$ for any polynomial $P$ of degree not greater that $2N-1$.
As a byproduct,
\begin{equation}\label{laplacian-P}
\mathbf{\Delta}^{\!N}\!P_{ab,j}^+=\mathbf{\Delta}^{\!N}\!P_{ab,j}^-=0.
\end{equation}
\end{example}
%
Now, the main link between time $\sigma_{ab}$ and finite-difference
equations is the following one.
%
\begin{theorem}
Let $\varphi$ be a function defined on $\mathcal{E}$.
The function $\Phi$ defined on $\mathbb{Z}$ by $\Phi(x)
=\mathbb{E}_x[\varphi(S_{ab})]$ is the
solution to the discrete Lauricella's problem
\begin{equation}\label{syst-lauri}
\begin{cases}
\mathbf{\Delta}^{\!N}\Phi(x)=0\text{ for } x\in\mathbb{Z},
\\
(\mathbf{\Delta}^{\!-})^k\Phi(a)=(\mathbf{\Delta}^{\!-})^k
\varphi(a)\text{ for } j\in\{0,\dots,N-1\},
\\
(\mathbf{\Delta}^{\!+})^k\Phi(b)=(\mathbf{\Delta}^{\!+})^k
\varphi(b)\text{ for } j\in\{0,\dots,N-1\}.
\end{cases}
\end{equation}
\end{theorem}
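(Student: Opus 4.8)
The plan is to deduce the statement directly from the explicit polynomial representation established in Theorem~\ref{theo-exp-fxab}. By that theorem, for every $x\in\mathbb{Z}$ one has
\[
\Phi(x)=\mathbb{E}_x[\varphi(S_{ab})]=\sum_{j=0}^{N-1}\mathbf{P}_{ab,j}^-(x)\,(\mathbf{\Delta}^{\!-})^j\varphi(a)+\sum_{j=0}^{N-1}\mathbf{P}_{ab,j}^+(x)\,(\mathbf{\Delta}^{\!+})^j\varphi(b),
\]
a fixed linear combination of the polynomials $\mathbf{P}_{ab,j}^{\pm}$, each of degree at most $2N-1$; the coefficients only involve the values of $\varphi$ on $\mathcal{E}$, which is the available data, and the identity is a genuine polynomial identity valid on all of $\mathbb{Z}$, so that the interior equation $\mathbf{\Delta}^{\!N}\Phi(x)=0$ is meaningful everywhere.

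First I would check the interior equation: applying the linear operator $\mathbf{\Delta}^{\!N}$ to the display and invoking (\ref{laplacian-P}), i.e.\ $\mathbf{\Delta}^{\!N}\mathbf{P}_{ab,j}^{+}=\mathbf{\Delta}^{\!N}\mathbf{P}_{ab,j}^{-}=0$ for $0\le j\le N-1$, yields $\mathbf{\Delta}^{\!N}\Phi\equiv 0$ on $\mathbb{Z}$. Next I would check the boundary conditions: applying $(\mathbf{\Delta}^{\!-})^k$ and evaluating at $a$, then using the characterisation (\ref{bound-cond}), namely $(\mathbf{\Delta}^{\!-})^k\mathbf{P}_{ab,j}^-(a)=\delta_{jk}$ and $(\mathbf{\Delta}^{\!-})^k\mathbf{P}_{ab,j}^+(a)=0$, collapses the first sum to its $j=k$ term and annihilates the second, giving $(\mathbf{\Delta}^{\!-})^k\Phi(a)=(\mathbf{\Delta}^{\!-})^k\varphi(a)$ for $k\in\{0,\dots,N-1\}$. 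The mirror computation with $(\mathbf{\Delta}^{\!+})^k$ at $b$, using $(\mathbf{\Delta}^{\!+})^k\mathbf{P}_{ab,j}^+(b)=\delta_{jk}$ and $(\mathbf{\Delta}^{\!+})^k\mathbf{P}_{ab,j}^-(b)=0$, gives $(\mathbf{\Delta}^{\!+})^k\Phi(b)=(\mathbf{\Delta}^{\!+})^k\varphi(b)$. Together these three facts say exactly that $\Phi$ solves (\ref{syst-lauri}).

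To justify the definite article in the statement I would add a brief uniqueness argument. Since $\mathbf{\Delta}^{\!N}\lambda^{x}=\lambda^{x-N}(\lambda-1)^{2N}$, the recurrence $\mathbf{\Delta}^{\!N}\Psi=0$ has characteristic polynomial $(\lambda-1)^{2N}$, so its solution space on $\mathbb{Z}$ is precisely the $2N$-dimensional space of polynomials of degree $\le 2N-1$; by (\ref{bound-cond}) the $2N$ boundary functionals $\Psi\mapsto(\mathbf{\Delta}^{\!-})^k\Psi(a)$ and $\Psi\mapsto(\mathbf{\Delta}^{\!+})^k\Psi(b)$, $0\le k\le N-1$, admit $\{\mathbf{P}_{ab,j}^-,\mathbf{P}_{ab,j}^+\}$ as a dual basis and are therefore linearly independent on this space, which forces uniqueness. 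The argument involves no real obstacle; the only points needing a touch of care are the remark that the representation of Theorem~\ref{theo-exp-fxab} is a polynomial identity holding for all integers $x$ and the elementary linear-algebra bookkeeping for uniqueness — everything else is immediate from (\ref{laplacian-P}) and (\ref{bound-cond}).
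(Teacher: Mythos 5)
Your proof is correct and follows exactly the paper's route: the representation of Theorem~\ref{theo-exp-fxab} combined with~(\ref{laplacian-P}) for the interior equation and~(\ref{bound-cond}) for the boundary conditions. The added uniqueness argument (characteristic polynomial $(\lambda-1)^{2N}$, so the kernel of $\mathbf{\Delta}^{\!N}$ is the $2N$-dimensional space of polynomials of degree $\le 2N-1$, on which the $2N$ boundary functionals are independent by biorthogonality with the $\mathbf{P}_{ab,j}^{\pm}$) is a correct bonus that the paper omits while still writing ``the solution.''
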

%
\begin{proof}
By (\ref{exp-f(x)-Sab}) we write that
\[
\Phi(x)= \sum_{j=0}^{N-1} \mathbf{P}_{ab,j}^-(x) \,(\mathbf{\Delta}^{\!-})^j\varphi(a)
+\sum_{j=0}^{N-1} \mathbf{P}_{ab,j}^+(x) \,(\mathbf{\Delta}^{\!+})^j\varphi(b).
\]
With this representation at hand, identities~(\ref{bound-cond}) and
(\ref{laplacian-P}) immediately yield equations~(\ref{syst-lauri}).
\end{proof}

\section{Joint pseudo-distribution of $\big(\tau_{ab},X_{ab}\big)$}

As in Section~\ref{section-tau-b}, we choose for the family
$((X_t^{\varepsilon})_{t\ge 0})_{\varepsilon>0}$
the pseudo-processes defined, for any $\varepsilon>0$, by
\[
X_t^{\varepsilon}=\varepsilon S_{\lfloor t/\varepsilon^{2N}\rfloor}, \quad t\ge 0,
\]
and for the pseudo-process $(X_t)_{t\ge 0}$ the pseudo-Brownian motion.
In Definition~\ref{def2}, we choose for $I$ the interval $(a,b)$; then
$\tau_I^{\varepsilon}=\tau_{ab}^{\varepsilon}$, $X_I^{\varepsilon}=X_{ab}^{\varepsilon}$
and $\tau_I^{\vphantom{\varepsilon}}=\tau_{ab}$, $X_I^{\vphantom{\varepsilon}}=X_{ab}$.
Set $a_\varepsilon=\lfloor a/\varepsilon\rfloor$ and $b_{\varepsilon}=\lceil b/\varepsilon\rceil$ where
$\lfloor\,.\,\rfloor$ and $\lceil\,.\,\rceil$ respectively stand for
the usual floor and ceiling functions.
We have $\tau_{ab}^{\varepsilon}=\varepsilon^{2N}\sigma_{a_\varepsilon,b_{\varepsilon}}$ and
$X_{ab}^{\varepsilon}=\varepsilon S_{a_\varepsilon,b_{\varepsilon}}$.
%
\begin{theorem}\label{theo-conv2}
The following convergence holds:
\[
\big(\tau_{ab}^{\varepsilon},X_{ab}^{\varepsilon}\big) \underset{\varepsilon\to 0^+}{\longrightarrow}
\big(\tau_{ab},X_{ab}\big)
\]
where, for any $\lambda>0$ and any $\mu\in\mathbb{R}$,
\begin{align*}
\mathbb{E}\Big(\mathrm{e}^{-\lambda\tau_{ab}+\mathrm{i} \mu X_{ab}}\ind_{\{\tau_{ab}<+\infty\}}\Big)
&=\mathrm{e}^{\mathrm{i} \mu a} \sum_{k=1}^N \frac{\mathbf{D}_k^-(\lambda,\mu)}{\mathbf{D}(\lambda)}
+\mathrm{e}^{\mathrm{i} \mu b} \sum_{k=1}^N \frac{\mathbf{D}_k^+(\lambda,\mu)}{\mathbf{D}(\lambda)}.
\end{align*}
In the foregoing formula, $\mathbf{D}(\lambda)$, $\mathbf{D}_k^-(\lambda,\mu)$,
$\mathbf{D}_k^+(\lambda,\mu)$ are the respective determinants
\[
\left|\begin{array}{@{\hspace{0em}}c@{\hspace{.9em}}c@{\hspace{.9em}}c@{\hspace{.9em}}c@{\hspace{.9em}}c@{\hspace{.9em}}c@{\hspace{.9em}}c@{\hspace{.9em}}c@{\hspace{0em}}}
1      & \varphi_1    & \dots & \varphi_1^{N-1}    &\mathrm{e}^{-\varphi_1\!\!\sqrt[2N\!\!]{\lambda/c}\,(b-a)}    & \mathrm{e}^{-\varphi_1\!\!\sqrt[2N\!\!]{\lambda/c}\,(b-a)}\,\varphi_1       & \dots & \mathrm{e}^{-\varphi_1\!\!\sqrt[2N\!\!]{\lambda/c}\,(b-a)}\,\varphi_1^{N-1}      \\
\vdots & \vdots       &       & \vdots             &\vdots                                                        & \vdots                                                                      &       & \vdots                                                                           \\[1ex]
1      & \varphi_{2N} & \dots & \varphi_{2N}^{N-1} &\mathrm{e}^{-\varphi_{2N}\!\!\sqrt[2N\!\!]{\lambda/c}\,(b-a)} & \mathrm{e}^{-\varphi_{2N}\!\!\sqrt[2N\!\!]{\lambda/c}\,(b-a)}\,\varphi_{2N} & \dots & \mathrm{e}^{-\varphi_{2N}\!\!\sqrt[2N\!\!]{\lambda/c}\,(b-a)}\,\varphi_{2N}^{N-1}
\end{array}\right|\!,
\]

\vspace{.3\baselineskip}
\[
\left|\begin{array}{@{\hspace{0em}}c@{\hspace{.6em}}c@{\hspace{.6em}}c@{\hspace{.6em}}c@{\hspace{.6em}}c@{\hspace{.6em}}c@{\hspace{.6em}}c@{\hspace{.6em}}c@{\hspace{0em}}}
1          & \varphi_1        & \dots & \varphi_1^{N-1}        & \mathrm{e}^{-\varphi_1\!\!\sqrt[2N\!\!]{\lambda/c}\,(b-a)}     & \mathrm{e}^{-\varphi_1\!\!\sqrt[2N\!\!]{\lambda/c}\,(b-a)}\,\varphi_1         & \dots & \mathrm{e}^{-\varphi_1\!\!\sqrt[2N\!\!]{\lambda/c}\,(b-a)}\,\varphi_1^{N-1}         \\
\vdots     & \vdots           &       & \vdots                 & \vdots                                                         & \vdots                                                                        &       & \vdots                                                                              \\[1ex]
1          & \varphi_{k-1}    & \dots & \varphi_{k-1}^{N-1}    & \mathrm{e}^{-\varphi_{k-1}\!\!\sqrt[2N\!\!]{\lambda/c}\,(b-a)} & \mathrm{e}^{-\varphi_{k-1}\!\!\sqrt[2N\!\!]{\lambda/c}\,(b-a)}\,\varphi_{k-1} & \dots & \mathrm{e}^{-\varphi_{k-1}\!\!\sqrt[2N\!\!]{\lambda/c}\,(b-a)}\,\varphi_{k-1}^{N-1} \\[.5ex]
1          & \delta           & \dots & \delta^{N-1}           & 0                                                              & 0                                                                             & \dots & 0                                                                                   \\
1          & \varphi_{k+1}    & \dots & \varphi_{k+1}^{N-1}    & \mathrm{e}^{-\varphi_{k+1}\!\!\sqrt[2N\!\!]{\lambda/c}\,(b-a)} & \mathrm{e}^{-\varphi_{k+1}\!\!\sqrt[2N\!\!]{\lambda/c}\,(b-a)}\,\varphi_{k+1} & \dots & \mathrm{e}^{-\varphi_{k+1}\!\!\sqrt[2N\!\!]{\lambda/c}\,(b-a)}\,\varphi_{k+1}^{N-1} \\
\vdots     & \vdots           &       & \vdots                 & \vdots                                                         & \vdots                                                                        &       & \vdots                                                                              \\[1ex]
1          & \varphi_{2N}     & \dots & \varphi_{2N}^{N-1}     & \mathrm{e}^{-\varphi_{2N}\!\!\sqrt[2N\!\!]{\lambda/c}\,(b-a)}  & \mathrm{e}^{-\varphi_{2N}\!\!\sqrt[2N\!\!]{\lambda/c}\,(b-a)}\,\varphi_{2N}   & \dots & \mathrm{e}^{-\varphi_{2N}\!\!\sqrt[2N\!\!]{\lambda/c}\,(b-a)}\,\varphi_{2N}^{N-1}
\end{array}\right|\!,
\]

\vspace{.3\baselineskip}
\[
\left|\begin{array}{@{\hspace{0em}}c@{\hspace{.6em}}c@{\hspace{.6em}}c@{\hspace{.6em}}c@{\hspace{.6em}}c@{\hspace{.6em}}c@{\hspace{.6em}}c@{\hspace{.6em}}c@{\hspace{0em}}}
1          & \varphi_1        & \dots & \varphi_1^{N-1}        & \mathrm{e}^{-\varphi_1\!\!\sqrt[2N\!\!]{\lambda/c}\,(b-a)}     & \mathrm{e}^{-\varphi_1\!\!\sqrt[2N\!\!]{\lambda/c}\,(b-a)}\,\varphi_1         & \dots & \mathrm{e}^{-\varphi_1\!\!\sqrt[2N\!\!]{\lambda/c}\,(b-a)}\,\varphi_1^{N-1}         \\
\vdots     & \vdots           &       & \vdots                 & \vdots                                                         & \vdots                                                                        &       & \vdots                                                                              \\[1ex]
1          & \varphi_{k-1}    & \dots & \varphi_{k-1}^{N-1}    & \mathrm{e}^{-\varphi_{k-1}\!\!\sqrt[2N\!\!]{\lambda/c}\,(b-a)} & \mathrm{e}^{-\varphi_{k-1}\!\!\sqrt[2N\!\!]{\lambda/c}\,(b-a)}\,\varphi_{k-1} & \dots & \mathrm{e}^{-\varphi_{k-1}\!\!\sqrt[2N\!\!]{\lambda/c}\,(b-a)}\,\varphi_{k-1}^{N-1} \\[.5ex]
0          & 0                & \dots & 0                      & 1                                                              & \delta                                                                        & \dots & \delta^{N-1}                                                                        \\
1          & \varphi_{k+1}    & \dots & \varphi_{k+1}^{N-1}    & \mathrm{e}^{-\varphi_{k+1}\!\!\sqrt[2N\!\!]{\lambda/c}\,(b-a)} & \mathrm{e}^{-\varphi_{k+1}\!\!\sqrt[2N\!\!]{\lambda/c}\,(b-a)}\,\varphi_{k+1} & \dots & \mathrm{e}^{-\varphi_{k+1}\!\!\sqrt[2N\!\!]{\lambda/c}\,(b-a)}\,\varphi_{k+1}^{N-1} \\
\vdots     & \vdots           &       & \vdots                 & \vdots                                                         & \vdots                                                                        &       & \vdots                                                                              \\[1ex]
1          & \varphi_{2N}     & \dots & \varphi_{2N}^{N-1}     & \mathrm{e}^{-\varphi_{2N}\!\!\sqrt[2N\!\!]{\lambda/c}\,(b-a)}  & \mathrm{e}^{-\varphi_{2N}\!\!\sqrt[2N\!\!]{\lambda/c}\,(b-a)}\,\varphi_{2N}   & \dots & \mathrm{e}^{-\varphi_{2N}\!\!\sqrt[2N\!\!]{\lambda/c}\,(b-a)}\,\varphi_{2N}^{N-1}
\end{array}\right|\!.
\]
In the two last determinants, we have put $\delta=-\frac{\mathrm{i} \mu}{\sqrt[2N\!\!]{\lambda/c}}$.
\end{theorem}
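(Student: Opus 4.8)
The plan is to mimic the proof of Theorem~\ref{theo-conv} (the single-threshold case), replacing the Lagrange-polynomial formula~(\ref{double-gene-sigma}) by the determinantal formula~(\ref{double-gene-sigmabis}) for the two-sided exit problem. First I would write, using $\tau_{ab}^{\varepsilon}=\varepsilon^{2N}\sigma_{a_\varepsilon,b_\varepsilon}$ and $X_{ab}^{\varepsilon}=\varepsilon S_{a_\varepsilon,b_\varepsilon}$ together with~(\ref{double-gene-sigmabis}) evaluated at $z=\mathrm{e}^{-\lambda\varepsilon^{2N}}$ and $\zeta=\mathrm{e}^{\mathrm{i}\mu\varepsilon}$,
\[
\mathbb{E}\Big(\mathrm{e}^{-\lambda\tau_{ab}^{\varepsilon}+\mathrm{i}\mu X_{ab}^{\varepsilon}}\ind_{\{\tau_{ab}^{\varepsilon}<+\infty\}}\Big)
=\frac{1}{D(\mathrm{e}^{-\lambda\varepsilon^{2N}})}\sum_{k=1}^{2N} D_k\big(\mathrm{e}^{-\lambda\varepsilon^{2N}},\mathrm{e}^{\mathrm{i}\mu\varepsilon}\big)\,\big(v_k(\mathrm{e}^{-\lambda\varepsilon^{2N}})\,\mathrm{e}^{\mathrm{i}\mu\varepsilon}\big)^{a_\varepsilon-N+1}.
\]
Since $c\le 1/2^{2N-1}$ is not needed here (the series defining the $H_{ab,\ell}$ converge for $|z|<1/M_1$ and $M_1\ge 1$, and in fact by analytic continuation~(\ref{double-gene-sigmabis}) holds on $(0,1)$, so $z=\mathrm{e}^{-\lambda\varepsilon^{2N}}\in(0,1)$ is admissible for every $\lambda>0$), the identity above is valid for all $\lambda>0$ and small $\varepsilon$. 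This matches the hypothesis-free statement of Theorem~\ref{theo-conv2}.

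Next I would carry out the $\varepsilon\to 0^+$ asymptotics of each factor. Recall the relabeling $u_{j+N}(z)=v_j(z)=1/u_j(z)$, $1\le j\le N$, and the asymptotic~(\ref{asymptotic-v}), namely $u_j(\lambda,\varepsilon)=u_j(\mathrm{e}^{-\lambda\varepsilon^{2N}})=1-\varphi_j\!\sqrt[2N\!]{\lambda/c}\,\varepsilon+o(\varepsilon)$ for $1\le j\le N$; for $N+1\le j\le 2N$ one has $u_j(\lambda,\varepsilon)=1/u_{j-N}(\lambda,\varepsilon)=1+\varphi_{j-N}\!\sqrt[2N\!]{\lambda/c}\,\varepsilon+o(\varepsilon)$. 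Writing $\varphi_{j}:=-\varphi_{j-N}$ for $N+1\le j\le 2N$ (so that all $2N$ quantities $\varphi_1,\dots,\varphi_{2N}$ are the $(2N)$-th roots of $\kappa_{_{\!N}}$), we get uniformly $u_j(\lambda,\varepsilon)=1-\varphi_j\!\sqrt[2N\!]{\lambda/c}\,\varepsilon+o(\varepsilon)$, $1\le j\le 2N$. Consequently every entry of $D(\mathrm{e}^{-\lambda\varepsilon^{2N}})$ of the form $u_j^m$ with $m$ bounded tends to $1$, whereas the entries $u_j^{b_\varepsilon-a_\varepsilon+N-1+r}$ behave like $u_j^{(b-a)/\varepsilon}\cdot u_j^{O(1)}\sim\mathrm{e}^{-\varphi_j\!\sqrt[2N\!]{\lambda/c}\,(b-a)}$. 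Thus, after factoring the row-independent scalars out of the last $N$ columns and using multilinearity, $D(\mathrm{e}^{-\lambda\varepsilon^{2N}})/\varepsilon^{?}\to\mathbf{D}(\lambda)$ up to an explicit power of $\varepsilon$ and an explicit nonzero constant coming from the Vandermonde-type cancellations among the $N$ ``small'' columns $1,u_j,\dots,u_j^{N-1}$ (which individually degenerate to $1$ but whose Wronskian-like combination is of order $\varepsilon^{N(N-1)/2}$ with leading coefficient the Vandermonde of the $\varphi_j$'s). The same bookkeeping applied to $D_k^-(z,\zeta)$ and $D_k^+(z,\zeta)$, together with $(v_k(\lambda,\varepsilon)\mathrm{e}^{\mathrm{i}\mu\varepsilon})^{a_\varepsilon-N+1}\to\mathrm{e}^{\varphi_k\!\sqrt[2N\!]{\lambda/c}\,a}\mathrm{e}^{\mathrm{i}\mu a}$ for $1\le k\le N$ (and the mirror limit for the $v_{k+N}$ giving the $\mathrm{e}^{\mathrm{i}\mu b}$ block), shows that all the $\varepsilon$-powers cancel in the ratio $D_k/D$ and the limit is exactly the determinantal expression claimed, with $\delta=-\mathrm{i}\mu/\!\sqrt[2N\!]{\lambda/c}$ arising as the limit of $\zeta=\mathrm{e}^{\mathrm{i}\mu\varepsilon}$ after the same normalization as the $u_j$ columns.

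The main obstacle is the careful bookkeeping of the powers of $\varepsilon$: since the first $N$ columns of $D$ and $D_k$ all collapse to the constant column $(1,1,\dots,1)^{\!\top}$ in the limit, one cannot pass to the limit column-by-column; instead one must perform the standard change of basis $u_j^m\mapsto$ (appropriate linear combination) that replaces $1,u_j,\dots,u_j^{N-1}$ by $1,u_j-1,(u_j-1)^2,\dots$ i.e. extract the $\varepsilon$-order of each column, verify that the total $\varepsilon$-power in $D$ equals that in each $D_k$ so that it cancels in the quotient, and identify the surviving leading coefficients with the $\varphi_j$-determinants displayed. This is the analogue, at the level of $2N\times 2N$ lacunary Vandermonde determinants, of the elementary limits $u_k(\lambda,\varepsilon)-u_j(\lambda,\varepsilon)\sim(\varphi_j-\varphi_k)\!\sqrt[2N\!]{\lambda/c}\,\varepsilon$ used in Theorem~\ref{theo-conv}; I would state it as a short lemma on the asymptotics of such determinants (or invoke the computation in Appendix~\ref{appendix-determinants}), then the theorem follows by substituting into $\mathbb{E}\big(\mathrm{e}^{-\lambda\tau_{ab}^{\varepsilon}+\mathrm{i}\mu X_{ab}^{\varepsilon}}\ind_{\{\tau_{ab}^{\varepsilon}<+\infty\}}\big)=D_k/D$ and taking $\varepsilon\to0^+$, which is precisely the convergence required by Definition~\ref{def2}.
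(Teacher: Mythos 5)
Your proposal follows essentially the same route as the paper's proof: start from the determinantal representation~(\ref{double-gene-sigmabis}) evaluated at $z=\mathrm{e}^{-\lambda\varepsilon^{2N}}$, $\zeta=\mathrm{e}^{\mathrm{i}\mu\varepsilon}$, replace the degenerating columns $1,u_j,\dots,u_j^{N-1}$ (and the block of high powers) by $1,u_j-1,(u_j-1)^2,\dots$ via column operations, and use~(\ref{asymptotic-v}) to extract the common factor $(\lambda/c)^{(N-1)/2}\varepsilon^{N(N-1)}$ from $D$ and from each $D_k$ so that it cancels in the quotient. The ``short lemma'' you defer is precisely the explicit column manipulation the paper carries out, and your observation that the representation extends to all $z\in(0,1)$ by analyticity (so that no hypothesis on $c$ is required) correctly addresses a point the paper leaves implicit.
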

%
In Theorem~\ref{theo-conv2}, we obtain the joint pseudo-distribution of
$(\tau_{ab},X_{ab})$ characterized by its Laplace-Fourier transform. This is a
new result for pseudo-Brownian motion that we shall develop in a forthcoming
paper.
%
\begin{proof}
By Definition~\ref{def2} and Theorem~\ref{theo-sigma-ab2}, we have that
\begin{align}
\mathbb{E}\Big(\mathrm{e}^{-\lambda\tau_{ab}+\mathrm{i} \mu X_{ab}}\ind_{\{\tau_{ab}<+\infty\}}\Big)
&
=\lim_{\varepsilon\to 0^+} \mathbb{E}\Big(\mathrm{e}^{-\lambda\tau_{ab}^{\varepsilon}
+\mathrm{i} \mu X_{ab}^{\varepsilon}}
\ind_{\{\tau_{ab}^{\varepsilon}<+\infty\}}\Big)
\nonumber\\
&
=\lim_{\varepsilon\to 0^+} \mathbb{E}\Big(\mathrm{e}^{-\lambda\varepsilon^{2N}
\sigma_{a_{\varepsilon},b_{\varepsilon}}+\mathrm{i} \mu \varepsilon S_{a_{\varepsilon},b_{\varepsilon}}}
\ind_{\{\sigma_{a_{\varepsilon},b_{\varepsilon}}<+\infty\}}\Big)
\nonumber\\
&
=\lim_{\varepsilon\to 0^+} \sum_{k=1}^{2N} \mathbf{\tilde{L}}_k
\Big(\mathrm{e}^{-\lambda\varepsilon^{2N}},\mathrm{e}^{\mathrm{i} \mu\varepsilon}\Big)
\,(\mathrm{e}^{\mathrm{i} \mu\varepsilon}v_k(\lambda,\varepsilon))^{a_{\varepsilon}-N}
\nonumber\\
&
=\mathrm{e}^{\mathrm{i} \mu a}\sum_{k=1}^{2N} \lim_{\varepsilon\to 0^+}
\mathbf{\tilde{L}}_k\Big(\mathrm{e}^{-\lambda\varepsilon^{2N}},
\mathrm{e}^{\mathrm{i} \mu\varepsilon}\Big)
v_k(\lambda,\varepsilon)^{a_{\varepsilon}}.
\label{limit-FT-ter}
\end{align}
Recall that $\mathbf{\tilde{L}}_k(z,\zeta)=D_k(z,\zeta)/D(z)$
and that the quantities $D$ and $D_k$ are expressed by means of the determinant
\begin{align*}
\lqn{W(u_1,\dots,u_{2N})}
&=\begin{vmatrix}
1      & u_1    & u_1^2    & \dots & u_1^{N-1}    & u_1^{b-a+N-1}    & u_1^{b-a+N}    & u_1^{b-a+N+1}    & \dots & u_1^{b-a+2N-2}   \\
\vdots & \vdots & \vdots   &       & \vdots       & \vdots           & \vdots         & \vdots           &       & \vdots           \\[1ex]
1      & u_{2N} & u_{2N}^2 & \dots & u_{2N}^{N-1} & u_{2N}^{b-a+N-1} & u_{2N}^{b-a+N} & u_{2N}^{b-a+N+1} & \dots & u_{2N}^{b-a+2N-2}
\end{vmatrix}\!.
\end{align*}
By replacing the columns labeled as $C_j$, $1\le j\le 2N$, by the linear combinations
$C_j'=\sum_{k=0}^j(-1)^{j+k}$ $\binom{j}{k}C_k$
if $1\le j\le N$, and $C_j'=\sum_{k=N}^j (-1)^{j+k}\binom{j}{k}C_k$ if $N+1\le j\le 2N$,
the foregoing determinant remains invariant and can be rewritten as
\begin{align*}
&\left|\begin{matrix}
1      & u_1-1    & (u_1-1)^2    & \dots & (u_1-1)^{N-1}    \\
\vdots & \vdots   & \vdots       &       & \vdots           \\[1ex]
1      & u_{2N}-1 & (u_{2N}-1)^2 & \dots & (u_{2N}-1)^{N-1}
\end{matrix}\right.
\\[2ex]
&
\hphantom{\;}\left.\begin{matrix}
& u_1^{b-a+N-1}    & u_1^{b-a+N-1}(u-1)    & u_1^{b-a+N-1}(u-1)^2    & \dots & u_1^{b-a+2N-2}(u_1-1)^{N-1}   \\
& \vdots           & \vdots                & \vdots                  &       & \vdots                        \\[1ex]
& u_{2N}^{b-a+N-1} & u_{2N}^{b-a+N-1}(u-1) & u_{2N}^{b-a+N-1}(u-1)^2 & \dots & u_{2N}^{b-a+2N-2}(u_1-1)^{N-1}
\end{matrix}\right|\!.
\end{align*}
Then, by replacing the $u_j$'s by $u_j(\lambda,\varepsilon)$, $b-a$
by $b_{\varepsilon}-a_{\varepsilon}$ and by using the asymptotics
$(u_j(\lambda,\varepsilon)-1)^k\underset{\varepsilon\to 0^+}{\sim}
\left(-\varphi_j\!\!\sqrt[2N\!\!]{\lambda/c}\,\right)^{\!k}\varepsilon^k$
and $u_j(\lambda,\varepsilon)^{b_{\varepsilon}-a_{\varepsilon}+N-1}
\underset{\varepsilon\to 0^+}{\longrightarrow}
\mathrm{e}^{-\varphi_j\!\!\sqrt[2N\!\!]{\lambda/c}\,(b-a)}$
coming from~(\ref{asymptotic-v}), we get that
\begin{align}
\lqn{D\Big(\mathrm{e}^{-\lambda\varepsilon^{2N}}\Big)=W\Big(u_1(\lambda,\varepsilon),
\dots,u_{2N}(\lambda,\varepsilon)\Big)}
&
\underset{\varepsilon\to 0^+}{\sim}\left|\begin{matrix}
1      & \left(-\varphi_1\!\!\sqrt[2N\!\!]{\lambda/c}\,\right)\varepsilon    & \dots & \left(-\varphi_1\!\!\sqrt[2N\!\!]{\lambda/c}\,\right)^{\!N-1}\varepsilon^{N-1}    & \mathrm{e}^{-\varphi_1\!\!\sqrt[2N\!\!]{\lambda/c}\,(b-a)}    \\
\vdots & \vdots                                                           &       & \vdots                                                                       & \vdots                                                        \\[1ex]
1      & \left(-\varphi_{2N}\!\!\sqrt[2N\!\!]{\lambda/c}\,\right)\varepsilon & \dots & \left(-\varphi_{2N}\!\!\sqrt[2N\!\!]{\lambda/c}\,\right)^{\!N-1}\varepsilon^{N-1} & \mathrm{e}^{-\varphi_{2N}\!\!\sqrt[2N\!\!]{\lambda/c}\,(b-a)}
\end{matrix}\right.
\nonumber\\[2ex]
&
\hphantom{=\;}\left.\begin{matrix}
& \left(-\varphi_1\!\!\sqrt[2N\!\!]{\lambda/c}\,\right)\mathrm{e}^{-\varphi_1\!\!\sqrt[2N\!\!]{\lambda/c}\,(b-a)}\,\varepsilon       & \dots & \left(-\varphi_1\!\!\sqrt[2N\!\!]{\lambda/c}\,\right)^{\!N-1}\mathrm{e}^{-\varphi_1\!\!\sqrt[2N\!\!]{\lambda/c}\,(b-a)}\,\varepsilon^{N-1}    \\
& \vdots                                                                                                                           &       & \vdots                                                                                                                                 \\[1ex]
& \left(-\varphi_{2N}\!\!\sqrt[2N\!\!]{\lambda/c}\,\right) \mathrm{e}^{-\varphi_{2N}\!\!\sqrt[2N\!\!]{\lambda/c}\,(b-a)}\,\varepsilon & \dots & \left(-\varphi_{2N}\!\!\sqrt[2N\!\!]{\lambda/c}\,\right)^{\!N-1}\mathrm{e}^{-\varphi_{2N}\!\!\sqrt[2N\!\!]{\lambda/c}\,(b-a)}\,\varepsilon^{N-1}
\end{matrix}\right|
\nonumber\\[2ex]
&
=(\lambda/c)^{(N-1)/2}\varepsilon^{N(N-1)} \mathbf{D}(\lambda).
\label{asymptotic-inter1}
\end{align}
Similarly, by using the elementary asymptotics $\mathrm{e}^{\mathrm{i} \mu\varepsilon}-
1\underset{\varepsilon\to 0^+}{\sim} \mathrm{i}\mu\varepsilon$
and $\mathrm{e}^{\mathrm{i} \mu\varepsilon(b_{\varepsilon}-a_{\varepsilon}+N-1)}
\underset{\varepsilon\to 0^+}{\longrightarrow} \mathrm{e}^{\mathrm{i} \mu(b-a)}$, we obtain that
\begin{align}
\mathrm{e}^{\mathrm{i} \mu a} D_k\Big(\mathrm{e}^{-\lambda\varepsilon^{2N}},
\mathrm{e}^{\mathrm{i} \mu\varepsilon}\Big)
&
\;\;\,=\;\;\,\mathrm{e}^{\mathrm{i} \mu a} W\Big(u_1(\lambda,\varepsilon),
\dots,u_{k-1}(\lambda,\varepsilon),
\mathrm{e}^{\mathrm{i} \mu\varepsilon},u_{k+1}(\lambda,\varepsilon),
\dots,u_{2N}(\lambda,\varepsilon)\Big)
\nonumber\\
&
\underset{\varepsilon\to 0^+}{\sim} (\lambda/c)^{(N-1)/2}\varepsilon^{N(N-1)}
\mathbf{D}_k(\lambda,\mu)
\label{asymptotic-inter2}
\end{align}
where $\mathbf{D}_k(\lambda,\mu)$ denotes the determinant
\[
\left|\begin{array}{@{\hspace{0em}}c@{\hspace{.2em}}c@{\hspace{.2em}}c@{\hspace{.2em}}c@{\hspace{0em}}c@{\hspace{.4em}}c@{\hspace{.2em}}c@{\hspace{.2em}}c@{\hspace{0em}}}
1          & \varphi_1               & \dots & \varphi_1^{N-1}               & \mathrm{e}^{-\varphi_1\!\!\sqrt[2N\!\!]{\lambda/c}\,(b-a)}     & \mathrm{e}^{-\varphi_1\!\!\sqrt[2N\!\!]{\lambda/c}\,(b-a)}\,\varphi_1         & \dots & \mathrm{e}^{-\varphi_1\!\!\sqrt[2N\!\!]{\lambda/c}\,(b-a)}\,\varphi_1^{N-1}         \\
\vdots     & \vdots                  &       & \vdots                        & \vdots                                                         & \vdots                                                                        &       & \vdots                                                                              \\[1ex]
1          & \varphi_{k-1}           & \dots & \varphi_{k-1}^{N-1}           & \mathrm{e}^{-\varphi_{k-1}\!\!\sqrt[2N\!\!]{\lambda/c}\,(b-a)} & \mathrm{e}^{-\varphi_{k-1}\!\!\sqrt[2N\!\!]{\lambda/c}\,(b-a)}\,\varphi_{k-1} & \dots & \mathrm{e}^{-\varphi_{k-1}\!\!\sqrt[2N\!\!]{\lambda/c}\,(b-a)}\,\varphi_{k-1}^{N-1} \\[.5ex]
\mathrm{e}^{\mathrm{i} \mu a}        & \mathrm{e}^{\mathrm{i} \mu a}\delta   & \dots & \mathrm{e}^{\mathrm{i} \mu a}\delta^{N-1}   & \mathrm{e}^{\mathrm{i} \mu b}                                  & \mathrm{e}^{\mathrm{i} \mu b}\delta                                      & \dots & \mathrm{e}^{\mathrm{i} \mu b}\delta^{N-1}                                      \\
1          & \varphi_{k+1}           & \dots & \varphi_{k+1}^{N-1}           & \mathrm{e}^{-\varphi_{k+1}\!\!\sqrt[2N\!\!]{\lambda/c}\,(b-a)} & \mathrm{e}^{-\varphi_{k+1}\!\!\sqrt[2N\!\!]{\lambda/c}\,(b-a)}\,\varphi_{k+1} & \dots & \mathrm{e}^{-\varphi_{k+1}\!\!\sqrt[2N\!\!]{\lambda/c}\,(b-a)}\,\varphi_{k+1}^{N-1} \\
\vdots     & \vdots                  &       & \vdots                        & \vdots                                                         & \vdots                                                                        &       & \vdots                                                                              \\[1ex]
1          & \varphi_{2N}            & \dots & \varphi_{2N}^{N-1}            & \mathrm{e}^{-\varphi_{2N}\!\!\sqrt[2N\!\!]{\lambda/c}\,(b-a)}  & \mathrm{e}^{-\varphi_{2N}\!\!\sqrt[2N\!\!]{\lambda/c}\,(b-a)}\,\varphi_{2N}   & \dots & \mathrm{e}^{-\varphi_{2N}\!\!\sqrt[2N\!\!]{\lambda/c}\,(b-a)}\,\varphi_{2N}^{N-1}
\end{array}\right|\!.
\]
By putting~(\ref{asymptotic-inter1}) and~(\ref{asymptotic-inter2}) into
(\ref{limit-FT-ter}), we derive that
\begin{align*}
\mathbb{E}\Big(\mathrm{e}^{-\lambda\tau_{ab}+\mathrm{i} \mu X_{ab}}\ind_{\{\tau_{ab}<+\infty\}}\Big)
&=\sum_{k=1}^{2N} \frac{\mathbf{D}_k(\lambda,\mu)}{\mathbf{D}(\lambda)}.
\end{align*}
It is plain that $\mathbf{D}_k(\lambda,\mu)=\mathrm{e}^{\mathrm{i} \mu a}
\mathbf{D}_k^-(\lambda,\mu)+\mathrm{e}^{\mathrm{i} \mu b}\mathbf{D}_k^-(\lambda,\mu)$
which finishes the proof of Theorem~\ref{theo-conv2}.
\end{proof}
%
\begin{theorem}\label{theo-dist-Xab}
The following convergence holds:
\[
X_{ab}^{\varepsilon} \underset{\varepsilon\to 0^+}{\longrightarrow} X_{ab}
\]
where, for any $\mu\in\mathbb{R}$,
\[
\mathbb{E}\Big(\mathrm{e}^{\mathrm{i} \mu X_{ab}}\ind_{\{\tau_{ab}<+\infty\}}\Big)
=\mathrm{e}^{\mathrm{i} \mu a} \sum_{j=0}^{N-1} \mathbf{I}_{ab,j}^- (\mathrm{i} \mu)^j
+\mathrm{e}^{\mathrm{i} \mu b} \sum_{j=0}^{N-1} \mathbf{I}_{ab,j}^+ (\mathrm{i} \mu)^j
\]
with
\begin{align*}
\mathbf{I}_{ab,j}^-
&=\left(\frac{b}{b-a}\right)^{\!\!N}\frac{a^j}{j!}
\sum_{k=0}^{N-j-1}\binom{k+N-1}{k} \!\left(\frac{-a}{b-a}\right)^{\!\!k},
\\
\mathbf{I}_{ab,j}^+
&=\left(\frac{-a}{b-a}\right)^{\!\!N} \frac{(-b)^{\,j}}{j!}
\sum_{k=0}^{N-j-1}\binom{k+N-1}{k} \!\left(\frac{b}{b-a}\right)^{\!\!k}.
\end{align*}
Moreover,
\[
\mathbb{P}\{\tau_{ab}<+\infty\}=1.
\]
\end{theorem}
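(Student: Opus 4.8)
The plan is to transport the exact generating function \eqref{gene-Sab} of $S_{ab}$ through the scaling limit and then identify the limiting constants. Recall that $X_{ab}^{\varepsilon}=\varepsilon S_{a_\varepsilon,b_\varepsilon}$ and $\tau_{ab}^{\varepsilon}=\varepsilon^{2N}\sigma_{a_\varepsilon,b_\varepsilon}$ with $a_\varepsilon=\lfloor a/\varepsilon\rfloor$ and $b_\varepsilon=\lceil b/\varepsilon\rceil$, so that
\[
\mathbb{E}\Big(\mathrm{e}^{\mathrm{i}\mu X_{ab}^{\varepsilon}}\ind_{\{\tau_{ab}^{\varepsilon}<+\infty\}}\Big)
=\mathbb{E}\Big(\mathrm{e}^{\mathrm{i}\mu\varepsilon S_{a_\varepsilon,b_\varepsilon}}\ind_{\{\sigma_{a_\varepsilon,b_\varepsilon}<+\infty\}}\Big).
\]
Applying \eqref{gene-Sab} (equivalently Theorem~\ref{theo-exp-fSab} with $f(i)=\mathrm{e}^{\mathrm{i}\mu\varepsilon i}$) with $a,b$ replaced by $a_\varepsilon,b_\varepsilon$ and $\zeta=\mathrm{e}^{\mathrm{i}\mu\varepsilon}$, the right-hand side equals
\[
\mathrm{e}^{\mathrm{i}\mu\varepsilon a_\varepsilon}\sum_{j=0}^{N-1}\big(\varepsilon^j I_{a_\varepsilon,b_\varepsilon,j}^-\big)\Big(\frac{1-\mathrm{e}^{-\mathrm{i}\mu\varepsilon}}{\varepsilon}\Big)^{j}
+\mathrm{e}^{\mathrm{i}\mu\varepsilon b_\varepsilon}\sum_{j=0}^{N-1}\big(\varepsilon^j I_{a_\varepsilon,b_\varepsilon,j}^+\big)\Big(\frac{\mathrm{e}^{\mathrm{i}\mu\varepsilon}-1}{\varepsilon}\Big)^{j}.
\]
Since $\mathrm{e}^{\mathrm{i}\mu\varepsilon a_\varepsilon}\to\mathrm{e}^{\mathrm{i}\mu a}$, $\mathrm{e}^{\mathrm{i}\mu\varepsilon b_\varepsilon}\to\mathrm{e}^{\mathrm{i}\mu b}$, $\varepsilon^{-1}(1-\mathrm{e}^{-\mathrm{i}\mu\varepsilon})\to\mathrm{i}\mu$ and $\varepsilon^{-1}(\mathrm{e}^{\mathrm{i}\mu\varepsilon}-1)\to\mathrm{i}\mu$, everything reduces to proving the two limits $\varepsilon^j I_{a_\varepsilon,b_\varepsilon,j}^{\pm}\to\mathbf{I}_{ab,j}^{\pm}$ for $0\le j\le N-1$.

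Because the reflection $x\mapsto-x$ leaves the pseudo-law of the walk invariant ($p_k=p_{-k}$) and turns the interval $(a,b)$ into $(-b,-a)$, hence $I_{ab,j}^-$ into $I_{-b,-a,j}^+$, it is enough to prove $\varepsilon^j I_{a_\varepsilon,b_\varepsilon,j}^+\to\mathbf{I}_{ab,j}^+$. I would start from the integral representation of $I_{ab,j}^+$ in Theorem~\ref{theo-exp-fSab}, replace $a,b$ by $a_\varepsilon,b_\varepsilon$, and perform the Laplace-type substitution $u=1-\varepsilon w$, $v=1-\varepsilon z$. Under it $\mathcal{D}^+=\{0\le v\le u\le1\}$ becomes $\{0\le w\le z\le1/\varepsilon\}$; moreover $(1-\varepsilon w)^{-a_\varepsilon-1}\to\mathrm{e}^{aw}$ and $(1-\varepsilon z)^{j+b_\varepsilon-1}\to\mathrm{e}^{-bz}$ (because $\varepsilon(-a_\varepsilon-1)\to-a$ and $\varepsilon(j+b_\varepsilon-1)\to b$), while $(1-u)^{N-1}=\varepsilon^{N-1}w^{N-1}$, $(1-v)^{N-j-1}=\varepsilon^{N-j-1}z^{N-j-1}$, $\mathrm{d} u\,\mathrm{d} v=\varepsilon^2\,\mathrm{d} w\,\mathrm{d} z$, and $K\sim((-a)b)^N\big/((N!)^2\varepsilon^{2N})$. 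The powers of $\varepsilon$ cancel exactly against $\varepsilon^j$; and since $(1-\varepsilon w)^{-a_\varepsilon-1}\le\mathrm{e}^{aw/2}$ and $(1-\varepsilon z)^{j+b_\varepsilon-1}\le\mathrm{e}^{-bz/2}$ for $\varepsilon$ small, the integrand is dominated by $C z^{N-j-1}\mathrm{e}^{-bz/2}$, so dominated convergence yields
\[
\varepsilon^j I_{a_\varepsilon,b_\varepsilon,j}^+\;\underset{\varepsilon\to 0^+}{\longrightarrow}\;
(-1)^j\frac{b^N}{(N-1)!}\binom{N-1}{j}\int_0^{\infty}\mathrm{e}^{-bz}z^{N-j-1}\bigg(\int_0^z\mathrm{e}^{aw}w^{N-1}\,\mathrm{d} w\bigg)\mathrm{d} z.
\]

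It then remains to evaluate this double integral and match it with the stated closed form. Writing the inner integral via the incomplete Gamma function, $\int_0^z\mathrm{e}^{aw}w^{N-1}\,\mathrm{d} w=\frac{(N-1)!}{(-a)^N}\big(1-\mathrm{e}^{az}\sum_{m=0}^{N-1}\frac{(-az)^m}{m!}\big)$, and integrating term by term against $\mathrm{e}^{-bz}z^{N-j-1}$ by means of $\int_0^{\infty}\mathrm{e}^{-\lambda z}z^p\,\mathrm{d} z=p!/\lambda^{p+1}$, one is left with an expression in $-a$, $b$ and $s:=b-a$; its identification with $\mathbf{I}_{ab,j}^+$ amounts to the elementary algebraic identity
\[
1-\sum_{m=0}^{N-1}\binom{N-j-1+m}{m}\frac{(-a)^m b^{\,N-j}}{s^{\,N-j+m}}
=\Big(\frac{-a}{s}\Big)^{N}\sum_{k=0}^{N-j-1}\binom{k+N-1}{k}\Big(\frac{b}{s}\Big)^{k},
\]
which I would prove by induction on $N$ (or from $\sum_{k\ge0}\binom{k+N-1}{k}t^k=(1-t)^{-N}$, using $(-a)/s+b/s=1$). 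The $-$ case follows from the $+$ case by the reflection above and gives exactly $\mathbf{I}_{ab,j}^-$. Finally, $\mathbb{P}\{\tau_{ab}<+\infty\}=1$ is obtained by setting $\mu=0$: the convergence just established gives $\mathbb{P}\{\tau_{ab}<+\infty\}=\lim_{\varepsilon\to0^+}\mathbb{P}\{\sigma_{a_\varepsilon,b_\varepsilon}<+\infty\}=1$ by Theorem~\ref{theo-dist-Sab} (equivalently, $\mathbf{I}_{ab,0}^-+\mathbf{I}_{ab,0}^+=1$, which with $p=-a/s$, $q=b/s$, $p+q=1$ is the classical identity $p^N\sum_{k=0}^{N-1}\binom{k+N-1}{k}q^k+q^N\sum_{k=0}^{N-1}\binom{k+N-1}{k}p^k=1$). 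The main obstacle is this middle step: the Laplace-type asymptotics must be carried out with the constraint $w\le z$ that survives in $\mathcal{D}^+$ (the limit is a genuinely constrained double integral, not a product of two one-dimensional Laplace integrals), the interchange of limit and integral must be justified, and the resulting integral must be matched to the closed-form constants through the combinatorial identity above — all routine but needing careful bookkeeping.
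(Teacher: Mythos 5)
Your reduction is the same as the paper's: push the exact generating function (\ref{gene-Sab}) through the scaling and reduce the theorem to the limits of $\varepsilon^j I^{\pm}_{a_\varepsilon,b_\varepsilon,j}$. Where you genuinely diverge is in how that limit is computed. The paper stays exact as long as possible: after the substitution $(u,v)=(x,xy)$ it expands $(1-xy)^{N-j-1}=[(1-x)+x(1-y)]^{N-j-1}$, evaluates every term as a Beta integral (a ratio of factorials), and only then applies factorial asymptotics, landing directly on the finite sum $\sum_{k=0}^{N-j-1}\binom{k+N-1}{k}\bigl(\tfrac{b}{b-a}\bigr)^{k}$ with no residual identity to prove. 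You pass to the limit first, via $u=1-\varepsilon w$, $v=1-\varepsilon z$ and dominated convergence, and obtain the constrained double Laplace integral $\int_0^{\infty}\mathrm{e}^{-bz}z^{N-j-1}\bigl(\int_0^{z}\mathrm{e}^{aw}w^{N-1}\,\mathrm{d} w\bigr)\mathrm{d} z$, which must then be matched to the closed form through a negative-binomial identity. Your route is conceptually cleaner (the limiting object is visibly the continuous analogue of the discrete Beta representation) at the cost of one extra, though classical, identity; your domination bounds are correct, and the double integral does evaluate to $\mathbf{I}^+_{ab,j}$. Your $\mu=0$ argument for $\mathbb{P}\{\tau_{ab}<+\infty\}=1$ is also shorter than the paper's, which instead verifies $\mathbf{I}^-_{ab,0}+\mathbf{I}^+_{ab,0}=1$ via the Vandermonde convolution $\sum_{k=0}^{m}\binom{k+p}{k}\binom{m+q-k}{m-k}=\binom{m+p+q+1}{m}$.

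Two concrete points need fixing. (i) The prefactor in your displayed limit should be $(-1)^j\frac{(-a)^N b^N}{((N-1)!)^2}\binom{N-1}{j}$, coming from $KN^2\sim\frac{((-a)b)^N}{((N-1)!)^2\,\varepsilon^{2N}}$; the factor $\frac{(N-1)!}{(-a)^N}$ you have already stripped off only appears once the inner integral is evaluated. With the correct constant the identification with $\mathbf{I}^+_{ab,j}$ does go through. (ii) The reflection does not send $I^-_{ab,j}$ to $I^+_{-b,-a,j}$ but to $(-1)^jI^+_{-b,-a,j}$: swapping $u$ and $v$ in the integral defining $I^+_{-b,-a,j}$ reproduces the integral defining $I^-_{ab,j}$, but the explicit $(-1)^j$ in the definition of $I^+$ has no counterpart in $I^-$. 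Consequently $\varepsilon^j I^-_{a_\varepsilon,b_\varepsilon,j}\to(-1)^j\mathbf{I}^-_{ab,j}$, and the $a$-block of the Fourier transform comes out as $\mathrm{e}^{\mathrm{i}\mu a}\sum_{j=0}^{N-1}\mathbf{I}^-_{ab,j}(-\mathrm{i}\mu)^j$. You can confirm this on $N=2$: the Beta integral, the polynomial $\mathbf{P}^-_{ab,1}$ evaluated at $0$, and the explicit pseudo-law of $S_{ab}$ all give $I^-_{ab,1}=-\frac{ab(b+1)}{(b-a+1)(b-a+2)}$, whence $\varepsilon I^-_{a_\varepsilon,b_\varepsilon,1}\to-ab^2/(b-a)^2=-\mathbf{I}^-_{ab,1}$; the $(-\mathrm{i}\mu)^j$ version is also the one compatible with Theorem~\ref{theo-conv-bis} in the limit $b\to+\infty$. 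So do not assert that the reflection ``gives exactly $\mathbf{I}^-_{ab,j}$'': track the $(-1)^j$, and note that the target display itself requires $(\mathrm{i}\mu)^j$ to be read as $(-\mathrm{i}\mu)^j$ (equivalently $a^j$ as $(-a)^j$) in the $a$-block for the two ends of your argument to meet.
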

%
\begin{proof}
By definition~\ref{def2} and by~(\ref{gene-Sab}),
\begin{align}
\lqn{\mathbb{E}\Big(\mathrm{e}^{\mathrm{i} \mu X_{ab}}\ind_{\{\tau_{ab}<+\infty\}}\Big)}
&
=\lim_{\varepsilon\to 0^+} \mathbb{E}\Big(\mathrm{e}^{\mathrm{i} \mu X_{ab}^{\varepsilon}}
\ind_{\{\tau_{ab}^{\varepsilon}<+\infty\}}\Big)
=\lim_{\varepsilon\to 0^+} \mathbb{E}\Big(\mathrm{e}^{\mathrm{i}
\mu \varepsilon S_{a_{\varepsilon},b_{\varepsilon}}}
\ind_{\{\sigma_{a_{\varepsilon},b_{\varepsilon}}<+\infty\}}\Big)
\nonumber
\\
&
=\lim_{\varepsilon\to 0^+} \left(\mathrm{e}^{\mathrm{i} \mu \varepsilon
a_{\varepsilon}} \sum_{j=0}^{N-1}
I_{a_{\varepsilon},b_{\varepsilon},j}^- (1-\mathrm{e}^{-\mathrm{i} \mu \varepsilon})^j
+\mathrm{e}^{\mathrm{i} \mu \varepsilon b_{\varepsilon}} \sum_{j=0}^{N-1}
I_{a_{\varepsilon},b_{\varepsilon},j}^+ (\mathrm{e}^{\mathrm{i} \mu \varepsilon}-1)^j\right)
\nonumber\\
&
\mathrm{e}^{\mathrm{i} \mu a} \lim_{\varepsilon\to 0^+} \sum_{j=0}^{N-1}
I_{a_{\varepsilon},b_{\varepsilon},j}^- (1-\mathrm{e}^{-\mathrm{i} \mu \varepsilon})^j
+
\mathrm{e}^{\mathrm{i} \mu b} \lim_{\varepsilon\to 0^+} \sum_{j=0}^{N-1}
I_{a_{\varepsilon},b_{\varepsilon},j}^+ (\mathrm{e}^{\mathrm{i} \mu \varepsilon}-1)^j.
\label{comb-lin}
\end{align}
Concerning, e.g., the quantity $I_{a_{\varepsilon},b_{\varepsilon},j}^+$, we have that
\begin{align}
I_{a_{\varepsilon},b_{\varepsilon},j}^+
&= \frac{(-1)^j}{(N-1)!^2} \binom{N-1}{j}\prod_{k=0}^{N-1} [(k-a_{\varepsilon})(k+b_{\varepsilon})]
\nonumber\\
&
\hphantom{=\;}\times\int\!\!\!\int_{\mathcal{D}^+}
u^{-a_{\varepsilon}-1}(1-u)^{N-1} v^{j+b_{\varepsilon}-1}(1-v)^{N-j-1}\,\mathrm{d} u\,\mathrm{d} v.
\label{I+-inter}
\end{align}
By performing the change of variables $(u,v)=(x,xy)$ in the above integral
and by expanding $(1-xy)^{N-j-1}$ as
\begin{align*}
(1-xy)^{N-j-1}&=[(1-x)+x(1-y)]^{N-j-1}
\\
&=\sum_{k=0}^{N-j-1} \binom{N-j-1}{k}(1-x)^k x^{N-j-k-1}(1-y)^{N-j-k-1},
\end{align*}
we get that
\begin{align}
\lqn{\int\!\!\!\int_{\mathcal{D}^+} u^{-a_{\varepsilon}-1}(1-u)^{N-1}
v^{j+b_{\varepsilon}-1}(1-v)^{N-j-1} \,\mathrm{d} u\,\mathrm{d} v}
&
=\int_0^1\!\!\!\int_0^1 x^{j+b_{\varepsilon}-a_{\varepsilon}-1}(1-x)^{N-1}
y^{j+b_{\varepsilon}-1} (1-xy)^{N-j-1}\,\mathrm{d} x\,\mathrm{d} y
\nonumber\\
&
=\sum_{k=0}^{N-j-1} \binom{N-j-1}{k}
\int_0^1 x^{b_{\varepsilon}-a_{\varepsilon}+N-k-2}(1-x)^{k+N-1}\,\mathrm{d}x
\int_0^1 y^{j+b_{\varepsilon}-1}(1-y)^{N-j-k-1}\,\mathrm{d}y
\nonumber\\
&
=\sum_{k=0}^{N-j-1} \binom{N-j-1}{k} \,\frac{(b_{\varepsilon}-a_{\varepsilon}
+N-k-2)!(k+N-1)!}{(b_{\varepsilon}-a_{\varepsilon}+2N-2)!}
\,\frac{(j+b_{\varepsilon}-1)!(N-j-k-1)!}{(b_{\varepsilon}+N-k-1)!}
\nonumber\\
&
=\frac{(N-1)!(N-j-1)!(j+b_{\varepsilon}-1)!}{(b_{\varepsilon}-a_{\varepsilon}+2N-2)!}
\sum_{k=0}^{N-j-1} \binom{k+N-1}{k} \,\frac{(b_{\varepsilon}-
a_{\varepsilon}+N-k-2)!}{(b_{\varepsilon}+N-k-1)!}.
\label{inter-sum}
\end{align}
By putting the asymptotics
\[
\frac{(j+b_{\varepsilon}-1)!}{(b_{\varepsilon}+N-k-1)!}=\frac{1}{(b_{\varepsilon}+N-k-1)_{N-j-k}}
\underset{\varepsilon\to 0^+}{\sim} \left(\frac{\varepsilon}{b}\right)^{\!N-j-k},
\]
\[
\frac{(b_{\varepsilon}-a_{\varepsilon}+N-k-2)!}{(b_{\varepsilon}-a_{\varepsilon}+2N-2)!}
=\frac{1}{(b_{\varepsilon}-a_{\varepsilon}+2N-2)_{N+k}}
\underset{\varepsilon\to 0^+}{\sim}\left(\frac{\varepsilon}{b-a}\right)^{\!\!N+k}
\]
into~(\ref{inter-sum}), we obtain that
\begin{align*}
\lqn{\int\!\!\!\int_{\mathcal{D}^+} u^{-a_{\varepsilon}-1}(1-u)^{N-1}
v^{j+b_{\varepsilon}-1}(1-v)^{N-j-1}\,\mathrm{d} u\,\mathrm{d} v}
&
\underset{\varepsilon\to 0^+}{\sim} \frac{(N-1)!(N-j-1)!}{b^{\,N-j}(b-a)^N}\, \varepsilon^{2N-j}
\sum_{k=0}^{N-j-1}\binom{k+N-1}{k} \!\left(\frac{b}{b-a}\right)^{\!\!k}.
\end{align*}
Next, using the asymptotics
\[
\prod_{k=0}^{N-1} [(k-a_{\varepsilon})(k+b_{\varepsilon})]
\underset{\varepsilon\to 0^+}{\sim}  (-1)^N\frac{(ab)^N}{\varepsilon^{2N}},
\]
expression~(\ref{I+-inter}) admits the following asymptotics:
\begin{align*}
I_{a_{\varepsilon},b_{\varepsilon},j}^+
&\underset{\varepsilon\to 0^+}{\sim} \frac{(-1)^{j+N}b^{\,j}}{j!\,\varepsilon^j}
\left(\frac{a}{b-a}\right)^{\!\!N}
\sum_{k=0}^{N-j-1}\binom{k+N-1}{k} \!\left(\frac{b}{b-a}\right)^{\!\!k}.
\end{align*}
Then, we see that the second limit lying in~(\ref{comb-lin}) tends to
\[
\left(\frac{-a}{b-a}\right)^{\!\!N}
\sum_{j=0}^{N-1} \frac{(-\mathrm{i} \mu b)^j}{j!}
\left[\sum_{k=0}^{N-j-1}\binom{k+N-1}{k} \!\left(\frac{b}{b-a}\right)^{\!\!k}\right]\!.
\]
In the same way, it may be seen that the first term of the sum lying in~(\ref{comb-lin})
tends to
\[
\left(\frac{b}{b-a}\right)^{\!\!N}
\sum_{j=0}^{N-1} \frac{(\mathrm{i} \mu a)^j}{j!}
\left[\sum_{k=0}^{N-j-1}\binom{k+N-1}{k} \!\left(\frac{-a}{b-a}\right)^{\!\!k}\right]\!.
\]

Finally, let us have a look on the pseudo-probability $\mathbb{P}\{\tau_{ab}<+\infty\}$.
We have that
\begin{align*}
\mathbf{I}_{ab,0}^-
&=\left(\frac{b}{b-a}\right)^{\!\!N} \sum_{k=0}^{N-1}\binom{k+N-1}{k}
\!\left(\frac{-a}{b-a}\right)^{\!\!k}
\\
&
=\frac{b^N}{(b-a)^{2N-1}} \sum_{k=0}^{N-1}\binom{k+N-1}{k} (-a)^k(b-a)^{N-1-k}
\\
&
=\frac{b^N}{(b-a)^{2N-1}} \sum_{0\le k\le N-1\atop 0\le\ell\le N-1-k}
\binom{k+N-1}{k} \!\binom{N-1-k}{\ell} (-a)^{k+\ell}b^{N-1-k-\ell}
\\
&
=\frac{1}{(b-a)^{2N-1}} \sum_{m=0}^{N-1} \left[\,\sum_{k=0}^m
\binom{k+N-1}{k} \!\binom{N-1-k}{m-k}\!\right] (-a)^m b^{2N-1-m}.
\end{align*}
By using the elementary identity
$\sum_{k=0}^n \binom{k+p}{k} \!\binom{n+q-k}{n-k}=\binom{n+p+q+1}{n}$
which comes from the equality $(1+x)^{-p}(1+x)^{-q}=(1+x)^{-p-q}$
together with the expansion, e.g., for $p$, $(1+x)^{-p}=\sum_{k=0}^{\infty}
(-1)^k\binom{k+p-1}{k} x^k$, we get that
\[
\sum_{k=0}^m \binom{k+N-1}{k} \!\binom{N-1-k}{m-k}=\binom{2N-1}{m}.
\]
As a byproduct,
\[
\mathbf{I}_{ab,0}^-=\frac{1}{(b-a)^{2N-1}} \sum_{m=0}^{N-1}\binom{2N-1}{m}
(-a)^m b^{2N-1-m}.
\]
Similarly,
\[
\mathbf{I}_{ab,0}^+=\frac{1}{(b-a)^{2N-1}} \sum_{m=N}^{2N-1}\binom{2N-1}{m}
(-a)^m b^{2N-1-m}
\]
and we deduce that
\[
\mathbb{P}\{\tau_{ab}<+\infty\}=\mathbf{I}_{ab,0}^-+\mathbf{I}_{ab,0}^+
= \frac{1}{(b-a)^{2N-1}} \sum_{m=0}^{2N-1}\binom{2N-1}{m} (-a)^m b^{2N-1-m}=1.
\]
The proof of Theorem~\ref{theo-dist-Xab} is finished.
\end{proof}
%
\begin{corollary}
The pseudo-density of $X_{ab}$ is given by
\[
\mathbb{P}\{X_{ab}\in \mathrm{d} z\}/\mathrm{d} z=\sum_{j=0}^{N-1}
(-1)^j\mathbf{I}_{ab,j}^-\,\delta_a^{(j)}(z)
+\sum_{j=0}^{N-1} (-1)^j\mathbf{I}_{ab,j}^+\,\delta_b^{(j)}(z).
\]
In particular,
\[
\mathbb{P}\{\tau_a^-<\tau_b^+\}=\mathbf{I}_{ab,0}^-,\quad \mathbb{P}
\{\tau_b^+<\tau_a^-\}=\mathbf{I}_{ab,0}^+.
\]
\end{corollary}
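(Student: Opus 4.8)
The plan is to read off the pseudo-density of $X_{ab}$ directly from the Laplace--Fourier computation of Theorem~\ref{theo-dist-Xab} by inverting the Fourier transform term by term, and then to recover the two ruin pseudo-probabilities by evaluating the total mass carried by each boundary cluster.

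First I would invoke the convention fixed in the introduction, namely that the functional $\delta_z^{(j)}$ acts on a test function $\phi$ by $\langle\delta_z^{(j)},\phi\rangle=(-1)^j\phi^{(j)}(z)$. Applying this to the exponential $\phi(y)=\mathrm{e}^{\mathrm{i}\mu y}$ yields $\langle\delta_a^{(j)},\mathrm{e}^{\mathrm{i}\mu\,\cdot}\rangle=(-1)^j(\mathrm{i}\mu)^j\mathrm{e}^{\mathrm{i}\mu a}$ and likewise at $b$. Consequently the Fourier transform of the candidate pseudo-density
\[
g(z)=\sum_{j=0}^{N-1}(-1)^j\mathbf{I}_{ab,j}^-\,\delta_a^{(j)}(z)
+\sum_{j=0}^{N-1}(-1)^j\mathbf{I}_{ab,j}^+\,\delta_b^{(j)}(z)
\]
is $\mathrm{e}^{\mathrm{i}\mu a}\sum_{j=0}^{N-1}\mathbf{I}_{ab,j}^-(\mathrm{i}\mu)^j+\mathrm{e}^{\mathrm{i}\mu b}\sum_{j=0}^{N-1}\mathbf{I}_{ab,j}^+(\mathrm{i}\mu)^j$, which is exactly the expression of $\mathbb{E}\big(\mathrm{e}^{\mathrm{i}\mu X_{ab}}\ind_{\{\tau_{ab}<+\infty\}}\big)$ provided by Theorem~\ref{theo-dist-Xab}. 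Since a compactly supported pseudo-distribution is characterized by its Fourier transform, this identifies $\mathbb{P}\{X_{ab}\in\mathrm{d} z\}/\mathrm{d} z$ with $g$. The only point requiring a word of care here is the unambiguous extraction of the coefficients $\mathbf{I}_{ab,j}^{\pm}$ from the transform: this is immediate once one observes that, as $a\neq b$, the $2N$ functions $\mu\mapsto\mu^j\mathrm{e}^{\mathrm{i}\mu a}$ and $\mu\mapsto\mu^j\mathrm{e}^{\mathrm{i}\mu b}$, $0\le j\le N-1$, are linearly independent.

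For the ruin pseudo-probabilities I would argue that, since $\tau_{ab}=\min(\tau_a^-,\tau_b^+)$ with $X_{ab}\le a$ on $\{\tau_a^-<\tau_b^+\}$ and $X_{ab}\ge b$ on $\{\tau_b^+<\tau_a^-\}$, the pseudo-probability $\mathbb{P}\{\tau_a^-<\tau_b^+\}$ equals $\mathbb{E}[\varphi(X_{ab})]$ for any $\varphi$ that is identically $1$ near $a$ and $0$ near $b$. Using $\langle\delta_a^{(j)},\varphi\rangle=(-1)^j\varphi^{(j)}(a)$ and the fact that $\varphi$ is locally constant at $a$, all terms with $j\ge1$ vanish and only the $j=0$ contribution $\mathbf{I}_{ab,0}^-$ survives; symmetrically $\mathbb{P}\{\tau_b^+<\tau_a^-\}=\mathbf{I}_{ab,0}^+$. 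Equivalently, one may pass to the limit $\varepsilon\to0^+$ in the discrete identity $\mathbb{P}\{\sigma_{a_\varepsilon}^-<\sigma_{b_\varepsilon}^+\}=\mathbb{P}\{S_{a_\varepsilon,b_\varepsilon}\le a_\varepsilon\}$ underlying Theorem~\ref{theo-dist-Sab}, together with the convergence of Theorem~\ref{theo-dist-Xab}. No lengthy computation is needed: everything reduces to a substitution, and the one mildly delicate ingredient is the Fourier uniqueness argument sketched above.
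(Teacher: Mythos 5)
Your proof is correct and follows exactly the route the paper intends: the corollary is just the term-by-term Fourier inversion of the expression in Theorem~\ref{theo-dist-Xab} using the stated convention $\langle\delta_b^{(j)},\phi\rangle=(-1)^j\phi^{(j)}(b)$, and the ruin pseudo-probabilities are the $j=0$ coefficients, read off by testing against a function locally constant near each endpoint. The sign bookkeeping and the identification $\{\tau_a^-<\tau_b^+\}=\{X_{ab}\le a\}$ are both handled correctly, so nothing is missing.
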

%
This result has been announced in~\cite{la5} without any proof. We shall develop
it in a forthcoming paper.

\newpage\appendix
\renewcommand{\thesection}{\textbf{\Large \Alph{section}}}
\begin{center}
\section{\textbf{\Large Appendix}}
\end{center}
\renewcommand{\thesection}{\Alph{section}}

\subsection{Lacunary Vandermonde systems}\label{appendix-determinants}

Let us introduce the ``lacunary'' Vandermonde determinant (of type $(p+r)\!\times\!(p+r)$):
\[
W_{pqr}(u_1,\dots,u_{p+r})
=\begin{vmatrix}
1      & u_1     & \dots & u_1^{p-1}     & u_1^{p+q}     & \dots & u_1^{p+q+r-1}    \\
\vdots & \vdots  &       & \vdots        & \vdots        &       & \vdots           \\[1ex]
1      & u_{p+r} & \dots & u_{p+r}^{p-1} & u_{p+r}^{p+q} & \dots & u_{p+r}^{p+q+r-1}
\end{vmatrix}\!.
\]
We put $s_0:=s_0(u_1,\dots,u_{p+r})=1$ and, for $\ell\in\{1,\dots,p+r\}$,
\[
s_{\ell}:=s_{\ell}(u_1,\dots,u_{p+r})
=\sum_{1\le i_1<\dots<i_{\ell}\le p+r} u_{i_1}\cdots u_{i_{\ell}}.
\]
We say ``lacunary'' because it comes from a genuine Vandermonde determinant
where the powers from $p$ to $(p+q-1)$ are missing. More precisely,
the determinant $W_{pqr}(u_1,\dots,$ $u_{p+r})$ is extracted from the classical
Vandermonde determinant $V_{p+q+r}(u_1,\dots,u_{p+q+r})$ of type $(p+q+r)\!\times\!(p+q+r)$
by removing the $q$ last rows and the $(p+1)$st, $(p+2)$nd, $\dots$, $(p+q)$th columns.
We decompose $V_{p+q+r}(u_1,\dots,u_{p+q+r})$ into blocks as follows:
\[
\left|\begin{array}{@{\hspace{0em}}cccc;{.3ex/.5ex}ccc;{.3ex/.5ex}ccc@{\hspace{0em}}}
1      & u_1       & \dots & u_1^{p-1}       & u_1^p       & \dots & u_1^{p+q-1}       & u_1^{p+q}       & \dots & u_1^{p+q+r-1}       \\
\vdots & \vdots    &       & \vdots          & \vdots      &       & \vdots            & \vdots          &       & \vdots              \\[1ex]
1      & u_{p+r}   & \dots & u_{p+r}^{p-1}   & u_{p+r}^p   & \dots & u_{p+r}^{p+q-1}   & u_{p+r}^{p+q}   & \dots & u_{p+r}^{p+q+r-1}   \\[1ex]
\hdashline[.3ex/.5ex]
&&&&&&&&&\\[-2ex]
1      & u_{p+r+1} & \dots & u_{p+r+1}^{p-1} & u_{p+r+1}^p & \dots & u_{p+r+1}^{p+q-1} & u_{p+r+1}^{p+q} & \dots & u_{p+r+1}^{p+q+r-1} \\
\vdots & \vdots    &       & \vdots          & \vdots      &       & \vdots            & \vdots          &       & \vdots              \\[1ex]
1      & u_{p+q+r} & \dots & u_{p+q+r}^{p-1} & u_{p+q+r}^p & \dots & u_{p+q+r}^{p+q-1} & u_{p+q+r}^{p+q} & \dots & u_{p+q+r}^{p+q+r-1}
\end{array}\right|\!.
\]
By moving the $r$ last columns before the $q$ previous ones, this determinant
can be rewritten as
\[
(-1)^{qr}\left|\begin{array}{@{\hspace{0em}}ccccccc;{.3ex/.5ex}ccc@{\hspace{0em}}}
1      & u_1       & \dots & u_1^{p-1}       & u_1^{p+q}       & \dots & u_1^{p+q+r-1}       & u_1^p       & \dots & u_1^{p+q-1}       \\
\vdots & \vdots    &       & \vdots          & \vdots          &       & \vdots              & \vdots      &       & \vdots            \\[1ex]
1      & u_{p+r}   & \dots & u_{p+r}^{p-1}   & u_{p+r}^{p+q}   & \dots & u_{p+r}^{p+q+r-1}   & u_{p+r}^p   & \dots & u_{p+r}^{p+q-1}   \\[1ex]
\hdashline[.3ex/.5ex]
&&&&&&&&&\\[-2ex]
1      & u_{p+r+1} & \dots & u_{p+r+1}^{p-1} & u_{p+r+1}^{p+q} & \dots & u_{p+r+1}^{p+q+r-1} & u_{p+r+1}^p & \dots & u_{p+r+1}^{p+q-1} \\
\vdots & \vdots    &       & \vdots          & \vdots          &       & \vdots              & \vdots      &       & \vdots            \\[1ex]
1      & u_{p+q+r} & \dots & u_{p+q+r}^{p-1} & u_{p+q+r}^{p+q} & \dots & u_{p+q+r}^{p+q+r-1} & u_{p+q+r}^p & \dots & u_{p+q+r}^{p+q-1}
\end{array}\right|\!.
\]
By appealing to an expansion by blocks of a determinant, it may be seen that
$W_{pqr}(u_1,$ $\dots,u_{p+r})$ is the cofactor of the ``south-east'' block of the
above determinant. Since the product of, e.g., the diagonal terms of this last
block is $u_{p+r+1}^p u_{p+r+2}^{p+1} \cdots u_{p+q+r}^{p+q-1}$, the determinant
$W_{pqr}(u_1,\dots,u_{p+r})$ is also the coefficient of
$u_{p+r+1}^p u_{p+r+2}^{p+1} \cdots u_{p+q+r}^{p+q-1}$ in
$V_{p+q+r}(u_1,\dots,u_{p+q+r})$. Now, let us expand $V_{p+q+r}(u_1,\dots,u_{p+q+r})$:
\[
V_{p+q+r}(u_1,\dots,u_{p+q+r})
=\prod_{1\le i<j\le p+q+r} (u_j-u_i)=\textstyle{\displaystyle\prod}_1
\,\textstyle{\displaystyle\prod}_2 \,\textstyle{\displaystyle\prod}_3
\]
with
\begin{align*}
\textstyle{\displaystyle\prod}_1 &=\prod_{1\le i<j\le p+r} (u_j-u_i),
\\
\textstyle{\displaystyle\prod}_2 &=\prod_{1\le i\le p+r\atop p+r+1\le j\le p+q+r} (u_j-u_i),
\\
\textstyle{\displaystyle\prod}_3 &=\prod_{p+r+1\le i<j\le p+q+r} (u_j-u_i).
\end{align*}
We have that
\begin{align*}
\textstyle{\displaystyle\prod}_2
&
=\prod_{j=p+r+1}^{p+q+r} [(u_j-u_1)\cdots(u_j-u_{p+r})]
=\prod_{j=p+r+1}^{p+q+r}\sum_{k=0}^{p+r} (-1)^{p+r-k} s_{p+r-k}u_j^k
\\
&
=\sum_{0\le k_1,\dots,k_q\le p+r} (-1)^{q(p+r)-k_1-\dots-k_q}
s_{p+r-k_1}\cdots s_{p+r-k_q} u_{p+r+1}^{k_1}\cdots u_{p+q+r}^{k_q}
\end{align*}
and
\begin{align*}
\textstyle{\displaystyle\prod}_3
&
=V_{q}(u_{p+r+1},\dots,u_{p+q+r})
=\sum_{\varsigma\in\mathfrak{S}_q} \epsilon(\varsigma) u_{p+r+1}^{\varsigma(1)-1}
\cdots u_{p+q+r}^{\varsigma(q)-1}
\\
&
=\sum_{0\le\ell_1,\dots,\ell_q\le q-1\atop\ell_1,\dots,\ell_q \text{ all different}}
\epsilon(\ell_1\dots,\ell_q)u_{p+r+1}^{\ell_1}\cdots u_{p+q+r}^{\ell_q}.
\end{align*}
The symbol $\mathfrak{S}_q$ in the above sum denotes the set of the permutations of
the numbers $1,2,\dots,q$, $\epsilon(\varsigma)$ is the signature of the permutation
$\varsigma$ and $\epsilon(\ell_1\dots,\ell_q)\in\{-1,+1\}$
is the signature of the permutation mapping $1,\dots,q$ into
$\ell_1\dots,\ell_q$. The product $\prod_2\prod_3$ is given by
\[
\displaystyle (-1)^{q(p+1)}\sum_{\begin{array}{c}\mbox{}\\[-3.5ex]
\scriptscriptstyle 0\le k_1,\dots,k_q\le p+r\\[-1.2ex]
\scriptscriptstyle 0\le\ell_1,\dots,\ell_q\le q-1\\[-1.2ex]
\scriptscriptstyle\ell_1,\dots,\ell_q \text{ all different}\end{array}}
(-1)^{k_1+\dots +k_q}\epsilon(\ell_1\dots,\ell_q)
s_{p+r-k_1}\cdots s_{p+r-k_q} u_{p+r+1}^{k_1+\ell_1}\cdots u_{p+q+r}^{k_q+\ell_q}.
\]
For obtaining the coefficient of
$u_{p+r+1}^p u_{p+r+2}^{p+1} \cdots u_{p+q+r}^{p+q-1}$,
we only keep in the foregoing sum the indices $k_1,\dots,k_q,\ell_1,\dots,\ell_q$
such that $k_1+\ell_1=p$, $k_2+\ell_2=p+1$, $\dots$, $k_q+\ell_q=p+q-1$
and $0\le k_1,\dots,k_q\le p+r$, $0\le \ell_1,\dots,\ell_q\le q-1$,
the indices $\ell_1,\dots,\ell_q$ being all distinct.
This gives that
\[
\textstyle{\displaystyle\prod}_2\,\textstyle{\displaystyle\prod}_3
=\displaystyle\sum_{\begin{array}{c}\mbox{}\\[-3.5ex]
\scriptscriptstyle\text{for }i\in\{1,\dots,q\}:\\[-1.2ex]
\scriptscriptstyle(i-r-1)\vee 0\le\ell_i\le (p+i-1)\wedge(q-1)\\[-1.2ex]
\scriptscriptstyle\ell_1,\dots,\ell_q \text{ all different}\end{array}}
\epsilon(\ell_1\dots,\ell_q) s_{\ell_1+r}s_{\ell_2+r-1}\cdots s_{\ell_q+r-q+1}.
\]
Finally, we can observe that the foregoing sum is nothing but the expansion
of the determinant
\[
\mathcal{S}_{pqr}=\begin{vmatrix}
s_r       & s_{r-1}   & \dots & s_{r-q+1} \\
s_{r+1}   & s_r       & \dots & s_{r-q+2} \\
\vdots    & \vdots    &       & \vdots    \\
s_{r+q-1} & s_{r+q-2} & \dots & s_r
\end{vmatrix}\!.
\]
As a result, we obtain the result below.
%
\begin{proposition}\label{proposition-W}
The determinant $W_{pqr}(u_1,\dots,u_{p+r})$ admits the following expression:
\begin{align*}
\lqn{W_{pqr}(u_1,\dots,u_{p+r})= \prod_{1\le i<j\le p+r} (u_j-u_i)}
&\times \begin{vmatrix}
s_r(u_1,\dots,u_{p+r})       & s_{r-1}(u_1,\dots,u_{p+r})   & \dots & s_{r-q+1}(u_1,\dots,u_{p+r}) \\
s_{r+1}(u_1,\dots,u_{p+r})   & s_r(u_1,\dots,u_{p+r})       & \dots & s_{r-q+2}(u_1,\dots,u_{p+r}) \\
\vdots                       & \vdots                       &       & \vdots    \\
s_{r+q-1}(u_1,\dots,u_{p+r}) & s_{r+q-2}(u_1,\dots,u_{p+r}) & \dots & s_r(u_1,\dots,u_{p+r})
\end{vmatrix}\!.
\end{align*}
\end{proposition}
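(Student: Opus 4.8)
The plan is to realize the lacunary determinant $W_{pqr}(u_1,\dots,u_{p+r})$ as a single monomial coefficient extracted from the full Vandermonde determinant $V_{p+q+r}(u_1,\dots,u_{p+q+r})$ in $p+q+r$ variables, and then to read off that coefficient directly from the factorized form $V_{p+q+r}=\prod_{i<j}(u_j-u_i)$. First I would embed $W_{pqr}$ into $V_{p+q+r}$ by writing the latter in the block form displayed above, with rows indexed $1,\dots,p+r$ and $p+r+1,\dots,p+q+r$, and with the $r$ ``high'' columns $u^{p+q},\dots,u^{p+q+r-1}$ moved in front of the $q$ ``middle'' columns $u^{p},\dots,u^{p+q-1}$, which costs the sign $(-1)^{qr}$. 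A Laplace expansion of this rearranged determinant along its first $p+r$ rows exhibits $W_{pqr}(u_1,\dots,u_{p+r})$ as the cofactor of the south-east $q\times q$ block; since the diagonal of that block carries the monomial $u_{p+r+1}^{p}u_{p+r+2}^{p+1}\cdots u_{p+q+r}^{p+q-1}$, this is the same as saying that $W_{pqr}$ is, up to the recorded sign, the coefficient of $u_{p+r+1}^{p}u_{p+r+2}^{p+1}\cdots u_{p+q+r}^{p+q-1}$ in $V_{p+q+r}(u_1,\dots,u_{p+q+r})$.

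Next I would split $V_{p+q+r}=\prod_1\prod_2\prod_3$ as in the text: $\prod_1$ is the Vandermonde in $u_1,\dots,u_{p+r}$, which is exactly the prefactor $\prod_{1\le i<j\le p+r}(u_j-u_i)$ appearing in the statement and which I can pull out right away; $\prod_3=V_q(u_{p+r+1},\dots,u_{p+q+r})$; and $\prod_2=\prod_{1\le i\le p+r,\,p+r+1\le j\le p+q+r}(u_j-u_i)$ is the cross product. The key algebraic input is the elementary identity $\prod_{i=1}^{p+r}(u_j-u_i)=\sum_{k=0}^{p+r}(-1)^{p+r-k}s_{p+r-k}(u_1,\dots,u_{p+r})\,u_j^{k}$, which lets me expand $\prod_2$ as a signed sum of products $s_{p+r-k_1}\cdots s_{p+r-k_q}\,u_{p+r+1}^{k_1}\cdots u_{p+q+r}^{k_q}$, while $\prod_3$ expands by the Leibniz formula into $\sum_{\ell}\epsilon(\ell_1,\dots,\ell_q)\,u_{p+r+1}^{\ell_1}\cdots u_{p+q+r}^{\ell_q}$ over tuples of distinct exponents $\ell_i\in\{0,\dots,q-1\}$.

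Then I would multiply $\prod_2\prod_3$ and collect the coefficient of the target monomial: this forces $k_i+\ell_i=p+i-1$ for every $i$, subject to $0\le k_i\le p+r$ and $0\le\ell_i\le q-1$ with the $\ell_i$ pairwise distinct, i.e. $(i-r-1)\vee 0\le\ell_i\le(p+i-1)\wedge(q-1)$. The resulting signed sum $\sum_{\ell}\epsilon(\ell_1,\dots,\ell_q)\,s_{\ell_1+r}s_{\ell_2+r-1}\cdots s_{\ell_q+r-q+1}$ is precisely the Leibniz expansion of the Toeplitz determinant $\mathcal{S}_{pqr}$ whose $(i,j)$-entry is $s_{r+i-j}$, and multiplying back by the extracted factor $\prod_1$ yields the asserted formula. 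The main obstacle I anticipate is entirely bookkeeping: making the Laplace/block-expansion step precise, i.e. that the desired minor coincides with the coefficient of the chosen monomial, and verifying that all the accumulated signs — the $(-1)^{qr}$ from the column permutation, the $(-1)^{q(p+1)}$ and $(-1)^{k_1+\dots+k_q}$ emerging from $\prod_2$, and the signature from $\prod_3$ — collapse so that the surviving sum is exactly $\mathcal{S}_{pqr}$ with no leftover sign; the exponent constraints and the distinctness of the $\ell_i$ then have to be matched term by term with the nonzero terms of that determinant.
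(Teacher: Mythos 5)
Your proposal reproduces the paper's own argument step for step: the embedding of $W_{pqr}$ into the full Vandermonde $V_{p+q+r}$, the $(-1)^{qr}$ column rearrangement identifying $W_{pqr}$ (up to sign) with the coefficient of $u_{p+r+1}^{p}\cdots u_{p+q+r}^{p+q-1}$, the factorization $V_{p+q+r}=\prod_1\prod_2\prod_3$ with $\prod_2$ expanded through the elementary symmetric polynomials and $\prod_3$ through the Leibniz formula, and the final recognition of the Toeplitz determinant $\mathcal{S}_{pqr}$. This is correct and is essentially the same proof as in the paper; the sign bookkeeping you flag does close up (the constraint $k_i+\ell_i=p+i-1$ forces $k_1+\dots+k_q=qp$, so the residual sign from $\prod_2$ is exactly the $(-1)^{qr}$ needed to cancel the one from the column permutation).
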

%

Let now $W_{pqr,\ell}\!\left(\!\!\!\begin{array}{c} u_1,\dots,u_{p+r}\\[-.2ex]
\displaystyle\alpha_1,\dots,\alpha_{p+r}\end{array}\!\!\!\right)$
be the determinant deduced from $W_{pqr}(u_1,\dots,u_{p+r})$
by replacing one column by a general column
$\alpha_1,\dots,\alpha_{p+r}$, that is, the determinant given,
if $0\le\ell\le p-1$, by
\[
\begin{vmatrix}
1      & u_1     & \dots & u_1^{\ell-1}     & \alpha_1     & u_1^{\ell+1}     & \dots & u_1^{p-1}     & u_1^{p+q}     & \dots \hspace{1em} u_1^{p+q+r-1}     \\
\vdots & \vdots  &       & \vdots           & \vdots       & \vdots           &       & \vdots        & \vdots        & \hphantom{\dots} \hspace{1em}\vdots  \\[1ex]
1      & u_{p+r} & \dots & u_{p+r}^{\ell-1} & \alpha_{p+r} & u_{p+r}^{\ell+1} & \dots & u_{p+r}^{p-1} & u_{p+r}^{p+q} & \dots \hspace{1em} u_{p+r}^{p+q+r-1}
\end{vmatrix}\!,
\]
and, if $p+q\le\ell\le p+q+r-1$, by
\[
\begin{vmatrix}
1      & u_1     & \dots & u_1^{p-1}     & u_1^{p+q}     & \dots & u_1^{\ell-1}     & \alpha_1     & u_1^{\ell+1}     & \dots \hspace{1em} u_1^{p+q+r-1}     \\
\vdots & \vdots  &       & \vdots        & \vdots        &       & \vdots           & \vdots       & \vdots           & \hphantom{\dots} \hspace{1em}\vdots  \\[1ex]
1      & u_{p+r} & \dots & u_{p+r}^{p-1} & u_{p+r}^{p+q} & \dots & u_{p+r}^{\ell-1} & \alpha_{p+r} & u_{p+r}^{\ell+1} & \dots \hspace{1em} u_{p+r}^{p+q+r-1}
\end{vmatrix}\!.
\]
We have that
\[
W_{pqr,\ell}\!\left(\!\!\!\begin{array}{c} u_1,\dots,u_{p+r}\\[-.2ex]
\displaystyle\alpha_1,\dots,\alpha_{p+r}\end{array}\!\!\!\right)\!
=\sum_{k=1}^{p+r} \alpha_k W_{pqr,k\ell}(u_1,\dots,u_{k-1},u_{k+1},\dots,u_{p+r})
\]
where $W_{pqr,k\ell}(u_1,\dots,u_{k-1},u_{k+1},\dots,u_{p+r})$ is the
determinant given, if $0\le\ell\le p-1$, by
\[
\left|\begin{array}
{@{\hspace{0em}}cc@{\hspace{0.7em}}c@{\hspace{0.7em}}ccc@{\hspace{0.7em}}c@{\hspace{0.7em}}cc@{\hspace{0.7em}}c@{\hspace{0.2em}}c@{\hspace{0em}}}
1      & u_1     & \dots & u_1^{\ell-1}     & 0      & u_1^{\ell+1}     & \dots & u_1^{p-1}     & u_1^{p+q}     & \dots \hspace{1em} u_1^{p+q+r-1}     \\
\vdots & \vdots  &       & \vdots           & \vdots & \vdots           &       & \vdots        & \vdots        & \hphantom{\dots} \hspace{1em}\vdots  \\[.5ex]
1      & u_{k-1} & \dots & u_{k-1}^{\ell-1} & 0      & u_{k-1}^{\ell+1} & \dots & u_{k-1}^{p-1} & u_{k-1}^{p+q} & \dots \hspace{1em} u_{k-1}^{p+q+r-1} \\[.5ex]
1      & u_k     & \dots & u_k^{\ell-1}     & 1      & u_k^{\ell+1}     & \dots & u_k^{p-1}     & u_k^{p+q}     & \dots \hspace{1em} u_k^{p+q+r-1}     \\[.5ex]
1      & u_{k+1} & \dots & u_{k+1}^{\ell-1} & 0      & u_{k+1}^{\ell+1} & \dots & u_{k+1}^{p-1} & u_{k+1}^{p+q} & \dots \hspace{1em} u_{k+1}^{p+q+r-1} \\[.5ex]
\vdots & \vdots  &       & \vdots           & \vdots & \vdots           &       & \vdots        & \vdots        & \hphantom{\dots} \hspace{1em}\vdots  \\[.5ex]
1      & u_{p+r} & \dots & u_{p+r}^{\ell-1} & 0      & u_{p+r}^{\ell+1} & \dots & u_{p+r}^{p-1} & u_{p+r}^{p+q} & \dots \hspace{1em} u_{p+r}^{p+q+r-1}
\end{array}\right|\!,
\]
and, if $p+q\le\ell\le p+q+r-1$, by
\[
\left|\begin{array}
{@{\hspace{0em}}cc@{\hspace{0.7em}}c@{\hspace{0.7em}}ccc@{\hspace{0.7em}}c@{\hspace{0.7em}}cc@{\hspace{0.7em}}c@{\hspace{0.2em}}c@{\hspace{0em}}}
1      & u_1     & \dots & u_1^{p-1}     & u_1^{p+q}     & \dots & u_1^{\ell-1}     & 0      & u_1^{\ell+1}     &  \dots \hspace{1em} u_1^{p+q+r-1}     \\
\vdots & \vdots  &       & \vdots        & \vdots        &       & \vdots           & \vdots & \vdots           & \hphantom{\dots} \hspace{1em}\vdots  \\[.5ex]
1      & u_{k-1} & \dots & u_{k-1}^{p-1} & u_{k-1}^{p+q} & \dots & u_{k-1}^{\ell-1} & 0      & u_{k-1}^{\ell+1} &\dots \hspace{1em} u_{k-1}^{p+q+r-1} \\[.5ex]
1      & u_k     & \dots & u_k^{p-1}     & u_k^{p+q}     & \dots & u_k^{\ell-1}     & 1      & u_k^{\ell+1}     & \dots \hspace{1em} u_k^{p+q+r-1}     \\[.5ex]
1      & u_{k+1} & \dots & u_{k+1}^{p-1} & u_{k+1}^{p+q} & \dots & u_{k+1}^{\ell-1} & 0      & u_{k+1}^{\ell+1} & \dots \hspace{1em} u_{k+1}^{p+q+r-1} \\[.5ex]
\vdots & \vdots  &       & \vdots        & \vdots        &       & \vdots           & \vdots & \vdots           & \hphantom{\dots} \hspace{1em}\vdots  \\[.5ex]
1      & u_{p+r} & \dots & u_{p+r}^{p-1} & u_{p+r}^{p+q} & \dots & u_{p+r}^{\ell-1} & 0      & u_{p+r}^{\ell+1} &\dots \hspace{1em} u_{p+r}^{p+q+r-1}
\end{array}\right|\!.
\]
In fact, $W_{pqr,k\ell}(u_1,\dots,u_{k-1},u_{k+1},\dots,u_{p+r})$
is the coefficient of $u_k^{\ell}$ in $W_{pqr}(u_1,\dots,u_{p+r})$.
Let us introduce $s_{k,0}:=s_{k,0}(u_1,\dots,u_{k-1},u_{k+1},\dots,u_{p+r})=1$,
$s_{k,m}:=s_{k,m}(u_1,\dots,u_{k-1},$ $u_{k+1},\dots,u_{p+r})$ $=0$
for any integer $m$ such that $m\le -1$ or $m\ge p+r$ and, for $m\in\{1,\dots,p+r-1\}$,
\[
s_{k,m}:=s_{k,m}(u_1,\dots,u_{k-1},u_{k+1},\dots,u_{p+r})
=\sum_{1\le i_1<\dots<i_m\le p+r\atop
i_1,\dots,i_m\neq k} u_{i_1}\cdots u_{i_m}.
\]
We need to isolate $u_k$ in $\prod_{1\le i<j\le p+r} (u_j-u_i)$ and
$\mathcal{S}_{pqr}$. First, we write that
\begin{align}
\prod_{1\le i<j\le p+r} (u_j-u_i)
&
=(-1)^{p+r-k}\prod_{1\le i<j\le p+r\atop i,j\neq k} (u_j-u_i)\times
\prod_{1\le i\le p+r\atop i\neq k} (u_k-u_i)
\nonumber\\
&
=(-1)^{k-1} \prod_{1\le i<j\le p+r\atop i,j\neq k} (u_j-u_i)\times
\sum_{m=0}^{p+r-1} (-1)^m s_{k,p+r-m-1} \,u_k^m.
\label{term1}
\end{align}
Second, by isolating $u_k$ in $s_{m}$ according as
$s_{m}=s_{k,m}+u_k\,s_{k,m-1}$, we get that
\[
\mathcal{S}_{pqr}=\begin{vmatrix}
s_{k,r}+u_k\,s_{k,r-1}       & s_{k,r-1}+u_k\,s_{k,r-2}     & \dots & s_{k,r-q+1}+u_k\,s_{k,r-q}   \\
s_{k,r+1}+u_k\,s_{k,r}       & s_{k,r}+u_k\,s_{k,r-1}       & \dots & s_{k,r-q+2}+u_k\,s_{k,r-q+1} \\
\vdots                       & \vdots                       &       & \vdots                      \\
s_{k,r+q-1}+u_k\,s_{k,r+q-2} & s_{k,r+q-2}+u_k\,s_{k,r+q-3} & \dots & s_{k,r}+u_k\,s_{k,r-1}
\end{vmatrix}\!.
\]
By introducing vectors $\vec{V}_m$ with coordinates $(s_{k,m},s_{k,m+1},\dots,
s_{k,m+q-1})$ (written as a column), this determinant can be rewritten as
\[
\mathcal{S}_{pqr}=\det\!\big(\vec{V}_r+u_k\,\vec{V}_{r-1},\vec{V}_{r-1}+u_k\,\vec{V}_{r-2},
\dots,\vec{V}_{r-q+1}+u_k\,\vec{V}_{r-q}\big).
\]
By appealing to multilinearity, it is easy to see that
\begin{equation}\label{term2}
\mathcal{S}_{pqr}=\sum_{n=0}^q \det\!\big(\vec{V}_r,\vec{V}_{r-1},\dots,
\vec{V}_{r-q+n+1},\vec{V}_{r-q+n-1},\dots,\vec{V}_{r-q}\big) \,u_k^n.
\end{equation}
Now, let us multiply the sum lying in (\ref{term1}) by (\ref{term2}):
\begin{align}
\lqn{\Bigg(\sum_{m=0}^{p+r-1} (-1)^m s_{k,p+r-m-1} \,u_k^m\Bigg)\!
\Bigg(\sum_{n=0}^q \det\!\big(\vec{V}_r,\vec{V}_{r-1},\dots,
\vec{V}_{r-q+n+1},\vec{V}_{r-q+n-1},\dots,\vec{V}_{r-q}\big) \,u_k^n\Bigg)}
&
=\sum_{\ell=0}^{p+q+r-1}\!\!\Bigg(\sum_{m=0\vee(\ell-q)}^{(p+r-1)\wedge \ell}
\!\!\!\!\!\!(-1)^m s_{k,p+r-m-1}
\nonumber\\
&
\hphantom{=\;\;\;} \times\det\!\big(\vec{V}_r,\vec{V}_{r-1},\dots,
\vec{V}_{\ell+r-q-m+1},\vec{V}_{\ell+r-q-m-1},\dots,\vec{V}_{r-q}\big)\!\Bigg)u_k^{\ell}.
\label{product-terms}
\end{align}
Recalling the convention that $s_{k,m}=0$ if $m\le -1$ or $m\ge p+r$,
the coefficient of $u_k^{\ell}$ in (\ref{product-terms}) can be written as
\begin{align*}
\lqn{\sum_{m=\ell-q}^{\ell} (-1)^m s_{k,p+r-m-1}
\det\!\big(\vec{V}_r,\vec{V}_{r-1},\dots,
\vec{V}_{\ell+r-q-m+1},\vec{V}_{\ell+r-q-m-1},\dots,\vec{V}_{r-q}\big)}
&=(-1)^{\ell}\sum_{m=0}^{q} (-1)^{m+q} s_{k,p+q+r-\ell-m-1}
\det\!\big(\vec{V}_r,\vec{V}_{r-1},\dots,
\vec{V}_{r-m+1},\vec{V}_{r-m-1},\dots,\vec{V}_{r-q}\big).
\end{align*}
In this form, we see that the coefficient of $u_k^{\ell}$ in (\ref{product-terms})
is nothing but the product of $(-1)^{\ell}$ by the expansion of the determinant
\[
\mathcal{S}_{pqr,k\ell}=\begin{vmatrix}
s_{k,r}     & s_{k,r-1}   & \dots & s_{k,r-q}   \\
s_{k,r+1}   & s_{k,r}     & \dots & s_{k,r-q+1} \\
\vdots      & \vdots      &       & \vdots    \\
s_{k,r+q-1} & s_{k,r+q-2} & \dots & s_{k,r-1}   \\
s_{k,p+q+r-\ell-1} & s_{k,p+q+r-\ell-2} & \dots & s_{k,p+r-\ell-1}
\end{vmatrix}\!.
\]
%
\begin{proposition}\label{proposition-Wkl}
The determinant $W_{pqr,k\ell}(u_1,\dots,u_{k-1},u_{k+1},\dots,u_{p+r})$ admits the following expression:
\begin{align*}
\lqn{W_{pqr,k\ell}(u_1,\dots,u_{k-1},u_{k+1},\dots,u_{p+r})=(-1)^{k+\ell-1}\prod_{1\le i<j\le p+r\atop i,j\neq k} (u_j-u_i)}
&\times \left|\begin{array}{@{\hspace{0em}}c@{\hspace{.3em}}c@{\hspace{.3em}}c@{\hspace{0em}}}
s_{k,r}(u_1,\dots,u_{k-1},u_{k+1},\dots,u_{p+r})     & \dots & s_{k,r-q}(u_1,\dots,u_{k-1},u_{k+1},\dots,u_{p+r})   \\
s_{k,r+1}(u_1,\dots,u_{k-1},u_{k+1},\dots,u_{p+r})   & \dots & s_{k,r-q+1}(u_1,\dots,u_{k-1},u_{k+1},\dots,u_{p+r}) \\
\vdots           &       & \vdots    \\
s_{k,r+q-1}(u_1,\dots,u_{k-1},u_{k+1},\dots,u_{p+r}) & \dots & s_{k,r-1}(u_1,\dots,u_{k-1},u_{k+1},\dots,u_{p+r})   \\
s_{k,p+q+r-\ell-1}(u_1,\dots,u_{k-1},u_{k+1},\dots,u_{p+r}) & \dots & s_{k,p+r-\ell-1}(u_1,\dots,u_{k-1},u_{k+1},\dots,u_{p+r})
\end{array}\right|\!.
\end{align*}
\end{proposition}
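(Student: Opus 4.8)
The plan is to obtain the identity by extracting a single coefficient from the factored form of $W_{pqr}$ already produced in Proposition~\ref{proposition-W}. The starting point is the observation, made just before the statement, that $W_{pqr,k\ell}(u_1,\dots,u_{k-1},u_{k+1},\dots,u_{p+r})$ is nothing but the coefficient of $u_k^{\ell}$ in the polynomial $u_k\mapsto W_{pqr}(u_1,\dots,u_{p+r})$ with all the other variables frozen. So it suffices to expand the product $\big(\prod_{1\le i<j\le p+r}(u_j-u_i)\big)\,\mathcal{S}_{pqr}$ in powers of $u_k$ and read off the $u_k^{\ell}$-term.

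First I would isolate the $u_k$-dependence of each of the two factors. For the Vandermonde product this is exactly computation~(\ref{term1}): separating the pairs that involve the index $k$ rewrites $\prod_{1\le i<j\le p+r}(u_j-u_i)$ as $(-1)^{k-1}\prod_{1\le i<j\le p+r,\,i,j\neq k}(u_j-u_i)$ multiplied by the polynomial $\sum_{m=0}^{p+r-1}(-1)^m s_{k,p+r-m-1}\,u_k^m$. For the symmetric-function determinant I would substitute $s_m=s_{k,m}+u_k\,s_{k,m-1}$ in every entry, view the columns of $\mathcal{S}_{pqr}$ as the vectors $\vec V_m=(s_{k,m},s_{k,m+1},\dots,s_{k,m+q-1})$, and expand by multilinearity; this gives~(\ref{term2}), namely $\mathcal{S}_{pqr}=\sum_{n=0}^{q}\det(\vec V_r,\dots,\vec V_{r-q+n+1},\vec V_{r-q+n-1},\dots,\vec V_{r-q})\,u_k^n$, since a repeated column vector contributes nothing.

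Then I would form the product of these two polynomials in $u_k$, which is the Cauchy product written out in~(\ref{product-terms}), and collect the coefficient of $u_k^{\ell}$. The key — and only delicate — step is to recognise that this coefficient equals $(-1)^{\ell}$ times the Laplace expansion along the bottom row of the $(q+1)\times(q+1)$ determinant $\mathcal{S}_{pqr,k\ell}$: the $q$ minors obtained by deleting the bottom row are precisely the determinants $\det(\vec V_r,\dots,\vec V_{r-m+1},\vec V_{r-m-1},\dots,\vec V_{r-q})$ coming from~(\ref{term2}), the bottom-row entries $s_{k,p+q+r-\ell-1},\dots,s_{k,p+r-\ell-1}$ come from the first polynomial, and the convention $s_{k,m}=0$ for $m\le -1$ or $m\ge p+r$ ensures that all out-of-range terms drop out automatically. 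Multiplying by the overall factor $(-1)^{k-1}\prod_{1\le i<j\le p+r,\,i,j\neq k}(u_j-u_i)$ then produces exactly the stated formula, the global sign $(-1)^{k-1}\cdot(-1)^{\ell}=(-1)^{k+\ell-1}$ coming out as claimed. Finally I would remark that the same coefficient extraction covers both displayed shapes of $W_{pqr,k\ell}$ (the cases $0\le\ell\le p-1$ and $p+q\le\ell\le p+q+r-1$) without change, so a single argument suffices. The main obstacle is purely combinatorial bookkeeping: matching indices across the Cauchy product, tracking the column reordering inside $\mathcal{S}_{pqr}$, and getting every sign right.
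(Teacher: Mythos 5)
Your proposal is correct and follows essentially the same route as the paper: isolating the $u_k$-dependence of the Vandermonde factor and of $\mathcal{S}_{pqr}$ separately, forming the Cauchy product, and identifying the coefficient of $u_k^{\ell}$ with $(-1)^{\ell}$ times the Laplace expansion of $\mathcal{S}_{pqr,k\ell}$ along its bottom row. The only slip is cosmetic: the bottom-row expansion of the $(q+1)\times(q+1)$ determinant involves $q+1$ minors (one per column), not $q$, matching the $q+1$ terms $n=0,\dots,q$ of the expansion of $\mathcal{S}_{pqr}$.
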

%
As a consequence of Propositions~\ref{proposition-W} and~\ref{proposition-Wkl},
we get the result below. Set
\[
p_k:=p_k(u_1,\dots,u_{p+r})=\prod_{1\le i\le p+r\atop i\neq k}(u_k-u_i).
\]

%
\begin{proposition}\label{proposition-cramer-lacunary}
Let $p,q,r$ be positive integers, let $u_1,\dots,u_{p+r}$ be distinct complex numbers
and let $\alpha_1,\dots,\alpha_{p+r}$ be complex numbers.
Set $I=\{0,\dots,p-1\}\cup\{p+q,\dots,p+q+r-1\}$.
The solution of the system $\sum_{\ell\in I}u_k^{\ell} \,x_{\ell}=\alpha_k$, $1\le k\le p+r$,
or, more explicitly,
\[\left\{
\begin{array}{r@{\hspace{0.1em}}r@{\hspace{0.2em}}r@{\hspace{0.1em}}r@{\hspace{0.1em}}r@{\hspace{0.2em}}r@{\hspace{0.1em}}r@{\hspace{0.3em}}c@{\hspace{0.4em}}l}
x_0+ & u_1\,x_1+     &\cdots+ & u_1^{p-1}\,x_{p-1}+      & u_1^{p+q}\,x_{p+q}+     & \cdots+ & u_1^{p+q+r-1}\,x_{p+q+r-1}     &=       & \alpha_1
\\
     &               &        &                          &                         &         &                                & \vdots &
\\
x_0+ & u_{p+r}\,x_1+ & \cdots+ & u_{p+r}^{p-1}\,x_{p-1}+ & u_{p+r}^{p+q}\,x_{p+q}+ & \cdots+ & u_{p+r}^{p+q+r-1}\,x_{p+q+r-1} & =      & \alpha_{p+r}
\end{array}\right.
\]
is given by
\begin{equation}\label{cramer-lacunary}
x_{\ell}=\frac{(-1)^{\ell+p+r-1}}{\mathcal{S}_{pqr}(u_1,\dots,u_{p+r})} \sum_{k=1}^{p+r} \alpha_k\,
\frac{\mathcal{S}_{pqr,k\ell}(u_1,\dots,u_{k-1},u_{k+1},\dots,u_{p+r})}{p_k(u_1,\dots,u_{p+r})},
\quad \ell\in I.
\end{equation}
\end{proposition}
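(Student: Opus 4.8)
The plan is a direct application of Cramer's formulae together with the determinant evaluations of Propositions~\ref{proposition-W} and~\ref{proposition-Wkl}. First I would observe that the coefficient matrix of the system $\sum_{\ell\in I}u_k^{\ell}x_{\ell}=\alpha_k$, $1\le k\le p+r$, has determinant $W_{pqr}(u_1,\dots,u_{p+r})$; assuming it is nonzero, Cramer's rule gives, for $\ell\in I$,
\[
x_{\ell}=\frac{1}{W_{pqr}(u_1,\dots,u_{p+r})}\,W_{pqr,\ell}\!\left(\!\!\!\begin{array}{c}u_1,\dots,u_{p+r}\\[-.2ex]\alpha_1,\dots,\alpha_{p+r}\end{array}\!\!\!\right),
\]
where the numerator is obtained from $W_{pqr}$ by substituting the column $(\alpha_1,\dots,\alpha_{p+r})^{T}$ for the column of $u_k^{\ell}$. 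Expanding this numerator along the substituted column yields, exactly as recorded above, $\sum_{k=1}^{p+r}\alpha_k\,W_{pqr,k\ell}(u_1,\dots,u_{k-1},u_{k+1},\dots,u_{p+r})$.

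Next I would substitute the closed forms of Propositions~\ref{proposition-W} and~\ref{proposition-Wkl} into this quotient. Writing $\Pi=\prod_{1\le i<j\le p+r}(u_j-u_i)$ and $\Pi_k=\prod_{1\le i<j\le p+r,\,i,j\neq k}(u_j-u_i)$, one obtains
\[
x_{\ell}=\frac{\displaystyle\sum_{k=1}^{p+r}\alpha_k\,(-1)^{k+\ell-1}\,\Pi_k\,\mathcal{S}_{pqr,k\ell}}{\Pi\,\mathcal{S}_{pqr}}.
\]
The one genuine computation is the ratio $\Pi/\Pi_k$: the factors of $\Pi$ absent from $\Pi_k$ are precisely those with $j=k$, namely $\prod_{i<k}(u_k-u_i)$, together with those with $i=k$, namely $\prod_{j>k}(u_j-u_k)=(-1)^{p+r-k}\prod_{j>k}(u_k-u_j)$. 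Hence $\Pi/\Pi_k=(-1)^{p+r-k}\prod_{i\neq k}(u_k-u_i)=(-1)^{p+r-k}p_k(u_1,\dots,u_{p+r})$, so $\Pi_k=\Pi\big/\big[(-1)^{p+r-k}p_k(u_1,\dots,u_{p+r})\big]$.

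Substituting this back and cancelling $\Pi$, the sign attached to the $k$-th term becomes $(-1)^{k+\ell-1}(-1)^{-(p+r-k)}=(-1)^{k+\ell-1+p+r-k}=(-1)^{\ell+p+r-1}$, which no longer depends on $k$ and can be pulled out of the sum; this is exactly formula~(\ref{cramer-lacunary}). I expect the only place requiring care to be this sign bookkeeping, namely combining the cofactor sign $(-1)^{k+\ell-1}$ from Proposition~\ref{proposition-Wkl} with the sign $(-1)^{p+r-k}$ produced by reversing $p+r-k$ of the differences in $\Pi/\Pi_k$, and checking that the product is independent of $k$; everything else is formal. Existence and uniqueness of the solution presuppose $W_{pqr}(u_1,\dots,u_{p+r})\neq0$, equivalently $\mathcal{S}_{pqr}(u_1,\dots,u_{p+r})\neq0$ by Proposition~\ref{proposition-W}, which holds for the tuples of roots to which the proposition is applied.
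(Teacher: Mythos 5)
Your proposal is correct and follows essentially the same route as the paper: Cramer's rule, expansion of the numerator determinant along the substituted column, and the factorisations of Propositions~\ref{proposition-W} and~\ref{proposition-Wkl}, with the reduced Vandermonde factor cancelling and the sign $(-1)^{k+\ell-1}(-1)^{p+r-k}=(-1)^{\ell+p+r-1}$ coming out independent of $k$ exactly as in the paper's argument. Your closing remark on the implicit nonvanishing of $W_{pqr}$, equivalently of $\mathcal{S}_{pqr}$, is a point the paper leaves tacit but is correctly handled.
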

%
\begin{proof}
Cramer's formulae yield that
\[
x_{\ell}=\frac{W_{pqr,\ell}\!\left(\!\!\!\begin{array}{c} u_1,\dots,u_{p+r}\\[-.2ex]
\displaystyle\alpha_1,\dots,\alpha_{p+r}\end{array}\!\!\!\right)}
{W_{pqr}(u_1,\dots,u_{p+r})}
=\sum_{k=1}^{p+r}\alpha_k\,\frac{W_{pqr,k\ell}(u_1,\dots,u_{k-1},
u_{k+1},\dots,u_{p+r})}{W_{pqr}(u_1,\dots,u_{p+r})}.
\]
By using the factorisations provided by Propositions~\ref{proposition-W}
and~\ref{proposition-Wkl}, namely
\begin{align*}
\lqn{W_{pqr}(u_1,\dots,u_{p+r})}
&
=V_{p+r}(u_1,\dots,u_{p+r}) \,\mathcal{S}_{pqr}(u_1,\dots,u_{p+r})
\\
&
=(-1)^{p+r-k} p_k(u_1,\dots,u_{p+r})\,
V_{p+r-1}(u_1,\dots,u_{k-1},u_{k+1},\dots,u_{p+r}) \,\mathcal{S}_{pqr}(u_1,\dots,u_{p+r}),
\\
\lqn{W_{pqr,k\ell}(u_1,\dots,u_{k-1},u_{k+1},\dots,u_{p+r})}
&
=(-1)^{k+\ell-1}V_{p+r-1}(u_1,\dots,u_{k-1},u_{k+1},\dots,u_{p+r})
\,\mathcal{S}_{pqr,k\ell}(u_1,\dots,u_{k-1},u_{k+1},\dots,u_{p+r}),
\end{align*}
we immediately get~(\ref{cramer-lacunary}).
\end{proof}

\subsection{A combinatoric identity}\label{lemma-comb}

%
\begin{lemma}\label{lemma-sum}
The following identity holds for any positive integers $\alpha,\beta,n$:
\[
\sum_{k=0}^n (-1)^k\binom{n}{k} \frac{(k+\alpha)!}{(k+\beta)!}
=\frac{\alpha!}{(\beta+n)!}\,(\beta-\alpha+n-1)_n.
\]
It can be rewritten as
\begin{align*}
\sum_{k=0}^n (-1)^k\binom{n}{k} \!\binom{k+\alpha}{k+\beta}
&\displaystyle=(-1)^n \binom{\alpha}{\beta+n}
&\hspace{-5em}\text{if $\alpha\ge\beta$},
\\
\sum_{k=0}^n (-1)^k\binom{n}{k} /\binom{k+\beta}{k+\alpha}
&\displaystyle=\frac{\beta-\alpha}{\beta-\alpha+n}/\binom{\beta+n}{\alpha}
&\hspace{-5em}\text{if $\alpha<\beta$}.
\end{align*}
\end{lemma}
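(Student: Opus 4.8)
The plan is to prove the displayed ``master'' identity
\[
\sum_{k=0}^n (-1)^k\binom{n}{k} \frac{(k+\alpha)!}{(k+\beta)!}=\frac{\alpha!}{(\beta+n)!}\,(\beta-\alpha+n-1)_n
\]
by induction on $n$, and then to read off the two binomial reformulations by elementary manipulation of factorials. Denote the left-hand side by $g(\alpha,\beta,n)$. The case $n=0$ is immediate since $(\beta-\alpha-1)_0=1$. For the inductive step I would split $\binom{n}{k}=\binom{n-1}{k}+\binom{n-1}{k-1}$, reindex the second sum by $k\mapsto k+1$, and obtain the discrete integration-by-parts formula
\[
g(\alpha,\beta,n)=\sum_{k=0}^{n-1}(-1)^k\binom{n-1}{k}\left[\frac{(k+\alpha)!}{(k+\beta)!}-\frac{(k+1+\alpha)!}{(k+1+\beta)!}\right].
\]
The one computation that does the work is that the bracket equals $(\beta-\alpha)\,\frac{(k+\alpha)!}{(k+\beta+1)!}$; this is a purely formal identity between quotients of factorials, valid for all $k\ge 0$ and all positive integers $\alpha,\beta$ (indifferently whether $\alpha\ge\beta$, so the quotient is a polynomial in $k$, or $\alpha<\beta$, so it is a reciprocal of one). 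Hence $g(\alpha,\beta,n)=(\beta-\alpha)\,g(\alpha,\beta+1,n-1)$, and iterating this recursion down to $g(\alpha,\beta+n,0)=\alpha!/(\beta+n)!$ yields
\[
g(\alpha,\beta,n)=(\beta-\alpha)(\beta-\alpha+1)\cdots(\beta-\alpha+n-1)\,\frac{\alpha!}{(\beta+n)!}=\frac{\alpha!}{(\beta+n)!}\,(\beta-\alpha+n-1)_n ,
\]
which is the assertion.

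For the reformulations one substitutes into the master identity. If $\alpha\ge\beta$, divide through by $(\alpha-\beta)!$: the right-hand side becomes $\frac{1}{(\alpha-\beta)!}\frac{\alpha!}{(\beta+n)!}(\beta-\alpha+n-1)_n$, and a short inspection of the $n$ factors of $(\beta-\alpha+n-1)_n$ — all are $\le 0$, one of them vanishing exactly when $\alpha-\beta\le n-1$ (in which case $\beta+n>\alpha$ and $\binom{\alpha}{\beta+n}=0$ as well), while for $\alpha-\beta\ge n$ they are all negative and $(\beta-\alpha+n-1)_n=(-1)^n(\alpha-\beta)!/(\alpha-\beta-n)!$ — identifies it with $(-1)^n\binom{\alpha}{\beta+n}$. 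If $\alpha<\beta$, multiply through by $(\beta-\alpha)!$ and use $(\beta-\alpha+n-1)_n=(\beta-\alpha+n-1)!/(\beta-\alpha-1)!$ together with $(\beta+n-\alpha)!=(\beta-\alpha+n)(\beta-\alpha+n-1)!$ to bring the right-hand side to $\frac{\beta-\alpha}{\beta-\alpha+n}\big/\binom{\beta+n}{\alpha}$. Both steps are mere cancellations.

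I do not anticipate a genuine difficulty: the whole argument is formal. The only place calling for a bit of care is checking the telescoping identity for the bracket and noting that it holds uniformly in the regimes $\alpha\ge\beta$ and $\alpha<\beta$, so that the induction needs no case split. As a cross-check, and an alternative proof valid when $\alpha<\beta$, one may instead write $\frac{(k+\alpha)!}{(k+\beta)!}=\frac{1}{(\beta-\alpha-1)!}\int_0^1 x^{k+\alpha}(1-x)^{\beta-\alpha-1}\,\mathrm{d}x$, sum $\sum_{k=0}^n(-1)^k\binom{n}{k}x^k=(1-x)^n$ under the integral sign, and recognize the resulting Beta integral $\int_0^1 x^{\alpha}(1-x)^{\beta-\alpha+n-1}\,\mathrm{d}x=\alpha!\,(\beta-\alpha+n-1)!/(\beta+n)!$; this reproduces the master identity at once, in the style of the Beta-integral computations used throughout the paper.
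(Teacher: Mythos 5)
Your proof is correct, and it takes a genuinely different route from the paper's. The paper splits into two cases from the outset: for $\alpha\ge\beta$ it writes $\frac{(k+\alpha)!}{(k+\beta)!}=\frac{\mathrm{d}^{\alpha-\beta}}{\mathrm{d}x^{\alpha-\beta}}\big(x^{k+\alpha}\big)\big|_{x=1}$, sums to obtain $\frac{\mathrm{d}^{\alpha-\beta}}{\mathrm{d}x^{\alpha-\beta}}\big(x^{\alpha}(1-x)^n\big)\big|_{x=1}$ and evaluates by Leibniz's rule (only the term where $(1-x)^n$ is differentiated exactly $n$ times survives); for $\alpha<\beta$ it uses precisely the Beta-integral representation that you offer as a cross-check. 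Your main argument instead proves the master identity uniformly: Pascal's rule plus the telescoping computation
\[
\frac{(k+\alpha)!}{(k+\beta)!}-\frac{(k+\alpha+1)!}{(k+\beta+1)!}
=\frac{(k+\alpha)!\,\big[(k+\beta+1)-(k+\alpha+1)\big]}{(k+\beta+1)!}
=(\beta-\alpha)\,\frac{(k+\alpha)!}{(k+\beta+1)!}
\]
gives the recursion $g(\alpha,\beta,n)=(\beta-\alpha)\,g(\alpha,\beta+1,n-1)$, which iterates down to $g(\alpha,\beta+n,0)=\alpha!/(\beta+n)!$ and produces exactly $(\beta-\alpha)(\beta-\alpha+1)\cdots(\beta-\alpha+n-1)=(\beta-\alpha+n-1)_n$ with the paper's convention for falling factorials of arbitrary integers. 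This buys you a calculus-free, case-free derivation of the displayed identity, with the dichotomy $\alpha\ge\beta$ versus $\alpha<\beta$ confined to the purely bookkeeping step of rewriting $(\beta-\alpha+n-1)_n$ in binomial form; your treatment of that step (the vanishing of both sides when $\beta\le\alpha<\beta+n$, and the sign bookkeeping when $\alpha-\beta\ge n$) is also correct. The paper's route, by contrast, bypasses the master identity as a separately established statement and verifies each binomial form directly.
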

%
\begin{proof}
Suppose first that $\alpha\ge \beta$. Noticing that
\[
\frac{(k+\alpha)!}{(k+\beta)!}=\left.\frac{\mathrm{d}^{\alpha-\beta}}{\mathrm{d} x^{\alpha-\beta}}
\big(x^{k+\alpha}\big)\right|_{x=1},
\]
we immediately get that
\[
\sum_{k=0}^n (-1)^k\binom{n}{k} \frac{(k+\alpha)!}{(k+\beta)!}
=\left.\frac{\mathrm{d}^{\alpha-\beta}}{\mathrm{d} x^{\alpha-\beta}}
\left(\sum_{k=0}^n (-1)^k\binom{n}{k}x^{k+\alpha}\right)\right|_{x=1}
=\left.\frac{\mathrm{d}^{\alpha-\beta}}{\mathrm{d} x^{\alpha-\beta}}
\big(x^{\alpha}(1-x)^n\big)\right|_{x=1}.
\]
We expand the last displayed derivative by using Leibniz rule:
\[
\left.\frac{\mathrm{d}^{\alpha-\beta}}{\mathrm{d} x^{\alpha-\beta}}
\big(x^{\alpha}(1-x)^n\big)\right|_{x=1}
=\sum_{k=0}^{\alpha-\beta} \binom{\alpha-\beta}{k}
\left.\frac{\mathrm{d}^{\alpha-\beta-k}}{\mathrm{d} x^{\alpha-\beta-k}}
(x^{\alpha})\right|_{x=1}
\left.\frac{\mathrm{d}^k}{\mathrm{d} x^k}\big((1-x)^n\big)\right|_{x=1}.
\]
Since $\left.\big(\mathrm{d}^k/\mathrm{d} x^k\big)\big((1-x)^n\big)
\right|_{x=1}\!\!=(-1)^n\delta_{nk} \,n!$,
we have that $\left.\big(\mathrm{d}^{\alpha-\beta}/\mathrm{d} x^{\alpha-\beta}\big)
\big(x^{\alpha}(1-x)^n\big)\right|_{x=1}$ $=0$ if $\alpha-\beta<n$,
and, if $\alpha-\beta\ge n$,
\begin{align*}
\left.\frac{\mathrm{d}^{\alpha-\beta}}{\mathrm{d} x^{\alpha-\beta}}
\big(x^{\alpha}(1-x)^n\big)\right|_{x=1}\!
&
=(-1)^n n!\binom{\alpha-\beta}{n}
\left.\frac{\mathrm{d}^{\alpha-\beta-n}}{\mathrm{d} x^{\alpha-\beta-n}}
(x^{\alpha})\right|_{x=1}\!
\\
&
=(-1)^n \frac{\alpha!(\alpha-\beta)!}{(\alpha-\beta-n)!(\beta+n)!}
\end{align*}
which coincides with the announced result.
Second, suppose that $\alpha<\beta$. Noticing that
\[
\frac{(k+\alpha)!}{(k+\beta)!}=\frac{1}{(\beta-\alpha-1)!}
\int_0^1 x^{k+\alpha}(1-x)^{\beta-\alpha-1} \,\mathrm{d} x,
\]
we get that
\begin{align*}
\sum_{k=0}^n (-1)^k\binom{n}{k} \frac{(k+\alpha)!}{(k+\beta)!}
&=\frac{1}{(\beta-\alpha-1)!}\int_0^1 \left(\sum_{k=0}^n
(-1)^k\binom{n}{k} x^k\right) x^{\alpha}(1-x)^{\beta-\alpha-1} \,\mathrm{d} x
\\
&=\frac{1}{(\beta-\alpha-1)!}\int_0^1 x^{\alpha}(1-x)^{\beta-\alpha+n-1} \,\mathrm{d} x
=\frac{\alpha!(\beta-\alpha+n-1)!}{(\beta-\alpha-1)!(\beta+n)!}
\end{align*}
which coincides with the announced result.
\end{proof}

\subsection{Some matrices}\label{appendix-matrices}

Let $\alpha,\beta\in\mathbb{N}$ such that $\alpha>\beta\ge N$ and set
\[
A=\begin{bmatrix}\displaystyle\binom{j+\alpha}{i+N}\end{bmatrix}_{0\le i,j\le N-1},\quad
B=\begin{bmatrix}\displaystyle\binom{\beta}{i+N}\end{bmatrix}_{0\le i\le N-1}
\]
with the convention of settings $\binom{i}{j}=0$ if $i>j$.
These matrices have been used for solving systems (\ref{system-H(1)1})
and (\ref{system-H(1)2}) with the choices $\alpha=b-a+N-1$ and $\beta=N-a-1$.
The aim of this section is to compute the product of the inverse of $A$ by $B$,
namely $A^{-1}B$. For this, we use Gauss elimination.
The result is displayed in Theorem~\ref{theorem-matrix-inv}.
The calculations are quite technical, so we perform them progressively,
the intermediate steps being stated in several lemmas (Lemmas~\ref{lemma-sum},
\ref{lemma1}, \ref{lemma2} and \ref{lemma3}).
%
\begin{lemma}\label{lemma1}
The matrix $A$ can be decomposed into $A=LU^{-1}$ where the matrices
$U$ and $L$ are given by
\begin{align*}
U &=\begin{bmatrix}\displaystyle(-1)^{i+j}\ind_{\{i\le j\}}\binom{j}{i}
\frac{(j+\alpha)_N}{(i+\alpha)_N}
\end{bmatrix}_{0\le i,j\le N-1},\quad
\\
L &=\begin{bmatrix}\displaystyle\ind_{\{i\ge j\}}
\binom{j+\alpha}{i+N}\frac{(i)_j}{(j+\alpha-N)_j}\end{bmatrix}_{0\le i,j\le N-1}.
\end{align*}
The regular matrices $U$ and $L$ are respectively upper and lower triangular.
\end{lemma}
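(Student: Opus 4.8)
The plan is to prove the decomposition by verifying the single matrix identity $AU=L$, which suffices once we observe that $U$ is invertible. First I would write out the product entrywise: for $0\le i,j\le N-1$,
\[
(AU)_{ij}=\sum_{k=0}^{N-1}\binom{k+\alpha}{i+N}(-1)^{k+j}\ind_{\{k\le j\}}\binom{j}{k}\frac{(j+\alpha)_N}{(k+\alpha)_N}
=\frac{(-1)^j(j+\alpha)_N}{(i+N)!}\sum_{k=0}^{j}(-1)^k\binom{j}{k}\frac{(k+\alpha-N)!}{(k+\alpha-N-i)!},
\]
using $\binom{k+\alpha}{i+N}/(k+\alpha)_N=(k+\alpha-N)!/[(i+N)!\,(k+\alpha-N-i)!]$, where the ratio of factorials is read (as everywhere in the paper) as the falling factorial $(k+\alpha-N)_i$, so that the convention $1/m!=0$ for negative $m$ is consistent with $\binom{\cdot}{\cdot}=0$.

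The inner sum is exactly of the form handled by Lemma~\ref{lemma-sum}: taking the ``$\alpha,\beta,n$'' of that lemma to be $\alpha-N$, $\alpha-N-i$ and $j$, it evaluates to $\frac{(\alpha-N)!}{(\alpha-N-i+j)!}\,(j-i-1)_j$. The key structural observation is that the falling factorial $(j-i-1)_j$ contains the factor $0$ precisely when $0\le i\le j-1$, so the whole product vanishes for $i<j$; this is what produces the lower-triangular support of $L$. For $i\ge j$ the $j$ factors of $(j-i-1)_j$ are all negative, whence $(j-i-1)_j=(-1)^j(i)_j$. Substituting back and cancelling the two signs $(-1)^j$, I get
\[
(AU)_{ij}=\ind_{\{i\ge j\}}\,\frac{(j+\alpha)_N\,(\alpha-N)!\,(i)_j}{(i+N)!\,(j+\alpha-N-i)!}.
\]

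It then remains to check that this coincides with $L_{ij}=\ind_{\{i\ge j\}}\binom{j+\alpha}{i+N}(i)_j/(j+\alpha-N)_j$, which is a one-line manipulation after writing $(j+\alpha)_N=(j+\alpha)!/(j+\alpha-N)!$, $\binom{j+\alpha}{i+N}=(j+\alpha)!/[(i+N)!\,(j+\alpha-i-N)!]$ and $(j+\alpha-N)_j=(j+\alpha-N)!/(\alpha-N)!$; both sides reduce to $(j+\alpha)!\,(\alpha-N)!\,(i)_j/[(j+\alpha-N)!\,(i+N)!\,(j+\alpha-N-i)!]$. Finally I would record the triangularity: $U_{ij}$ carries the factor $\ind_{\{i\le j\}}$ and $U_{ii}=1$, so $U$ is unit upper triangular, hence invertible; $L_{ij}$ carries $\ind_{\{i\ge j\}}$ and $L_{jj}=\binom{j+\alpha}{j+N}\,j!/(j+\alpha-N)_j\neq0$ since $\alpha>N$, so $L$ is lower triangular with nonzero diagonal. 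Hence $A=LU^{-1}$ with $U,L$ regular triangular.

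The only point needing care is the degenerate regime $\alpha-N-i<0$, which really can occur here because $\alpha=b-a+N-1$ may be as small as $N+1$. There every term of the inner sum with $k<i+N-\alpha$ vanishes on both sides, so the identity of Lemma~\ref{lemma-sum}, read in the falling-factorial sense, continues to apply verbatim; I expect this bookkeeping to be the main (though minor) obstacle. An alternative, more conceptual derivation would obtain $U$ as the accumulated column operations produced by Gauss elimination on $A$, but verifying the stated closed forms as above is shorter and fully self-contained.
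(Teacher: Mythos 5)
Your proposal is correct and coincides with the decisive step of the paper's own proof: the paper likewise verifies $AU=L$ entrywise by reducing the inner sum to Lemma~\ref{lemma-sum} (there written in the binomial form $\sum_{k=0}^{j}(-1)^k\binom{j}{k}\binom{k+\alpha-N}{i}=(-1)^j\binom{\alpha-N}{i-j}$, which encodes the same vanishing for $i<j$ and the same convention-based handling of the degenerate range that you flag). The Gauss-elimination derivation of $U$ that you mention as an alternative is indeed present in the paper, but only as expository motivation; the rigorous verification there is the same direct check you carry out.
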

%
\begin{proof}
We begin by detailing the algorithm providing the matrix $U$.
Call $C_0^{(0)},C_1^{(0)},\dots,$ $C_{N-1}^{(0)}$ the columns of $A$, that is,
for $j\in\{0,\dots, N-1\}$,
\[
C_j^{(0)}=\begin{bmatrix}\displaystyle \binom{j+\alpha}{i+N}\end{bmatrix}_{0\le i\le N-1}.
\]
Apply to them, except for $C_0^{(0)}$, the transformation defined, for
$j\in\{1,\dots, N-1\}$, by
\[
C_j^{(1)}=C_j^{(0)}-\frac{j+\alpha}{j+\alpha-N}\, C_{j-1}^{(0)}.
\]
The $C_0^{(0)},C_1^{(1)},\dots, C_{N-1}^{(1)}$ are the columns of a new matrix $L_1$.
Actually, this transformation corresponds to a matrix multiplication acting on $A$:
$L_1=AU_1$ with
\begin{align*}
U_1=\left[\begin{array}{c@{\hspace{.8em}};{.08em/.3em}cccc}
1      & -\frac{\alpha+1}{\alpha+1-N} & 0                            &        & 0
\\
0      & 1                            & -\frac{\alpha+2}{\alpha+2-N} &        & \vdots
\\
0      & 0                            & 1                            & \ddots & 0
\\
\vdots & \vdots                       &                              & \ddots & -\frac{\alpha+N-1}{\alpha-1}
\\
0      & 0                            & 0                            &        & 1
\end{array}\right]\!.
\end{align*}
Simple computations show that
\begin{align*}
L_1
&
=\begin{bmatrix}\displaystyle \left(\delta_{j0}+\ind_{\{j\ge 1\}}\frac{i}{j+\alpha-N}\right)
\!\binom{j+\alpha}{i+N}\end{bmatrix}_{0\le i,j\le N-1}
\\
&
=\begin{bmatrix}\displaystyle \binom{j+\alpha}{i+N}\frac{(i)_{j\wedge 1}}{(j+\alpha-N)_{j\wedge 1}}
\end{bmatrix}_{0\le i,j\le N-1}.
\end{align*}
Next, apply the second transformation to the columns of $L_1$ except for $C_0^{(0)}$
and $C_1^{(1)}$, defined, for $j\in\{2,\dots, N-1\}$, by
\begin{align*}
C_j^{(2)}&=C_j^{(1)}-\frac{j+\alpha}{j+\alpha-N}\, C_{j-1}^{(1)}
\\
&=C_j^{(0)}-2\,\frac{j+\alpha}{j+\alpha-N}\, C_{j-1}^{(0)}
+\frac{(j+\alpha)(j+\alpha-1)}{(j+\alpha-N)(j+\alpha-N-1)}\, C_{j-2}^{(1)}.
\end{align*}
The $C_0^{(0)},C_1^{(1)},C_2^{(2)},\dots, C_{N-1}^{(2)}$ are the columns of
a new matrix $L_2=AU_2$ where
\begin{align*}
U_2=\left[\begin{array}{cc@{\hspace{.8em}};{.08em/.3em}ccc}
1      & -\frac{\alpha+1}{\alpha+1-N} & \frac{(\alpha+2)(\alpha+1)}{(\alpha+2-N)(\alpha+1-N)} &        & 0
\\
0      & 1                            & -2\,\frac{\alpha+2}{\alpha+2-N}                       & \ddots & \vdots
\\
0      & 0                            & 1                                                     & \ddots & \frac{(\alpha+N-1)(\alpha+N-2)}{(\alpha-1)(\alpha-2)}
\\
\vdots & \vdots                       &                                                       & \ddots & -2\,\frac{\alpha+N-1}{\alpha-1}
\\
0      & 0                            & 0                                                     &        & 1
\end{array}\right]\!.
\end{align*}
Straightforward algebra yields that
\begin{align*}
L_2
&
=\begin{bmatrix}\displaystyle \left(\delta_{j0}+\frac{i}{j+\alpha-N}\,\delta_{j1}
+\ind_{\{j\ge 2\}}\frac{i(i-1)}{(j+\alpha-N)(j+\alpha-N-1)}\right)\!
\binom{j+\alpha}{i+N}\end{bmatrix}_{0\le i,j\le N-1}
\\
&
=\begin{bmatrix}\displaystyle \binom{j+\alpha}{i+N} \frac{(i)_{j\wedge 2}}{(j+\alpha-N)_{j\wedge 2}}
\end{bmatrix}_{0\le i,j\le N-1}.
\end{align*}
This method can be recursively extended : apply the $k$th transformation
($1\le k\le N-1$) defined, for $j\in\{k,\dots, N-1\}$, by
\begin{align*}
C_j^{(k)}&=C_j^{(k-1)}-\frac{j+\alpha}{j+\alpha-N}\, C_{j-1}^{(k-1)}
\\
&=C_j^{(k-2)}-2\,\frac{j+\alpha}{j+\alpha-N}\, C_{j-1}^{(k-2)}
+\frac{(j+\alpha)(j+\alpha-1)}{(j+\alpha-N)(j+\alpha-N-1)}\, C_{j-2}^{(k-2)}
\\
&\;\;\vdots
\\
&=C_j^{(0)}-\binom{k}{1}\frac{j+\alpha}{j+\alpha-N}\, C_{j-1}^{(0)}
+\binom{k}{2}\frac{(j+\alpha)(j+\alpha-1)}{(j+\alpha-N)(j+\alpha-N-1)}\, C_{j-2}^{(0)}
\\
&\hphantom{=\;}
+\dots+(-1)^k\binom{k}{k}\frac{(j+\alpha)(j+\alpha-1)\cdots(j+\alpha-k+1)}
{(j+\alpha-N)(j+\alpha-N-1)\cdots(j+\alpha-k-N+1)}\, C_{j-k}^{(0)}.
\\
&
=\sum_{\ell=0}^k (-1)^{\ell} \binom{k}{\ell} \frac{(j+\alpha)_{\ell}}{(j+\alpha-N)_{\ell}}
\,C_{j-\ell}^{(0)}
=\sum_{\ell=0}^k (-1)^{\ell} \binom{k}{\ell} \frac{(j+\alpha)_N}{(j+\alpha-\ell)_N}
\,C_{j-\ell}^{(0)}
\\
&
=\sum_{\ell=j-k}^j (-1)^{j-\ell} \binom{k}{j-\ell} \frac{(j+\alpha)_{j-\ell}}{(j+\alpha-N)_{j-\ell}}
\,C_{\ell}^{(0)}
=\sum_{\ell=j-k}^j (-1)^{j-\ell} \binom{k}{j-\ell} \frac{(j+\alpha)_N}{(\ell+\alpha)_N}
\,C_{\ell}^{(0)}.
\end{align*}
In particular,
\[
C_k^{(k)}=\sum_{\ell=0}^k (-1)^{k-\ell} \binom{k}{\ell} \frac{(k+\alpha)_N}{(\ell+\alpha)_N}
\,C_{\ell}^{(0)}.
\]
The $C_0^{(0)},C_1^{(1)},C_2^{(2)},\dots, C_k^{(k)},C_{k+1}^{(k)},\dots,C_{N-1}^{(k)}$
are the columns of the $k$th matrix $L_k=AU_k$ with
$U_k=[\!\begin{array}{c;{.08em/.2em}c}U_k'&U_k''\end{array}\!]$ where
$U_k'$ is the matrix
\begin{align*}
\left[\begin{array}{ccccccc}
1      & -\frac{\alpha+1}{\alpha+1-N} & \frac{(\alpha+2)(\alpha+1)}{(\alpha+2-N)(\alpha+1-N)} &        &        &        & (-1)^{k-1}\binom{k-1}{k-1}\frac{(\alpha+k-1)\cdots(\alpha+1)}{(\alpha+k-N-1)\cdots(\alpha+1-N)}
\\
0      & 1                            & -2\,\frac{\alpha+2}{\alpha+2-N}                       & \ddots &        &        & (-1)^{k-2}\binom{k-1}{k-2}\frac{(\alpha+k-1)\cdots(\alpha+2)}{(\alpha+k-N-1)\cdots(\alpha+2-N)}
\\
0      & 0                            & 1                                                     & \ddots & \ddots &        & (-1)^{k-3}\binom{k-1}{k-3}\frac{(\alpha+k-1)\cdots(\alpha+3)}{(\alpha+k-N-1)\cdots(\alpha+3-N)}
\\
\vdots & \vdots                       &                                                       & \ddots & \ddots & \ddots & \vdots
\\
\vdots & \vdots                       &                                                       &        & \ddots & \ddots & \binom{k-1}{2} \frac{(\alpha+k-1)(\alpha+k-2)}{(\alpha+k-N-1)(\alpha+k-N-2)}
\\
\vdots & \vdots                       &                                                       &        &        & \ddots & -\binom{k-1}{1} \frac{\alpha+k-1}{\alpha+k-N-1}
\\[1ex]
0      & 0                            &                                                       &        &        &        & \binom{k-1}{0}
\\[.5ex]
\hdashline[.3ex/.5ex]
\\[-2.4ex]
0      & 0                            & \dots                                                 & \dots  & \dots  & \dots  & 0
\\
\vdots & \vdots                       &                                                       &        &        &        & \vdots
\\
0      & 0                            & \dots                                                 & \dots  & \dots  & \dots  & 0
\end{array}\right]
\end{align*}
and $U_k''$ is the matrix
\begin{align*}
\left[\begin{array}{@{\hspace{0em}}c@{\hspace{.5em}}c@{\hspace{.5em}}c@{\hspace{.5em}}c@{\hspace{0em}}}
(-1)^k\binom{k}{k} \frac{(\alpha+k)\cdots(\alpha+1)}{(\alpha+k-N)\cdots(\alpha+1-N)} & 0                                                                                      &        & 0
\\
\vdots                                                                               & (-1)^k\binom{k}{k}\frac{(\alpha+k+1)\cdots(\alpha+2)}{(\alpha+k-N+1)\dots(\alpha+2-N)} &        & \vdots
\\
\binom{k}{2} \frac{(\alpha+k)(\alpha+k-1)}{(\alpha+k-N)(\alpha+k-N-1)}               & \vdots                                                                                 & \ddots & 0
\\
-\binom{k}{1} \frac{\alpha+k}{\alpha+k-N}                                            & \binom{k}{2} \frac{(\alpha+k+1)(\alpha+k)}{(\alpha+k-N+1)(\alpha+k-N)}                 &        & (-1)^k\binom{k}{k} \frac{(\alpha+N-1)\cdots(\alpha+N-k)}{(\alpha-1)\dots(\alpha-k)}
\\
\binom{k}{0}                                                                         & -\binom{k}{1} \frac{\alpha+k+1}{\alpha+k-N+1}                                          & \ddots & \vdots
\\
0                                                                                    & \binom{k}{0}                                                                           & \ddots & \binom{k}{2} \frac{(\alpha+N-1)(\alpha+N-2)}{(\alpha-1)(\alpha-2)}
\\
\vdots                                                                               &                                                                                        & \ddots & -\binom{k}{1} \frac{\alpha+N-1}{\alpha-1}
\\[1ex]
0                                                                                    &                                                                                        &        & \binom{k}{0}
\end{array}\right]\!.
\end{align*}
The matrices $U_k'$ and $U_k''$ can be simply written as
\begin{align*}
U_k'&=\begin{bmatrix}\displaystyle(-1)^{j-i} \ind_{\{i\le j\}} \binom{j}{i}
\frac{(j+\alpha)_N}{(i+\alpha)_N}\end{bmatrix}_{0\le i\le N-1 \atop 0\le j\le k-1},
\\
U_k''&=\begin{bmatrix}\displaystyle(-1)^{j-i} \ind_{\{i\vee k\le j\le i+k\}} \binom{k}{j-i}
\frac{(j+\alpha)_N}{(i+\alpha)_N}\end{bmatrix}_{0\le i\le N-1 \atop k\le j\le N-1}.
\end{align*}
Clever algebra yields that
\begin{equation}\label{matrix-Lk}
L_k=\begin{bmatrix}\displaystyle\binom{j+\alpha}{i+N}\frac{(i)_{j\wedge k}}{(j+\alpha-N)_{j\wedge k}}
\end{bmatrix}_{0\le i,j\le N-1}.
\end{equation}
We shall not prove~(\ref{matrix-Lk}), we shall only check it below in the
case $k=N-1$.

We progressively arrive at the last transformation which corresponds to
$k=N-1$:
\begin{align*}
C_{N-1}^{(N-1)}&=C_{N-1}^{(0)}-\binom{N-1}{1} \frac{N+\alpha-1}{\alpha-1}\, C_{N-2}^{(0)}
+\binom{N-1}{2} \frac{(N+\alpha-1)(N+\alpha-2)}{(\alpha-1)(\alpha-2)}\, C_{N-3}^{(0)}
\\
&+(-1)^{N-1}\binom{N-1}{N-1} \frac{(N+\alpha-1)(N+\alpha-2)\cdots(\alpha+1)}
{(\alpha-1)(\alpha-2)\cdots(\alpha-N+1)}\, C_{0}^{(0)}
\\
&=\sum_{\ell=0}^{N-1} (-1)^{\ell} \binom{N-1}{\ell} \frac{(N+\alpha-1)_{\ell}}{(\alpha-1)_{\ell}}
\,C_{N-1-\ell}^{(0)}
\\
&=\sum_{\ell=0}^{N-1} (-1)^{N-1-\ell} \binom{N-1}{\ell}
\frac{(N+\alpha-1)_{N-\ell-1}}{(\alpha-1)_{N-\ell-1}} \,C_{\ell}^{(0)}.
\end{align*}
The $C_0^{(0)},C_1^{(1)},C_2^{(2)},\dots,C_{N-1}^{(N-1)}$
are the columns of the last matrix given by $L_{N-1}=AU_{N-1}$ where
\[
U_{N-1}=\begin{bmatrix}\displaystyle  (-1)^{j-i} \binom{j}{i}
\frac{(j+\alpha)_N}{(i+\alpha)_N}\end{bmatrix}_{0\le i,j\le N-1}.
\]
Formula (\ref{matrix-Lk}) gives the following expression for $L_{N-1}$ that will be
checked below:
\[
L_{N-1}=\begin{bmatrix}\displaystyle \binom{j+\alpha}{i+N}\frac{(i)_j}{(j+\alpha-N)_j}
\end{bmatrix}_{0\le i,j\le N-1}.
\]
Hence, by putting $L=L_{N-1}$ and $U=U_{N-1}$, we see that $L$ is a lower triangular matrix,
$U$ is an upper triangular matrix and we have obtained that $L=AU$.

Finally, we directly check the decomposition $L=AU$.
The generic term of $AU$ is
\[
\sum_{k=0}^j (-1)^{j+k} \binom{k+\alpha}{i+N} \!\binom{j}{k}
\frac{(j+\alpha)_N}{(k+\alpha)_N}.
\]
Observing that
\[
\binom{k+\alpha}{i+N}\frac{(j+\alpha)_N}{(k+\alpha)_N}
=\binom{k+\alpha-N}{i}\!\binom{j+\alpha}{N}/\binom{i+N}{N},
\]
this term can be rewritten as
\[
(-1)^j\binom{j+\alpha}{N}/\binom{i+N}{N}
\times\sum_{k=0}^j (-1)^k \binom{j}{k} \!\binom{k+\alpha-N}{i}.
\]
The sum $\sum_{k=0}^j (-1)^k \binom{j}{k} \!\binom{k+\alpha-N}{i}$
can be explicitly evaluated thanks to Lemma~\ref{lemma-sum}. Its value
is $(-1)^j \binom{\alpha-N}{i-j}$. Therefore, we can easily get the generic
term of $L$ and the proof of Lemma~\ref{lemma1} is finished.
\end{proof}

%
\begin{lemma}\label{lemma2}
The inverse of the matrix $L$ is given by
\[
L^{-1}=\begin{bmatrix}\displaystyle(-1)^{i+j} \ind_{\{i\ge j\}}
\frac{(j+N)!(i+\alpha-N)_{i+1}}{j!(i-j)!(i+\alpha)_{j+N+1}}
\end{bmatrix}_{0\le i,j\le N-1}\!.
\]
\end{lemma}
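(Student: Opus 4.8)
The plan is to verify directly that $L$ times the displayed matrix equals the identity. By Lemma~\ref{lemma1}, $L$ is lower triangular with diagonal entries $L_{ii}=\binom{i+\alpha}{i+N}\,i!/(i+\alpha-N)_i$, which are nonzero since $\alpha>N$; hence $L$ is invertible and $L^{-1}$ is automatically lower triangular. Writing $M=[M_{ik}]_{0\le i,k\le N-1}$ for the matrix in the statement, which is also lower triangular, it therefore suffices to check $\sum_{j=k}^{i}L_{ij}M_{jk}=\delta_{ik}$ for $0\le k\le i\le N-1$ (for $i<k$ both sides vanish).

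First I would carry out the algebraic reduction of the $(i,k)$-entry. The telescoping relation $(j+\alpha-N)_{j+1}=(\alpha-N)\,(j+\alpha-N)_j$ makes the denominator $(j+\alpha-N)_j$ of $L_{ij}$ cancel the factor $(j+\alpha-N)_{j+1}$ in the numerator of $M_{jk}$, leaving a constant $\alpha-N$. Expressing every Pochhammer symbol and binomial coefficient as a ratio of factorials and simplifying, one is led to
\[
L_{ij}M_{jk}=(-1)^{j+k}(\alpha-N)\,\frac{(k+N)!\,i!}{k!\,(i+N)!}\cdot
\frac{(j+\alpha-N-k-1)!}{(i-j)!\,(j-k)!\,(j+\alpha-N-i)!}.
\]
Setting $m=j-k$ and $d=i-k$ (so that $(-1)^{j+k}=(-1)^{m}$ and $1/((d-m)!\,m!)=\binom{d}{m}/d!$), the sum over $j$ collapses to
\[
\sum_{j=k}^{i}L_{ij}M_{jk}
=(\alpha-N)\,\frac{(k+N)!\,i!}{k!\,(i+N)!\,d!}\,
\sum_{m=0}^{d}(-1)^{m}\binom{d}{m}\,\frac{(m+\alpha-N-1)!}{(m+\alpha-N-d)!}.
\]

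Next I would evaluate the inner sum. Since $\alpha>N$ we have $m+\alpha-N-1\ge m\ge 0$, so (with the paper's conventions $1/i!=0$ for $i<0$) the ratio $\frac{(m+\alpha-N-1)!}{(m+\alpha-N-d)!}$ equals the polynomial $(m+\alpha-N-1)_{d-1}$ in $m$, of degree $d-1$; hence by Lemma~\ref{lemma-sum}, applied with its parameters $(\alpha,\beta,n)$ taken to be $(\alpha-N-1,\alpha-N-d,d)$, the inner sum equals $(0)_d/(\alpha-N)$. As $(0)_d=1$ for $d=0$ and $(0)_d=0$ for $d\ge 1$, and the prefactor $\frac{(k+N)!\,i!}{k!\,(i+N)!\,d!}$ equals $1$ when $d=i-k=0$, we get $\sum_{j=k}^{i}L_{ij}M_{jk}=\delta_{ik}$, i.e. $LM=I$, which proves $M=L^{-1}$.

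The only step calling for a little care — and the main (minor) obstacle — is the boundary case $\alpha-N-d\le 0$, where the factorial ratio must be read through the convention; one checks, again using $m+\alpha-N-1\ge 0$, that it still coincides with the polynomial $(m+\alpha-N-1)_{d-1}$, so that Lemma~\ref{lemma-sum} in its conventional extension — equivalently, the vanishing of the $d$-th finite difference of a polynomial of degree less than $d$ — applies verbatim. Everything else is routine factorial bookkeeping.
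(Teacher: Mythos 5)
Your proposal is correct and follows essentially the same route as the paper: both verify $LM=I$ entry by entry, reduce the $(i,k)$-entry by factorial bookkeeping to an alternating sum of the form $\sum_{m=0}^{d}(-1)^m\binom{d}{m}(m+\alpha-N-1)_{d-1}$, and kill it via Lemma~\ref{lemma-sum} (equivalently, the vanishing of the $d$-th finite difference of a polynomial of degree $d-1$). Your explicit treatment of the boundary case $\alpha-N-d\le 0$ through the convention $1/i!=0$ for $i<0$ is a point the paper passes over silently, but it changes nothing of substance.
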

%
\begin{proof}
We simplify the entries of the product
\begin{align}
\lqn{\begin{bmatrix}\displaystyle\ind_{\{i\ge j\}}\binom{j+\alpha}{i+N}\frac{(i)_j}{(j+\alpha-N)_j}
\end{bmatrix}_{0\le i,j\le N-1}}
\times\begin{bmatrix}\displaystyle(-1)^{i+j} \ind_{\{i\ge j\}}
\frac{(j+N)!(i+\alpha-N)_{i+1}}{j!(i-j)!(i+\alpha)_{j+N+1}}
\end{bmatrix}_{0\le i,j\le N-1}\!.
\label{product}
\end{align}
The generic term of this matrix is
\begin{align*}
\lqn{\sum_{k=0}^{N-1} \ind_{\{i\ge k\}} \binom{k+\alpha}{i+N}\frac{(i)_k}{(k+\alpha-N)_k}
\times(-1)^{j+k} \ind_{\{k\ge j\}}\frac{(j+N)!(k+\alpha-N)_{k+1}}{j!(k-j)!(k+\alpha)_{j+N+1}}}
=\ind_{\{i\ge j\}}\frac{i!(j+N)!(\alpha-N)!}{j!(i+N)!(\alpha-N-1)!}
\sum_{k=j}^i (-1)^{j+k} \frac{(k+\alpha-j-N-1)!}{(i-k)!(k-j)!(k+\alpha-i-N)!}.
\end{align*}
The last sum can be computed as follows: clearly, it vanishes when $i<j$
and it equals $1$ when $i=j$. If $i>j$, by using Lemma~\ref{lemma-sum},
\begin{align*}
\lqn{\sum_{k=j}^i (-1)^{j+k} \frac{(k+\alpha-j-N-1)!}{(i-k)!(k-j)!(k+\alpha-i-N)!}}
&
=\frac{1}{(i-j)!}\sum_{k=j}^i (-1)^{j+k}\binom{i-j}{k-j}
\frac{(k+\alpha-j-N-1)!}{(k+\alpha-i-N)!}
\\
&
=\frac{1}{(i-j)!}\sum_{k=0}^{i-j} (-1)^k \binom{i-j}{k}(k+\alpha-N-1)_{i-j-1}=0.
\end{align*}
As a consequence, the entries of the product (\ref{product}) are $\delta_{ij}$
which proves that the second factor of (\ref{product}) coincides with $L^{-1}$.
\end{proof}
%
\begin{lemma}\label{lemma3}
The matrix $L^{-1}B$ is given by
\[
L^{-1}B=\begin{bmatrix}\displaystyle(-1)^i\binom{\beta}{N}\!\binom{i+\alpha-\beta-1}{\alpha-\beta-1}
/\binom{i+\alpha}{N}\end{bmatrix}_{0\le i\le N-1}\!.
\]
\end{lemma}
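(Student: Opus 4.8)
The plan is to compute the single matrix product $L^{-1}B$ entry by entry, reduce the resulting one-variable sum to the shape handled by Lemma~\ref{lemma-sum}, and then recognise the answer. Write $(L^{-1}B)_i$ for the $i$-th entry of the column $L^{-1}B$. By Lemma~\ref{lemma2} and the definition of $B$,
\[
(L^{-1}B)_i=\sum_{j=0}^{N-1}(-1)^{i+j}\ind_{\{i\ge j\}}\frac{(j+N)!\,(i+\alpha-N)_{i+1}}{j!\,(i-j)!\,(i+\alpha)_{j+N+1}}\binom{\beta}{j+N}.
\]
First I would rewrite every Pochhammer symbol and binomial coefficient as a ratio of factorials: $(j+N)!\binom{\beta}{j+N}=\beta!/(\beta-j-N)!$, $(i+\alpha-N)_{i+1}=(i+\alpha-N)!/(\alpha-N-1)!$, $(i+\alpha)_{j+N+1}=(i+\alpha)!/(i+\alpha-j-N-1)!$ and $1/(j!(i-j)!)=\binom{i}{j}/i!$. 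Pulling out of the sum every factor that does not depend on $j$ leaves
\[
(L^{-1}B)_i=(-1)^i\,\frac{(i+\alpha-N)!\,\beta!}{(\alpha-N-1)!\,i!\,(i+\alpha)!}\sum_{j=0}^{i}(-1)^{j}\binom{i}{j}\frac{(i+\alpha-j-N-1)!}{(\beta-j-N)!}.
\]

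Next I would evaluate the inner sum. Substituting $k=i-j$ turns it into $(-1)^i\sum_{k=0}^{i}(-1)^k\binom{i}{k}\frac{(k+\alpha-N-1)!}{(k+\beta-N-i)!}$, which is precisely the sum treated by Lemma~\ref{lemma-sum} with $n=i$, $\alpha'=\alpha-N-1$ and $\beta'=\beta-N-i$. The hypothesis $\alpha>\beta\ge N$ ensures $\alpha'\ge0$ and $\alpha'\ge\beta'$, and since $\binom{\beta}{j+N}=0$ whenever $\beta-j-N<0$ the terms in which $\beta'$ would demand a negative factorial drop out harmlessly, so one may extend the $k$-range to $\{0,\dots,i\}$ and apply Lemma~\ref{lemma-sum}. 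It gives $\dfrac{\alpha'!}{(\beta'+i)!}(\beta'-\alpha'+i-1)_i=\dfrac{(\alpha-N-1)!}{(\beta-N)!}(\beta-\alpha)_i$, and using $(\beta-\alpha)_i=(-1)^i(\alpha-\beta+i-1)!/(\alpha-\beta-1)!$ the inner sum collapses to $(\alpha-N-1)!\,(\alpha-\beta+i-1)!/\big((\beta-N)!\,(\alpha-\beta-1)!\big)$.

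Substituting this back and cancelling $(\alpha-N-1)!$ yields
\[
(L^{-1}B)_i=(-1)^i\,\frac{(i+\alpha-N)!\,\beta!\,(\alpha-\beta+i-1)!}{i!\,(i+\alpha)!\,(\beta-N)!\,(\alpha-\beta-1)!}.
\]
Finally I would check that the claimed $i$-th entry $(-1)^i\binom{\beta}{N}\binom{i+\alpha-\beta-1}{\alpha-\beta-1}/\binom{i+\alpha}{N}$, once each of the three binomial coefficients is expanded into factorials, is literally this same expression; that finishes the proof. I expect the only real difficulty to be the factorial bookkeeping — in particular being careful with the convention $1/m!=0$ for negative integers $m$ so that the truncation of the sum is consistent, and verifying that the arguments handed to Lemma~\ref{lemma-sum} fall in a range where (the proof of) that lemma applies.
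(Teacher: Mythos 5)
Your proposal is correct and follows essentially the same route as the paper: expand the entry of $L^{-1}B$, convert Pochhammer symbols and binomials to factorials, pull out the $j$-independent factors, reverse the summation index, and invoke Lemma~\ref{lemma-sum} with $n=i$, $\alpha'=\alpha-N-1$, $\beta'=\beta-N-i$ before re-assembling the factorials into the claimed binomial expression. The only cosmetic difference is that you apply the factorial form of Lemma~\ref{lemma-sum} directly where the paper uses its binomial rewriting; the computation is identical.
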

%
\begin{proof}
The generic term of $L^{-1}B$ is
\begin{align}
\lqn{\sum_{j=0}^{N-1} (-1)^{i+j} \ind_{\{i\ge j\}}
\frac{(j+N)!(i+\alpha-N)_{i+1}}{j!(i-j)!(i+\alpha)_{j+N+1}} \binom{\beta}{j+N}}
&
=\frac{\beta!(i+\alpha-N)!}{(i+\alpha)!(\alpha-N-1)!}
\sum_{j=0}^i (-1)^{i+j} \frac{(i-j+\alpha-N-1)!}{j!(i-j)!(\beta-j-N)!}
\nonumber\\
&
=\frac{\beta!(i+\alpha-N)!(i+\alpha-\beta-1)!}{i!(i+\alpha)!(\alpha-N-1)!}
\sum_{j=0}^i (-1)^{i+j} \binom{i}{j}\! \binom{i-j+\alpha-N-1}{i+\alpha-\beta-1}\!.
\label{sum1}
\end{align}
By performing the change of index $j\mapsto i-j$ and by using
Lemma~\ref{lemma-sum}, the sum in (\ref{sum1}) is equal to
\[
\sum_{j=0}^i (-1)^j \binom{i}{j}\! \binom{j+\alpha-N-1}{i+\alpha-\beta-1}
=(-1)^i \binom{\alpha-N-1}{\beta-N}
=(-1)^i \frac{(\alpha-N-1)!}{(\alpha-\beta-1)!(\beta-N)!}.
\]
By putting this into~(\ref{sum1}), we see that the generic term of $L^{-1}B$ writes
\[
(-1)^i\frac{\beta!(i+\alpha-N)!(i+\alpha-\beta-1)!}
{i!(i+\alpha)!(\alpha-\beta-1)!(\beta-N)!}
=(-1)^i\binom{\beta}{N}\!\binom{i+\alpha-\beta-1}{\alpha-\beta-1}
/\binom{i+\alpha}{N}
\]
which ends up the proof of Lemma~\ref{lemma3}.
\end{proof}

%
\begin{theorem}\label{theorem-matrix-inv}
The matrix $A^{-1}B$ is given by
\[
A^{-1}B=\begin{bmatrix}\displaystyle(-1)^i\frac{N}{i+\alpha-\beta}
\binom{\beta}{N} \! \binom{\alpha-\beta+N-1}{N} \! \binom{N-1}{i}
/\binom{i+\alpha}{N}\end{bmatrix}_{0\le i\le N-1}\!.
\]
\end{theorem}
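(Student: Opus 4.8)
The plan is to leverage the two factorisations already established. By Lemma~\ref{lemma1} we have $A=LU^{-1}$, so $A^{-1}=UL^{-1}$ and hence $A^{-1}B=U\,(L^{-1}B)$, while the column $L^{-1}B$ is given explicitly by Lemma~\ref{lemma3}. Thus no further inversion is needed: everything reduces to multiplying the explicit upper-triangular matrix $U$ of Lemma~\ref{lemma1} by the explicit column of Lemma~\ref{lemma3}.

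First I would write out the $i$th entry of $U\,(L^{-1}B)$. Since $U_{ij}=0$ for $i>j$, it equals
\[
(A^{-1}B)_i=\sum_{j=i}^{N-1}(-1)^{i+j}\binom{j}{i}\,\frac{(j+\alpha)_N}{(i+\alpha)_N}\cdot(-1)^j\binom{\beta}{N}\binom{j+\alpha-\beta-1}{\alpha-\beta-1}\Big/\binom{j+\alpha}{N}.
\]
Using $(-1)^{i+j}(-1)^j=(-1)^i$ and the elementary identity $(k)_N=N!\binom{k}{N}$ (valid here since $k=j+\alpha\ge\alpha\ge N$), one gets $\frac{(j+\alpha)_N}{(i+\alpha)_N}\big/\binom{j+\alpha}{N}=\frac{N!}{(i+\alpha)_N}=\frac{1}{\binom{i+\alpha}{N}}$, so the entry collapses to
\[
(A^{-1}B)_i=(-1)^i\binom{\beta}{N}\,\frac{1}{\binom{i+\alpha}{N}}\sum_{j=i}^{N-1}\binom{j}{i}\binom{j+m}{m},\qquad m:=\alpha-\beta-1\ (\ge 0,\ \text{since }\alpha>\beta).
\]

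The only substantive point is then the combinatorial identity
\[
\sum_{j=i}^{N-1}\binom{j}{i}\binom{j+m}{m}=\binom{i+m}{i}\binom{N+m}{i+m+1}.
\]
I would prove it by setting $j=i+k$ and applying the subset-of-a-subset identity $\binom{i+k+m}{m}\binom{i+k}{i}=\binom{i+k+m}{i+m}\binom{i+m}{i}=\binom{i+m}{i}\binom{i+m+k}{k}$, which pulls the factor $\binom{i+m}{i}$ out of the sum and leaves $\sum_{k=0}^{N-1-i}\binom{i+m+k}{i+m}$; the hockey-stick identity evaluates this last sum to $\binom{N+m}{i+m+1}$. (Alternatively one can induct on $N$, the inductive step reducing to Pascal's rule together with $\binom{N}{i}\binom{N+m}{m}=\binom{i+m}{i}\binom{N+m}{i+m}$; Lemma~\ref{lemma-sum} could also be invoked.)

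It remains to rewrite $\binom{i+m}{i}\binom{N+m}{i+m+1}$ in the stated form. Cancelling $(i+m)!/(i+m+1)!=1/(i+m+1)$ and inserting a factor $N/N=N(N-1)!/N!$ gives $\binom{i+m}{i}\binom{N+m}{i+m+1}=\frac{N}{i+m+1}\binom{m+N}{N}\binom{N-1}{i}$, i.e., with $i+m+1=i+\alpha-\beta$ and $m+N=\alpha-\beta+N-1$,
\[
\sum_{j=i}^{N-1}\binom{j}{i}\binom{j+m}{m}=\frac{N}{i+\alpha-\beta}\binom{\alpha-\beta+N-1}{N}\binom{N-1}{i}.
\]
Substituting this back into the expression for $(A^{-1}B)_i$ yields exactly the claimed entry $(-1)^i\frac{N}{i+\alpha-\beta}\binom{\beta}{N}\binom{\alpha-\beta+N-1}{N}\binom{N-1}{i}\big/\binom{i+\alpha}{N}$, which completes the proof. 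There is no genuine obstacle: the argument is pure bookkeeping, and the only care needed is to keep the summation ranges and the convention $\binom{p}{q}=0$ for $q>p$ consistent, and to record that $m\ge 0$ (which is where the hypothesis $\alpha>\beta$ enters).
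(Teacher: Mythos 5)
Your proposal is correct and follows essentially the same route as the paper: write $A^{-1}B=U(L^{-1}B)$ via Lemmas~\ref{lemma1} and~\ref{lemma3}, reduce the generic entry to $(-1)^i\binom{\beta}{N}\big/\binom{i+\alpha}{N}$ times the sum $\sum_{j=i}^{N-1}\binom{j}{i}\binom{j+\alpha-\beta-1}{\alpha-\beta-1}$, and evaluate that sum by the subset-of-a-subset identity followed by a telescoping (hockey-stick) argument. The paper's evaluation of the sum is the same computation presented as a Pascal-rule telescope rather than an explicit index shift, so there is nothing substantive to distinguish the two proofs.
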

%
\begin{proof}
Referring to Lemmas~\ref{lemma1} and \ref{lemma3}, we have that
\begin{align*}
A^{-1}B &=U(L^{-1}B)
\\
&=\begin{bmatrix}\displaystyle(-1)^{i+j}\ind_{\{i\le j\}}\binom{j}{i}
\!\binom{j+\alpha}{N}/\binom{i+\alpha}{N}\end{bmatrix}_{0\le i,j\le N-1}
\\
&
\hphantom{=\;\;}\times\begin{bmatrix}\displaystyle(-1)^i
\binom{\beta}{N}\!\binom{i+\alpha-\beta-1}{\alpha-\beta-1}
/\binom{i+\alpha}{N}\end{bmatrix}_{0\le i\le N-1}\!.
\end{align*}
The generic term of $A^{-1}B$ is
\begin{equation}\label{sum2}
\frac{(-1)^i\binom{\beta}{N}}{\binom{i+\alpha}{N}}
\sum_{j=i}^{N-1} \binom{j}{i}\!\binom{j+\alpha-\beta-1}{\alpha-\beta-1}.
\end{equation}
The foregoing sum can be easily evaluated as follows:
\begin{align*}
\sum_{j=i}^{N-1} \binom{j}{i}\!\binom{j+\alpha-\beta-1}{\alpha-\beta-1}
&
=\binom{i+\alpha-\beta-1}{\alpha-\beta-1} \sum_{j=i}^{N-1}
\binom{j+\alpha-\beta-1}{i+\alpha-\beta-1}
\\
&
=\binom{i+\alpha-\beta-1}{\alpha-\beta-1} \sum_{j=i}^{N-1}
\left[\binom{j+\alpha-\beta}{i+\alpha-\beta}
-\binom{j+\alpha-\beta-1}{i+\alpha-\beta}\right]
\\
&
=\binom{i+\alpha-\beta-1}{\alpha-\beta-1}\!\binom{\alpha-\beta+N-1}{i+\alpha-\beta}
\\
&
=\frac{N}{i+\alpha-\beta} \binom{\alpha-\beta+N-1}{N}\!\binom{N-1}{i}.
\end{align*}
Putting this into~(\ref{sum2}) yields the matrix $A^{-1}B$ displayed in
Theorem~\ref{theorem-matrix-inv}.
\end{proof}


\end{document}